\documentclass[reqno]{amsart}

\usepackage{tikz}
\usepackage{graphicx}
\usepackage{amssymb}
\usepackage{hyperref}
\usepackage{cleveref}
\usepackage{subfigure}
\usepackage[a4paper, total={15cm, 23cm},centering]{geometry}

\usepackage{pstricks,tikz}

\usepackage{xcolor}
\hypersetup{
    colorlinks,
    linkcolor={blue!50!black},
    citecolor={darkgray!},
    urlcolor={Gray!}
}

\usepackage{mathrsfs}
\usepackage{float}
\usepackage[T1]{fontenc} 

\newtheorem{thm}{Theorem}[section]
\newtheorem{lem}[thm]{Lemma}

\newtheorem{prop}[thm]{Proposition}
\newtheorem{cor}[thm]{Corollary}

\newtheorem{conj}[thm]{Conjecture}
 \newtheorem{thm1}{Theorem}
 \newtheorem{cor1}[thm1]{Corollary}
 \newtheorem{conj1}{Conjecture}

\theoremstyle{definition}
\newtheorem{defn}[thm]{Definition}

\newtheorem{exa}[thm]{Example}

\newtheorem{alg}[thm]{Algorithm}

\theoremstyle{remark}
\newtheorem{rem}[thm]{Remark}

\def\Gmin{\widetilde{S}}
\def\Phisph{\Phi_{\mathrm{sph}}}
\def\Preg{\mathscr{P}_{\mathrm{reg}}(W)}
\def\intT{\mathrm{Int}(T)}

\def\scrC{\mathscr{C}}
\def\scrH{\mathscr{H}}
\def\scrD{\mathscr{D}}

\def\scrS{\mathscr{S}}
\def\scrG{\mathscr{G}}
\def\scrR{\mathscr{R}}

\def\scrP{\mathscr{P}}

\def\scrH{\mathscr{H}}

\def\scrT{\mathscr{T}}
\def\scrX{\mathscr{X}}

\def\scrJ{\mathscr{J}}
\def\scrQ{\mathscr{Q}}

\def\cA{\mathcal{A}}

\def\cE{\mathcal{E}}

\def\cL{\mathcal{L}}

\def\cS{\mathcal{S}}

\def\sA{\mathsf{A}}
\def\sB{\mathsf{B}}
\def\sG{\mathsf{G}}
\def\sX{\mathsf{X}}
\def\sY{\mathsf{Y}}
\def\sZ{\mathsf{Z}}
\def\sC{\mathsf{C}}
\def\sD{\mathsf{D}}
\def\sF{\mathsf{F}}

\def\peq{\preccurlyeq}

\definecolor{ForestGreen}{cmyk}{0.91,0,0.88,0.42}

%
%

\numberwithin{equation}{section}




\makeatletter
\@namedef{subjclassname@2020}{%
  \textup{2020} Mathematics Subject Classification}
\makeatother

\begin{document}

\title{Cone Types, Automata, and Regular Partitions in Coxeter Groups}

\author{James Parkinson}
\address{School of Mathematics and Statistics, University of Sydney, NSW, Australia}
\email{jamesp@maths.usyd.edu.au}

\author{Yeeka Yau}
\address{School of Mathematics and Statistics, University of Sydney, NSW, Australia}
\email{y.yau@maths.usyd.edu.au}

\subjclass[2020]{Primary 20F55; Secondary 20F10, 17B22}

\date{\today}


\keywords{Coxeter groups, root systems, automata, low elements, Garside shadows}

\begin{abstract}
In this article we introduce the notion of a \textit{regular partition} of a Coxeter group. We develop the theory of these partitions, and show that the class of regular partitions is essentially equivalent to the class of automata (not necessarily finite state) recognising the language of reduced words in the Coxeter group. As an application of this theory we prove that each cone type in a Coxeter group has a unique minimal length representative. This result can be seen as an analogue of Shi's classical result that each component of the Shi arrangement of an affine Coxeter group has a unique minimal length element. We further develop the theory of cone types in Coxeter groups by identifying the minimal set of roots required to express a cone type as an intersection of half-spaces. This set of \textit{boundary roots} is closely related to the elementary inversion sets of Brink and Howlett, and also to the notion of the base of an inversion set introduced by Dyer. 
\end{abstract}

\maketitle


\section*{Introduction}

In \cite{BH:93}, Brink and Howlett showed that every finitely generated Coxeter system $(W,S)$ is automatic by providing an explicit construction of a finite state automaton $\cA_0$ recognising the language $\cL(W,S)$ of reduced words of $(W,S)$. A key insight in~\cite{BH:93} was the introduction of the set $\cE$ of \textit{elementary roots} of the associated root system, and the proof that $\cE$ is finite for all finitely generated Coxeter systems. This remarkable work paved the way for the study of other structures in Coxeter groups which also induce automata, such as the notion of \textit{Garside shadows} and \textit{low elements} introduced by Dehornoy, Dyer, and Hohlweg (see \cite{DDH:15,DH:16}). 

By the Myhill-Nerode Theorem there exists a unique (up to isomorphism) \textit{minimal} (with respect to the number of states) automaton $\cA(W,S)$ recognising $\cL(W,S)$. This automaton has been of considerable interest recently. For example, in \cite[Conjecture 2]{HNW:16} Hohlweg, Nadeau and Williams conjectured necessary and sufficient conditions for minimality of the automaton $\cA_0$ constructed by Brink and Howlett, and this conjecture was verified by the current authors in~\cite[Theorem~1]{PY:19}. Furthermore, in \cite{HNW:16}, an automaton recognising $\cL(W,S)$ is constructed using the smallest Garside shadow, and it is conjectured in \cite[Conjecture 1]{HNW:16} that this automaton is always minimal. 

In this paper we provide a detailed investigation of the minimal automaton $\cA(W,S)$. The set of accept states of this automaton is the set $\mathbb{T}$ of all \textit{cone types} in $W$, where the cone type of $x\in W$ is 
$$
T(x)=\{y\in W\mid \ell(xy)=\ell(x)+\ell(y)\}.
$$
Here $\ell:W\to\mathbb{N}$ is the usual length function, and we note that $|\mathbb{T}|<\infty$ (this is a consequence of the fact that $|\cE|<\infty$). Thus studying $\cA(W,S)$ amounts to studying cone types in Coxeter groups. 

A main contribution of this paper is the introduction of the notion of a \textit{regular partition} of~$W$. This concept is inspired by, and simultaneously generalises, both the Shi arrangement and the theory of Garside shadows, and we show that the class of regular partitions of $W$ is essentially equivalent to the class of automata recognising $\cL(W,S)$ (see Theorem~\ref{thm:main3}).

We will outline our main results below. In order to do so, we fix the following terminology (see Section~\ref{sec:1} for precise definitions). Let $(W,S)$ be a finitely generated Coxeter system, and let $\Phi$ be an associated root system. For $w\in W$ write $\Phi(w)$ for the inversion set of $w$, and $\cE(w)=\Phi(w)\cap\cE$ for the elementary inversion set of~$w$. The \textit{right weak order} on $W$ is defined by $x\peq y$ if and only if $\ell(x^{-1}y)=\ell(y)-\ell(x)$ (equivalently, if and only if $x$ is a prefix of $y$). The left descent set of $w\in W$ is $D_L(w)=\{s\in S\mid \ell(sw)=\ell(w)-1\}$. 

Recall, from \cite{DDH:15}, that a \textit{Garside shadow} in $W$ is a subset $B\subseteq W$ such that (i) $S\subseteq B$, (ii) $B$ is closed under taking suffixes of elements, and (iii) $B$ is closed under taking joins (in the right weak order) of bounded subsets. Since the intersection of Garside shadows is again a Garside shadow, there exists a unique smallest Garside shadow, denoted~$\Gmin$, and $|\Gmin|<\infty$.

One of the main contributions of this paper is the proof of the following fundamental property of cone types in Coxeter groups (see Corollary~\ref{cor:gateexist}).

\begin{thm1}\label{thm:main1}
For each cone type $T$ there is a unique element $m_T\in W$ of minimal length such that $T(m_T)=T$. Moreover, if $w\in W$ with $T(w)=T$ then $m_T$ is a suffix of $w$.
\end{thm1}


The path to proving Theorem~\ref{thm:main1} is surprisingly circuitous, and along the way we introduce several new concepts, as outlined below. An initial observation is that while the set $\{w\in W\mid T(w)=T\}$ of all cone type representatives of $T$ is disconnected (in the Coxeter complex), the inverse of this set turns out to be connected, and indeed convex (see Proposition~\ref{prop:convexitybasics}). Thus we consider the sets 
$$
X_T=\{w\in W\mid T(w^{-1})=T\},\quad\text{for $T\in\mathbb{T}$}.
$$
We will ultimately prove Theorem~\ref{thm:main1} by showing that $X_T$ contains a unique minimal length element $g_T$, and that whenever $w\in W$ is such that $T(w^{-1})=T$ then $g_T\peq w$. We call $\Gamma=\{g_T\mid T\in\mathbb{T}\}$ the set of \textit{gates} of $W$. Of course $m_T=g_T^{-1}$, however it turns out that the set $\Gamma$ appears to be more fundamental than the set of minimal length cone type representatives. 

We call the partition $\scrT=\{X_T\mid T\in\mathbb{T}\}$ of $W$ the \textit{cone type partition}. When $W$ is affine, the partition $\scrT$ shares many properties with the partition $\scrS$ of $W$ induced by the classical Shi arrangement introduced by Shi in~\cite{Shi:87a,Shi:87b} (and extensively studied ever since). We illustrate this below in the case $\tilde{\sB}_2$ (the parts of the partitions are the connected components; see Example~\ref{ex:completingB2} for the details of how to compute $\scrT$).

\begin{figure}[H]
\centering
\subfigure[The cone type partition~$\scrT$]{
\begin{tikzpicture}[scale=0.65]
\path [fill=gray!90] (0,0) -- (1,1) -- (0,1) -- (0,0);
\path [fill=blue!30] (-1,1)--(0,2)--(2,2)--(2,0)--(1,-1)--(0,-1)--(0,0)--(1,1)--(-1,1);
\path [fill=blue!30] (0,2)--(-1,3)--(1,3)--(0,2);
\path [fill=blue!30] (1,3)--(1,4)--(2,4)--(1,3);
\path [fill=blue!30] (-1,1)--(0,1)--(0,-1)--(-1,-1)--(-1,1);
\path [fill=blue!30] (-1,1)--(-2,0)--(-2,2)--(-1,1);
\path [fill=blue!30] (-2,0)--(-3,0)--(-3,-1)--(-2,0);
\path [fill=blue!30] (1,-1)--(1,-2)--(2,-2)--(1,-1);
\path [fill=blue!30] (2,0)--(3,0)--(3,-1)--(2,0);
\draw (-4,-2) -- (5,-2); 
\draw (-4,-1) -- (5,-1);
\draw [line width=2pt](-4,0) -- (5,0);
\draw [line width=2pt](-4,1) -- (5,1);
\draw (-4,2) -- (5,2);
\draw (-4,3) -- (5,3); 
\draw (-4,4) -- (5,4);
\draw (-4,5) -- (5,5);
\draw (-4,-3) -- (5,-3);
\draw (-4,-4) -- (5,-4);
\draw (-2,-4) -- (-2,5);
\draw (-1,-4) -- (-1,5);
\draw [line width=2pt](0,-4) -- (0,5);
\draw [line width=2pt](1,-4) -- (1,5);
\draw (2,-4) -- (2,5);
\draw (-4,-4) -- (-4,5);
\draw (-3,-4) -- (-3,5);
\draw (3,-4) -- (3,5);
\draw (4,-4) -- (4,5);
\draw (5,-4) -- (5,5);
\draw (-4,4)--(-3,5);
\draw (-4,2)--(-1,5);
\draw (-4,0) -- (1,5);
\draw (-4,-2) -- (3,5);
\draw [line width=2pt](-4,-4) -- (5,5);
\draw (-2,-4) -- (5,3);
\draw (0,-4) -- (5,1);
\draw (2,-4)--(5,-1);
\draw (4,-4)--(5,-3);
\draw (3,5)--(5,3);
\draw (1,5)--(5,1);
\draw (-1,5)--(5,-1);
\draw[line width=2pt] (-3,5)--(5,-3);
\draw[line width=2pt] (-4,4)--(4,-4);
\draw (-4,2)--(2,-4);
\draw (-4,0)--(0,-4);
\draw (-4,-2)--(-2,-4);
\draw[line width=2pt] (-4,-2)--(-1,1);
\draw[line width=2pt] (0,2)--(3,5);
\end{tikzpicture}
}\quad\qquad
\subfigure[The Shi partition $\scrS$]{\begin{tikzpicture}[scale=0.65]
\path [fill=gray!90] (0,0) -- (1,1) -- (0,1) -- (0,0);
\path [fill=blue!30] (-1,1)--(0,2)--(2,2)--(2,0)--(1,-1)--(0,-1)--(0,0)--(1,1)--(-1,1);
\path [fill=blue!30] (0,2)--(-1,3)--(1,3)--(0,2);
\path [fill=blue!30] (1,3)--(1,4)--(2,4)--(1,3);
\path [fill=blue!30] (-1,1)--(0,1)--(0,-1)--(-1,-1)--(-1,1);
\path [fill=blue!30] (-1,1)--(-2,0)--(-2,2)--(-1,1);
\path [fill=blue!30] (-2,0)--(-3,0)--(-3,-1)--(-2,0);
\path [fill=blue!30] (1,-1)--(1,-2)--(2,-2)--(1,-1);
\path [fill=blue!30] (2,0)--(3,0)--(3,-1)--(2,0);
\path [fill=red!30] (-1,1)--(-1,2)--(0,2)--(-1,1);
\draw (-4,-2) -- (5,-2); 
\draw (-4,-1) -- (5,-1);
\draw [line width=2pt](-4,0) -- (5,0);
\draw [line width=2pt](-4,1) -- (5,1);
\draw (-4,2) -- (5,2);
\draw (-4,3) -- (5,3); 
\draw (-4,4) -- (5,4);
\draw (-4,5) -- (5,5);
\draw (-4,-3) -- (5,-3);
\draw (-4,-4) -- (5,-4);
\draw (-2,-4) -- (-2,5);
\draw (-1,-4) -- (-1,5);
\draw [line width=2pt](0,-4) -- (0,5);
\draw [line width=2pt](1,-4) -- (1,5);
\draw (2,-4) -- (2,5);
\draw (-4,-4) -- (-4,5);
\draw (-3,-4) -- (-3,5);
\draw (3,-4) -- (3,5);
\draw (4,-4) -- (4,5);
\draw (5,-4) -- (5,5);
\draw (-4,4)--(-3,5);
\draw (-4,2)--(-1,5);
\draw (-4,0) -- (1,5);
\draw (-4,-2) -- (3,5);
\draw [line width=2pt](-4,-4) -- (5,5);
\draw (-2,-4) -- (5,3);
\draw (0,-4) -- (5,1);
\draw (2,-4)--(5,-1);
\draw (4,-4)--(5,-3);
\draw (3,5)--(5,3);
\draw (1,5)--(5,1);
\draw (-1,5)--(5,-1);
\draw[line width=2pt] (-3,5)--(5,-3);
\draw[line width=2pt] (-4,4)--(4,-4);
\draw (-4,2)--(2,-4);
\draw (-4,0)--(0,-4);
\draw (-4,-2)--(-2,-4);
\draw[line width=2pt] (-4,-2)--(3,5);
\end{tikzpicture}
}
\caption{The partitions $\scrS$ and $\scrT$ for $\tilde{\sB}_2$}\label{fig:B2partitions}
\end{figure}

In Figure~\ref{fig:B2partitions}(a) the gates $g_T$ are shaded blue. By direct observation, each part of the partition~$\scrS$ also contains a unique minimal length element, and these are shaded blue and red (the red element in Figure~\ref{fig:B2partitions}(b) is shaded red to highlight the difference with Figure~\ref{fig:B2partitions}(a)). Note that $\scrS$ is a refinement of $\scrT$ (written $\scrT\leq \scrS$), and that $\scrT$ is not a hyperplane arrangement. 

One can define the \textit{Shi partition} $\scrS$ for an arbitrary Coxeter group by declaring $x,y\in W$ to lie in the same part of $\scrS$ if and only if $\cE(x)=\cE(y)$ (see \cite[Definition~3.18]{HNW:16}, and note that in the affine case this agrees with the classical definition, as the hyperplanes of the Shi arrangement are precisely the hyperplanes corresponding to the elementary roots of $W$). While the celebrated result of Shi~\cite{Shi:87b} tells us that in the affine case each component of $\scrS$ contains a unique minimal length element, it is unknown if this is true for general Coxeter type, and this analogy underscores the difficulty in proving Theorem~\ref{thm:main1}.

The above discussion suggests that the language of ``partitions of $W$'' is the appropriate framework in which to study cone types and related structures. Indeed our approach to Theorem~\ref{thm:main1} is via a detailed study of a special class of partitions that we call the \textit{regular partitions} of $W$. A partition $\scrP$ of $W$ is \textit{regular} if the following conditions are satisfied for each part $P\in\scrP$:
\begin{enumerate}
\item if $x,y\in P$ then $D_L(x)=D_L(y)$ (write $D_L(P)$ for this common value), and
\item if $s\notin D_L(P)$ then $sP\subseteq P'$ for some part $P'\in\scrP$. 
\end{enumerate}
A partition satisfying~(1) is called \textit{locally constant}. Let $\scrP(W)$ denote the set of all partitions of $W$, and let $\Preg$ denote the set of all regular partitions of~$W$.

It is not hard to see that $\scrT$ is a regular partition, and other interesting examples of regular partitions include partitions induced by Garside shadows, and the partitions induced by general Shi arrangements (see Theorem~\ref{thm:regularpartitions}).

The following theorem (see Theorems~\ref{thm:regularautomaton} and~\ref{thm:converseautomaton}) shows that regular partitions are equivalent to ``reduced'' automata recognising $\cL(W,S)$ (here ``reduced'' is a natural and mild hypothesis, see Section~\ref{subsec:regularpartitions}).

\begin{thm1}\label{thm:main3} For each regular partition $\scrR$ of $W$ there exists an explicitly defined reduced automaton recognising $\cL(W,S)$, with accept states being the parts of $\scrR$. Moreover, every reduced automaton recognising $\cL(W,S)$ arises in such a way from some regular partition~$\scrR$.
\end{thm1}

Theorem~\ref{thm:main3} highlights the fundamental role regular partitions play in the automatic structure of $W$. In particular, our construction encapsulates all of the known constructions of automata recognising $\cL(W,S)$, and produces infinitely many new examples. 

To make further progress, and continuing towards a proof of Theorem~\ref{thm:main1}, we undertake a study of the structure of the partially ordered set $\Preg$ of all regular partitions. Here the partial order is $\scrP\leq \scrP'$ if $\scrP'$ is a refinement of $\scrP$. We prove (see Theorem~\ref{thm:regularlattice2}):

\begin{thm1}\label{thm:main4}
The partially ordered set $(\Preg,\leq)$ is a complete lattice, with bottom element $\scrT$ and top element $\mathbf{1}=\{\{w\}\mid w\in W\}$ (the partition into singletons). 
\end{thm1}

Thus for any partition $\scrP\in\scrP(W)$ one may define the \textit{regular completion} $\widehat{\scrP}\in\Preg$ by
$$
\widehat{\scrP}=\bigwedge \{\scrR\in\Preg\mid \scrP\leq \scrR\}.
$$
While it is not immediately obvious from the definition, we show in Theorem~\ref{thm:newterminateatregularisation} that  $\scrP\leq\widehat{\scrP}$, and hence $\widehat{\scrP}$ is the minimal regular partition refining $\scrP$. We give an algorithm (called the \textit{simple refinements algorithm}) to compute the regular completion, and prove sufficient conditions for this algorithm to terminate in finite time (see Algorithm~\ref{alg:regularisation} and Theorem~\ref{thm:finitetermination}). 

Thus we have an essentially free construction of regular partitions, and hence by Theorem~\ref{thm:main3} an essentially free construction of automata recognising the language $\cL(W,S)$. Furthermore, our sufficient conditions for the simple refinements algorithm to terminate in finite time leads to sufficient conditions for the resulting automata to be finite state.

An important corollary of Theorem~\ref{thm:main4} is the following characterisation of the cone type partition~$\scrT$. Let $\scrD$ be the partition of $W$ according to left descent sets (that is, $x$ and $y$ in the same part of $\scrD$ if and only if $D_L(x)=D_L(y)$). Then (see Corollary~\ref{cor:regularlattice1}):

\begin{cor1}\label{cor:TD}
We have $\scrT=\widehat{\scrD}$. 
\end{cor1}

Corollary~\ref{cor:TD}, along with the simple refinements algorithm, allows for $\scrT$ to be computed algorithmically (see Example~\ref{ex:completingB2}). Moreover, Corollary~\ref{cor:TD} is a key step in establishing Theorem~\ref{thm:main1}. 

We next introduce the notion of a \textit{gated} partition. A partition $\scrP$ of $W$ is called \textit{gated} if for each part $P\in\scrP$ there exists an element $g\in P$ with $g\peq x$ for all $x\in P$. These elements $g$ are called the ``gates'' of the partition, and we write $\Gamma(\scrP)$ for the set of gates of $\scrP$ (it is clear that each part $P$ of a gated partition has a unique gate). 

We show that if $\scrP$ is a gated and convex partition, then the simple refinements algorithm preserves the gated property. Here \textit{convex} means that each part of the partition is convex in the usual sense. Thus we have the following theorem (see Corollary~\ref{cor:regularcompletiongated}), which, combined with Corollary~\ref{cor:TD}, finally leads to a proof of Theorem~\ref{thm:main1}. 

\begin{thm1}\label{thm1:preservegatedness}
Let $\scrP$ be a locally constant, convex and gated partition of $W$. If the simple refinements algorithm terminates in finite time, then the regular completion $\widehat{\scrP}$ is gated and convex.
\end{thm1}

The finite set $\Gamma=\Gamma(\scrT)$ (the set of gates of~$W$) has many remarkable properties. For example, $\Gamma$ is closed under taking suffix, contains all spherical elements of $W$, is contained in every Garside shadow, and is contained in the set $\Gamma(\scrP)$ of gates of every gated regular partition~$\scrP$ (see Proposition~\ref{prop:gatesbasicfacts}). Moreover we make the following conjecture (which in turn would resolve \cite[Conjecture~1]{HNW:16}, see Theorem~\ref{thm:GminGamma}).

\begin{conj1}\label{conj:garside}
The set $\Gamma$ is closed under join, and hence is a Garside shadow. 
\end{conj1}

Let $\Phisph^+$ denote the set of \textit{spherically supported} positive roots. We have $\Phisph^+\subseteq \cE$, however this containment can be strict (see~\cite[Theorem~1]{PY:19} for the classification of Coxeter systems for which $\cE=\Phisph^+$). Let $L\subseteq W$ denote the set of \textit{low elements} of $W$ introduced by Dehornoy, Dyer, and Hohlweg in~\cite{DDH:15} (see Definition~\ref{defn:nlow}). We prove the following, providing evidence for Conjecture~\ref{conj:garside} (see Theorem~\ref{thm:conjectures}).

\begin{thm1}\label{thm:conjforspherical}
If $\cE=\Phisph^+$ then $\Gamma=\Gmin=L$, and so $\Gamma$ is a Garside shadow.
\end{thm1}

%
%

Other main contributions of this paper include the following. In Section~\ref{sec:conetypes} we characterise the minimal set $\partial T\subseteq \Phi^+$ of positive roots required to determine $T$ (we call $\partial T$ the \textit{boundary roots} of~$T$), and provide a precise formula for cone types in terms of these roots. If $T$ is a cone type, and $x\in W$ is such that $T=T(x^{-1})$, we define
$$
\partial T=\{\beta\in\Phi^+\mid \text{there exists $w\in W$ with $\Phi(x)\cap\Phi(w)=\{\beta\}$}\}.
$$
This set of roots is independent of the particular representative $x\in W$ with $T=T(x^{-1})$ chosen, and we prove the following theorem (see Theorem~\ref{thm:geometry2}). 

\begin{thm1}\label{thm:main2}
Let $T$ be a cone type. Then
$$
T=\bigcap_{\beta\in\partial T}H_{\beta}^+\quad\text{where}\quad H_{\beta}^+=\{w\in W\mid \ell(s_{\beta}w)>\ell(w)\}.
$$
Moreover, removing any root from the above intersection results in strict containment.
\end{thm1}

We note that if $T(x^{-1})=T$ then $\partial T\subseteq\cE(x)$, however strict containment can occur. Moreover, we have $\partial T\subseteq \Phi^1(x)$ (where $\Phi^1(x)$ is the ``base'' of $\Phi(x)$, defined by Dyer~\cite{Dye:19}), however again strict containment can occur. We make the following conjecture (for which we can only prove the reverse implication):

\begin{conj1}\label{conj:boundary}
Let $x\in W$. Then $x\in \Gamma$ if and only if $\partial T(x^{-1})=\Phi^1(x)$. 
\end{conj1}

We also identify the ``partition theoretic'' equivalent to Garside shadows, in the following sense. If $\scrP$ is a regular gated partition, then the set $\Gamma(\scrP)$ of all gates of $\scrP$ contains $S$ and is automatically closed under suffix. However $\Gamma(\scrP)$ is not necessarily closed under join. In Section~\ref{sec:conical} we define \textit{conical partitions} (see Definition~\ref{defn:conical}). These partitions are necessarily gated, and the set of gates of a conical partition is necessarily closed under join. Thus regular conical partitions are equivalent to Garside shadows (see Corollary~\ref{cor:garsideequivalent}).

In Section~\ref{sec:ultralow}, motivated by Conjecture~\ref{conj:boundary}, we define \textit{ultra low} elements in a Coxeter group to be the elements $x\in W$ with $\partial T(x^{-1})=\Phi^1(x)$, and investigate their properties. We have $U\subseteq \Gamma\subseteq \Gmin$. Conjecture~\ref{conj:boundary}, if true, implies that $U=\Gamma$, and Conjecture~\ref{conj:garside}, if true, implies that $\Gamma=\Gmin$.

Finally, in Section~\ref{sec:superelementary} we consider the question of which elementary roots occur as a boundary root of some cone type. We show that in spherical and affine Coxeter groups all elementary roots occur, and we exhibit a family of rank~$4$ Coxeter groups where the inclusion is strict.

We thank C. Hohlweg and M. Dyer for helpful comments on an earlier version of this work, and R. Howlett for useful discussions concerning elementary roots and super elementary roots. This work was supported by funding from the Australian Research Council under the Discovery Project~DP200100712.

\newpage

\section{Preliminaries}\label{sec:1}

In this section we give an overview of background and preliminary results on Coxeter groups, root systems, elementary roots, the Coxeter complex, low elements, Garside shadows, cone types, and automata recognising the language of reduced words in a Coxeter group. Our main references are \cite{AB:08,BB:05,Deo:82} (for Coxeter groups, the Coxeter complex, and root systems), \cite{BH:93} (for elementary roots), \cite{Dye:21,DH:16} (for low elements), \cite{DDH:15,HNW:16} (for Garside shadows), and \cite{Eps:92,HNW:16} (for relevant automata theory).

\subsection{Coxeter groups}\label{sec:1:Coxetergroups} Let $(W,S)$ be a Coxeter system. We will assume throughout that $|S| < \infty$. For $s,t\in S$ let $m_{s,t}$ denote the order of $st$. The \textit{length} of $w\in W$ is 
$$
\ell(w)=\min\{n\geq 0\mid w=s_1\cdots s_n\text{ with }s_1,\ldots,s_n\in S\},
$$
where $\ell(e)=0$, with $e$ the identity element. An expression $w=s_1 \cdots s_n$ with $n=\ell(w)$ is called a \textit{reduced expression} for~$w$. An element $v \in W$ is a \textit{prefix} of $w$ if $\ell(w) = \ell(v) + \ell(v^{-1}w)$. Similarly, an element $u \in W$ is a \textit{suffix} of $w$ if $\ell(w) = \ell(u) + \ell(w u^{-1})$. Note that $v$ is a prefix (respectively suffix) of $w$ if and only if there is a reduced expression for $w$ starting (respectively ending) with a reduced expression for $v$.

Let $J\subseteq S$. The \textit{$J$-parabolic subgroup} of $W$ is the subgroup $W_J=\langle J\rangle$, and we say that $J$ is \textit{spherical} if $|W_J| < \infty$. If $J$ is spherical then there exists a unique longest element of $W_J$, denoted~$w_J$, and we have $\ell(w_Jw)=\ell(ww_J)=\ell(w_J)-\ell(w)$ for all $w\in W_J$. If $|W|<\infty$ (that is, $S$ is spherical) then we often write $w_0=w_S$ for the longest element of~$W$.

The \textit{left descent set} of $w\in W$ is
$$
D_L(w)=\{s\in S\mid \ell(sw)=\ell(w)-1\},
$$
and similarly the right descent set is $D_R(w)=\{s\in S\mid \ell(ws)=\ell(w)-1\}$. By \cite[Proposition~2.17]{AB:08} both $D_L(w)$ and $D_R(w)$ are spherical subsets of $S$ for all $w\in W$.

Let $J\subseteq S$. It is well known (see, for example \cite[Proposition~2.20]{AB:08}) that each coset $W_Jw$ contains a unique representative of minimal length. Let $W^J$ be the transversal of these minimal length coset representatives. Then each $w\in W$ has a unique decomposition
\begin{align}
\label{eq:WJdecomposition}w=uv\quad\text{with $u\in W_J$, $v\in W^J$},
\end{align}
and moreover whenever $u\in W_J$ and $v\in W^J$ we have $\ell(uv)=\ell(u)+\ell(v)$.

The \textit{right weak order} is the partial order defined on $W$ with $v \peq w$ if $v$ is a prefix of~$w$. The partially ordered set $(W, \peq)$ is a complete meet semilattice (see \cite[Chapter 3.2]{BB:05}), and thus for any subset $X \subseteq W$ there is a greatest lower bound (or \textit{meet}), denoted $\bigwedge X$. A \textit{bound} for a subset $X\subseteq W$ is an element $w\in W$ such that $x\peq w$ for all $x\in X$. It follows from the existence of meets that every bounded subset $X \subseteq W$ admits a least upper bound (or \textit{join}), given by
$$
    \bigvee X = \bigwedge \{ w \in W \mid x \peq w \text{ for all }x \in X \}.
$$
If $X=\{x,y\}$ is bounded we write $\bigwedge \{x,y\}=x\wedge y$ and $\bigvee \{x,y\}=x\vee y$.

\subsection{Root systems}\label{sec:1:rootsystems} Let $(W,S)$ be a Coxeter system. Let $V$ be an $\mathbb{R}$-vector space with basis $\Pi=\{\alpha_s\mid s\in S\}$, and define a symmetric bilinear form on $V$ by linearly extending $\langle\alpha_s,\alpha_t\rangle=-\cos(\pi/m_{s,t})$. The Coxeter group $W$ acts on $V$ by the rule $s\lambda=\lambda-2\langle \lambda,\alpha_s\rangle \alpha_s$ for $s\in S$ and $\lambda\in V$, and the root system of $W$ is 
$$
\Phi=\{w\alpha_s\mid w\in W,\,s\in S\}.
$$ 
The elements of $\Phi$ are called \textit{roots}, and the \textit{simple roots} are the roots $\alpha_s$ with $s\in S$. 

\begin{rem} Note that $\langle\alpha_s,\alpha_t\rangle=-1$ if $m_{st}=\infty$. One may work more generally with an arbitrary \textit{based root system} $\Phi$ associated to $W$, as in \cite[\S2.3]{DH:16}, however for the purpose of this paper the concrete choice of realisation described above suffices.
\end{rem}

Each root $\alpha\in\Phi$ can be written as $\alpha=\sum_{s\in S}c_s\alpha_s$ with either $c_s\geq 0$ for all $s\in S$, or $c_s\leq 0$ for all $s\in S$. In the first case $\alpha$ is called \textit{positive} (written $\alpha>0$), and in the second case $\alpha$ is called \textit{negative} (written $\alpha<0$). Let $\Phi^+$ denote the set of all positive roots.

The set of \textit{reflections} of $W$ is $\{wsw^{-1}\mid w\in W,\,s\in S\}$. If $w\alpha_s=\beta$ we define $s_{\beta}=wsw^{-1}$. Note that this reflection acts on $V$ by $s_{\beta}\lambda=\lambda-2\langle \lambda,\beta\rangle\beta$. 

The \textit{inversion set} of $w\in W$ is 
$$
\Phi(w)=\{\alpha\in \Phi^+\mid w^{-1}\alpha<0\}.
$$

We recall some well-known facts in the following proposition.

\begin{prop}\label{prop:rootsystembasics}
Let $u,v,w\in W$ and $s\in S$. 
\begin{enumerate}
\item We have $\ell(ws)>\ell(w)$ if and only if $w\alpha_s>0$. 
\item We have $\ell(sw)>\ell(w)$ if and only if $w^{-1}\alpha_s>0$. 
\item If $w=s_1\cdots s_n$ is reduced, then $\Phi(w)=\{\beta_1,\ldots,\beta_n\}$ where 
$$
\beta_j=s_1\cdots s_{j-1}\alpha_{s_j}\quad\text{for $1\leq j\leq n$}.
$$
\item We have $\Phi(w)=\{\beta\in\Phi^+\mid\ell(s_{\beta}w)<\ell(w)\}$.
\item $\Phi(v)\subseteq \Phi(w)$ if and only if $v\peq w$. 
\item If $w=uv$ with $\ell(w)=\ell(u)+\ell(v)$ then $\Phi(w)=\Phi(u)\sqcup u\Phi(v)$. 
\end{enumerate}
\end{prop}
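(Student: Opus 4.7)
These six facts are foundational and tightly interlinked; my plan is to prove them in an order such that each rests only on its predecessors, with the dictionary $|\Phi(w)| = \ell(w)$ serving as the backbone.

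The main work is to establish (1), (2), and (3) essentially simultaneously via an induction on $\ell(w)$ using the reflection representation. The crucial input is the elementary observation, from the formula $t\lambda = \lambda - 2\langle\lambda,\alpha_t\rangle\alpha_t$, that each simple reflection $t$ sends $\alpha_t$ to $-\alpha_t$ and permutes $\Phi^+ \setminus \{\alpha_t\}$. Using this, I would fix a reduced expression $w = s_1 \cdots s_n$, define $\beta_j = s_1 \cdots s_{j-1}\alpha_{s_j}$ as in (3), and argue inductively that the $\beta_j$ are distinct positive roots exhausting $\Phi(w)$, so in particular $|\Phi(w)| = \ell(w)$. Completing this argument typically proceeds in tandem with the Exchange Condition. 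From $|\Phi(w)| = \ell(w)$, part (1) drops out by tracking $\Phi(ws)$ relative to $\Phi(w)$: if $w\alpha_s > 0$ then the only sign that flips under $s$ on the right side is at the root $w\alpha_s$, so $\Phi(ws) = \Phi(w) \sqcup \{w\alpha_s\}$ and hence $\ell(ws) = \ell(w) + 1$. Part (2) is immediate by applying (1) to $w^{-1}$, since $\ell(w^{-1}) = \ell(w)$.

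With (3) in hand, (6) is an immediate corollary: concatenate reduced expressions for $u$ and $v$ and partition the resulting enumeration of $\Phi(w)$ by source. For (4), given $\beta = \beta_j \in \Phi(w)$, the identity $s_{u\alpha} = u s_\alpha u^{-1}$ yields $s_\beta = s_1 \cdots s_{j-1} s_j s_{j-1} \cdots s_1$, and telescopic cancellation in $s_\beta w$ gives $s_\beta w = s_1 \cdots \widehat{s_j} \cdots s_n$, of length at most $n - 1$; the converse of (4) is an instance of the Strong Exchange Condition, which is standard given (3). For (5), the direction $v \peq w \Rightarrow \Phi(v) \subseteq \Phi(w)$ is immediate from (6). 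For the converse I would induct on $\ell(v)$: pick $s \in D_L(v)$, note $\alpha_s \in \Phi(v) \subseteq \Phi(w)$ (so $s \in D_L(w)$ as well), apply (6) to the factorisations $v = s \cdot (sv)$ and $w = s \cdot (sw)$ to obtain $\Phi(v) = \{\alpha_s\} \sqcup s\Phi(sv)$ and similarly for $w$, which reduces the hypothesis to $\Phi(sv) \subseteq \Phi(sw)$; the inductive conclusion $sv \peq sw$ then promotes to $v \peq w$ by left-multiplying the factorisation $sw = (sv) u$ by $s$ and tallying lengths.

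The principal obstacle is the joint induction underlying (1)--(3): the sign analysis is delicate, and a careful proof essentially builds the reflection representation of $W$ faithfully in tandem with the Exchange Condition. Once $|\Phi(w)| = \ell(w)$ is in place the remaining statements unfold with only elementary bookkeeping via (3) and (6).
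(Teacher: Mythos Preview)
The paper does not actually prove this proposition: its entire proof is the line ``See~\cite[Proposition~2.2 and Proposition~3.1]{Deo:82} and \cite[Proposition~3.1.3]{BB:05}.'' Your sketch, by contrast, supplies the standard textbook argument that those references contain, and it is essentially correct in outline. The joint induction for (1)--(3) using the fact that each simple reflection permutes $\Phi^+\setminus\{\alpha_s\}$, the concatenation argument for (6), the telescoping/Strong Exchange argument for (4), and the descent-set induction for (5) are all the expected moves. So there is no meaningful comparison to be made beyond noting that you have unpacked what the paper chose to outsource.
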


\begin{proof}
See~\cite[Proposition~2.2 and Proposition~3.1]{Deo:82} and \cite[Proposition~3.1.3]{BB:05}.
\end{proof}

 The \textit{support} of a root $\alpha\in\Phi$ is $\mathrm{supp}(\alpha)=\{s\in S\mid c_s\neq 0\}$, where $\alpha=\sum_{s\in S}c_s\alpha_s$.  For $J\subseteq S$ let 
$$
\Phi_J=\{\alpha\in\Phi\mid\mathrm{supp}(\alpha)\subseteq J\},
$$
and for $w\in W$ write $\Phi_J(w)=\Phi(w)\cap\Phi_J$. 

\begin{lem}\cite[Corollary~2.13]{HNW:16}\label{lem:rootsdecomposition}
Let $J\subseteq S$. If $w=uv$ with $u\in W_J$ and $v\in W^J$ then $\Phi(u)=\Phi_J(w)$. In particular, we have
$
W^J=\{v\in W\mid \Phi_J(v)=\emptyset\}.
$
\end{lem}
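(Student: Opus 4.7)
The plan is to leverage the disjoint decomposition $\Phi(w) = \Phi(u) \sqcup u\Phi(v)$ from Proposition~\ref{prop:rootsystembasics}(6), which applies because $\ell(uv) = \ell(u) + \ell(v)$ for $u \in W_J$ and $v \in W^J$. Intersecting with $\Phi_J$, I reduce the claim $\Phi_J(w) = \Phi(u)$ to two sub-assertions: (i) $\Phi(u) \subseteq \Phi_J$, and (ii) $u\Phi(v) \cap \Phi_J = \emptyset$. Assertion~(i) is essentially immediate from Proposition~\ref{prop:rootsystembasics}(3): taking a reduced expression $u = s_1 \cdots s_k$ with every $s_i \in J$, each root in $\Phi(u)$ is a $W_J$-translate of a simple root $\alpha_{s_j}$ with $s_j \in J$, and the $W_J$-action preserves $\mathrm{span}\{\alpha_s : s \in J\}$.

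For assertion~(ii), I would split the argument into two steps. First, $\Phi(v) \cap \Phi_J = \emptyset$: if some $\beta \in \Phi_J$ lay in $\Phi(v)$, then the reflection $s_\beta$ would lie in $W_J$ (a standard fact about parabolic subgroups and their root subsystems), and Proposition~\ref{prop:rootsystembasics}(4) would give $\ell(s_\beta v) < \ell(v)$ with $s_\beta v \in W_J v$, contradicting the minimality of $v$ in its coset. Second, I need to show that applying $u \in W_J$ to a root $\beta \in \Phi^+ \setminus \Phi_J$ yields another root outside $\Phi_J$. The key observation is that for any $s \in J$ and $t \notin J$, one has $s\alpha_t = \alpha_t - 2\langle \alpha_t, \alpha_s\rangle \alpha_s$, so a straightforward induction on $\ell(u)$ establishes that $u\alpha_t - \alpha_t \in \mathrm{span}\{\alpha_s : s \in J\}$ for every $t \notin J$. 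By linearity, expanding $\beta = \sum_s c_s \alpha_s$, the coefficient of $\alpha_t$ in $u\beta$ for each $t \notin J$ equals $c_t$. Since $\beta \notin \Phi_J$, some such $c_t$ is nonzero, forcing $u\beta \notin \Phi_J$.

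Combining~(i) and~(ii) gives $\Phi_J(w) = \Phi(u)$. The ``in particular'' statement then falls out easily: writing an arbitrary $v \in W$ as $v = uv'$ with $u \in W_J$ and $v' \in W^J$ via the unique decomposition~\eqref{eq:WJdecomposition}, the first part yields $\Phi_J(v) = \Phi(u)$, which vanishes exactly when $u = e$, i.e., when $v = v' \in W^J$. The main obstacle is the coefficient-preservation argument for $W_J$-translates of non-$J$-supported roots in~(ii); the remaining steps are direct invocations of the cited propositions and the uniqueness of the parabolic decomposition.
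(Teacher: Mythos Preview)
Your argument is correct. The paper does not include its own proof of this lemma; it simply cites \cite[Corollary~2.13]{HNW:16}. Your proof via the decomposition $\Phi(w)=\Phi(u)\sqcup u\Phi(v)$, together with the observation that the $W_J$-action fixes the $\alpha_t$-coefficients for $t\notin J$ modulo $\mathrm{span}\{\alpha_s:s\in J\}$, is a standard and complete argument for this result.
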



Each root $\beta\in\Phi^+$ partitions $W$ into two sets
$$
H_{\beta}^+=\{w\in W\mid \ell(s_{\beta}w)>\ell(w)\}\quad\text{and}\quad H_{\beta}^-=\{w\in W\mid \ell(s_{\beta}w)<\ell(w)\}.
$$
Note that $e\in H_{\beta}^+$. We call $H_{\beta}^+$ and $H_{\beta}^-$ the \textit{half-spaces determined by $\beta$}. Note that if $\beta\in\Phi^+$ then $\beta\in\Phi(w)$ if and only if $w\in H_{\beta}^-$.

\subsection{Elementary roots}\label{sec:1:elementaryroots} A root $\beta\in\Phi^+$ is said to \textit{dominate} a root $\alpha\in \Phi^+$ if $w^{-1}\beta<0$ implies that $w^{-1}\alpha<0$ (for all $w\in W$). A root $\beta\in \Phi^+$ is said to be \textit{elementary} if $\beta$ dominates no other positive root $\alpha\neq \beta$. Geometrically, $\beta$ dominates $\alpha$ if and only if $H_{\beta}^-\subseteq H_{\alpha}^-$, or equivalently, if and only if $H_{\beta}^+\supseteq H_{\alpha}^+$. We note that elementary roots are also called \textit{small}, \textit{humble} or \textit{minimal} in the literature. 

Let $\cE\subseteq \Phi^+$ denote the set of all elementary roots. By \cite[Theorem~2.8]{BH:93} the set $\cE$ is finite for all (finitely generated) Coxeter systems~$(W,S)$.

The \textit{elementary inversion set} of $w\in W$ is 
$$
\cE(w)=\{\beta\in\cE\mid w^{-1}\beta<0\}=\Phi(w)\cap \cE.
$$
Let $\mathbb{E}=\{\cE(w)\mid w\in W\}$ be the set of all elementary inversion sets. Since $\cE$ is finite, $\mathbb{E}$ is finite too.

Let $n\in\mathbb{N}$. A root $\beta\in \Phi^+$ is called \textit{$n$-elementary} if it dominates at most $n$ roots $\alpha\in\Phi^+\backslash\{\beta\}$. Thus $0$-elementary roots are the same as elementary roots. Let $\cE_n$ denote the set of all $n$-elementary roots. 

The $n$-elementary inversion set of $w\in W$ is $\cE_n(w)=\Phi(w)\cap\cE_n$. Let $\mathbb{E}_n$ denote the set of all $n$-elementary inversion sets. By \cite[Corollary~3.9]{Fu:12} the set $\cE_n$ is finite for each $n\in\mathbb{N}$, and hence:

\begin{cor}\label{cor:Enfinite} 
The set $\mathbb{E}_n$ is finite for each $n\in\mathbb{N}$. 
\end{cor}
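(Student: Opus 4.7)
The plan is to observe that this corollary is essentially immediate from the cited result of Fu~\cite{Fu:12} stating that $\cE_n$ is a finite set for every $n \in \mathbb{N}$. The argument is a pure cardinality bound: every element of $\mathbb{E}_n$ is by definition a subset of $\cE_n$, so $\mathbb{E}_n$ embeds into the power set of a finite set.

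More explicitly, first I would recall that $\cE_n(w) = \Phi(w) \cap \cE_n$ for any $w \in W$, so $\cE_n(w) \subseteq \cE_n$. This gives the inclusion $\mathbb{E}_n = \{\cE_n(w) \mid w \in W\} \subseteq \mathcal{P}(\cE_n)$. Then I would invoke \cite[Corollary~3.9]{Fu:12} to conclude $|\cE_n| < \infty$, whence $|\mathcal{P}(\cE_n)| = 2^{|\cE_n|} < \infty$, and therefore $|\mathbb{E}_n| \leq 2^{|\cE_n|} < \infty$.

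There is no real obstacle here; the entire content of the corollary lies in Fu's finiteness result for $\cE_n$, which the statement explicitly invokes. The corollary is the natural analogue of the fact that $\mathbb{E}$ is finite (stated just above in the excerpt as a consequence of $|\cE|<\infty$), and the proof pattern is identical. I would keep the exposition to one or two lines in the final paper.
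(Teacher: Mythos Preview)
Your proposal is correct and matches the paper's approach exactly: the paper simply states that $\cE_n$ is finite by \cite[Corollary~3.9]{Fu:12} and then writes ``and hence'' before the corollary, leaving the power-set bound implicit. Your write-up just makes explicit the one-line inclusion $\mathbb{E}_n \subseteq \mathcal{P}(\cE_n)$ that the paper takes for granted.
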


The following lemma is key to the automatic structure of~$W$ (see \cite{BH:93} and \cite[Lemma~3.21]{DH:16}).

\begin{lem}\label{lem:elementaryrootsbasics}
Let $w\in W$, $s\in S$, and $n\in\mathbb{N}$. If $\ell(sw)>\ell(w)$ then 
$$\cE_n(sw)=(\{\alpha_s\}\sqcup s\cE_n(w))\cap\cE_n.
$$
\end{lem}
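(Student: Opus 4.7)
The plan begins by invoking Proposition~\ref{prop:rootsystembasics}(6), applied to the length-additive factorisation $sw = s\cdot w$, to obtain
\[
\Phi(sw)=\{\alpha_s\}\sqcup s\Phi(w).
\]
Intersecting with $\cE_n$, and noting that $\alpha_s$ dominates no other positive root (so $\alpha_s\in\cE_0\subseteq\cE_n$), reduces the lemma to the single equality $s\Phi(w)\cap\cE_n=s\cE_n(w)\cap\cE_n$. The inclusion $\supseteq$ is immediate from $\cE_n(w)\subseteq\Phi(w)$. For the reverse inclusion, given $\beta=s\gamma\in\cE_n$ with $\gamma\in\Phi(w)$ (note that $\gamma\neq\alpha_s$, since $\alpha_s\notin\Phi(w)$ by Proposition~\ref{prop:rootsystembasics}(2) and the hypothesis), the task is to show $\gamma\in\cE_n$; that is, that $\gamma$ dominates at most $n$ positive roots besides itself.

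The key auxiliary fact I would establish is the half-space identity $sH_\delta^- = H_{s\delta}^-$ for every $\delta\in\Phi^+\setminus\{\alpha_s\}$. Since $H_\delta^-=\{v\in W\mid\delta\in\Phi(v)\}$, this amounts to showing $\delta\in\Phi(sv)\iff s\delta\in\Phi(v)$, which I would prove by a case split on the sign of $\ell(sv)-\ell(v)$, applying Proposition~\ref{prop:rootsystembasics}(6) in each case. Because domination of positive roots is the inclusion of the corresponding negative half-spaces, and $s$ is a bijection of $W$, it follows that for $\beta',\gamma'\in\Phi^+\setminus\{\alpha_s\}$, $\beta'$ dominates $\gamma'$ if and only if $s\beta'$ dominates $s\gamma'$. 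Consequently $\gamma'\mapsto s\gamma'$ restricts to a bijection between the positive roots dominated by $\gamma$ that are distinct from $\gamma$ and $\alpha_s$, and the positive roots dominated by $s\gamma$ that are distinct from $s\gamma$ and $\alpha_s$.

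The final step removes the ambiguity at $\alpha_s$ using the hypothesis. From $\gamma\in\Phi(w)$ together with $\ell(sw)>\ell(w)$ one has $w\in H_\gamma^-\cap H_{\alpha_s}^+$, so $H_\gamma^-\not\subseteq H_{\alpha_s}^-$, which means $\gamma$ does \emph{not} dominate $\alpha_s$. Combined with the bijection above, the number of positive roots dominated by $\gamma$ other than $\gamma$ itself equals the number of positive roots dominated by $s\gamma$ other than $s\gamma$ and $\alpha_s$, which is bounded above by the total domination count of $s\gamma$, hence by $n$. Therefore $\gamma\in\cE_n$ as required. The main subtlety to watch is precisely this asymmetric role of $\alpha_s$: the map $\gamma'\mapsto s\gamma'$ need not preserve domination of $\alpha_s$ itself, so in general the domination counts for $\gamma$ and $s\gamma$ may differ by one, and the hypothesis $\ell(sw)>\ell(w)$ is exactly what forces the inequality to go in the direction needed for the argument.
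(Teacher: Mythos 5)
Your proof is correct: the reduction via $\Phi(sw)=\{\alpha_s\}\sqcup s\Phi(w)$, the transport of dominance under $\gamma'\mapsto s\gamma'$ on $\Phi^+\setminus\{\alpha_s\}$ (via $sH_{\delta}^-=H_{s\delta}^-$), and the observation that $w\in H_{\gamma}^-\cap H_{\alpha_s}^+$ prevents $\gamma$ from dominating $\alpha_s$ together give exactly the needed bound $|\{\text{roots dominated by }\gamma\}\setminus\{\gamma\}|\leq n$. The paper states this lemma without proof, citing Brink--Howlett and Dyer--Hohlweg, and your argument is essentially the standard one from those references, including the correct handling of the one genuine subtlety (the asymmetric role of $\alpha_s$ in the domination count).
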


The set of \textit{spherical roots} is 
$$
\Phisph=\{\alpha\in\Phi\mid\mathrm{supp}(\alpha)\subseteq J\text{ for some spherical subset $J\subseteq S$}\}.
$$
Let 
$$
\mathbb{S}=\{\Phisph(w)\mid w\in W\},\quad\text{where $\Phisph(w)=\Phi(w)\cap\Phisph$}.
$$
Clearly $\mathbb{S}$ is finite. 

We have $\Phisph^+\subseteq \cE$, however this containment can be strict. The classification of Coxeter systems for which $\cE=\Phisph^+$ is as follows (see~\cite[Theorem~1]{PY:19}). Let $\mathcal{X}$ denote the set of connected Coxeter graphs which are either of affine or compact hyperbolic type and contain neither circuits nor infinite bonds. Then $\cE=\Phisph^+$ if and only if the Coxeter graph of $(W,S)$ does not have a subgraph contained in $\mathcal{X}$. In particular, if follows that $\cE=\Phisph^+$ whenever $(W,S)$ is spherical, of type $\tilde{A}_n$, right-angled, or has complete Coxeter graph (that is, $m_{s,t}\geq 3$ for all $s,t\in S$ with $s\neq t$).

\subsection{The Coxeter complex}\label{sec:1:Coxetercomplex}

The \textit{Coxeter complex} of a Coxeter system is a certain abstract simplicial complex $\Sigma(W,S)$ on which~$W$ naturally acts. While no result of this paper formally depends on the Coxeter complex, it is nonetheless a useful concept for providing a geometric intuition for Coxeter groups.

We refer to \cite[Chapter~3]{AB:08} for the formal construction of $\Sigma(W,S)$. Here we provide a less formal sketch. For each $w\in W$ let $C_w$ be a combinatorial simplex with $|S|$ vertices, and assign each vertex $x$ of $C_w$ a \textit{type} $\tau(x)\in S$ such that $C_w$ contains precisely one vertex of each type~$s\in S$. For each $w\in W$ and $s\in S$ we glue $C_w$ and $C_{ws}$ together along their cotype $\{s\}$ faces, identifying the vertex of type $s'$ in $C_{w}$ with the vertex of type $s'$ in $C_{ws}$ for all $s'\in S\backslash\{s\}$. The resulting simplicial complex $\Sigma(W,S)$ is called the \textit{Coxeter complex} of $(W,S)$. The Coxeter complex has maximal simplices $C_w$, $w\in W$, and these are called the \textit{chambers} (or sometimes \textit{alcoves}) of the complex. The Coxeter group $W$ acts on $\Sigma(W,S)$ by type preserving simplicial complex automorphisms. On the level of chambers this action is given by $wC_v=C_{wv}$ for all $w,v\in W$, and the action on the set of chambers is simply transitive. Let $C_0=C_e$ be the \textit{fundamental chamber}, and so $C_w=wC_0$. By construction, the chambers $wC_0$ and $wsC_0$ are \textit{$s$-adjacent} (meaning they share a cotype~$\{s\}$ face). 

Let $\beta\in\Phi^+$. The set 
$$
H_{\beta}=\{\sigma\in\Sigma(W,S)\mid s_{\beta}(\sigma)=\sigma\}
$$ 
of all simplices fixed by $s_{\beta}$ is called a \textit{wall} of the Coxeter complex. Since $s_{\beta}$ fixes no chambers, there are no chambers contained in the wall $H_{\beta}$. This illustrates the utility of the Coxeter complex, as one can now speak of the wall $H_{\beta}$ separating the half-spaces $H_{\beta}^+$ and $H_{\beta}^-$.

We will sometimes identify $W$ with the set of chambers of $\Sigma(W,S)$ by identifying $w\leftrightarrow wC_0$. Thus one may simultaneously think of $W$ as a group, and more geometrically as the associated Coxeter complex.

\subsection{Low elements}\label{sec:1:lowelements}

The \textit{base} of an inversion set is defined in terms of extreme rays of the cone of $\Phi(w)$ (see \cite{Dye:19} and \cite{DH:16}), however for our purposes the following equivalent characterisation is sufficient (see \cite[Proposition~4.6]{DH:16}). 

\begin{defn}\label{defn:phi1}
Let $w \in W$. The \textit{base} of the inversion set $\Phi(w)$ is
$$
    \Phi^1(w) = \{ \beta \in \Phi^+ \mid \ell(s_{\beta}w) = \ell(w) - 1 \}.
$$
\end{defn}

By Proposition~\ref{prop:rootsystembasics}(4) we have $\Phi^1(w)\subseteq\Phi(w)$. For $A \subseteq \Phi^+$ let $\mathrm{cone}(A)$ be the set of all non-negative linear combinations of roots in $A$ and write $\mathrm{cone}_{\Phi}(A) = \mathrm{cone}(A) \cap \Phi^+$. The set $\Phi^1(w)$ determines the inversion set $\Phi(w)$ in the following sense. 

\begin{thm}\cite[Lemma 1.7]{Dye:19}\label{thm:eqcond} Let $w\in W$. Then
\begin{align*}
\Phi(w) = \mathrm{cone}_{\Phi}(\Phi^1(w)),
\end{align*}
and moreover if $A \subseteq \Phi^+$ is such that $\Phi(w) = \mathrm{cone}_{\Phi}(A)$ then $\Phi^1(w) \subseteq A$.
\end{thm}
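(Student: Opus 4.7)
The statement has two parts: the cone identity $\Phi(w)=\mathrm{cone}_\Phi(\Phi^1(w))$, and the minimality claim that any subset $A\subseteq\Phi^+$ with $\mathrm{cone}_\Phi(A)=\Phi(w)$ must contain $\Phi^1(w)$. My plan is to handle the identity by a double inclusion, using a biconvexity principle for the easy direction and an induction on $\ell(w)$ for the other, and then to deduce minimality from an extreme ray characterisation of $\Phi^1(w)$.

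For the easy inclusion $\mathrm{cone}_\Phi(\Phi^1(w))\subseteq\Phi(w)$, I would first establish that inversion sets are \emph{biconvex}: if $\alpha,\gamma\in\Phi(w)$ and $c,d\geq 0$ are such that $c\alpha+d\gamma\in\Phi^+$, then $c\alpha+d\gamma\in\Phi(w)$. The reason is that $w^{-1}\alpha,w^{-1}\gamma$ lie in the negative cone in $V$, hence so does $w^{-1}(c\alpha+d\gamma)$; if the latter equals a root, that root is negative, so $c\alpha+d\gamma\in\Phi(w)$. Iterating biconvexity over the finite generating set $\Phi^1(w)\subseteq\Phi(w)$ yields the inclusion. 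For the reverse inclusion $\Phi(w)\subseteq\mathrm{cone}_\Phi(\Phi^1(w))$, I would induct on $\ell(w)$, reducing to the following decomposition claim: for every $\beta\in\Phi(w)\setminus\Phi^1(w)$ there exist $\gamma_1,\gamma_2\in\Phi(w)\setminus\{\beta\}$ and $c_1,c_2>0$ with $\beta=c_1\gamma_1+c_2\gamma_2$. Given such a decomposition, one recurses on each $\gamma_i$, using a well-founded order (for instance, the value $\ell(w)-\ell(s_\gamma w)$, which is strictly smaller for $\gamma_i$ than for $\beta$ since $\ell(s_\beta w)\leq \ell(w)-2$).

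For the minimality claim, suppose $\mathrm{cone}_\Phi(A)=\Phi(w)$ and fix $\beta\in\Phi^1(w)$. Then $\beta=\sum_i c_i\alpha_i$ with $c_i>0$ and $\alpha_i\in A\subseteq\Phi(w)$. I would argue that $\beta$ spans an extreme ray of $\mathrm{cone}(\Phi(w))$, which will force every $\alpha_i$ to be a positive scalar multiple of $\beta$, and since any two positive roots that are positive scalar multiples are equal, one concludes $\alpha_i=\beta$ for all $i$, so $\beta\in A$. The extreme ray property is essentially the contrapositive of the decomposition claim above: if $\beta$ were not extremal it would admit a non-trivial decomposition $\beta=c_1\gamma_1+c_2\gamma_2$ inside $\mathrm{cone}(\Phi(w))$, and then (using biconvexity and a small geometric argument at the root lattice level) one could find $\gamma\in\Phi(w)\setminus\{\beta\}$ with $s_\beta\gamma\in\Phi(s_\beta w)\cup\{-\beta\}$ violating the computation $|\Phi(s_\beta w)|=\ell(w)-1=|\Phi(w)|-1$ that characterises $\beta\in\Phi^1(w)$.

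The main obstacle is the decomposition claim for $\beta\in\Phi(w)\setminus\Phi^1(w)$. Equivalently, I must show that a non-extreme direction of the convex cone on $\Phi(w)$ is detected by the length function via $\ell(s_\beta w)\leq\ell(w)-2$. The cleanest route is probably to combine Proposition~\ref{prop:rootsystembasics}(4) (that $\ell(s_\beta w)<\ell(w)$ iff $\beta\in\Phi(w)$) with the observation that each unit drop in length beyond one corresponds to an additional root of $\Phi(w)$ that $s_\beta$ sends into $\Phi(w)\setminus\{\beta\}$, producing the required geometric witnesses $\gamma_1,\gamma_2$. This bridge between the length-theoretic definition of $\Phi^1(w)$ and the convex-geometric notion of extreme ray is the heart of the argument, and is where I expect to invest the most care.
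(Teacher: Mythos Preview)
The paper does not prove this theorem: it is quoted verbatim from \cite[Lemma~1.7]{Dye:19}, and the equivalence between the length-theoretic description $\Phi^1(w)=\{\beta:\ell(s_\beta w)=\ell(w)-1\}$ and the original extreme-ray definition is likewise imported from \cite[Proposition~4.6]{DH:16}. So there is no ``paper's own proof'' to compare against; you are effectively attempting to reprove those two cited results.

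Your outline has the right shape, but there is a genuine gap in the termination argument. You propose to recurse on a decomposition $\beta=c_1\gamma_1+c_2\gamma_2$ with $\gamma_i\in\Phi(w)\setminus\{\beta\}$, using the measure $\ell(w)-\ell(s_\gamma w)$, and you assert this is strictly smaller for each $\gamma_i$ than for $\beta$. That assertion is unjustified: nothing prevents $\gamma_1$ (say) from also satisfying $\ell(s_{\gamma_1}w)\leq\ell(w)-2$, possibly with an even larger drop. The recursion could cycle. A cleaner route is to bypass recursion entirely: $\Phi(w)$ is finite, so $\mathrm{cone}(\Phi(w))$ is polyhedral, and Minkowski's theorem for polyhedral cones gives that every element is a nonnegative combination of extreme-ray generators. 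The real work is then concentrated in a single equivalence: $\beta$ lies on an extreme ray of $\mathrm{cone}(\Phi(w))$ if and only if $\ell(s_\beta w)=\ell(w)-1$. Your final paragraph gestures at this but the argument there is too vague to be a proof (for instance, the phrase ``a small geometric argument at the root lattice level'' hides exactly the step that needs care, and $s_\beta$ does not permute $\Phi^+\setminus\{\beta\}$ in general, so tracking $\Phi(s_\beta w)$ versus $s_\beta\Phi(w)$ requires more than a cardinality count). Once that equivalence is in hand, both the cone identity and the minimality claim follow immediately from standard convex geometry, and your biconvexity observation handles the easy containment.
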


\newpage

In \cite{DH:16} Dyer and Hohlweg introduced the notion of an $n$-low element of a Coxeter group~$W$. 

\begin{defn}\label{defn:nlow}
Let $n\in\mathbb{N}$. An element $w\in W$ is \textit{$n$-low} if $\Phi(w)=\mathrm{cone}_{\Phi}(A)$ for some $A\subseteq\cE_n$. A $0$-low element is called \textit{low}. Let $L_n$ denote the set of all $n$-low elements, and let $L=L_0$ denote the set of low elements. Note that by Theorem~\ref{thm:eqcond} we have that $w$ is $n$-low if an only if $\Phi^1(w)\subseteq \cE_n$.
\end{defn}

Let $\Theta_n:L_n\to\mathbb{E}_n$ be the map $\Theta_n(x)=\cE_n(x)$ (introduced by Dyer and Hohlweg in \cite{DH:16}). This map is injective (see \cite[Proposition~3.26]{DH:16}), and hence $|L_n|\leq|\mathbb{E}_n|$ for all $n\in\mathbb{N}$. In \cite[Conjecture~2]{DH:16} Dyer and Hohlweg conjecture that $\Theta_n$ is a bijection for all $n\in\mathbb{N}$. 
%
%
%

The following result is useful when working with joins. 

\begin{prop}\cite[Proposition 2.8]{DH:16} \label{prop:inversionsetjoin}
If $X \subseteq W$ is bounded, then
$$
    \Phi(\bigvee X) = \mathrm{cone}_{\Phi}(\bigcup_{x \in X} \Phi(x)).
$$
\end{prop}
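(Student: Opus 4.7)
The plan is to establish the two inclusions separately, with the first essentially immediate and the second requiring more structural input. For the inclusion $\mathrm{cone}_\Phi(\bigcup_{x \in X} \Phi(x)) \subseteq \Phi(\bigvee X)$, I first note that $x \peq \bigvee X$ for every $x \in X$, so Proposition~\ref{prop:rootsystembasics}(5) gives $\Phi(x) \subseteq \Phi(\bigvee X)$ and hence $\bigcup_{x \in X} \Phi(x) \subseteq \Phi(\bigvee X)$. The containment then reduces to the key closure property that every inversion set $\Phi(w)$ satisfies $\mathrm{cone}_\Phi(\Phi(w)) = \Phi(w)$: if $\alpha_1,\alpha_2 \in \Phi(w)$ and $\gamma = c_1\alpha_1 + c_2\alpha_2 \in \Phi^+$ with $c_1, c_2 \geq 0$, then $w^{-1}\gamma$ is a nonnegative combination of negative roots and hence negative, forcing $\gamma \in \Phi(w)$. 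Applying this with $w = \bigvee X$ delivers the easy direction.

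For the reverse inclusion $\Phi(\bigvee X) \subseteq \mathrm{cone}_\Phi(\bigcup_{x \in X} \Phi(x))$, write $A = \bigcup_{x \in X} \Phi(x)$ and $C = \mathrm{cone}_\Phi(A)$. The strategy is to produce an element $w' \in W$ with $\Phi(w') = C$; once this is in hand, each $x \in X$ satisfies $\Phi(x) \subseteq C = \Phi(w')$, hence $x \peq w'$, so $w'$ is an upper bound for $X$, giving $\bigvee X \peq w'$ and therefore $\Phi(\bigvee X) \subseteq \Phi(w') = C$.

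To exhibit $w'$, I would invoke the characterisation of inversion sets as the finite biclosed (biconvex) subsets of $\Phi^+$: a finite set $B \subseteq \Phi^+$ equals $\Phi(v)$ for some $v \in W$ precisely when both $B$ and $\Phi^+ \setminus B$ are stable under taking positive combinations of pairs of elements that land in $\Phi^+$. The set $C$ is closed by construction, and is finite because any upper bound $v$ of $X$ (which exists since $X$ is bounded) satisfies $A \subseteq \Phi(v)$, so by the argument of the first paragraph $C \subseteq \Phi(v)$, which is finite. The main obstacle is verifying the coclosed condition for $C$: if $\beta_1, \beta_2 \in \Phi^+ \setminus C$ and $c_1\beta_1 + c_2\beta_2 \in \Phi^+$, one must show this combination also lies outside $C$. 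I would attempt this by contradiction, using the coclosedness of each individual $\Phi(x)$ together with a careful analysis of how a putative expression of $c_1\beta_1 + c_2\beta_2$ as a positive combination of roots in $A$ would propagate through the cone structure to contradict $\beta_i \notin C$. This coclosed verification is the technical heart of the proof, and is precisely where the theory of infinite root systems — specifically Dyer's biconvexity results for inversion sets — is indispensable; a tempting induction on $\ell(\bigvee X)$ via the decomposition $\Phi(\bigvee X) = \{\alpha_s\} \sqcup s\Phi(s\cdot \bigvee X)$ for $s \in D_L(\bigvee X)$ runs into the complication that $s \cdot \bigvee X$ need not itself arise as the join of a natural subset of $W$, which makes the direct biclosed-set approach more robust.
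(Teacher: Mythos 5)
First, a point of comparison: the paper does not prove this proposition at all --- it is quoted from the literature as \cite[Proposition 2.8]{DH:16} --- so there is no in-paper argument to measure yours against, and your attempt has to stand on its own. Your first inclusion does: $\Phi(x)\subseteq\Phi(\bigvee X)$ for each $x\in X$ by Proposition~\ref{prop:rootsystembasics}(5), and your computation showing $w^{-1}\gamma<0$ for $\gamma\in\mathrm{cone}_{\Phi}(\Phi(w))$ works verbatim for arbitrary nonnegative combinations (not just pairs), so $\mathrm{cone}_{\Phi}(\bigcup_{x\in X}\Phi(x))\subseteq\mathrm{cone}_{\Phi}(\Phi(\bigvee X))=\Phi(\bigvee X)$. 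The deduction in the second paragraph is also valid as a conditional statement: if $C=\mathrm{cone}_{\Phi}(A)$ were known to be $\Phi(w')$ for some $w'\in W$, then $w'$ would be an upper bound for $X$ and $\Phi(\bigvee X)\subseteq\Phi(w')=C$ would follow. Your verifications that $C$ is finite and closed are correct.

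The genuine gap is the coclosedness of $C$, which you explicitly leave as something you ``would attempt'' and then outsource to ``Dyer's biconvexity results for inversion sets.'' This is not a fillable detail: granted the characterisation of inversion sets as the finite biclosed subsets of $\Phi^+$, the assertion that $\mathrm{cone}_{\Phi}(\bigcup_{x\in X}\Phi(x))$ is coclosed whenever $X$ is bounded is \emph{equivalent} to the proposition being proved, so invoking Dyer's results on joins and biclosed closures at that point amounts to citing the theorem to prove the theorem. Nor does the sketch you give look repairable by elementary means: a positive root $c_1\beta_1+c_2\beta_2$ may lie in $\mathrm{cone}(A)$ via a combination of roots drawn from several different sets $\Phi(x)$, and the coclosedness of each individual $\Phi(x)$ gives no purchase on such mixed expressions; moreover, boundedness of $X$ enters your argument only through the finiteness of $C$, whereas it must play a much more structural role (this is precisely where the weak-order semilattice machinery of \cite{Dye:19} is used in the original proof). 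So the skeleton and the easy direction are right, but the technical heart of the hard direction is missing, and the route you indicate for it is essentially circular.
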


Each reduced expression $w=s_1\cdots s_n$ gives rise to an ordering of the inversion set of $w$, as in Proposition~\ref{prop:rootsystembasics}(3). In particular, the ``final root''  of this ordered sequence is $\beta=s_1\cdots s_{n-1}\alpha_{s_n}=-w\alpha_{s_n}>0$ (see Proposition~\ref{prop:rootsystembasics}(3)). The set of such roots $\beta$, as the reduced expression for $w$ varies, plays an important role later in this work. 

\begin{defn}\label{defn:finalroots}
Let $w\in W$. The set of \textit{final roots} of $w$ is 
$$
\Phi^0(w)=\{-w\alpha_s\mid s\in D_R(w)\}.
$$
\end{defn}

Note that $\beta\in\Phi^0(w)$ if and only if $s_{\beta}w=ws$ for some $s\in D_R(w)$, if and only if $\beta=-w\alpha_s>0$ for some $s\in S$. Also note that $\Phi^0(w)\subseteq \Phi^1(w)$.

%
%
%
%
%
%
%
%

\subsection{Garside shadows}

The notion of a \textit{Garside shadow} in a Coxeter system $(W,S)$ was introduced and investigated by Dehornoy, Dyer and Hohlweg \cite{DDH:15} and Dyer and Hohlweg in~\cite{DH:16}. 

\begin{defn} \label{def:garside_shadow}
A \textit{Garside shadow} is a subset $B \subseteq W$ such that $S \subseteq B$ and
\begin{enumerate}
    \item for $X \subseteq B$ if $w = \bigvee X$ exists, then $w \in B$;
    \item if $w \in B$ and $v$ is a suffix of $w$ then $v \in B$.
\end{enumerate}
We refer to (1) as \textit{closure under join}, and (2) as \textit{closure under taking suffixes}. 
\end{defn}

It is clear that the intersection of two Garside shadows is again a Garside shadow (see \cite[Proposition~2.2]{DH:16}) and hence there is a unique smallest Garside shadow, denoted $\Gmin$. Using the finiteness of the set of elementary roots, Dyer and Hohlweg show in \cite[Theorem~1.1]{DH:16} that $\Gmin$ is finite for all finitely generated Coxeter systems~$(W,S)$.

If $B$ is a Garside shadow then each element $w\in W$ can be ``projected'' onto $B$ as follows.

\begin{defn}\cite[Definition~2.4]{HNW:16} Let $B \subseteq W$ be a Garside shadow. The \textit{projection} of $W$ onto $B$ is the function $\pi_B:W\to B$ given by
$$
    \pi_B(w) = \bigvee \{ b \in B \mid b \peq w \}.
$$
Note that $\pi_B(w)\in B$ because $B$ is closed under join.
\end{defn}

The following important theorem was first conjectured in \cite[Conjecture~1]{DH:16}, where it was proved in the case $n=0$ (see \cite[Theorem~1.1]{DH:16}), and for all $n\in\mathbb{N}$ in the case that $W$ is affine (see \cite[Theorem~4.17]{DH:16}). Recently Dyer~\cite{Dye:21} has proved the theorem for all $n\in\mathbb{N}$ for arbitrary~$W$. 

\begin{thm}\label{thm:nlowgarside}\cite[Corollary~1.7]{Dye:21}
Let $n\in\mathbb{N}$. The set $L_n$ of $n$-low elements is a finite Garside shadow. 
\end{thm}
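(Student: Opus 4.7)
The plan is to verify the three defining conditions of a Garside shadow along with finiteness. Three of the four requirements are within relatively easy reach. Finiteness follows from Corollary~\ref{cor:Enfinite} (which gives $|\mathbb{E}_n|<\infty$) combined with the injectivity of the map $\Theta_n : L_n \to \mathbb{E}_n$ recorded after Definition~\ref{defn:nlow}, so $|L_n|\leq|\mathbb{E}_n|<\infty$. The inclusion $S\subseteq L_n$ is immediate, since $\Phi^1(s)=\{\alpha_s\}\subseteq \cE_0\subseteq\cE_n$ for each $s\in S$. Closure under joins is nearly formal: for a bounded $X\subseteq L_n$ with $w=\bigvee X$, Theorem~\ref{thm:eqcond} gives $\Phi(x)=\mathrm{cone}_{\Phi}(\Phi^1(x))$ for each $x\in X$, so Proposition~\ref{prop:inversionsetjoin} yields
$$\Phi(w)=\mathrm{cone}_{\Phi}\Bigl(\bigcup_{x\in X}\Phi(x)\Bigr)=\mathrm{cone}_{\Phi}\Bigl(\bigcup_{x\in X}\Phi^1(x)\Bigr),$$
and the set $A=\bigcup_{x\in X}\Phi^1(x)$ is contained in $\cE_n$, so $w\in L_n$ by Definition~\ref{defn:nlow}.

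The genuine obstacle is closure under suffixes. An induction on $\ell(u)$ in the decomposition $w=uv$ with $\ell(w)=\ell(u)+\ell(v)$ reduces the problem to a single step: given $s\in S$ with $\ell(sv)=\ell(v)+1$ and $sv\in L_n$, conclude $v\in L_n$. Here the inversion sets transform cleanly via $\Phi(sv)=\{\alpha_s\}\sqcup s\Phi(v)$, and the elementary inversion sets transform cleanly via Lemma~\ref{lem:elementaryrootsbasics}, so one might hope for an equally clean transformation of the base. However, the base does not behave in this way: a root $\beta\in\Phi^1(v)$ produces $s\beta\in\Phi(sv)$, but $s\beta$ need not lie in $\Phi^1(sv)$, and even when it does, recovering $\beta\in\cE_n$ from $s\beta\in\cE_n$ is not formal because the dominance ordering does not transport in an obvious way under reflection by a simple root.

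The approach to this obstacle, which is where the full force of Dyer's argument \cite{Dye:21} is required, is to analyse the extreme rays of $\mathrm{cone}(\Phi(sv))$ and track how they deform under the reflection $s$ into extreme rays of $\mathrm{cone}(\Phi(v))$. The key technical statement one would aim to establish is a monotonicity result: if every extreme ray of $\mathrm{cone}(\Phi(sv))$ lies in $\cE_n$, then every extreme ray of $\mathrm{cone}(\Phi(v))$ also lies in $\cE_n$, up to a perturbation involving only $\alpha_s$ (which is itself elementary, hence $n$-elementary). Combined with Lemma~\ref{lem:elementaryrootsbasics} and the cone description of inversion sets from Theorem~\ref{thm:eqcond}, this monotonicity forces $\Phi^1(v)\subseteq\cE_n$, and hence $v\in L_n$. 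The technical heart is thus a root-system lemma controlling how dominance behaves under simple reflection; once this is in hand, suffix closure and hence the Garside shadow property of $L_n$ follow, recovering the affine case of \cite[Theorem~4.17]{DH:16} and the $n=0$ case of \cite[Theorem~1.1]{DH:16} as special instances.
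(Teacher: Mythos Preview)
The paper does not give a proof of this theorem; it is stated with citation to \cite[Corollary~1.7]{Dye:21}, and the preceding paragraph records that the general case was only recently settled by Dyer after being open as \cite[Conjecture~1]{DH:16}. So there is no ``paper's own proof'' to compare against.

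Your treatment of the three easy conditions is correct and matches what was already known from \cite{DH:16}: finiteness via the injectivity of $\Theta_n$ and Corollary~\ref{cor:Enfinite}, the containment $S\subseteq L_n$, and join closure via Proposition~\ref{prop:inversionsetjoin} combined with Theorem~\ref{thm:eqcond}. You also correctly identify suffix closure as the genuinely hard part, and your explanation of why the naive approach fails (the base $\Phi^1$ does not transform cleanly under simple reflection, and dominance does not transport simply) is accurate.

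However, your final paragraph is not a proof but a description of a strategy: you posit a ``monotonicity result'' about extreme rays under simple reflection and assert that ``once this is in hand'' the theorem follows, without establishing that lemma. This is exactly the gap that made the result a conjecture for several years, and it is what \cite{Dye:21} fills. Moreover, the heuristic you offer for the resolution does not appear to match Dyer's actual method: as the title of \cite{Dye:21} indicates, his argument proceeds through the structure theory of maximal rank reflection subgroups rather than a direct cone-geometric analysis of a single simple reflection. So your proposal correctly isolates the difficulty and disposes of everything around it, but the crucial step remains unproved and the suggested line of attack is not the one that is known to succeed.
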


\subsection{Cone types}\label{sec:1:conetypes}

The \textit{cone type} of $w\in W$ is
$$
    T(w) = \{ v \in W \mid \ell(wv) = \ell(w) + \ell(v) \}.
$$
Thus $T(w)$ consists of all elements $v$ that ``extend'' $w$. Let $\mathbb{T}=\{T(w)\mid w\in W\}$ be the set of all cone types of $W$. Cone types play a central role in this work. The following proposition collects some basic results. 

\newpage

\begin{prop} \label{prop:conetypebasics}
Let $x,y \in W$. The following are equivalent:
\begin{enumerate}
     \item $\ell(x^{-1}y) = \ell(x) + \ell(y)$
    \item $y \in T(x^{-1})$
    \item $x\in T(y^{-1})$
    \item $\Phi(x) \cap \Phi(y) = \emptyset$
    \item $\Phi(x^{-1}y) = \Phi(x^{-1}) \sqcup x^{-1}\Phi(y)$.
\end{enumerate}
\end{prop}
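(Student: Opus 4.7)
The equivalences split naturally into three quick ones and the real content. For (1) $\Leftrightarrow$ (2) $\Leftrightarrow$ (3): statement (2) unpacks as $\ell(x^{-1}y)=\ell(x^{-1})+\ell(y)$ and statement (3) as $\ell(y^{-1}x)=\ell(y^{-1})+\ell(x)$; using $\ell(x)=\ell(x^{-1})$ and $\ell(x^{-1}y)=\ell(y^{-1}x)$ (since inverses preserve length), all three collapse to the single identity in (1). This leaves the genuine task of tying the length identity to the root-system conditions (4) and (5).

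The connection to (5) is immediate in both directions via Proposition~\ref{prop:rootsystembasics}. For (1) $\Rightarrow$ (5), the plan is to apply Proposition~\ref{prop:rootsystembasics}(6) to the factorisation $x^{-1}y = x^{-1}\cdot y$, whose length-additivity is exactly (1). For (5) $\Rightarrow$ (1), comparing cardinalities in the disjoint union and invoking $\ell(w)=|\Phi(w)|$ from Proposition~\ref{prop:rootsystembasics}(3) (together with the fact that $\alpha\mapsto x^{-1}\alpha$ is a bijection on roots) recovers the length identity. Finally (5) $\Rightarrow$ (4) is immediate: the disjoint union $\Phi(x^{-1})\sqcup x^{-1}\Phi(y)$ lies in $\Phi^+$, so each $\alpha\in\Phi(y)$ satisfies $x^{-1}\alpha>0$, forcing $\alpha\notin\Phi(x)$.

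The only non-trivial step, and thus the main obstacle, is (4) $\Rightarrow$ (5). My plan is to prove directly that $\Phi(x^{-1}y)=\Phi(x^{-1})\sqcup x^{-1}\Phi(y)$ under the assumption $\Phi(x)\cap\Phi(y)=\emptyset$. Disjointness is automatic: if $\gamma\in\Phi(x^{-1})\cap x^{-1}\Phi(y)$ then $\gamma=x^{-1}\beta$ for some $\beta\in\Phi(y)\subseteq\Phi^+$, forcing $x\gamma=\beta>0$, which contradicts $\gamma\in\Phi(x^{-1})$. For the forward inclusion $\Phi(x^{-1})\cup x^{-1}\Phi(y)\subseteq\Phi(x^{-1}y)$, hypothesis (4) is used twice: given $\gamma\in\Phi(x^{-1})$, set $\beta=-x\gamma\in\Phi^+$; a quick check shows $\beta\in\Phi(x)$, so by (4) $\beta\notin\Phi(y)$, giving $y^{-1}\beta>0$ and hence $(x^{-1}y)^{-1}\gamma=-y^{-1}\beta<0$; given $\alpha\in\Phi(y)$, hypothesis (4) yields $x^{-1}\alpha>0$, and then $(x^{-1}y)^{-1}(x^{-1}\alpha)=y^{-1}\alpha<0$. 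The reverse inclusion $\Phi(x^{-1}y)\subseteq\Phi(x^{-1})\cup x^{-1}\Phi(y)$ needs no hypothesis: given $\gamma\in\Phi(x^{-1}y)$, split on the sign of $x\gamma$; if $x\gamma<0$ then $\gamma\in\Phi(x^{-1})$ directly, and if $x\gamma>0$ then $\alpha=x\gamma$ lies in $\Phi^+$ with $y^{-1}\alpha=y^{-1}x\gamma<0$, so $\alpha\in\Phi(y)$ and $\gamma=x^{-1}\alpha\in x^{-1}\Phi(y)$.

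The only subtlety will be keeping careful track of signs and the standard bijection $\alpha\mapsto -w^{-1}\alpha$ between $\Phi(w)$ and $\Phi(w^{-1})$; no machinery beyond Proposition~\ref{prop:rootsystembasics} is required, and the main obstacle, such as it is, is the case analysis in the (4) $\Rightarrow$ (5) step just outlined.
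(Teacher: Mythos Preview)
Your proof is correct. The structure matches the paper for the easy equivalences: (1) $\Leftrightarrow$ (2) $\Leftrightarrow$ (3) from the definitions, (1) $\Rightarrow$ (5) from Proposition~\ref{prop:rootsystembasics}(6), and (5) $\Rightarrow$ (1) by counting. The difference lies in how (4) is linked in. The paper simply cites \cite[Lemma~1.2]{BH:93} for (1) $\Leftrightarrow$ (4) and does not spell out any argument, whereas you close the cycle by proving (5) $\Rightarrow$ (4) and then (4) $\Rightarrow$ (5) directly via a sign-tracking case analysis on $x\gamma$. Your argument is thus fully self-contained using only Proposition~\ref{prop:rootsystembasics}, at the cost of a paragraph of root bookkeeping; the paper's version is shorter but relies on the external reference. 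The content of your (4) $\Rightarrow$ (5) step is essentially a direct proof of the Brink--Howlett lemma being cited.
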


\begin{proof}
The equivalence of (1), (2) and (3) is immediate from the definitions. For the equivalence of (1) with (4) see, for example \cite[Lemma~1.2]{BH:93}. Finally, if $\ell(x^{-1}y)=\ell(x)+\ell(y)$ then $\Phi(x^{-1}y)=\Phi(x^{-1})\sqcup x^{-1}\Phi(y)$ by Proposition~\ref{prop:rootsystembasics}(6), and conversely if $\Phi(x^{-1}y)=\Phi(x^{-1})\sqcup x^{-1}\Phi(y)$ then $\ell(x^{-1}y)=\ell(x)+\ell(y)$ because $\ell(w)=|\Phi(w)|$ for all $w\in W$, completing the proof.
%
%
\end{proof}

We also note the following obvious fact.

\begin{lem}\label{lem:containoneway}
If $x\peq y$ then $T(y^{-1})\subseteq T(x^{-1})$. 
\end{lem}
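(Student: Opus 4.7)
The plan is to reduce the containment of cone types to a containment of inversion sets, using the characterisation of cone type membership given in Proposition~\ref{prop:conetypebasics}.

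Specifically, by Proposition~\ref{prop:rootsystembasics}(5) the hypothesis $x \peq y$ is equivalent to $\Phi(x) \subseteq \Phi(y)$. Now let $v \in T(y^{-1})$. Applying the equivalence (2)~$\Leftrightarrow$~(4) of Proposition~\ref{prop:conetypebasics} to the pair $(y,v)$, this is equivalent to $\Phi(y) \cap \Phi(v) = \emptyset$. Combining these two facts gives $\Phi(x) \cap \Phi(v) \subseteq \Phi(y) \cap \Phi(v) = \emptyset$, and applying the same equivalence in reverse to the pair $(x,v)$ yields $v \in T(x^{-1})$, as required.

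There is no serious obstacle here: once one observes that both the weak order and the cone type relation are encoded cleanly by inclusions/disjointness of inversion sets, the result reduces to the trivial set-theoretic fact that a subset of a set disjoint from $\Phi(v)$ is itself disjoint from $\Phi(v)$. An alternative, more direct, route would be to write $y = xu$ with $\ell(y) = \ell(x) + \ell(u)$, then expand $\ell(y^{-1}v) = \ell(u^{-1}x^{-1}v)$ using the subadditivity of $\ell$ twice to pin down $\ell(x^{-1}v) = \ell(x) + \ell(v)$; however, the inversion set argument is shorter and fits naturally with the surrounding framework.
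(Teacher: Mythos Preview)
Your proof is correct and is essentially identical to the paper's own proof: both translate $x \peq y$ into $\Phi(x) \subseteq \Phi(y)$, then use the equivalence of cone type membership with disjointness of inversion sets from Proposition~\ref{prop:conetypebasics} to conclude. The only difference is cosmetic (your explicit citation of Proposition~\ref{prop:rootsystembasics}(5) and the added remark about an alternative length-based argument).
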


\begin{proof}
If $w\in T(y^{-1})$ then $\Phi(y)\cap\Phi(w)=\emptyset$ (by Proposition~\ref{prop:conetypebasics}), and hence $\Phi(x)\cap\Phi(w)=\emptyset$ (as $x\peq y$ implies that $\Phi(x)\subseteq \Phi(y)$) and hence $w\in T(x^{-1})$ (again by Proposition~\ref{prop:conetypebasics}). 
\end{proof}

%
%
%
%

The following result gives a formula for cone types in terms of inversion sets and half-spaces. 

\begin{thm} \label{thm:geometry1}
For $x\in W$ we have
$$
    T(x^{-1}) = \bigcap_{\beta \in \Phi(x)} H_{\beta}^{+}.
$$
\end{thm}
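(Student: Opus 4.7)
The proof is essentially a chain of equivalences, pivoting on the characterisation of half-spaces via inversion sets together with Proposition~\ref{prop:conetypebasics}(4).

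First I would fix $x \in W$ and an arbitrary $y \in W$, and reformulate membership in $H_{\beta}^{+}$ for $\beta \in \Phi^{+}$ in terms of $\Phi(y)$. Concretely, the remark at the end of Section~1.2 already records that, for $\beta \in \Phi^{+}$, one has $\beta \in \Phi(w)$ iff $w \in H_{\beta}^{-}$. Since $H_{\beta}^{+}$ and $H_{\beta}^{-}$ partition $W$, this immediately gives
$$
y \in H_{\beta}^{+} \iff \beta \notin \Phi(y) \qquad (\beta \in \Phi^{+}).
$$
(Alternatively this comes directly from Proposition~\ref{prop:rootsystembasics}(4) applied to $\beta$: $\ell(s_{\beta}y) > \ell(y) \iff \beta \notin \Phi(y)$.)

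Next, I would intersect over all $\beta \in \Phi(x) \subseteq \Phi^{+}$ to obtain
$$
y \in \bigcap_{\beta \in \Phi(x)} H_{\beta}^{+} \iff \Phi(x) \cap \Phi(y) = \emptyset.
$$
Finally, Proposition~\ref{prop:conetypebasics} states that $\Phi(x) \cap \Phi(y) = \emptyset$ is equivalent to $y \in T(x^{-1})$, which closes the equivalence and yields the displayed equality.

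There is really no obstacle here; the proof is a bookkeeping exercise unpacking the definition of $H_{\beta}^{+}$ and invoking Proposition~\ref{prop:conetypebasics}. The only point deserving a sentence of care is the implicit use of $\Phi(x) \subseteq \Phi^{+}$ so that the reformulation of the half-space condition applies uniformly to each $\beta$ in the intersection, which is immediate from the definition of an inversion set.
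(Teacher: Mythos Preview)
Your proof is correct and essentially identical to the paper's own argument: both run the same chain of equivalences $y\in T(x^{-1})\iff\Phi(x)\cap\Phi(y)=\emptyset\iff y\in H_{\beta}^{+}$ for all $\beta\in\Phi(x)$, invoking Proposition~\ref{prop:conetypebasics} and Proposition~\ref{prop:rootsystembasics}(4). The only cosmetic difference is the order in which you traverse the equivalences.
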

\begin{proof}
We have $y\in T(x^{-1})$ if and only if $\Phi(x)\cap\Phi(y)=\emptyset$ (by Proposition~\ref{prop:conetypebasics}), if and only if $y^{-1}\beta>0$ for all $\beta\in\Phi(x)$, if and only if $\ell(s_{\beta}y)>\ell(y)$ for all $\beta\in \Phi(x)$ (by Proposition~\ref{prop:rootsystembasics}), if and only if $y\in\bigcap_{\beta \in \Phi(x)} H_{\beta}^{+}$. 
\end{proof}

\begin{exa}
To apply the formula in Theorem~\ref{thm:geometry1} to determine $T(w^{-1})$, one considers all walls of the Coxeter complex that separate $e$ and $w$ (the positive roots corresponding to these walls are the elements of $\Phi(w)$), and take the intersection of the half-spaces containing the identity for each of these walls. Let us illustrate in an example. 

\begin{figure}[H]
\centering
\begin{tikzpicture}[scale=1]
\path [fill=gray!20] (-4.33,-2.5)--(0,0)--(0.433,0.75)--(-4.33,3.5);
\path [fill=gray!50] (0,0) -- (0.433,0.75) -- (0,1) -- (0,0);
\path [fill=red!30] (0.866,0.5)--(2.598,1.5)--(4.33,1.5)--(4.33,0)--(1.732,0);
\path [fill=red!70] (2.165,0.75)--(2.598,1.5)--(2.598,0.5);
\draw(-4.33,4.5)--(4.33,4.5);
\draw(-4.33,3)--(4.33,3);
\draw(-4.33,1.5)--(4.33,1.5);
\draw(-4.33,0)--(4.33,0);
\draw(-4.33,-1.5)--(4.33,-1.5);
\draw(-4.33,-3)--(4.33,-3);
\draw(-4.33,-3)--(-4.33,4.5);
\draw(-3.464,-3)--(-3.464,4.5);
\draw(-2.598,-3)--(-2.598,4.5);
\draw(-1.732,-3)--(-1.732,4.5);
\draw(-.866,-3)--(-.866,4.5);
\draw(0,-3)--(0,4.5);
\draw(.866,-3)--(.866,4.5);
\draw(1.732,-3)--(1.732,4.5);
\draw(2.598,-3)--(2.598,4.5);
\draw(3.464,-3)--(3.464,4.5);
\draw(4.33,-3)--(4.33,4.5);
\draw(-4.33,3.5)--({-3*0.866},4.5);
\draw(-4.33,2.5)--({-1*0.866},4.5);
\draw(-4.33,1.5)--({1*0.866},4.5);
\draw(-4.33,.5)--({3*0.866},4.5);
\draw(-4.33,-.5)--(4.33,4.5);
\draw(-4.33,-1.5)--(4.33,3.5);
\draw(-4.33,-2.5)--(4.33,2.5);
\draw(-3.464,-3)--(4.33,1.5);
\draw(-1.732,-3)--(4.33,.5);
\draw(0,-3)--(4.33,-.5);
\draw(1.732,-3)--(4.33,-1.5);
\draw(3.464,-3)--(4.33,-2.5);
\draw(4.33,3.5)--({3*0.866},4.5);
\draw(4.33,2.5)--({1*0.866},4.5);
\draw(4.33,1.5)--({-1*0.866},4.5);
\draw(4.33,.5)--({-3*0.866},4.5);
\draw(4.33,-.5)--(-4.33,4.5);
\draw(4.33,-1.5)--(-4.33,3.5);
\draw(4.33,-2.5)--(-4.33,2.5);
\draw(3.464,-3)--(-4.33,1.5);
\draw(1.732,-3)--(-4.33,.5);
\draw(0,-3)--(-4.33,-.5);
\draw(-1.732,-3)--(-4.33,-1.5);
\draw(-3.464,-3)--(-4.33,-2.5);
\draw(-4.33,-1.5)--(-3.464,-3);
\draw(-4.33,1.5)--(-1.732,-3);
\draw(-4.33,4.5)--(0,-3);
\draw({-3*0.866},4.5)--(1.732,-3);
\draw({-1*0.866},4.5)--(3.464,-3);
\draw({1*0.866},4.5)--(4.33,-1.5);
\draw({3*0.866},4.5)--(4.33,1.5);
\draw(4.33,-1.5)--(3.464,-3);
\draw(4.33,1.5)--(1.732,-3);
\draw(4.33,4.5)--(0,-3);
\draw({3*0.866},4.5)--(-1.732,-3);
\draw({1*0.866},4.5)--(-3.464,-3);
\draw({-1*0.866},4.5)--(-4.33,-1.5);
\draw({-3*0.866},4.5)--(-4.33,1.5);
\draw[line width=2pt](0.866,-3)--(0.866,4.5);%
\draw[line width=2pt](-1.732,-3)--({3*0.866},4.5);%
\draw[line width=2pt](3.46,-3)--(-0.866,4.5);%
\draw[line width=2pt](-4.33,-2.5)--(4.33,2.5);
\draw[line width=2pt](-4.33,3.5)--(4.33,-1.5);
\draw[line width=2pt](1.732,-3)--(1.732,4.5);
\draw[line width=2pt](0,-3)--(4.33,4.5);
\draw[line width=2pt](-4.33,4.5)--(4.33,-0.5);
\end{tikzpicture}
\caption{A cone type}\label{fig:G2examplecone}
\end{figure}

\noindent Let $w$ be the element shaded dark red in Figure~\ref{fig:G2examplecone}. The identity is shaded grey, and the walls separating $e$ from $w$ are show in bold. The intersection of the corresponding positive half-spaces is shown in light grey -- this is the cone type $T=T(w^{-1})$. Note that some of the walls are ``redundant'' in the sense that the corresponding roots can be removed from the intersection in Theorem~\ref{thm:geometry1}. We will address this issue in Theorem~\ref{thm:geometry2}. The red shaded region is $X_T=\{x\in W\mid T(x^{-1})=T\}$ (see Proposition~\ref{prop:partsdescription}). Note that this is a convex region, with a unique minimal length element $g$, and moreover for all $x\in X_T$ we have $g\peq x$. We will prove these observations in general in Corollary~\ref{cor:gateexist}.
\end{exa}

The \textit{cone} of $w\in W$ is 
$$
C(w)=\{v\in W\mid \ell(v)=\ell(w)+\ell(w^{-1}v)\}=\{v\in W\mid w\peq v\}.
$$
Note that $T(w)=w^{-1}C(w)$. The following lemma shows that joins and intersections of cones are closely related. 

\begin{lem}\label{lem:conejoin}
A subset $X\subseteq W$ is bounded if and only if $\bigcap_{x\in X}C(x)\neq\emptyset$, and if $X\subseteq W$ is bounded then $C(\bigvee X)=\bigcap_{x\in X}C(x)$. 
\end{lem}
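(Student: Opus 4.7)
The plan is to observe that everything follows by unwinding the definition of the cone $C(w) = \{v \in W \mid w \peq v\}$ and the definition of the join in the weak order. In particular, $C(w)$ is simply the principal up-set of $w$ in $(W, \peq)$, so intersections of cones correspond to sets of common upper bounds of the generators.

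First I would prove the equivalence. By definition, a subset $X \subseteq W$ is bounded if and only if there exists some $w \in W$ with $x \peq w$ for all $x \in X$. Such an element $w$ belongs to $C(x)$ for every $x \in X$ (since $x \peq w$ iff $w \in C(x)$), and hence $w \in \bigcap_{x \in X} C(x)$. Conversely, any $w \in \bigcap_{x \in X} C(x)$ satisfies $x \peq w$ for all $x \in X$, and so is a bound for~$X$. Therefore $X$ is bounded iff $\bigcap_{x \in X} C(x) \neq \emptyset$.

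Next I would establish the formula $C(\bigvee X) = \bigcap_{x \in X} C(x)$ when $X$ is bounded. Since $(W,\peq)$ is a complete meet semilattice and $X$ is bounded, the join $\bigvee X$ exists (as noted in Section~\ref{sec:1:Coxetergroups}). Now unwinding definitions, $v \in C(\bigvee X)$ iff $\bigvee X \peq v$, which by the defining universal property of the join is equivalent to $v$ being an upper bound of $X$, that is $x \peq v$ for all $x \in X$. This in turn is equivalent to $v \in C(x)$ for all $x \in X$, i.e.\ $v \in \bigcap_{x \in X} C(x)$.

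There is no serious obstacle here: both parts are essentially tautological once one translates between $C(w)$ as an up-set in weak order and the partial order itself. The only mild subtlety is remembering to invoke the existence of joins of bounded sets from the preliminaries before writing $\bigvee X$ in the second statement.
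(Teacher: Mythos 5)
Your proof is correct and follows exactly the paper's approach: the paper's one-line proof is precisely the observation that $y\in\bigcap_{x\in X}C(x)$ if and only if $y$ is an upper bound for $X$, which you have simply written out in full detail together with the standard characterisation of the join as least upper bound.
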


\begin{proof}
Both statements are clear from the fact that $y\in \bigcap_{x\in X}C(x)$ if and only if $y$ is an upper bound for $X$. 
\end{proof}

We note, in passing the following result, which superficially appears similar to Lemma~\ref{lem:conejoin}, however requires a rather different proof. While we will not require this result in this paper, we record it for future reference.

\begin{prop} \label{prop:joinconetype}
If $X\subseteq W$ is bounded with $y=\bigvee X$ then $T(y^{-1}) =\bigcap_{x\in X} T(x^{-1})$.
\end{prop}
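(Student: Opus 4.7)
The plan is to translate the assertion into an assertion about inversion sets via Proposition~\ref{prop:conetypebasics}, and then combine it with the join-formula for inversion sets (Proposition~\ref{prop:inversionsetjoin}). Throughout write $A = \bigcup_{x\in X}\Phi(x) \subseteq \Phi^+$. Proposition~\ref{prop:conetypebasics} gives
\[
\bigcap_{x\in X} T(x^{-1}) = \{w\in W \mid \Phi(w)\cap\Phi(x)=\emptyset\text{ for all }x\in X\} = \{w\in W\mid \Phi(w)\cap A=\emptyset\},
\]
and similarly $T(y^{-1})=\{w\in W\mid \Phi(w)\cap\Phi(y)=\emptyset\}$. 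By Proposition~\ref{prop:inversionsetjoin}, $\Phi(y)=\mathrm{cone}_{\Phi}(A)$. So the proposition amounts to
\[
\Phi(w)\cap A=\emptyset \quad\Longleftrightarrow\quad \Phi(w)\cap \mathrm{cone}_{\Phi}(A)=\emptyset.
\]

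For the forward inclusion $T(y^{-1})\subseteq\bigcap_{x\in X} T(x^{-1})$, I would give the short argument: each $x\in X$ satisfies $x\peq y$, so Lemma~\ref{lem:containoneway} yields $T(y^{-1})\subseteq T(x^{-1})$, and intersecting over $x$ gives the inclusion. (Equivalently, $A\subseteq \Phi(y)$ is trivial from $\Phi(x)\subseteq\Phi(y)$ for $x\peq y$.)

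The real content is the reverse inclusion, and this is where I would spend the effort. Suppose $w\in\bigcap_{x\in X}T(x^{-1})$, i.e.\ $\Phi(w)\cap A=\emptyset$. I will show $\Phi(w)\cap\mathrm{cone}_{\Phi}(A)=\emptyset$, arguing by contradiction. Assume some $\beta\in\Phi(w)\cap\mathrm{cone}_{\Phi}(A)$; then we may write $\beta=\sum_{i} c_i\alpha_i$ with $\alpha_i\in A$ and $c_i\geq 0$, not all $c_i$ zero (as $\beta\neq 0$). Apply $w^{-1}$ to obtain the linear identity
\[
w^{-1}\beta=\sum_{i} c_i\, w^{-1}\alpha_i.
\]
Since $\beta\in\Phi(w)$ we have $w^{-1}\beta<0$, meaning its expansion in simple roots has only non-positive coefficients and at least one strictly negative coefficient. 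On the other hand, for each $i$ the root $\alpha_i$ lies in $A$ but not in $\Phi(w)$ (since $\Phi(w)\cap A=\emptyset$), so $w^{-1}\alpha_i>0$. Hence the right-hand side is a non-negative combination of positive roots, which is either $0$ or a positive root -- in either case not negative. This contradicts $w^{-1}\beta<0$.

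The main (and really only) obstacle is this last coclosedness step: we need that the complement $\Phi^+\setminus\Phi(w)$ is closed under non-negative $\mathbb{R}$-linear combinations that land in $\Phi^+$. The positivity/sign-chasing argument above handles it in one line; I would just be careful to note that $c_i\ge 0$ not all zero, combined with $w^{-1}\alpha_i\in\Phi^+$, forces $\sum c_i w^{-1}\alpha_i\in\Phi^+\cup\{0\}$, ruling out $w^{-1}\beta\in -\Phi^+$. This completes the reverse inclusion and hence the proof.
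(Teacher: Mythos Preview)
Your proof is correct and follows essentially the same approach as the paper: both use Lemma~\ref{lem:containoneway} for one inclusion, and for the other express $\Phi(y)=\mathrm{cone}_\Phi(\bigcup_{x\in X}\Phi(x))$ via Proposition~\ref{prop:inversionsetjoin} and derive a contradiction by applying $w^{-1}$ to a putative $\beta\in\Phi(w)\cap\Phi(y)$ and comparing signs. One small phrasing point: a non-negative combination of positive roots need not be a root, so rather than saying the right-hand side ``is either $0$ or a positive root'' you should say it lies in the non-negative span of the simple roots, which already contradicts $w^{-1}\beta\in -\Phi^+$.
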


\begin{proof}
The inclusion $T(y^{-1}) \subseteq\bigcap_{x\in X} T(x^{-1})$ follows from Lemma~\ref{lem:containoneway} because $x\peq y$ for all $x\in X$. Now suppose that $w \in \bigcap_{x\in W}T(x^{-1})$. Then by Proposition~\ref{prop:conetypebasics} we have $\Phi(x) \cap \Phi(w) = \emptyset$ for all $x\in X$. We claim that $\Phi(y) \cap \Phi(w) = \emptyset$. For if there exists $\beta \in \Phi(y) \cap \Phi(w)$ then by Proposition~\ref{prop:inversionsetjoin} we have $\Phi(y)=\mathrm{cone}_{\Phi}(\bigcup_{x\in X}\Phi(x))$, and so
$$
\beta = \sum c_i \beta_i
$$ 
where $\beta_i \in \Phi(x) \cup \Phi(y)$ with $x\in X$ and $c_i \ge 0$. Since $w^{-1}\beta < 0$ we have $w^{-1}\beta_i < 0$ for some $\beta_i \in\bigcup_{x\in X} \Phi(x)$ and hence $\Phi(x) \cap \Phi(w)$ is non-empty for some $x\in X$, a contradiction.
\end{proof}

%
%
%

\subsection{Automata recognising the language of reduced words}

An automaton~$\mathcal{A}$ can be viewed as a computing device for defining a language over a finite alphabet~$A$. Any string over~$A$ can be input into the automaton, which is then either accepted or rejected by $\mathcal{A}$. The set of strings accepted by $\mathcal{A}$ is the \textit{language recognised by} $\mathcal{A}$ and any language $\cL$ for which there exists a finite state automaton recognising $\cL$, is called a \textit{regular} language.

In this paper we are interested in automata recognising the language of reduced words in a Coxeter system~$(W,S)$. This allows for some minor simplifications to the general definition of an automaton, as explained below. We will work in the setting of $G$ being any group generated by a finite set~$S$. Let $\ell_S:G\to\mathbb{N}$ be the associated length function (defined as in the Coxeter group case). A word $(s_1,\cdots,s_n)\in S^n$ is \textit{reduced} if $\ell_S(s_1\cdots s_n)=n$. Let 
$\cL(G,S)$ be the set of all reduced words (the \textit{language of reduced words} in $(G,S)$). 

\begin{defn} An \textit{automaton} with \textit{alphabet $S$} is a quadruple $\cA=(Y,\mu,o,\dagger)$ where $Y$ is a set (called the \textit{state set}), $o\in Y$ is the \textit{start state}, $\dagger\notin Y$ is the \textit{dead state}, and $\mu:(Y\cup\{\dagger\})\times S\to Y\cup\{\dagger\}$ is a function (called the \textit{transition function}) such that $\mu(\dagger,s)=\dagger$ for all $s\in S$. If $|Y|<\infty$ then $\cA$ is a \textit{finite state} automaton. The \textit{language accepted by $\cA$} is the set of all words $(s_1,\ldots,s_n)$ such that $y_n\in Y$, where $y_0=o$ and $y_j=\mu(y_{j-1},s_j)$ for $1\leq j\leq n$. 
\end{defn}

We will often omit $\dagger$ from the notation, and simply give the automaton as a triple $\cA=(Y,\mu,o)$. It is helpful to think of an automaton $\cA=(Y,\mu,o)$ as a directed graph with labelled edges. The vertex set of this graph is $Y$, and if $x,y\in Y$ with $\mu(x,s)=y$ we draw an arrow from $x$ to $y$ with label $s$. Note that the dead state $\dagger$ is not drawn, and we have $\mu(x,s)=\dagger$ if and only if there is no $s$-arrow exiting the state~$x$. If $\mathcal{A}=(Y,\mu,o)$ recognises the language $\cL(G,S)$ then there exists a path in the associated directed graph starting at $o$ with edge labels $(s_1,\ldots,s_n)$ if and only if $(s_1,\ldots,s_n)$ is reduced.

%
%
%
%
%

The concepts of quotients and totally surjective morphisms are useful when comparing two automata.

\begin{defn}
Let $\mathcal{A}=(Y,\mu,o,\dagger)$ and $\mathcal{A}'=(Y',\mu',o',\dagger')$ be automata recognising $\cL(G,S)$. We say that $\cA'$ is a \textit{quotient} of $\cA$ if there exists a function $f:Y\cup\{\dagger\}\to Y'\cup\{\dagger'\}$ such that:
\begin{enumerate}
\item $f(o)=o'$, $f(\dagger)=\dagger'$, and $f(Y)=Y'$;
\item if $x,y\in Y$ with $\mu(x,s)=y$ then $\mu'(f(x),s)=f(y)$;
\item if $x',y'\in Y'$ with $\mu'(x',s)=y'$ then there exists $x,y\in Y$ with $f(x)=x'$, $f(y)=y'$, and $\mu(x,s)=y$. 
\end{enumerate}
We call such a function $f$ a \textit{totally surjective morphism} $f:\cA\to\cA'$. If, in addition, $f:Y\to Y'$ is injective then we call $f$ an \textit{isomorphism}, and we say that $\cA$ and $\cA'$ are \textit{isomorphic}, and write $\cA\cong \cA'$. 
\end{defn}
More intuitively, condition (2) says that if $x\to_s y$ is a transition in $\cA$ then $f(x)\to_s f(y)$ is a transition in $\cA'$, and condition (3) says that every transition $x'\to_s y'$ in $\cA'$ is the image under $f$ of some transition $x\to_s y$ in $\cA$.

The cone type of $g\in G$ is $T(g)=\{h\in G\mid \ell_S(gh)=\ell_S(g)+\ell_S(h)\}$, and we write $\mathbb{T}(G,S)$ for the set of all cone types.

\begin{lem}\label{lem:stateconetype}
Let $\cA=(Y,\mu,o)$ be an automaton recognising $\cL(G,S)$. If $(s_1,\ldots,s_n)$ and $(s_1',\ldots,s_m')$ are reduced words such that the corresponding paths in the automaton end at the same state, then $T(s_1\dots s_n)=T(s_1'\cdots s_m')$.
\end{lem}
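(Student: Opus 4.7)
The plan is to prove $T(g) = T(g')$, where $g = s_1\cdots s_n$ and $g' = s_1'\cdots s_m'$, by establishing a single inclusion; the reverse follows by symmetry. Let $y \in Y$ denote the common endpoint state of the two paths in $\cA$. The central bridge that I would use is the following consequence of $\cA$ recognising $\cL(G,S)$: a word $(u_1,\ldots,u_p)$ over $S$ is reduced if and only if the path in $\cA$ starting at $o$ and reading $(u_1,\ldots,u_p)$ does not enter the dead state $\dagger$, i.e.\ terminates at a state in $Y$.

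First I would unpack the definition of cone type in terms of reduced expressions. By definition, $h \in T(g)$ means $\ell_S(gh) = \ell_S(g) + \ell_S(h)$. Fixing any reduced expression $h = t_1\cdots t_k$, this identity is equivalent to the concatenated word $(s_1,\ldots,s_n,t_1,\ldots,t_k)$ itself being a reduced expression for $gh$, since both sides then equal $n+k$. Thus $h \in T(g)$ if and only if $(s_1,\ldots,s_n,t_1,\ldots,t_k)$ is a reduced word for some (equivalently, any) reduced word $(t_1,\ldots,t_k)$ representing $h$.

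Next, assume $h \in T(g)$ and fix a reduced word $(t_1,\ldots,t_k)$ for $h$. By the previous paragraph, $(s_1,\ldots,s_n,t_1,\ldots,t_k)$ is reduced, so its path from $o$ survives. This path arrives at $y$ after the prefix $(s_1,\ldots,s_n)$, so reading $(t_1,\ldots,t_k)$ from $y$ ends in $Y$. Now consider the path from $o$ reading $(s_1',\ldots,s_m',t_1,\ldots,t_k)$: by hypothesis it also arrives at $y$ after the prefix, and since $\mu$ is deterministic the continuation from $y$ is the very same tail as above, hence ends in $Y$. The bridge then gives that $(s_1',\ldots,s_m',t_1,\ldots,t_k)$ is reduced, which forces $\ell_S(g'h) = m+k = \ell_S(g') + \ell_S(h)$, i.e.\ $h \in T(g')$. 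Symmetry completes the argument.

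I do not expect a substantive obstacle here: once the bridge between reducedness and path-survival is made explicit, the argument reduces to splicing two reduced expressions at the common state $y$, which is legitimate precisely because $\mu$ is a function (so the "tail" from $y$ is intrinsic to $y$, not to the prefix that led there). The only point requiring a moment's care is the equivalence of "for some reduced expression of $h$" with "for any reduced expression of $h$" in the cone-type characterisation, which is immediate since $\ell_S(gh) = \ell_S(g) + \ell_S(h)$ is a statement about group elements, independent of the chosen reduced expression.
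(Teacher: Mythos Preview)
Your proposal is correct and follows essentially the same approach as the paper: both arguments exploit that acceptance of the concatenation $(s_1,\ldots,s_n,t_1,\ldots,t_k)$ depends only on the state reached after the prefix, so the two prefixes can be swapped. The paper's version is terser (it simply notes the biconditional on acceptance and says ``hence the result''), while you spell out the translation between $h\in T(g)$ and reducedness of the concatenated word more carefully, but the underlying idea is identical.
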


\begin{proof}
Let $(t_1,\ldots,t_k)$ be a word, with $t_1,\ldots,t_k\in S$. Since the paths in $\cA$ with edge labels $(s_1,\ldots,s_n)$ and $(s_1',\ldots,s_m')$ both end at the same state, and since $\cA$ recognises the language $\cL(G,S)$, we have that the word $(s_1,\ldots,s_n,t_1,\ldots,t_k)$ is accepted if and only if the word $(s_1',\ldots,s_m',t_1,\ldots,t_k)$ is accepted. Hence the result.
\end{proof}

The following theorem is essentially the Myhill-Nerode Theorem (see \cite[Theorem~1.2.9]{Eps:92}). We sketch a proof in our context. 

\begin{thm}\label{thm:MyhillNerode}
Let $G$ be a group generated by a finite set~$S$. Let $\cA(G,S)=(\mathbb{T}(G,S),\mu,T(e))$, where $\mu$ is given by (for $T\in \mathbb{T}(G,S)$ and $s\in S$)
$$
\mu(T,s)=\begin{cases}
T(gs)&\text{if $s\in T$ and $g\in G$ is such that $T=T(g)$}\\
\dagger&\text{if $s\notin T$}.
\end{cases}
$$
Then 
\begin{enumerate}
\item $\cA(G,S)$ is an automaton recognising $\cL(G,S)$;
\item $\cA(G,S)$ is a quotient of every automaton recognising $\cL(G,S)$;
\item $\cL(G,S)$ is regular if and only if $|\mathbb{T}(G,S)|<\infty$;
\item if $\cL(G,S)$ is regular then $\cA(G,S)$ is the unique minimal (with respect to the number of states) automaton up to isomorphism recognising $\cL(G,S)$.
\end{enumerate}
\end{thm}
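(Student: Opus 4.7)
The strategy is the standard Myhill--Nerode approach, with the crucial algebraic step being the well-definedness of $\mu$. If $T = T(g) = T(g')$ and $s \in T$, then for any $h \in G$ the sandwich inequalities $\ell(gsh) \leq \ell(g) + \ell(sh) \leq \ell(g) + 1 + \ell(h)$ (using $\ell(gs) = \ell(g) + 1$, which holds since $s \in T$) combined with the identity $h \in T(gs) \iff \ell(gsh) = \ell(g) + 1 + \ell(h)$ show that $h \in T(gs)$ if and only if $sh \in T(g)$ and $\ell(sh) = \ell(h) + 1$. Both conditions depend only on $T$ and $h$, so $T(gs) = T(g's)$, and $\mu$ is well defined.

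For part (1), a routine induction on word length shows that the path from $T(e)$ labelled by $(s_1,\ldots,s_n)$ avoids $\dagger$ and ends at $T(s_1\cdots s_n)$ precisely when the word is reduced; the inductive step is immediate from the definitions of $\mu$ and cone type. For part (2), given $\cA' = (Y',\mu',o')$ recognising $\cL(G,S)$, I would assume without loss of generality that every state of $\cA'$ is reachable (unreachable states can be deleted without affecting the language). Because accepted paths are exactly reduced words, Lemma~\ref{lem:stateconetype} makes $f(y) = T(g)$ well defined, where $g$ is the element spelled by any reduced word reaching $y$; $f$ is then surjective onto $\mathbb{T}(G,S)$, with $f(o') = T(e)$ and $f(\dagger') = \dagger$. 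Transition compatibility in both directions is verified directly: in the forward direction, any transition $\mu'(x,s) = y$ with $y \in Y'$ extends a reduced path at $x$ to a reduced path at $y$, so $s \in f(x)$ and $\mu(f(x), s) = f(y)$; in the backward direction, any transition $\mu(T,s) = T' \neq \dagger$ in $\cA(G,S)$ lifts by choosing a reduced expression for some $g$ with $T(g) = T$ and tracing the corresponding path in $\cA'$.

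Parts (3) and (4) are then formal consequences. For (3), regularity produces a finite recogniser of which $\cA(G,S)$ is a quotient, forcing $|\mathbb{T}(G,S)| < \infty$, while conversely part (1) shows that $\cA(G,S)$ itself is a finite recogniser when $|\mathbb{T}(G,S)| < \infty$. For (4), part (2) shows that every recogniser has at least $|\mathbb{T}(G,S)|$ states, so $\cA(G,S)$ is minimal; any other minimal recogniser $\cA$ admits a surjection onto $\mathbb{T}(G,S)$ between equinumerous finite sets, which is therefore a bijection, and hence by definition an isomorphism. I expect the main obstacle to be the well-definedness of $\mu$ in the opening step; once this is secured, the remainder is a careful translation of the quotient definition into statements about cone types and paths, guided throughout by Lemma~\ref{lem:stateconetype}.
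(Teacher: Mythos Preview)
Your proposal is correct and follows essentially the same approach as the paper's proof. The paper merely asserts that well-definedness of $\mu$ is ``elementary to check'' and that $f$ is ``straightforward'' to verify as a totally surjective morphism, whereas you supply the explicit sandwich argument for $T(gs)=T(g's)$ and spell out both directions of transition compatibility; you also flag the reachability issue for unreachable states of $\cA'$, which the paper's sketch silently ignores.
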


\begin{proof}
It is elementary to check that if $s\in T$ and $g,g'\in G$ with $T=T(g)=T(g')$ then $T(gs)=T(g's)$, and hence $\mu$ is well defined. The proof of (1) is a simple induction on the length of the word. 

To prove (2), by Lemma~\ref{lem:stateconetype} if $(s_1,\ldots,s_n)$ and $(s_1',\ldots,s_m')$ are reduced words such that the corresponding paths in the automaton end at the same state~$y\in Y'$, then $T(s_1\dots s_n)=T(s_1'\cdots s_m')$. Thus we can define a function $f:Y'\cup\{\dagger'\}\to\mathbb{T}(G,S)\cup\{\dagger\}$ by setting $f(\dagger')=\dagger$ and $f(y)=T(s_1\cdots s_n)$, and it is straightforward to check that $f$ is a totally surjective morphism. Then (3) follows from (1) and (2) and the definition of regular languages. 

(4) If $\cL(G,S)$ is regular then $\cA(G,S)$ has finitely many states (by (3)), and for any finite state automata $\cA'=(Y',\mu',o')$ recognising $\cL(G,S)$ we have $|\mathbb{T}(G,S)|\leq |Y'|$ by (2). If equality holds then the totally surjective morphism from $\cA'$ is injective, and hence an isomorphism. 
\end{proof}

\begin{defn}
We refer to the automaton $\cA(G,S)$ constructed in Theorem~\ref{thm:MyhillNerode} as the \textit{cone type automaton}. 
\end{defn}

\subsection{Examples of automata recognising $\cL(W,S)$}

We now recall examples from the literature of finite state automata recognising the language $\cL(W,S)$ of reduced words in a Coxeter group. 

The first construction of a finite state automaton recognising $\cL(W,S)$ was given by Brink and Howlett in~\cite[Section 3]{BH:93} using elementary inversion sets (in fact, the automaton in~\cite{BH:93}  recognises the language of lexicographically minimal reduced words in $(W,S)$). This concept was extended by Hohlweg, Nadeau, and Williams in~\cite{HNW:16}, leading to the following construction.

\begin{thm}\label{thm:canonicalautomaton} (see \cite{BH:93}, \cite{Eri:94a} and \cite[Section~3.4]{HNW:16})
For $n\in\mathbb{N}$ let $\cA_{n}=(\mathbb{E}_n,\mu,\emptyset)$, where, for $A\in\mathbb{E}_n$,
$$
\mu(A,s)=\begin{cases}
(\{\alpha_s\}\cup sA)\cap \cE_n&\text{if $\alpha_s\notin A$}\\
\dagger&\text{if $\alpha_s\in A$}
\end{cases}
$$
Then $\cA_n$ is a finite state automaton recognising~$\cL(W,S)$. 
\end{thm}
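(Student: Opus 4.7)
The plan is to verify three items: (a) the transition function $\mu$ is well-defined as a map $\mathbb{E}_n\times S\to\mathbb{E}_n\cup\{\dagger\}$; (b) the state set $\mathbb{E}_n$ is finite; and (c) the language recognised by $\cA_n$ is exactly $\cL(W,S)$. Item (b) is immediate from Corollary~\ref{cor:Enfinite}, so the work concentrates on (a) and (c), which are both essentially driven by Lemma~\ref{lem:elementaryrootsbasics}.

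For (a), let $A\in\mathbb{E}_n$ and fix any $w\in W$ with $\cE_n(w)=A$. Every simple root is elementary (the only chamber on the negative side of the wall $H_{\alpha_s}$ that is adjacent to $e$ is $s$ itself, so $\alpha_s$ dominates no other positive root), and hence $\{\alpha_s\mid s\in S\}\subseteq\cE\subseteq\cE_n$. Consequently $\alpha_s\in A$ iff $\alpha_s\in\Phi(w)$ iff $\ell(sw)<\ell(w)$, by Proposition~\ref{prop:rootsystembasics}(2). When $\alpha_s\notin A$ we therefore have $\ell(sw)>\ell(w)$, and Lemma~\ref{lem:elementaryrootsbasics} yields
\[
(\{\alpha_s\}\cup sA)\cap\cE_n=(\{\alpha_s\}\sqcup s\cE_n(w))\cap\cE_n=\cE_n(sw)\in\mathbb{E}_n.
\]
Since the left-hand side depends only on the pair $(A,s)$, this simultaneously shows that $\mu(A,s)$ is independent of the representative $w$ and lies in $\mathbb{E}_n$.

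For (c), I will prove by induction on $k\geq 0$ the following invariant: for every word $(s_1,\ldots,s_k)\in S^k$, the path in $\cA_n$ from $\emptyset$ with labels $s_1,\ldots,s_k$ avoids $\dagger$ iff the element $u_k:=s_ks_{k-1}\cdots s_1$ satisfies $\ell(u_k)=k$, in which case the path terminates at $\cE_n(u_k)$. The base case $k=0$ is immediate since $\emptyset=\cE_n(e)$. For the inductive step, set $u_{k-1}=s_{k-1}\cdots s_1$: if $\ell(u_{k-1})<k-1$ the path already sits at $\dagger$ by the inductive hypothesis and $u_k=s_ku_{k-1}$ has length strictly less than $k$; otherwise the current state is $A=\cE_n(u_{k-1})$, and the analysis in (a) shows that $\ell(s_ku_{k-1})>\ell(u_{k-1})$ iff $\alpha_{s_k}\notin A$, in which case Lemma~\ref{lem:elementaryrootsbasics} delivers $\mu(A,s_k)=\cE_n(u_k)$. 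Finally, because $\ell(w)=\ell(w^{-1})$, the condition $\ell(u_k)=k$ is equivalent to $\ell(s_1\cdots s_k)=k$, i.e.\ to $(s_1,\ldots,s_k)\in\cL(W,S)$.

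There is no serious obstacle; the argument is essentially a repackaging of Lemma~\ref{lem:elementaryrootsbasics} in automaton-theoretic language. The one minor subtlety is bookkeeping: the left-multiplication transition rule means that the automaton, while reading $(s_1,\ldots,s_k)$ from left to right, actually tracks the reversed product $s_k\cdots s_1$ rather than $s_1\cdots s_k$. This is harmless because $\cL(W,S)$ is closed under reversal. In fact, a right-multiplication analogue of the construction cannot work state-wise, since $\cE_n(ws)$ cannot be recovered from the pair $(\cE_n(w),s)$ alone: the datum of which positive root $w\alpha_s$ is being added is needed and is not determined by $\cE_n(w)$. This asymmetry explains why the construction must process letters on the left.
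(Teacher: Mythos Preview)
Your proof is correct and follows the classical direct argument (essentially that of \cite{BH:93} and \cite[Section~3.4]{HNW:16}): verify well-definedness and the recognition property by a straightforward induction driven by Lemma~\ref{lem:elementaryrootsbasics}. The bookkeeping with the reversed product $u_k=s_k\cdots s_1$ is handled cleanly, and the observation that the construction must use left multiplication is a nice explanatory touch.

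The paper itself does not give a self-contained proof at the point where the theorem is stated; it cites the literature. Its own contribution to the proof comes later, where it observes (after Theorem~\ref{thm:regularautomaton}) that Theorems~\ref{thm:regularpartitions} and~\ref{thm:regularautomaton} together yield a new proof: one shows that the $n$-Shi partition $\scrS_n$ is regular, and then the general machinery produces the automaton $\cA(\scrS_n)$, which coincides with $\cA_n$ via the bijection between parts of $\scrS_n$ and elements of $\mathbb{E}_n$ (Proposition~\ref{prop:partsdescription}(3)). So the two approaches differ genuinely: yours is the direct inductive verification, while the paper's route packages the same computation (still ultimately resting on Lemma~\ref{lem:elementaryrootsbasics}, used in the proof of Theorem~\ref{thm:regularpartitions}(3)) inside the regular-partition framework. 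Your approach is shorter and self-contained; the paper's approach buys uniformity, placing $\cA_n$, the Garside automata, and the cone type automaton under a single construction.
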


The automaton $\cA_n$ is called the \textit{$n$-canonical automaton}. We sometimes call $\cA_0$ the \textit{Brink-Howlett automaton}. By Lemma~\ref{lem:elementaryrootsbasics} the transition function of $\cA_n$ is given by $\mu(\cE_n(w),s)=\cE_n(sw)$ whenever $\ell(sw)>\ell(w)$.

\begin{cor}\label{cor:finiteconetypes1}
Each finitely generated Coxeter system has finitely many cone types.
\end{cor}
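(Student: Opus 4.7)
The plan is to combine the finite-state automaton constructed by Brink and Howlett (as recast in Theorem~\ref{thm:canonicalautomaton}) with the Myhill–Nerode characterisation recorded in Theorem~\ref{thm:MyhillNerode}. The statement asserts that $|\mathbb{T}|<\infty$, and since $\mathbb{T}$ is precisely the state set of the cone type automaton $\cA(W,S)$, what we must show is that $\cA(W,S)$ has finitely many states.

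First I would invoke Theorem~\ref{thm:canonicalautomaton} in the case $n=0$: this produces the Brink–Howlett automaton $\cA_0=(\mathbb{E}_0,\mu,\emptyset)$ recognising $\cL(W,S)$, whose state set is the set $\mathbb{E}_0=\mathbb{E}$ of elementary inversion sets. By Corollary~\ref{cor:Enfinite} (applied with $n=0$, which is the Brink–Howlett finiteness theorem for $\cE$), the set $\mathbb{E}$ is finite, so $\cA_0$ is a \emph{finite state} automaton recognising $\cL(W,S)$. Consequently $\cL(W,S)$ is a regular language.

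Next I would appeal to Theorem~\ref{thm:MyhillNerode}(3), which gives the equivalence ``$\cL(W,S)$ is regular $\iff |\mathbb{T}|<\infty$''. Since we have just verified that $\cL(W,S)$ is regular, we conclude that $|\mathbb{T}|<\infty$, as required. Alternatively, and giving a concrete bound, one could apply Theorem~\ref{thm:MyhillNerode}(2): the cone type automaton $\cA(W,S)$ is a quotient of the finite state automaton $\cA_0$ via some totally surjective morphism $f:\mathbb{E}\cup\{\dagger\}\to\mathbb{T}\cup\{\dagger\}$, and surjectivity of $f|_{\mathbb{E}}$ forces $|\mathbb{T}|\leq|\mathbb{E}|=|\cE|<\infty$.

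There is no real obstacle here: the entire content of the corollary is packaged into the two already-cited results, so the proof is a one-line deduction. The only mild care needed is to make clear which part of Theorem~\ref{thm:MyhillNerode} is being used and to record, should one want it, the explicit upper bound $|\mathbb{T}|\leq|\cE|$ that falls out of the quotient viewpoint.
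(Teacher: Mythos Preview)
Your proof is correct and follows exactly the same route as the paper: invoke the finite-state Brink--Howlett automaton $\cA_0$ and then apply Theorem~\ref{thm:MyhillNerode} to conclude that $\cA(W,S)$ has finitely many states. One small slip in your final parenthetical remark: the state set of $\cA_0$ is $\mathbb{E}$ (the set of elementary \emph{inversion sets}), not $\cE$ (the set of elementary roots), so the bound that falls out is $|\mathbb{T}|\leq|\mathbb{E}|\leq 2^{|\cE|}$, not $|\mathbb{T}|\leq|\cE|$.
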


\begin{proof}
The fact that $\cA_0$ is finite state (as $|\cE|<\infty$) implies, by Theorem~\ref{thm:MyhillNerode}, that the cone type automaton $\cA(W,S)$ is also finite state.
\end{proof}

To each Garside shadow~$B$ there is an associated automaton $\cA_B=(B,\mu,e)$ (finite state if $|B|<\infty$) defined as follows.

\begin{thm}\label{thm:garsideautomaton} \cite[Theorem~1.2]{HNW:16} Let $B$ be a Garside shadow, and let $\cA_B=(B,\mu,e)$ where
$$
\mu(w,s)=\begin{cases}
\pi_B(sw)&\text{if $s\notin D_L(w)$}\\
\dagger&\text{if $s\in D_L(w)$.}
\end{cases}
$$
Then $\cA_B$ is an automaton recognising $\cL(W,S)$. 
\end{thm}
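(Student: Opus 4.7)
The plan is to prove by induction on $n$ that for any word $(s_1,\ldots,s_n)\in S^n$, if $y_0=e$ and $y_j=\mu(y_{j-1},s_j)$, then the computation reaches $y_n\neq\dagger$ if and only if the word is reduced, in which case $y_n=\pi_B(s_ns_{n-1}\cdots s_1)$. The base $n=0$ is trivial since $e\in B$ and $\pi_B(e)=e$.

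For the inductive step I would establish two auxiliary facts. First, $D_L(\pi_B(w))=D_L(w)$ for all $w\in W$: the forward containment follows from $\pi_B(w)\peq w$, and the reverse uses that $s\in D_L(w)$ is equivalent to $\alpha_s\in\Phi(w)$ by Proposition~\ref{prop:rootsystembasics}(2), hence to $s\peq w$, so that $s\in S\subseteq B$ forces $s\peq \pi_B(w)$. Second, and the heart of the argument, the commutation lemma: if $s\notin D_L(w)$ then $\pi_B(sw)=\pi_B(s\pi_B(w))$. Granting both, the induction closes immediately. If $(s_1,\ldots,s_n)$ is reduced then inductively $y_{n-1}=\pi_B(s_{n-1}\cdots s_1)$, the descent condition gives $s_n\notin D_L(y_{n-1})$, and the commutation lemma yields $y_n=\pi_B(s_ny_{n-1})=\pi_B(s_n\cdots s_1)$; conversely, if $y_n\neq\dagger$ then $s_n\notin D_L(y_{n-1})=D_L(s_{n-1}\cdots s_1)$ by induction, so $\ell(s_n\cdots s_1)>\ell(s_{n-1}\cdots s_1)=n-1$, forcing reducedness.

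The commutation lemma itself would be proved by a two-step squeeze. Setting $v=\pi_B(w)$, one first checks that $s\notin D_L(w)$ propagates to $s\notin D_L(v)$ and $sv\peq sw$, via Proposition~\ref{prop:rootsystembasics}(6) applied to the factorisations $sw=s\cdot w$ and $sv=s\cdot v$; monotonicity of $\pi_B$ then gives $\pi_B(s\pi_B(w))=\pi_B(sv)\peq\pi_B(sw)$. For the reverse inequality, let $u=\pi_B(sw)\in B$. If $s\peq u$, then $u':=su$ is a suffix of $u$, so $u'\in B$ by suffix closure; comparing the decompositions $\Phi(u)=\{\alpha_s\}\sqcup s\Phi(u')$ and $\Phi(u)\subseteq\Phi(sw)=\{\alpha_s\}\sqcup s\Phi(w)$ forces $\Phi(u')\subseteq\Phi(w)$, so $u'\peq w$, hence $u'\peq v$ and $u=su'\peq sv$. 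If instead $s\not\peq u$, both $u$ and $s$ lie in $B$ and are bounded above by $sw$, so the join $b:=u\vee s\in B$ exists by join closure; applying the previous case to $b$ (which satisfies $s\peq b\peq sw$) gives $b\peq sv$, whence $u\peq b\peq sv$. In either case $u\in B$ with $u\peq sv$, so $u\peq\pi_B(sv)$, completing the squeeze.

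The main obstacle is the second case of the commutation lemma, where $s\not\peq\pi_B(sw)$. This is precisely the step that forces simultaneous use of both defining properties of a Garside shadow: join closure to produce $b=u\vee s\in B$, followed by suffix closure applied to $b$ when reducing back to the first case. Without either closure property the argument breaks down, explaining why the projection-based transition rule of $\cA_B$ requires the full Garside shadow hypothesis.
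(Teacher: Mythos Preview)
Your proof is correct, and the two auxiliary facts you isolate are exactly \cite[Proposition~2.6 and Proposition~2.8]{HNW:16}. Your argument is essentially the original direct proof from \cite{HNW:16}, with the bonus that you supply a self-contained proof of the commutation lemma $\pi_B(sw)=\pi_B(s\pi_B(w))$ rather than citing it.

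The paper itself does not prove Theorem~\ref{thm:garsideautomaton} at the point where it is stated; it is quoted from \cite{HNW:16}. The paper's own contribution is an alternative proof obtained later, by showing that the Garside partition $\scrG_B$ is regular (Theorem~\ref{thm:regularpartitions}(2)) and then invoking the general machine Theorem~\ref{thm:regularautomaton}, which produces an automaton recognising $\cL(W,S)$ from any regular partition. That route still rests on the same two lemmas you prove---they are precisely what is used in the proof of Theorem~\ref{thm:regularpartitions}(2) to check the locally constant and regularity conditions for $\scrG_B$---but the paper cites them from \cite{HNW:16} rather than reproving them. So the underlying engine is identical; the difference is that you run the induction directly on words, while the paper factors the argument through its regular-partition formalism, gaining a uniform statement that covers the Brink--Howlett, $n$-canonical, and Garside automata simultaneously.
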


It is conjectured by Hohlweg, Nadeau and Williams~\cite[Conjecture~1]{HNW:16} that the automaton $\cA_{\Gmin}$ (where $\Gmin$ is the smallest Garside shadow) is the minimal automaton recognising $\cL(W,S)$ (and hence isomorphic to the cone type automaton $\cA(W,S)$). 

By \cite[Corollary~1.7]{Dye:21} the set $L_n$ of $n$-low elements forms a finite Garside shadow, and hence $\cA_{L_n}=(L_n,\mu,e)$ is a finite state automaton recognising the language of reduced words in~$W$. It is conjectured by Dyer and Hohlweg~\cite[Conjecture~2]{DH:16} that the map $\Theta_n:L_n\to \mathbb{E}_n$ with $\Theta_n(w)=\cE_n(w)$ is a bijection. If $(W,S)$ is such that $\Theta_n$ is a bijection, then it follows that $\cA_n\cong \cA_{L_n}$ (see also the discussion in \cite[\S3.6]{HNW:16}).

We note that the results of \cite{DH:16}, \cite{HNW:16} and \cite{PY:19} imply the following, confirming~\cite[Conjecture~1]{HNW:16} in the case that $\cE=\Phisph^+$, and~\cite[Conjecture~2]{DH:16} in the case that $\cE=\Phisph^+$ and $n=0$.

\begin{thm}\label{thm:conjsspherical}
If $\cE=\Phisph^+$ then the automaton $\cA_{\Gmin}$ is minimal, and the map $\Theta_0:L\to\mathbb{E}$ with $\Theta_0(x)=\cE(x)$ is bijective. 
\end{thm}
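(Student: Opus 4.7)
The plan is to reduce both assertions to a numerical chain
\[
|\mathbb{T}| \;\leq\; |\Gmin| \;\leq\; |L| \;\leq\; |\mathbb{E}|,
\]
and then collapse this chain to a string of equalities by invoking the main result of \cite{PY:19}. For the first inequality, $\cA_{\Gmin}$ recognises $\cL(W,S)$ by Theorem~\ref{thm:garsideautomaton}, so by Theorem~\ref{thm:MyhillNerode}(2) the cone type automaton $\cA(W,S)$ is a quotient of $\cA_{\Gmin}$; the surjectivity of the underlying map of state sets gives $|\mathbb{T}| \leq |\Gmin|$. For the second, Theorem~\ref{thm:nlowgarside} shows that $L$ is a Garside shadow, and minimality of $\Gmin$ then yields $\Gmin \subseteq L$. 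For the third, $\Theta_0$ is injective by \cite[Proposition~3.26]{DH:16}, so $|L|\leq|\mathbb{E}|$.

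Next I would apply the hypothesis through \cite[Theorem~1]{PY:19}, which is set up so that the Coxeter-graph condition characterising $\cE = \Phisph^+$ coincides with the necessary and sufficient condition for the minimality of $\cA_0$, the latter being \cite[Conjecture~2]{HNW:16}. Therefore $\cA_0$ is minimal under our hypothesis. Since its state set is $\mathbb{E}$, Theorem~\ref{thm:MyhillNerode}(4) forces $|\mathbb{E}| = |\mathbb{T}|$, and hence the displayed chain collapses to equalities throughout.

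To finish, I would unpack the two extremal equalities. From $|\Gmin| = |\mathbb{T}|$ and the fact that $\cA_{\Gmin}$ recognises $\cL(W,S)$, Theorem~\ref{thm:MyhillNerode}(4) yields $\cA_{\Gmin} \cong \cA(W,S)$, so $\cA_{\Gmin}$ is minimal. From $|L| = |\mathbb{E}|$ combined with the injectivity of $\Theta_0$, surjectivity follows and $\Theta_0$ is bijective. The main obstacle in this argument is entirely external: it is the classification of \cite[Theorem~1]{PY:19} that supplies the only substantive input beyond a bookkeeping of cardinalities. As a by-product one also obtains $\Gmin = L$, anticipating the stronger identity $\Gamma = \Gmin = L$ appearing in Theorem~\ref{thm:conjforspherical}.
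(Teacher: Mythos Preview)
Your proposal is correct and follows essentially the same approach as the paper's proof: both establish the chain $|\Gmin|\leq|L|\leq|\mathbb{E}|$ via the Garside property of $L$ and the injectivity of $\Theta_0$, and then invoke \cite[Theorem~1]{PY:19} to force equalities. The only cosmetic difference is that you explicitly insert $|\mathbb{T}|$ at the bottom of the chain, whereas the paper leaves it implicit (writing directly $|\mathbb{E}|\leq|\Gmin|$ once $\cA_0$ is known to be minimal).
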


\begin{proof}
Since $L$ is a Garside shadow we have $\Gmin\subseteq L$, and by \cite[Proposition~3.26]{DH:16} we have $|L|\leq|\mathbb{E}|$, and so $|\Gmin|\leq|L|\leq|\mathbb{E}|$. On the other hand $|\mathbb{E}|$ is the number of states of the Brink-Howlett automaton $\cA_0$, and by \cite[Theorem~1]{PY:19} this automaton is minimal if and only if $\cE=\Phisph^+$. Thus $|\mathbb{E}|\leq|\Gmin|$ (as $\cA_{\Gmin}$ recognises $\cL(W,S)$ by \cite[Theorem~1.2]{HNW:16}), and hence $\Gmin=L$ and $|L|=|\mathbb{E}|$. 
\end{proof}


\section{Cone types in Coxeter groups}\label{sec:conetypes}

In this section we study cone types in Coxeter groups. In Section~\ref{sec:2:1} we give an explicit formula for the transitions between cone types in $\cA(W,S)$. In Section~\ref{sec:boundaryroots} we define the \textit{boundary roots} of a cone type, and show that these roots form the minimal set of roots required to express a cone type as an intersection of half-spaces. In Section~\ref{sec:2:3} we consider the connection between containment of cone types and the property $x\peq y$, collecting some results that will be useful later in the paper.

\subsection{Transitions in the cone type automata}\label{sec:2:1} The construction of the transition function in the cone type automata $\cA(W,S)$ in Theorem~\ref{thm:MyhillNerode} requires one to choose cone type representatives (however, ultimately, is independent of these choices). The following lemma can be used to remove these choices (see Corollary~\ref{cor:transitions} below), and gives an iterative method of computing cone types. 

\begin{lem} \label{lem:conetypeevolution}
Let $T\in\mathbb{T}$ and $s\in S$, and suppose that $s\in T$. Then the set
$$
T'=s\{w\in T\mid \ell(sw)=\ell(w)-1\}=s(T\backslash H_{\alpha_s}^+)
$$
is a cone type. Moreover, if $T=T(v)$ then $\ell(vs)=\ell(v)+1$ and $T'=T(vs)$. 
\end{lem}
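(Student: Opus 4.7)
The plan is to choose a representative $v\in W$ with $T=T(v)$ and then prove directly that $T(vs) = s(T\setminus H_{\alpha_s}^+)$. The two descriptions of $T'$ agree because $T\setminus H_{\alpha_s}^+ = T\cap H_{\alpha_s}^- = \{w\in T\mid \ell(sw)=\ell(w)-1\}$, using that every element of $W$ lies in exactly one of $H_{\alpha_s}^+$ or $H_{\alpha_s}^-$. Once we establish the equality $T'=T(vs)$, the fact that $T'$ is a cone type is automatic; in particular, $T'$ does not depend on the choice of representative $v$.

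First I would dispose of the length claim: since $s\in T = T(v)$ means $\ell(vs) = \ell(v)+\ell(s)=\ell(v)+1$ directly from the definition of cone type. Next, I would analyse when an element $u\in W$ belongs to $T(vs)$, i.e.\ when $\ell(vsu) = \ell(vs)+\ell(u) = \ell(v)+1+\ell(u)$. The argument splits on whether $s$ is or is not a left descent of $u$:

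\begin{itemize}
\item If $\ell(su)>\ell(u)$, then $\ell(vsu) \le \ell(v)+\ell(su) = \ell(v)+\ell(u)+1$, with equality if and only if $su\in T(v)=T$. So in this subcase, $u\in T(vs)$ iff $su\in T$ and $\ell(su)=\ell(u)+1$.
\item If $\ell(su)<\ell(u)$, write $u = su'$ with $\ell(u')=\ell(u)-1$. Then $\ell(vsu)=\ell(vu')\le \ell(v)+\ell(u') = \ell(v)+\ell(u)-1 < \ell(v)+1+\ell(u)$, so $u\notin T(vs)$.
\end{itemize}

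Thus $u\in T(vs)$ iff $\ell(su)>\ell(u)$ and $su\in T$. Setting $w=su$ (so $u=sw$), the condition $\ell(su)>\ell(u)$ becomes $\ell(sw)<\ell(w)$, i.e.\ $w\in H_{\alpha_s}^-$; combined with $w\in T$ this exactly says $w\in T\setminus H_{\alpha_s}^+$. Hence $T(vs) = s(T\setminus H_{\alpha_s}^+) = T'$.

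No step here is genuinely difficult; the only subtlety is the descent-case analysis, where one must rule out the descending case cleanly using the subadditivity $\ell(vu')\le \ell(v)+\ell(u')$. Everything else follows from Proposition~\ref{prop:rootsystembasics}(1)--(2) and the definitions.
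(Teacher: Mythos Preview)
Your proof is correct and follows essentially the same approach as the paper: pick a representative $v$ with $T=T(v)$, note $\ell(vs)=\ell(v)+1$, and then verify $T(vs)=s\{w\in T\mid \ell(sw)=\ell(w)-1\}$ by a length computation. The only cosmetic difference is that you organise the forward direction as an explicit case split on whether $s\in D_L(u)$, whereas the paper compresses this into the single chain $\ell(vsu)\le \ell(v)+\ell(su)\le \ell(v)+\ell(u)+1=\ell(vsu)$, forcing both inequalities to be equalities; the content is identical.
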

\begin{proof}
Let $v\in W$ be such that $T=T(v)$. Since $s\in T$ we have $\ell(vs)=\ell(v)+1$. We claim that 
$
T(vs)=s\{w\in T\mid \ell(sw)=\ell(w)-1\},
$
from which the result follows. 

If $u\in T(vs)$ then $\ell(vsu)=\ell(vs)+\ell(u)=\ell(v)+\ell(u)+1$, and it follows that $\ell(vsu)=\ell(v)+\ell(su)$ and $\ell(su)=\ell(u)+1$. Thus the element $w=su$ satisfies $w\in T(v)=T$ and $\ell(sw)=\ell(w)-1$. Conversely, suppose that $w\in T=T(v)$ and $\ell(sw)=\ell(w)-1$. Then 
$$
\ell((vs)(sw))=\ell(vw)=\ell(v)+\ell(w)=\ell(v)+\ell(sw)+1=\ell(vs)+\ell(sw),
$$
and so $sw\in T(vs)$. 
\end{proof}

Lemma~\ref{lem:conetypeevolution} gives a geometric description of the transition function in the automaton $\cA(W,S)$.

\begin{cor}\label{cor:transitions}
The transition function of $\cA(W,S)=(\mathbb{T},\mu,T(e))$ is given by 
$$
\mu(T,s)=\begin{cases}
s(T\backslash H_{\alpha_s}^+)&\text{if $s\in T$}\\
\dagger&\text{if $s\notin T$}
\end{cases}
$$
\end{cor}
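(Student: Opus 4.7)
The plan is to obtain this corollary directly by combining Lemma~\ref{lem:conetypeevolution} with the definition of $\cA(W,S)$ from Theorem~\ref{thm:MyhillNerode}; essentially no new work is required, and the main task is to reconcile the two descriptions of the target state.

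First I would dispose of the easy case $s \notin T$: by the definition of $\mu$ in Theorem~\ref{thm:MyhillNerode} (specialised to the group $G = W$ with generating set $S$), $\mu(T,s) = \dagger$ precisely when $s \notin T$, which matches the second branch of the claimed formula.

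For the case $s \in T$, I would pick any representative $v \in W$ with $T = T(v)$; Theorem~\ref{thm:MyhillNerode} then gives $\mu(T,s) = T(vs)$, and this is well-defined independently of the choice of $v$. Applying Lemma~\ref{lem:conetypeevolution} with this $T$ and $s$, we have $\ell(vs) = \ell(v)+1$ and
\[
T(vs) \;=\; s\{w \in T \mid \ell(sw) = \ell(w) - 1\}.
\]
So it suffices to identify $\{w \in T \mid \ell(sw) = \ell(w)-1\}$ with $T \setminus H_{\alpha_s}^+$.

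This last identification is a straightforward unpacking of definitions: since $s_{\alpha_s} = s$, the half-space $H_{\alpha_s}^+ = \{w \in W \mid \ell(sw) > \ell(w)\}$, and for every $w \in W$ we have $\ell(sw) \in \{\ell(w)-1, \ell(w)+1\}$; hence $w \notin H_{\alpha_s}^+$ if and only if $\ell(sw) = \ell(w)-1$. Substituting back gives $\mu(T,s) = s(T \setminus H_{\alpha_s}^+)$, as required. There is no substantive obstacle — the only point to be careful about is that the transition function in Theorem~\ref{thm:MyhillNerode} is a priori defined via a choice of representative, and one must observe that Lemma~\ref{lem:conetypeevolution} phrases the same object intrinsically in terms of $T$ alone, which is what makes the corollary worth stating.
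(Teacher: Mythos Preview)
Your proposal is correct and matches the paper's approach: the corollary is stated there without proof since it follows immediately from Lemma~\ref{lem:conetypeevolution} together with the definition of $\mu$ in Theorem~\ref{thm:MyhillNerode}. In fact the identification $\{w\in T\mid \ell(sw)=\ell(w)-1\}=T\setminus H_{\alpha_s}^+$ that you verify is already recorded in the statement of Lemma~\ref{lem:conetypeevolution}, so even that step is unnecessary.
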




\begin{exa}
Figure~\ref{fig:evolution} illustrates the transitions $T(e)\to_s T(s)\to_t T(st)\to_u T(stu)$ in the cone type automaton of the rank~$3$ Coxeter group with $m_{s,t}=4$, $m_{t,u}=3$ and $m_{s,u}=3$.
\begin{figure}[H]
\centering
\subfigure[The cone type $T(e)$]{
\includegraphics[totalheight=5cm]{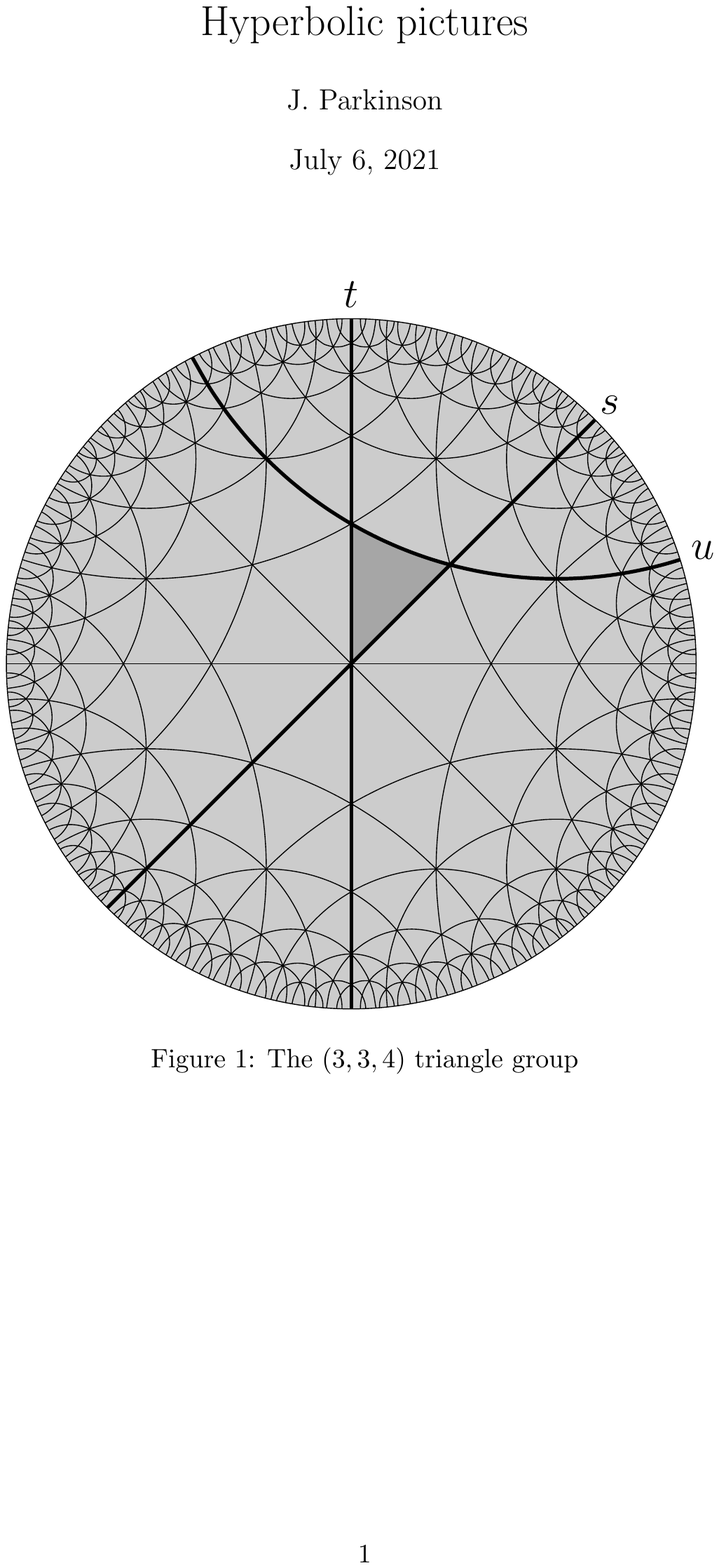}}\hspace{0.5cm}
\subfigure[The cone type $T(s)$]{
\includegraphics[totalheight=5cm]{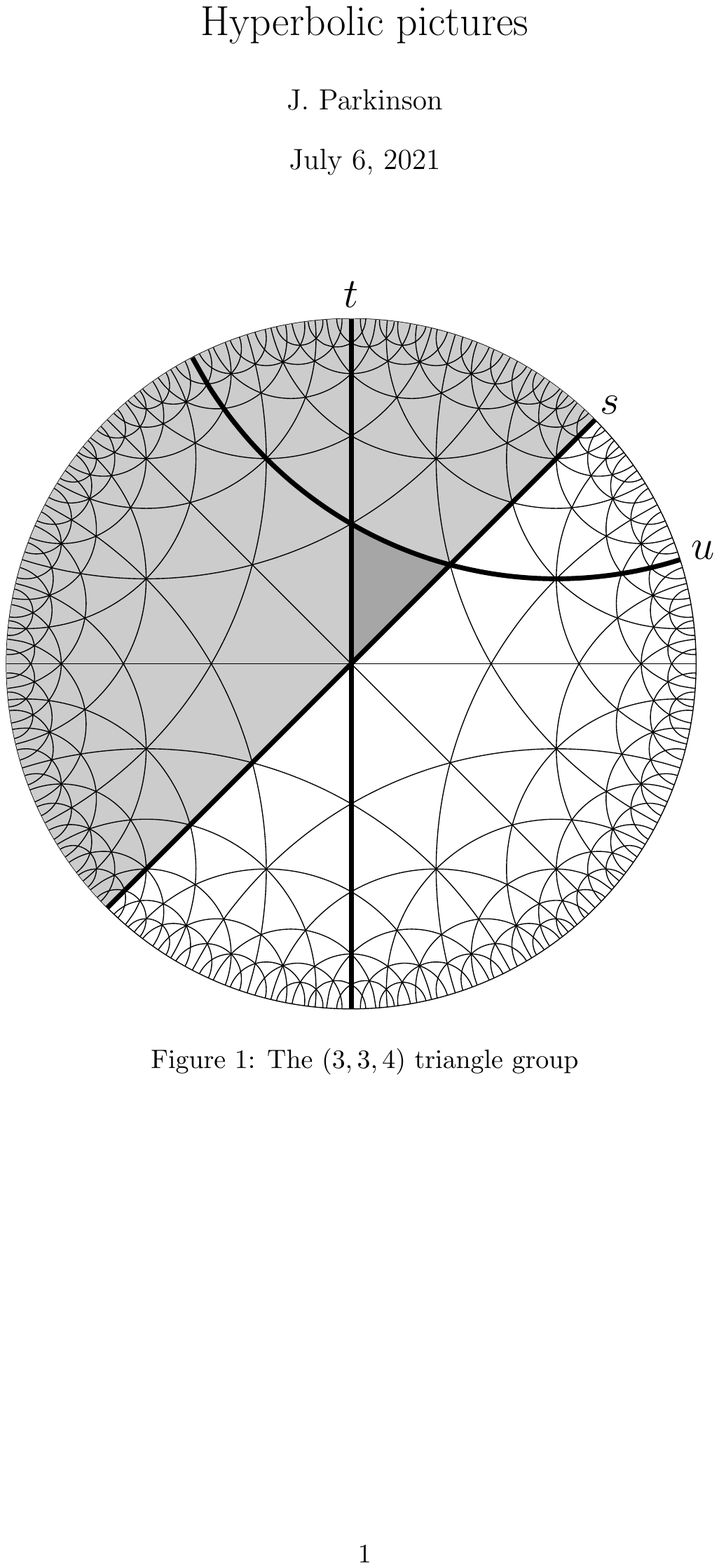}}
\subfigure[The cone type $T(st)$]{
\includegraphics[totalheight=5cm]{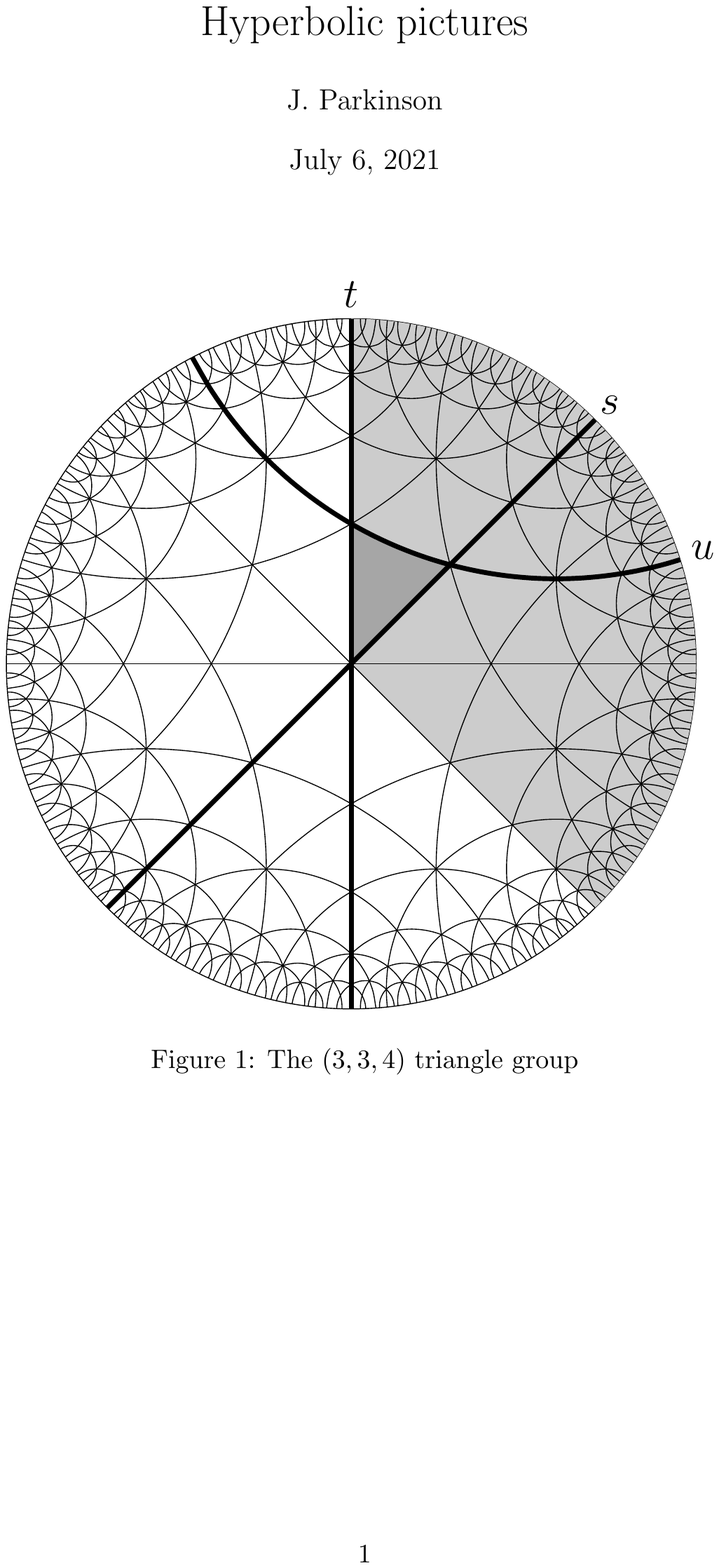}}\hspace{0.5cm}
\subfigure[The cone type $T(stu)$]{
\includegraphics[totalheight=5cm]{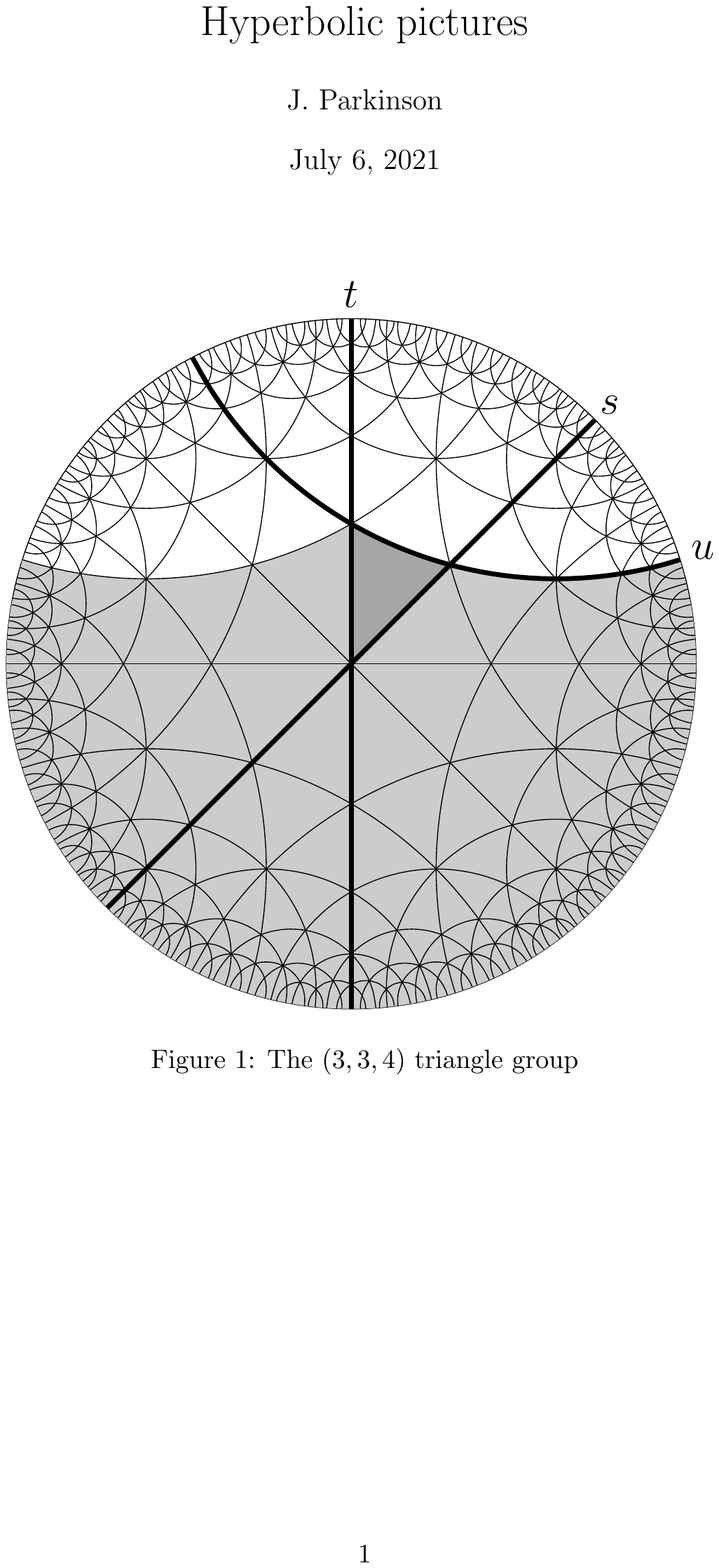}}
\caption{Transitions in $\cA(W,S)$}\label{fig:evolution}
\end{figure}
\end{exa}

\subsection{Boundary roots}\label{sec:boundaryroots}

In this section we give a geometric description of cone types, proving a more precise, and indeed optimal,  version of  Theorem~\ref{thm:geometry1}. The main result of this section is Theorem~\ref{thm:geometry2}, giving a formula for a cone type in terms of a minimal set of ``boundary roots'' of the cone type. One interesting consequence this formula is another proof of Corollary~\ref{cor:finiteconetypes1} (the finiteness of $\mathbb{T}$) without directly appealing to automata theory, however the finiteness of $\cE$ is still required in the proof.

\begin{lem}\label{lem:EandPhi1}
Let $x,y\in W$ and $\beta\in\Phi^+$. Suppose that $\Phi(x)\cap\Phi(y)=\{\beta\}$. Then:
\begin{enumerate}
\item $\beta\in\cE$, and
\item $\beta\in\Phi^1(x)\cap\Phi^1(y)$. 
\end{enumerate}
\end{lem}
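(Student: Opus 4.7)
The plan is to treat the two parts separately, since each rests on a distinct tool, but both exploit the hypothesis $\Phi(x)\cap\Phi(y)=\{\beta\}$ in essentially the same way: any positive root forced to lie in both inversion sets must coincide with $\beta$.

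For (1), I would argue by contrapositive. Suppose $\beta \notin \mathcal{E}$, so by definition $\beta$ dominates some positive root $\alpha \neq \beta$. By the geometric reformulation of dominance ($H_\beta^- \subseteq H_\alpha^-$), for any $w\in W$ the condition $\beta \in \Phi(w)$ forces $\alpha \in \Phi(w)$. Applying this with $w=x$ and $w=y$ gives $\alpha \in \Phi(x)\cap\Phi(y)$, so $\alpha = \beta$ by hypothesis, a contradiction. Hence $\beta \in \mathcal{E}$.

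For (2), the key input is Theorem~\ref{thm:eqcond}: $\Phi(x) = \mathrm{cone}_\Phi(\Phi^1(x))$. Since $\beta \in \Phi(x)$, choose a non-negative expansion
\[
\beta = \sum_{\gamma \in \Phi^1(x)} c_\gamma\, \gamma, \qquad c_\gamma \geq 0.
\]
Apply $y^{-1}$ to both sides. Then $y^{-1}\beta < 0$ (because $\beta \in \Phi(y)$), while $y^{-1}\beta = \sum c_\gamma\, y^{-1}\gamma$. A non-negative combination of positive roots is never a negative root, so at least one $\gamma \in \Phi^1(x)$ with $c_\gamma > 0$ satisfies $y^{-1}\gamma < 0$, i.e.\ $\gamma \in \Phi(y)$. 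Combined with $\gamma \in \Phi^1(x) \subseteq \Phi(x)$, the hypothesis forces $\gamma = \beta$. Thus $\beta \in \Phi^1(x)$. Swapping the roles of $x$ and $y$, the identical argument (using $\Phi(y) = \mathrm{cone}_\Phi(\Phi^1(y))$) yields $\beta \in \Phi^1(y)$.

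I do not anticipate a genuine obstacle here: both parts are short and reduce to the single observation that the unique element of $\Phi(x)\cap\Phi(y)$ is rigidly constrained. The only subtlety to flag is in (2), where one must remember that although the cone expansion of $\beta$ over $\Phi^1(x)$ need not be unique, \emph{any} such expansion suffices for the argument, since the positivity of coefficients together with $y^{-1}\beta<0$ forces at least one summand itself to be inverted by $y$.
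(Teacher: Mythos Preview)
Your proof of part~(1) is correct and essentially identical to the paper's.

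For part~(2), your argument is correct but takes a genuinely different route. The paper argues directly at the level of lengths: it fixes a reduced expression for $y$, locates the position at which $\beta$ appears in the induced ordering of $\Phi(y)$, takes the corresponding prefix $y'$, and uses $\Phi(x)\cap\Phi(s_\beta y')=\emptyset$ together with the triangle inequality for lengths to squeeze $\ell(s_\beta x)=\ell(x)-1$. This uses only elementary facts about inversion sets and lengths (Propositions~\ref{prop:rootsystembasics} and~\ref{prop:conetypebasics}). Your argument instead invokes Dyer's Theorem~\ref{thm:eqcond} to write $\beta$ as a non-negative combination of roots in $\Phi^1(x)$, then applies $y^{-1}$ and uses the sign structure of $\Phi$ to force one of those roots into $\Phi(y)$. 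Your approach is shorter and more conceptual, but it relies on the imported (and non-trivial) result $\Phi(x)=\mathrm{cone}_\Phi(\Phi^1(x))$; the paper's argument is self-contained within the elementary material of Section~\ref{sec:1} and avoids that dependency. Either way, the underlying mechanism---forcing an auxiliary root into $\Phi(x)\cap\Phi(y)$ and concluding it equals $\beta$---is the same.
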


\begin{proof}
(1) If $\beta \notin \cE$ then $\beta$ dominates some root $\alpha \in \Phi^{+}$ with $\alpha\neq \beta$. Since $x^{-1}\beta < 0$ and $y^{-1}\beta<0$ we have $x^{-1}\alpha < 0$ and $y^{-1}\alpha<0$ (by the definition of dominance) and hence $\alpha\in\Phi(x)\cap\Phi(y)$, a contradiction.

(2) Let $y = s_1 \cdots s_n$ be a reduced expression. Since $\beta\in\Phi(y)$ we have $\beta = s_1 \cdots s_{j-1}(\alpha_{s_j})$ for some $1\leq j\leq n$ (see Proposition~\ref{prop:rootsystembasics}). Let $y' = s_1 \cdots s_j$. Then $s_{\beta}y' = y' s_j$ with $\ell(s_{\beta}y') = \ell(y') -1$. We have
$$
    \ell(x^{-1}s_{\beta}y') =\ell((s_{\beta}x)^{-1}y')\le \ell(s_{\beta}x) + \ell(y').
$$
Since $\Phi(s_{\beta}y')=\Phi(y's_j)=\Phi(y')\backslash \{\beta\}$ we have $\Phi(x)\cap\Phi(s_{\beta}y')=\emptyset$, and so by Proposition~\ref{prop:conetypebasics}
$$
    \ell(x^{-1}s_{\beta}y') = \ell(x) + \ell(s_{\beta}y') = \ell(x) + \ell(y') - 1.
$$
Thus $\ell(s_{\beta}x)\geq \ell(x^{-1}s_{\beta}y')-\ell(y')=\ell(x)-1$. 

On the other hand, $\ell(s_{\beta}x)\leq \ell(x)-1$ because $\beta\in\Phi(x)$ (see Proposition~\ref{prop:rootsystembasics}). Thus $\ell(s_{\beta}x)=\ell(x)-1$ and so $\beta\in\Phi^1(x)$. 
Interchanging the roles of $x$ and $y$ shows that $\beta \in \Phi^1(y)$ too.
\end{proof}

\begin{defn} \label{def:boundary_roots}
Let $T$ be a cone type. The \textit{boundary roots} of $T$ are the roots $\beta \in \Phi^+$ such that there exists $w \in W$ and $s \in S$ with $w \notin T$ and $s_{\beta}w = ws \in T$. Let $\partial T$ be the set of all boundary roots of~$T$. 
\end{defn}

The conditions $w\notin T$ and $ws\in T$ in Definition~\ref{def:boundary_roots} force $\ell(ws)=\ell(w)-1$ because if $T=T(x)$, we have $\ell(xw)<\ell(x)+\ell(w)$ and $\ell(xws)=\ell(x)+\ell(ws)$, and so $\ell(ws)=\ell(xws)-\ell(x)\leq \ell(xw)+1-\ell(x)<\ell(w)+1$.

In terms of the simplicial structure of the Coxeter complex, the roots $\beta\in\partial T$ are the roots $\beta\in\Phi^+$ such that the wall $H_{\beta}$ bounds~$T$. To understand this interpretation, note that the chambers $wC_0$ and $wsC_0$ are adjacent in the Coxeter complex, and the panel (codimension~$1$ simplex) $\pi=wC_0\cap wsC_0$ lies on the wall $H_{\beta}$, and separates the chamber $wsC_0$ (which is contained in $T$) from the chamber $wC_0$ (which is not contained in $T$). For example, in Figure~\ref{fig:G2examplecone} the walls $H_{\beta}$ with $\beta\in\partial T$ are the three walls bounding~$T$. 

We will formalise the above interpretation in Theorem~\ref{thm:geometry2}. We first develop some important properties of the boundary roots. 

%
%

\begin{thm} \label{thm:boundaryroots}
Let $T$ be a cone type. If $T=T(x^{-1})$ then $\beta\in\partial T$ if and only if there exists $w\in W$ with
$$
\Phi(x)\cap\Phi(w)=\{\beta\}.
$$
Moreover if $\beta\in\partial T$ then there exists $w\in W$, independent of $x$, such that $\Phi(x)\cap\Phi(w)=\{\beta\}$ whenever $T=T(x^{-1})$. 
\end{thm}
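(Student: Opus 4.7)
The plan is to prove both directions of the iff simultaneously with the ``moreover'' clause, by extracting the witness $w$ directly from Definition~\ref{def:boundary_roots}. Since that definition never mentions $x$, the resulting $w$ will automatically be uniform across representatives of $T$. One preliminary observation is needed: every cone type of the form $T(x^{-1})$ is a down-set for the right weak order. Indeed, if $v \peq w \in T(x^{-1})$, then $\ell(xw) = \ell(xv) + \ell(v^{-1}w) \le \ell(x) + \ell(v) + \ell(v^{-1}w) = \ell(x) + \ell(w) = \ell(xw)$ forces equality throughout, hence $v \in T(x^{-1})$.

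For the forward direction together with uniformity, suppose $\beta \in \partial T$ and fix $w' \in W$, $s \in S$ from Definition~\ref{def:boundary_roots}, so $w' \notin T$, $w's \in T$, and $s_\beta w' = w's$. This identity forces $\beta = \pm w'\alpha_s$; the $+$ sign would give $w'\alpha_s > 0$ and hence $w' \peq w's$, which by the preliminary down-set property would contradict $w' \notin T$. Thus $\beta = -w'\alpha_s \in \Phi(w')$, and Proposition~\ref{prop:rootsystembasics}(6) applied to the reduced factorisation $w' = (w's) \cdot s$ yields $\Phi(w's) = \Phi(w') \setminus \{\beta\}$. For any $x$ with $T(x^{-1}) = T$, Proposition~\ref{prop:conetypebasics} then translates $w' \notin T$ into $\Phi(x) \cap \Phi(w') \ne \emptyset$ and $w's \in T$ into $\Phi(x) \cap \Phi(w's) = \emptyset$; together these pin $\Phi(x) \cap \Phi(w') = \{\beta\}$. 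Because $w'$ was selected from $\partial T$ alone, it does not depend on $x$, so this argument delivers both the forward implication and the moreover clause at once.

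For the converse, given $x$ with $T(x^{-1}) = T$ and $w$ with $\Phi(x) \cap \Phi(w) = \{\beta\}$, I would pick a reduced expression $w = s_1 \cdots s_n$ and locate via Proposition~\ref{prop:rootsystembasics}(3) the unique $j$ with $\beta = s_1 \cdots s_{j-1}\alpha_{s_j}$, then set $w' = s_1 \cdots s_j$. Since $w' \peq w$ gives $\Phi(w') \subseteq \Phi(w)$ and $\beta \in \Phi(w')$, we obtain $\Phi(x) \cap \Phi(w') = \{\beta\}$ and hence $w' \notin T$; removing the last letter yields $\Phi(w's_j) = \Phi(w') \setminus \{\beta\}$, so $w's_j \in T$; finally $\beta = -w'\alpha_{s_j}$ gives $s_\beta w' = w's_j$, certifying $\beta \in \partial T$. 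The only delicate step in the whole argument is excluding the $+$ alternative in $\beta = \pm w'\alpha_s$ in the forward direction, which is precisely where the down-set property of cone types plays its essential role; everything else is routine bookkeeping with inversion sets via Propositions~\ref{prop:rootsystembasics} and~\ref{prop:conetypebasics}.
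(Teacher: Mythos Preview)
Your proof is correct and follows essentially the same approach as the paper's: both directions use the same reduced-expression and inversion-set bookkeeping via Propositions~\ref{prop:rootsystembasics} and~\ref{prop:conetypebasics}. The only cosmetic difference is that where the paper invokes the remark after Definition~\ref{def:boundary_roots} (giving $\ell(ws)=\ell(w)-1$ directly by a length inequality) to determine the sign of $\beta$, you instead reprove the down-set property of cone types; the two arguments are equivalent.
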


\begin{proof}
Let $\beta\in\partial T$. Thus there exists $w \in W$ and $s \in S$ with $w \notin T$ and $s_{\beta} w = ws \in T$, and necessarily $\ell(ws)=\ell(w)-1$. Let $x \in W$ with $T(x^{-1}) = T$. Since $w\notin T$ and $ws\in T$ we have $\Phi(x) \cap \Phi(w) \neq \emptyset$ and $\Phi(x) \cap \Phi(ws) = \emptyset$ (by Proposition~\ref{prop:conetypebasics}). Since $\ell(ws)=\ell(w)-1$ and $s_{\beta}w=ws$ we have $\Phi(ws)=\Phi(w)\backslash\{-w\alpha_s\}$ and $\beta=-w\alpha_s$, and thus $\Phi(x)\cap\Phi(w)=\{\beta\}$ (with $w$ independent of the particular $x\in W$ with $T=T(x^{-1})$). 

Suppose that $T=T(x^{-1})$ and that there is $w \in W$ with $\Phi(x) \cap \Phi(w) = \{ \beta \}$. Let $w = s_1 \cdots s_n$ be a reduced expression, and let $1 \le j \le n$ be such that $\beta = s_1 \cdots s_{j-1}\alpha_{s_j}$ (by Proposition~\ref{prop:rootsystembasics}). Let $v = s_1 \cdots s_{j}$. Then $vs_j = s_{\beta} v$ and we have $\Phi(x) \cap \Phi(v) = \{ \beta \}$ and $\Phi(x) \cap \Phi(vs_j) = \emptyset$. Thus $v\notin T$ and $vs\in T$ (by Proposition~\ref{prop:conetypebasics}), and so $\beta \in \partial T$.
\end{proof}

\newpage

We immediately have the following corollary:

\begin{cor}\label{cor:boundaryroots}
Let $T$ be a cone type, with $T=T(x^{-1})$. Then $|\partial T|<\infty$ and $\partial T\subseteq \Phi^1(x)\cap\cE(x)$. In particular, all boundary roots of $T$ are elementary. 
\end{cor}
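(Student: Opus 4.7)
The plan is to deduce this corollary directly from the two preceding results, Theorem~\ref{thm:boundaryroots} and Lemma~\ref{lem:EandPhi1}, both of which do essentially all the work. Fix a cone type $T$ and a representative $x \in W$ with $T = T(x^{-1})$.

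First I would take any boundary root $\beta \in \partial T$ and apply Theorem~\ref{thm:boundaryroots} to produce an element $w \in W$ such that $\Phi(x) \cap \Phi(w) = \{\beta\}$. Feeding this pair $(x,w)$ into Lemma~\ref{lem:EandPhi1} then delivers the two facts needed: part~(1) of that lemma gives $\beta \in \cE$, and together with $\beta \in \Phi(x)$ this yields $\beta \in \Phi(x) \cap \cE = \cE(x)$; part~(2) gives $\beta \in \Phi^1(x)$. Thus $\beta \in \Phi^1(x) \cap \cE(x)$, establishing the containment $\partial T \subseteq \Phi^1(x) \cap \cE(x)$.

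For the finiteness claim, either side of the intersection suffices: $\cE(x) \subseteq \cE$ is finite by \cite[Theorem~2.8]{BH:93}, and alternatively $\Phi^1(x) \subseteq \Phi(x)$ is finite since $\ell(x) < \infty$. Either way, $|\partial T| \leq |\Phi^1(x) \cap \cE(x)| < \infty$. The ``in particular'' statement that every boundary root is elementary is just the observation that $\partial T \subseteq \cE(x) \subseteq \cE$.

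There is no substantial obstacle here: the corollary is essentially a repackaging of the previous two results, and the only choice to be made is to feed the $w$ supplied by Theorem~\ref{thm:boundaryroots} into Lemma~\ref{lem:EandPhi1}. The slight subtlety worth noting in the writeup is that although the $w$ in Theorem~\ref{thm:boundaryroots} can be chosen independently of the representative $x$, the containment $\partial T \subseteq \Phi^1(x) \cap \cE(x)$ depends on $x$; this is fine because the corollary is asserted for a fixed choice of $x$.
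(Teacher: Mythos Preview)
Your proposal is correct and follows essentially the same approach as the paper: invoke Theorem~\ref{thm:boundaryroots} to obtain $w$ with $\Phi(x)\cap\Phi(w)=\{\beta\}$, then apply Lemma~\ref{lem:EandPhi1}. The only cosmetic difference is that the paper argues finiteness directly from $\partial T\subseteq\Phi(x)$ (since $|\Phi(x)|=\ell(x)<\infty$), whereas you note two alternative routes to the same conclusion.
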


\begin{proof}
We have $|\partial T|<\infty$ by Theorem~\ref{thm:boundaryroots}, as $\partial T\subseteq \Phi(x)$ whenever $T=T(x^{-1})$. If $\beta\in\partial T$ and $T=T(x^{-1})$ then by Theorem~\ref{thm:boundaryroots} there exists $w\in W$ with $\{\beta\}=\Phi(x)\cap \Phi(w)$. The result follows from Lemma~\ref{lem:EandPhi1}.
\end{proof}

The main theorem of this section is as follows.

\begin{thm}
\label{thm:geometry2}
If $T$ is a cone type then
$$
    T = \bigcap_{\beta\in\partial T} H_{\beta}^+.
$$
Moreover, no root can be removed from this intersection (in the sense that if a root is omitted then the equality becomes strict containment). 
\end{thm}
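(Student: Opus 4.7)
The plan is to prove the equality $T = \bigcap_{\beta\in\partial T} H_\beta^+$ by two containments, and then establish sharpness by exhibiting, for each $\beta\in\partial T$, a chamber witnessing its necessity.

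First I would dispose of the easy containment $T \subseteq \bigcap_{\beta\in\partial T} H_\beta^+$. Fix $x\in W$ with $T = T(x^{-1})$. Corollary~\ref{cor:boundaryroots} gives $\partial T \subseteq \Phi(x)$, so Theorem~\ref{thm:geometry1} immediately yields $T = \bigcap_{\beta\in\Phi(x)} H_\beta^+ \subseteq \bigcap_{\beta\in\partial T} H_\beta^+$.

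Next, and this is the main step, I would prove the reverse containment by contrapositive: given $y\notin T$, I construct a boundary root lying in $\Phi(y)$. Choose a reduced expression $y = s_1\cdots s_n$ and write $v_j = s_1\cdots s_j$. Since $v_0 = e \in T$ but $v_n = y\notin T$, there is a smallest index $j$ with $v_j\notin T$. Setting $w := v_j$ and $s := s_j$ we have $w\notin T$, $ws = v_{j-1}\in T$, and $\ell(ws) = \ell(w)-1$, so Definition~\ref{def:boundary_roots} yields $\beta := -w\alpha_s = v_{j-1}\alpha_{s_j} \in \partial T$. On the other hand, Proposition~\ref{prop:rootsystembasics}(3) shows that this $\beta$ is precisely the $j$-th inversion of $y$, so $\beta \in \Phi(y)$ and hence $y\in H_\beta^-$, contradicting $y\in \bigcap_{\beta\in\partial T} H_\beta^+$.

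For the minimality claim I would fix $\beta\in\partial T$ and use Definition~\ref{def:boundary_roots} to produce $w\notin T$ and $s\in S$ with $ws\in T$, $\ell(ws) = \ell(w)-1$, and $\beta = -w\alpha_s$. The reduced factorisation $w = (ws)s$, combined with Proposition~\ref{prop:rootsystembasics}(6), gives $\Phi(w) = \Phi(ws)\sqcup\{\beta\}$. Since $ws\in T$, the already-established forward containment yields $\Phi(ws)\cap\partial T = \emptyset$, so $\Phi(w)\cap\partial T = \{\beta\}$. Hence $w$ lies in $H_\gamma^+$ for every $\gamma\in\partial T\setminus\{\beta\}$ while $w\notin T$, showing that $\beta$ cannot be omitted from the intersection.

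The main obstacle is the reverse containment: one must produce a boundary root inside $\Phi(y)$ starting only from the bare hypothesis $y\notin T$. The conceptual point is that any reduced gallery from $e\in T$ to $y\notin T$ must at some step pass from inside $T$ to outside $T$ through a panel, and the root associated with that step is simultaneously a boundary root of $T$ (directly by the definition) and a member of $\Phi(y)$ (because the prefix-inversions along a reduced expression enumerate $\Phi(y)$).
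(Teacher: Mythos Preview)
Your proof is correct and follows essentially the same approach as the paper's: the easy containment via Corollary~\ref{cor:boundaryroots} and Theorem~\ref{thm:geometry1}, the reverse containment by walking along a reduced expression for an element outside $T$ until it first exits $T$ to locate a boundary root in its inversion set, and minimality via the witness pair $(w,s)$ from Definition~\ref{def:boundary_roots}. Your minimality argument is slightly more explicit than the paper's in invoking the forward containment to deduce $\Phi(ws)\cap\partial T=\emptyset$, but the substance is identical.
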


\begin{proof}
Let $x$ be any element with $T(x^{-1}) = T$. By Corollary~\ref{cor:boundaryroots} we have $\partial T \subseteq \Phi(x)$ and thus by Theorem~\ref{thm:geometry1} we have
\begin{align} \label{eq:contain}
    T=T(x^{-1})  = \bigcap_{\beta \in \Phi(x)} H_{\beta}^{+} \subseteq \bigcap_{\beta \in \partial T} H_{\beta}^{+}
\end{align}
Now suppose that there exists
$$
    v \in \big( \bigcap_{\beta \in \partial T} H_{\beta}^{+} \big) \backslash T.
$$
Let $v = s_1 \cdots s_n$ be a reduced expression. Since $v \notin T$ we have $\ell(x^{-1}v) < \ell(x^{-1}) + \ell(v)$ and so there is an index $1 \le j < n$ such that $\ell(x^{-1}s_1 \ldots s_j) = \ell(x^{-1}) + j$ and $\ell(x^{-1}s_1 \ldots s_{j+1}) < \ell(x^{-1}) + j+1$. Let $w = s_1 \cdots s_{j+1}$ and let $s = s_{j+1}$. Then $w \notin T$ and $ws \in T$ and writing $\beta = -w\alpha_s > 0$ we have $s_{\beta} w = ws$. Thus $\beta \in \partial T$. Since $\beta\in\Phi(w)$ (as $w^{-1}\beta=-\alpha_s<0$) and $\Phi(w)\subseteq \Phi(v)$ (as $w\peq v$) we have $v\in H_{\beta}^-$, a contradiction. Thus equality holds in~(\ref{eq:contain}).

Now suppose $\beta_0 \in \partial T$ is omitted from the intersection, and let $ w \in W$, $s \in S$ be such that $w \notin T$ and $s_{\beta_0}w = ws \in T$. Since $\Phi(ws) = \Phi(w) \backslash \{ \beta_0 \}$ and $ws \in T$ we have 
$$
    w \in \bigcap_{\beta \in \partial T \setminus \{ \beta_0 \}} H_{\beta}^{+},
$$
and so the right hand side strictly contains $T$ (as $w\notin T$).
\end{proof}

While Theorem~\ref{thm:geometry2} gives the most precise formula for the cone type (that is, with no redundancies in the intersection), the following corollary collects various other useful formulae for the cone type.

\begin{cor} \label{cor:geometry3}
Let $T$ be a cone type. If $T=T(x^{-1})$ then
$$
    T = \bigcap_{\Phi(x)} H_{\beta}^+ = \bigcap_{\cE(x)}H_{\beta}^+ = \bigcap_{\Phi^1(x)}H_{\beta}^+ = \bigcap_{\Phi^1(x) \cap \cE(x)} H_{\beta}^+ = \bigcap_{\partial T} H_{\beta}^+.
$$
\end{cor}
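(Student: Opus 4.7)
The plan is to deduce Corollary~\ref{cor:geometry3} by a simple squeezing argument from the two extreme equalities already available: the leftmost equality $T = \bigcap_{\Phi(x)} H_{\beta}^+$ is Theorem~\ref{thm:geometry1}, and the rightmost equality $T = \bigcap_{\partial T} H_{\beta}^+$ is Theorem~\ref{thm:geometry2}. Since all five intersections are shown to coincide, it suffices to establish the chain of set-containments relating the indexing sets, and then invoke the elementary fact that enlarging the indexing set of an intersection can only shrink it.

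First I would record the relevant containments among the four indexing sets $\Phi(x)$, $\cE(x)$, $\Phi^1(x)$, $\Phi^1(x) \cap \cE(x)$, and $\partial T$. By definition $\cE(x) = \Phi(x) \cap \cE \subseteq \Phi(x)$, and by Proposition~\ref{prop:rootsystembasics}(4) together with the definition of $\Phi^1(x)$ we have $\Phi^1(x) \subseteq \Phi(x)$. Corollary~\ref{cor:boundaryroots} gives $\partial T \subseteq \Phi^1(x) \cap \cE(x)$. Putting these together we obtain the containments
\[
\partial T \;\subseteq\; \Phi^1(x) \cap \cE(x) \;\subseteq\; \Phi^1(x) \;\subseteq\; \Phi(x), \qquad \Phi^1(x) \cap \cE(x) \;\subseteq\; \cE(x) \;\subseteq\; \Phi(x).
\]

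Next I would translate these containments into the reverse containments among the half-space intersections. Because each $H_{\beta}^+$ is a subset of $W$ and intersecting over a larger family can only produce a smaller set, the above yields
\[
\bigcap_{\beta \in \Phi(x)} H_{\beta}^+ \;\subseteq\; \bigcap_{\beta \in \cE(x)} H_{\beta}^+,\ \bigcap_{\beta \in \Phi^1(x)} H_{\beta}^+ \;\subseteq\; \bigcap_{\beta \in \Phi^1(x) \cap \cE(x)} H_{\beta}^+ \;\subseteq\; \bigcap_{\beta \in \partial T} H_{\beta}^+.
\]
Since the leftmost expression equals $T$ by Theorem~\ref{thm:geometry1} and the rightmost expression equals $T$ by Theorem~\ref{thm:geometry2}, every intermediate intersection is squeezed to $T$, which is exactly the statement of the corollary.

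There is essentially no obstacle here: all of the nontrivial content has been packaged into Theorems~\ref{thm:geometry1} and~\ref{thm:geometry2} and Corollary~\ref{cor:boundaryroots}. The only mild point of care is the independence of the right-hand sides of Corollary~\ref{cor:geometry3} from the choice of representative $x$ with $T(x^{-1}) = T$: this is automatic for the intersections over $\Phi(x)$ (by Theorem~\ref{thm:geometry1}) and over $\partial T$ (by Theorem~\ref{thm:geometry2}), and the squeeze above then forces it for the intermediate ones.
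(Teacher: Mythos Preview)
Your proof is correct and follows essentially the same approach as the paper: both use the containments $\partial T \subseteq \Phi^1(x)\cap\cE(x) \subseteq \Phi^1(x),\,\cE(x) \subseteq \Phi(x)$ from Corollary~\ref{cor:boundaryroots} to squeeze each intermediate intersection between $\bigcap_{\Phi(x)}H_\beta^+=T$ (Theorem~\ref{thm:geometry1}) and $\bigcap_{\partial T}H_\beta^+=T$ (Theorem~\ref{thm:geometry2}). The only cosmetic point is that your displayed chain with the comma reads a little awkwardly; the paper simply runs the squeeze twice, once through $\Phi^1(x)$ and once through $\cE(x)$.
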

\begin{proof}
By Corollary~\ref{cor:boundaryroots} we have $\partial T \subseteq \Phi^1(x) \subseteq \Phi(x)$ and hence by Theorems~\ref{thm:geometry1} and~\ref{thm:geometry2} we have
$$
    T = \bigcap_{\Phi(x)} H_{\beta}^+ \subseteq \bigcap_{\Phi^1(x)} H_{\beta}^+ \subseteq \bigcap_{\partial T} H_{\beta}^+ = T,
$$
and so equality holds throughout. By Corollary~\ref{cor:boundaryroots} we also have $\partial T \subseteq \cE(x)$, and so
$$
    T = \bigcap_{\Phi(x)} H_{\beta}^+ \subseteq \bigcap_{\cE(x)} H_{\beta}^+ \subseteq \bigcap_{\partial T} H_{\beta}^+ = T,
$$
and so equality holds throughout.
\end{proof}

\newpage

The formulae in Corollary~\ref{cor:geometry3} give another proof of Corollary~\ref{cor:finiteconetypes1}, independent of automata theory, as follows.

\begin{cor}\label{cor:finiteconetypes2}
Each finitely generated Coxeter system has finitely many cone types.
\end{cor}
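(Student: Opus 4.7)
The plan is to establish that $\mathbb{T}$ embeds into a finite set. The key ingredient is Theorem~\ref{thm:geometry2}, which shows that every cone type $T$ is completely determined by its set $\partial T$ of boundary roots via the equality
$$
T = \bigcap_{\beta \in \partial T} H_{\beta}^+.
$$
In particular, the assignment $T \mapsto \partial T$ defines an \emph{injective} map from $\mathbb{T}$ into the collection of subsets of $\Phi^+$.

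To conclude, I would invoke Corollary~\ref{cor:boundaryroots}, which guarantees that every boundary root of a cone type is elementary, so the image of the above injection actually lies in the collection of subsets of $\cE$. Since $|\cE|<\infty$ by the Brink-Howlett theorem \cite{BH:93}, there are only finitely many such subsets, and hence $|\mathbb{T}| \leq 2^{|\cE|} < \infty$. There is essentially no obstacle remaining here: all the substantive work has already been done in establishing the optimal intersection formula in Theorem~\ref{thm:geometry2} and the inclusion $\partial T \subseteq \cE$ in Corollary~\ref{cor:boundaryroots}, together with the finiteness of $\cE$. An entirely parallel argument could instead be based on the formula $T = \bigcap_{\beta \in \cE(x)} H_{\beta}^+$ from Corollary~\ref{cor:geometry3}, noting that $\cE(x)$ ranges over the finite set $\mathbb{E}$; the dependence on automata theory is avoided either way, since the only external input is the finiteness of~$\cE$.
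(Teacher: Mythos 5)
Your proposal is correct and is essentially the argument the paper gives: the paper's proof uses the formula $T(x^{-1})=\bigcap_{\beta\in\cE(x)}H_{\beta}^+$ from Corollary~\ref{cor:geometry3} together with $|\cE|<\infty$, which is exactly the ``parallel argument'' you mention at the end. Your primary route via $\partial T\subseteq\cE$ and Theorem~\ref{thm:geometry2} is an equivalent instantiation of the same idea and is equally valid.
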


\begin{proof}
The result follows from the formula 
$$
T(x^{-1})=\bigcap_{\beta\in\cE(x)}H_{\beta}^+
$$
and the fact that there are only finitely many elementary roots.
\end{proof}

In Proposition~\ref{prop:conetypebasics} we listed some equivalences to the statement $y\in T(x^{-1})$. We now record some further equivalences. 

\begin{cor} \label{cor:conetypeequivalences}
Let $x, y \in W$. The following are equivalent.
\begin{enumerate}
\item $y\in T(x^{-1})$;
\item $\cE(x)\cap\cE(y)=\emptyset$;
\item $\Phi^1(x)\cap\Phi^1(y)=\emptyset$;
\item $\partial T(x^{-1})\cap \Phi(y)=\emptyset$.
\end{enumerate}
\end{cor}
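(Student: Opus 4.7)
My plan is to close the cycle $(1)\Rightarrow(2),(3),(4)$ and then each of $(2),(3),(4)\Rightarrow(1)$. By Proposition~\ref{prop:conetypebasics}(4), condition (1) unpacks as $\Phi(x)\cap\Phi(y)=\emptyset$, and since each of $\cE(x)$, $\Phi^1(x)$, and (by Corollary~\ref{cor:boundaryroots}) $\partial T(x^{-1})$ is a subset of $\Phi(x)$, the forward implications $(1)\Rightarrow(2),(3),(4)$ are immediate.

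The implication $(4)\Rightarrow(1)$ is an unwinding of Theorem~\ref{thm:geometry2}: $y\in T(x^{-1})$ means $y\in H_{\beta}^+$ for every $\beta\in\partial T(x^{-1})$, and by definition of $H_{\beta}^+$ the latter is equivalent to $\beta\notin\Phi(y)$ for every such $\beta$. For $(2)\Rightarrow(1)$ I argue contrapositively using the dominance order on $\Phi^+$: if $\Phi(x)\cap\Phi(y)\neq\emptyset$, pick $\beta$ minimal under dominance in this intersection; were $\beta$ to dominate some $\alpha\neq\beta$ in $\Phi^+$, the very definition of dominance would put $\alpha\in\Phi(x)\cap\Phi(y)$, contradicting minimality. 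Hence $\beta$ is elementary, so $\beta\in\cE(x)\cap\cE(y)$.

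The subtle implication, and the main obstacle, is $(3)\Rightarrow(1)$. The key tool is Theorem~\ref{thm:eqcond}, which gives $\Phi(w)=\mathrm{cone}_{\Phi}(\Phi^1(w))$. An easy application: given $\beta\in\Phi(x)\cap\Phi(y)$, writing $\beta=\sum c_i\alpha_i$ with $\alpha_i\in\Phi^1(x)$ and $c_i\geq 0$, the inequality $y^{-1}\beta<0$ forces $y^{-1}\alpha_j<0$ for some $j$, yielding $\alpha_j\in\Phi^1(x)\cap\Phi(y)$; symmetrically one obtains an element of $\Phi(x)\cap\Phi^1(y)$. The hard part is upgrading either of these one-sided inclusions to a common element of $\Phi^1(x)\cap\Phi^1(y)$. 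My plan is an induction on $\ell(x)+\ell(y)$ via a minimal counterexample: among pairs $(x,y)$ with $\Phi^1(x)\cap\Phi^1(y)=\emptyset$ yet $\Phi(x)\cap\Phi(y)\neq\emptyset$, choose one of smallest length-sum, pick $\beta$ in the intersection, and pass to a shortest prefix $x_1\peq x$ for which $\beta$ is a final root, so that $\beta\in\Phi^0(x_1)\subseteq\Phi^1(x_1)$. If $\ell(x_1)<\ell(x)$, the inductive hypothesis applied to $(x_1,y)$ produces an element of $\Phi^1(x_1)\cap\Phi^1(y)\subseteq\Phi(x)\cap\Phi^1(y)$; iterating the construction symmetrically on the $y$-side and exploiting the finiteness of $\Phi(x)\cap\Phi(y)$ should force termination at a common element of $\Phi^1(x)\cap\Phi^1(y)$, yielding the required contradiction.
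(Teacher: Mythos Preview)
Your forward implications and $(4)\Rightarrow(1)$ are fine. For $(2)\Rightarrow(1)$, your dominance argument is correct but the paper's route is shorter and more uniform: Corollary~\ref{cor:geometry3} gives $T(x^{-1})=\bigcap_{\beta\in\cE(x)}H_\beta^+$, whence $y\in T(x^{-1})$ if and only if $\cE(x)\cap\Phi(y)=\emptyset$; since every root of $\cE(x)$ lies in $\cE$, one has $\cE(x)\cap\Phi(y)=\cE(x)\cap(\Phi(y)\cap\cE)=\cE(x)\cap\cE(y)$, and $(1)\Leftrightarrow(2)$ drops out with no appeal to dominance. The paper then invokes the same half-space identity with $\cE(x)$ replaced by $\Phi^1(x)$ (respectively $\partial T(x^{-1})$) to handle (3) (respectively (4)).

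Your induction for $(3)\Rightarrow(1)$ does not close. From a minimal counterexample $(x,y)$ and $\beta\in\Phi(x)\cap\Phi(y)$, passing to a proper prefix $x_1\prec x$ and applying the inductive hypothesis yields some $\gamma\in\Phi^1(x_1)\cap\Phi^1(y)$; but $\Phi^1(x_1)\not\subseteq\Phi^1(x)$ in general (already in type $\sA_2$ with $x_1=s_1s_2\prec x=s_1s_2s_1$ one has $\alpha_{s_1}+\alpha_{s_2}\in\Phi^1(x_1)\setminus\Phi^1(x)$), so all you obtain is $\gamma\in\Phi(x)\cap\Phi^1(y)$. Iterating ``on the $y$-side'' with $\gamma$ gains nothing, since $\gamma$ already lies in $\Phi^1(y)$; returning to the $x$-side just produces another element of $\Phi(x)\cap\Phi^1(y)$ rather than of $\Phi^1(x)\cap\Phi^1(y)$. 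Finiteness of $\Phi(x)\cap\Phi(y)$ does not prevent this process from cycling among such roots without ever landing in the desired intersection. You have, however, correctly spotted that (3) is more delicate than (2): unlike $\cE(w)=\Phi(w)\cap\cE$, the set $\Phi^1(w)$ is not the intersection of $\Phi(w)$ with any fixed global subset of $\Phi^+$, so the one-line reduction that works for (2) does not carry over verbatim, and the paper's ``similarly'' leaves this passage implicit.
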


\begin{proof}
Using the formulae in Corollary~\ref{cor:geometry3} we have $y\in T(x^{-1})$ if and only if $y\in H_{\beta}^+$ for all $\beta\in\cE(x)$, if and only if $\ell(s_{\beta}y)>\ell(y)$ for all $\beta\in\cE(x)$, if and only if $\beta\notin \Phi(y)$ for all $\beta\in\cE(x)$. Thus (1) and (2) are equivalent. Similarly (1) and (3) are equivalent, and (1) and (4) are equivalent.
\end{proof}

\begin{rem}
The following example shows that each formula in Corollary~\ref{cor:geometry3}, except for the boundary root formula, may have redundancies. Consider $(W,S)$ of type $\tilde{\sB}_2$, with $m_{s,t}=4$, $m_{t,u}=4$, and $m_{s,u}=2$. Consider the element $x=tus$. Let $T=T(x^{-1})$. We have $\partial T=\{\alpha_s,\alpha_u\}$, while $\Phi^1(x)=\cE(x)=\{\alpha_s,\alpha_u,su\alpha_t\}$ (see Figure~\ref{fig:B2partitions}).
\end{rem}

Later in this paper we will be interested in the sets 
$$
X_T=\{x\in W\mid T(x^{-1})=T\},\quad\text{for $T\in\mathbb{T}$}.
$$
To obtain a formula for $X_T$ as an intersection of half-spaces, we introduce the \textit{internal roots} of a cone type.

\begin{defn}
Let $T$ be a cone type. A root $\beta\in\Phi^+$ is an \textit{internal root} of $T$ if there exists $w\in T$ with $\beta\in\Phi(w)$. Let $\intT$ denote the set of all internal roots of $T$. Thus $\intT=\bigcup_{w\in T}\Phi(w)$. 
\end{defn}

Geometrically, $\intT$ is the set of roots $\beta\in\Phi^+$ such that the wall $H_{\beta}$ separates two elements of $T$. To see this, note that if $\beta\in\intT$ then $\beta\in \Phi(w)$ for some $w\in T$, and so $\beta$ separates $e\in T$ and $w\in T$. Conversely, if $\beta\in\Phi^+$ and $H_{\beta}$ separates elements $w,v\in T$ then we may assume $w\in H_{\beta}^-$ (and then $v\in H_{\beta}^+$) and so $\beta\in \Phi(w)$.

\begin{thm}\label{thm:conetypeprojection}
For $T\in\mathbb{T}$ we have
$$
X_T=\bigg(\bigcap_{\beta\in\partial T}H_{\beta}^-\bigg)\cap\bigg(\bigcap_{\beta\in\intT}H_{\beta}^+\bigg).
$$
\end{thm}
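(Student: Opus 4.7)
The plan is to translate the right-hand side into inversion-set language and then verify two set-theoretic containments. Recall that for $\beta \in \Phi^+$, the half-spaces $H_\beta^\pm$ can be described as $H_\beta^- = \{w \in W \mid \beta \in \Phi(w)\}$ and $H_\beta^+ = \{w \in W \mid \beta \notin \Phi(w)\}$. Consequently, the identity to be proved reduces to the equivalence
\[
x \in X_T \iff \partial T \subseteq \Phi(x) \ \text{ and } \ \Phi(x) \cap \intT = \emptyset.
\]

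For the forward direction, assume $T(x^{-1}) = T$. The inclusion $\partial T \subseteq \Phi(x)$ is immediate from Corollary~\ref{cor:boundaryroots}, which states that $\partial T \subseteq \cE(x) \subseteq \Phi(x)$ for every representative $x$ with $T(x^{-1}) = T$. For the second condition, observe that for any $w \in T = T(x^{-1})$, Proposition~\ref{prop:conetypebasics} gives $\Phi(x) \cap \Phi(w) = \emptyset$; taking the union over all $w \in T$ yields $\Phi(x) \cap \intT = \emptyset$.

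For the backward direction, suppose that $\partial T \subseteq \Phi(x)$ and $\Phi(x) \cap \intT = \emptyset$. Using $\partial T \subseteq \Phi(x)$ together with Theorem~\ref{thm:geometry1} and Theorem~\ref{thm:geometry2}, we have
\[
T(x^{-1}) = \bigcap_{\beta \in \Phi(x)} H_\beta^+ \subseteq \bigcap_{\beta \in \partial T} H_\beta^+ = T.
\]
Conversely, any $w \in T$ satisfies $\Phi(w) \subseteq \intT$ by definition of $\intT$, and so the hypothesis $\Phi(x) \cap \intT = \emptyset$ forces $\Phi(x) \cap \Phi(w) = \emptyset$; Proposition~\ref{prop:conetypebasics} then yields $w \in T(x^{-1})$. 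Hence $T \subseteq T(x^{-1})$, and equality follows, so $x \in X_T$.

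I do not anticipate any substantial obstacle: once the intersection is rewritten via the correspondence between half-spaces and inversion sets, the theorem reduces to a straightforward application of tools already established, namely Proposition~\ref{prop:conetypebasics}, Corollary~\ref{cor:boundaryroots}, and the two geometric descriptions of cone types in Theorems~\ref{thm:geometry1} and~\ref{thm:geometry2}. A minor consistency check worth mentioning is that $\partial T$ and $\intT$ are automatically disjoint whenever $X_T \neq \emptyset$ (since $\partial T \subseteq \Phi(x)$ while $\Phi(x) \cap \intT = \emptyset$), so splitting the description into one intersection over $\partial T$ and another over $\intT$ loses no information.
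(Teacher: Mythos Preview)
Your proof is correct and follows essentially the same approach as the paper's: both translate the half-space conditions into inversion-set language and verify the two containments using Corollary~\ref{cor:boundaryroots}, Proposition~\ref{prop:conetypebasics}, and Theorem~\ref{thm:geometry2}. Your backward direction is phrased slightly more compactly (using the chain $T(x^{-1})=\bigcap_{\Phi(x)}H_\beta^+\subseteq\bigcap_{\partial T}H_\beta^+=T$ directly), but the underlying argument is the same.
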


\begin{proof}
Let $Y$ denote the right hand side of the equation in the statement of the theorem. Suppose that $x\in X_T$. Thus $T(x^{-1})=T$, and so $\partial T\subseteq\Phi(x)$ (by Corollary~\ref{cor:boundaryroots}) and so $x\in H_{\beta}^-$ for all $\beta\in\partial T$. If $\beta\in \intT$ then $\beta\in\Phi(w)$ for some $w\in T$, and since $\Phi(x)\cap\Phi(w)=\emptyset$ (by Proposition~\ref{prop:conetypebasics}) we have $\beta\notin\Phi(x)$, and so $x\in H_{\beta}^+$ for all $\beta\in\intT$. Hence $X_T\subseteq Y$.

Conversely, suppose that $y\in Y$. We claim that $T(y^{-1})=T$. On the one hand, if there exists $w\in T$ with $w\notin T(y^{-1})$ then $\Phi(w)\cap\Phi(y)\neq\emptyset$, and for any $\beta\in\Phi(w)\cap\Phi(y)$ we have $\beta\in\intT$ (as $\beta\in\Phi(w)$ and $w\in T$) and $y\in H_{\beta}^-$ (as $\beta\in\Phi(y)$), a contradiction. Thus $T\subseteq T(y^{-1})$. On the other hand, if $w\notin T$ then by Theorem~\ref{thm:geometry2} there is $\beta\in\partial T$ with $w\in H_{\beta}^-$, and so $\beta\in\partial T\cap \Phi(w)$. Since $y\in Y$ we have $y\in H_{\beta}^-$, and so $\beta\in\Phi(y)$. Thus $\Phi(y)\cap\Phi(w)\neq\emptyset$, and so $w\notin T(y^{-1})$. Thus $T(y^{-1})\subseteq T$, completing the proof.
\end{proof}

Note that the intersection in Theorem~\ref{thm:conetypeprojection} may be over an infinite set of roots, as $\intT$ may be infinite. We will show in Corollary~\ref{cor:finiteintersection} that in fact each set $X_T$ can be expressed as an intersection of finitely many half-spaces. 

\subsection{On containment of cone types}\label{sec:2:3}

In this section we consider the connection between containment of cone types $T(y^{-1})\subseteq T(x^{-1})$ and the property $x\peq y$. We saw in Lemma~\ref{lem:containoneway} that if $x\peq y$ then $T(y^{-1})\subseteq T(x^{-1})$. The converse implication is obviously false in general. For example, if $x$ and $y$ are elements in the red shaded region of Figure~\ref{fig:G2examplecone}, then $T(x^{-1})=T(y^{-1})$, however of course $x\peq y$ may not occur.

However, with some constraints on~$x$, the reverse implication does hold. The following theorem shows that $x\in W_J$, with $J\subseteq S$, is a sufficient condition. Later in this paper we conjecture a generalisation of this result (see Conjecture~\ref{conj:orderisomorphism}).

\begin{thm} \label{thm:parabolicconetype}
Let $x \in W_J$ and $y \in W$ with $J\subseteq S$ spherical. Then $T(y^{-1}) \subseteq T(x^{-1})$ if and only if $x\peq y$.
\end{thm}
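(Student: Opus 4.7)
The reverse implication is already Lemma~\ref{lem:containoneway}. For the forward implication my plan is to argue by contradiction: assuming $T(y^{-1})\subseteq T(x^{-1})$ but $x\not\peq y$, I will exhibit a witness $v_1\in T(y^{-1})\setminus T(x^{-1})$. The key new tool will be the longest element $w_J\in W_J$ afforded by sphericity of $J$.

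First I would reduce to a statement purely inside $W_J$. Using the parabolic decomposition $y=y_1y_2$ with $y_1\in W_J$ and $y_2\in W^J$, Proposition~\ref{prop:rootsystembasics}(6) gives $\Phi(y)=\Phi(y_1)\sqcup y_1\Phi(y_2)$. Since $x\in W_J$ we have $\Phi(x)\subseteq \Phi_J\cap\Phi^+$, and since $y_1\in W_J$ fixes the simple-root coefficients outside $J$ while Lemma~\ref{lem:rootsdecomposition} gives $\Phi(y_2)\cap\Phi_J=\emptyset$, the component $y_1\Phi(y_2)$ is disjoint from $\Phi_J$. Hence $\Phi(x)\subseteq\Phi(y)$ is equivalent to $\Phi(x)\subseteq\Phi(y_1)$, and by Proposition~\ref{prop:rootsystembasics}(5) it suffices to prove $\Phi(x)\subseteq\Phi(y_1)$.

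Suppose for contradiction there exists $\beta\in\Phi(x)\setminus\Phi(y_1)\subseteq\Phi_J\cap\Phi^+$. The candidate witness is $v_1=y_1w_J\in W_J$. The heart of the argument is the identity $\Phi(v_1)=(\Phi_J\cap\Phi^+)\setminus\Phi(y_1)$, which I would verify as follows: for $\alpha\in\Phi_J\cap\Phi^+$ one has $v_1^{-1}\alpha=w_Jy_1^{-1}\alpha$, and since $w_J$ sends $\Phi_J\cap\Phi^+$ bijectively onto $\Phi_J\cap\Phi^-$, we get $v_1^{-1}\alpha<0$ iff $y_1^{-1}\alpha>0$ iff $\alpha\notin\Phi(y_1)$; while for $\alpha\in\Phi^+\setminus\Phi_J$ both $y_1$ and $w_J$ preserve the simple-root coefficients outside $J$, so $v_1^{-1}\alpha>0$ and $\alpha\notin\Phi(v_1)$. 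Granted this identity, $\beta\in\Phi(v_1)\cap\Phi(x)$ forces $v_1\notin T(x^{-1})$ by Proposition~\ref{prop:conetypebasics}, while $\Phi(v_1)\cap\Phi(y_1)=\emptyset$ by construction and $\Phi(v_1)\cap y_1\Phi(y_2)=\emptyset$ (since $\Phi(v_1)\subseteq\Phi_J$ and $y_1\Phi(y_2)\cap\Phi_J=\emptyset$) together yield $v_1\in T(y^{-1})$, producing the required contradiction.

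The main technical point is the computation of $\Phi(y_1w_J)$; sphericity of $J$ enters essentially through the existence of $w_J$, which acts as a ``complement'' map on the positive roots of $\Phi_J$ and makes the witness construction work. Without such an element there is no obvious analogue, which is consistent with the fact that the natural generalisation beyond spherical $x$ remains only conjectural.
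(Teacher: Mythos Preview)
Your proof is correct and rests on the same key idea as the paper's: both take the witness element $y_1w_J$ (called $z=uw_J$ in the paper, $v_1$ in your argument) and use that it lies in $T(y^{-1})$. The paper proceeds directly, using $z\in T(x^{-1})$ together with length computations to exhibit $y=x\cdot(w^{-1}v)$ with lengths additive; you phrase the same equivalence through inversion sets via the identity $\Phi(y_1w_J)=\Phi_J^+\setminus\Phi(y_1)$ and run it as a contradiction, which is a clean repackaging of the identical content.
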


\begin{proof}
By Lemma~\ref{lem:containoneway}, we only need to show that if $T(y^{-1}) \subseteq T(x^{-1})$ then $x\peq y$. Hence suppose that $T(y^{-1})\subseteq T(x^{-1})$, with $x\in W_J$. Write $y=uv$ as in  (\ref{eq:WJdecomposition}), with $u\in W_J$ and $v\in W^J$.

Let $z=uw_J$, with $w_J$ the longest element of $W_J$. Since $v\in W^J$ we have
\begin{align*}
\ell(y^{-1}z)&=\ell(v^{-1}w_J)=\ell(v)+\ell(w_J)=\ell(y)-\ell(u)+\ell(w_J)=\ell(y)+\ell(z),
\end{align*}
and so $z\in T(y^{-1})$. Thus $z\in T(x^{-1})$. 

Let $w=w_Jz^{-1}x\in W_J$, and note that $xw^{-1}v=y$. We claim that 
\begin{align}\label{eq:prefix}
\ell(xw^{-1}v)=\ell(x)+\ell(w^{-1}v),
\end{align}
from which the desired result $x\peq y$ follows. To prove~(\ref{eq:prefix}), we have
\begin{align*}
\ell(xw^{-1}v)&=\ell(xw^{-1})+\ell(v)&&\text{as $v\in W^J$}\\
&=\ell(w_J)-\ell(z)+\ell(v)&&\text{as $xw^{-1}=zw_J$}\\
&=\ell(w_J)-(\ell(x^{-1}z)-\ell(x))+\ell(v)&&\text{as $z\in T(x^{-1})$}\\
&=\ell(x)+\ell(w)+\ell(v)&&\text{as $w=w_Jz^{-1}x$ and $z^{-1}x\in W_J$}\\
&=\ell(x)+\ell(w^{-1}v)&&\text{as $v\in W^J$ and $w\in W_J$},
\end{align*}
completing the proof.
\end{proof}

%
%

\begin{cor}
Let $W$ be a finite Coxeter group. Then for $x,y \in W$ we have $T(y^{-1}) \subseteq T(x^{-1})$ if and only if $x\peq y$.
\end{cor}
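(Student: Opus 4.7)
The plan is essentially immediate: specialise Theorem~\ref{thm:parabolicconetype} to the case $J = S$. Since $W$ is finite by hypothesis, the full generating set $S$ is spherical, so $W_J = W_S = W$. In particular, every element $x \in W$ automatically satisfies $x \in W_J$, so the hypothesis of Theorem~\ref{thm:parabolicconetype} is met for arbitrary $x, y \in W$.

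Given this, the forward direction ``$T(y^{-1}) \subseteq T(x^{-1})$ implies $x \peq y$'' is exactly the content of Theorem~\ref{thm:parabolicconetype} applied with $J = S$. The reverse direction ``$x \peq y$ implies $T(y^{-1}) \subseteq T(x^{-1})$'' is the general Lemma~\ref{lem:containoneway}, which requires no finiteness assumption on $W$. Together these give the stated biconditional.

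There is really no obstacle here; the corollary is a straightforward specialisation rather than a genuinely new statement. The only thing worth flagging in the write-up is the observation that finiteness of $W$ is precisely what is needed to place \emph{every} $x \in W$ into a spherical parabolic subgroup, thereby lifting the hypothesis $x \in W_J$ in Theorem~\ref{thm:parabolicconetype}.
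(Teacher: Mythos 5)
Your proposal is correct and is exactly the paper's argument: the paper also obtains the corollary by taking $J=S$ (so $W_J=W$) in Theorem~\ref{thm:parabolicconetype}, with the reverse implication already covered there via Lemma~\ref{lem:containoneway}. Nothing further is needed.
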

\begin{proof}
The result follows by letting $W = W_J$ in the statement of Theorem~\ref{thm:parabolicconetype}.
\end{proof}

\subsection{Cone types in finite Coxeter groups}

We can describe cone types in a finite Coxeter group very precisely, and this description will be useful in conjunction with Theorem~\ref{thm:parabolicconetype} in later sections. 

\begin{prop} \label{prop:paraboliccone}
Let $W$ be a finite Coxeter group and let $w_0$ be the longest element of $W$. Then for $x \in W$ we have
$$
    T(x^{-1}) = \{ w \in W \mid w \peq xw_0 \}.
    $$
In particular, for all $x,y \in W$ we have $T(x^{-1}) = T(y^{-1})$ if and only if $x = y$.
\end{prop}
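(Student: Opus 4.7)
The plan is to reduce both assertions to a clean identity about inversion sets in a finite Coxeter group, namely that $\Phi(xw_0) = \Phi^+ \setminus \Phi(x)$ for every $x \in W$. Once this identity is in hand, the characterisation of $T(x^{-1})$ follows by pairing Proposition~\ref{prop:conetypebasics}(4) (which says $w \in T(x^{-1})$ iff $\Phi(x) \cap \Phi(w) = \emptyset$) with Proposition~\ref{prop:rootsystembasics}(5) (which says $w \peq v$ iff $\Phi(w) \subseteq \Phi(v)$).

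First, I would establish the identity $\Phi(xw_0) = \Phi^+ \setminus \Phi(x)$. Given $\alpha \in \Phi^+$, observe that $\alpha \in \Phi(xw_0)$ iff $w_0^{-1}x^{-1}\alpha < 0$ iff $w_0(x^{-1}\alpha) < 0$. Since $w_0$ is the longest element of the finite Coxeter group $W$, it sends every positive root to a negative root (and vice versa). Thus $w_0(x^{-1}\alpha) < 0$ precisely when $x^{-1}\alpha > 0$, i.e.\ precisely when $\alpha \notin \Phi(x)$. This gives the claimed equality. (As a consistency check, $|\Phi(xw_0)| = \ell(xw_0) = \ell(w_0) - \ell(x) = |\Phi^+| - |\Phi(x)|$.)

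Next, I would chain the equivalences: for $w \in W$,
\begin{align*}
w \in T(x^{-1}) &\iff \Phi(x) \cap \Phi(w) = \emptyset \\
&\iff \Phi(w) \subseteq \Phi^+ \setminus \Phi(x) \\
&\iff \Phi(w) \subseteq \Phi(xw_0) \\
&\iff w \peq xw_0,
\end{align*}
using the two propositions cited above together with the identity from the first step. This proves the displayed formula.

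Finally, for the uniqueness assertion, note that the set $\{w \in W \mid w \peq xw_0\}$ has $xw_0$ as its unique maximum element with respect to $\peq$ (it lies in the set, and all other elements are bounded above by it by definition). Hence if $T(x^{-1}) = T(y^{-1})$, then by the formula just proved these two downward-closed sets coincide, so their unique $\peq$-maxima agree, giving $xw_0 = yw_0$ and therefore $x = y$. There is no real obstacle here; the only substantive step is the root-system identity in the first paragraph, which is entirely routine given that $w_0$ negates $\Phi^+$.
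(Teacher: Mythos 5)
Your proof is correct, but it takes a different route from the paper's. The paper argues purely with length arithmetic: from $w\in T(x^{-1})$ it computes $\ell(w^{-1}xw_0)=\ell(w_0)-\ell(w^{-1}x)=\ell(xw_0)-\ell(w)$ using the identity $\ell(vw_0)=\ell(w_0)-\ell(v)$, and reverses the computation for the converse; the uniqueness statement is then read off from $xw_0\in T(x^{-1})$ and antisymmetry of $\peq$. You instead translate everything into inversion sets via the identity $\Phi(xw_0)=\Phi^+\setminus\Phi(x)$, and then chain Proposition~\ref{prop:conetypebasics} ($w\in T(x^{-1})$ iff $\Phi(x)\cap\Phi(w)=\emptyset$) with Proposition~\ref{prop:rootsystembasics}(5) ($\Phi(w)\subseteq\Phi(v)$ iff $w\peq v$). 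Both arguments are complete and of comparable length. Your version has the advantage of isolating a reusable fact --- indeed the paper itself invokes $\Phi(ww_0)=\Phi^+\setminus\Phi(w)$ later, in the proof of Theorem~\ref{thm:sphericalsuperelementary} --- and of staying entirely within the root-system formalism used throughout Section~\ref{sec:conetypes}; the paper's version is more elementary in that it needs nothing beyond the defining length identities. One cosmetic remark: when you write $w_0^{-1}x^{-1}\alpha<0$ iff $w_0(x^{-1}\alpha)<0$ you are implicitly using $w_0=w_0^{-1}$; this is true, but you could avoid invoking it by noting that $w_0^{-1}$ also has inversion set $\Phi^+$ and hence likewise negates every positive root.
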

\begin{proof}
If $w\in T(x^{-1})$ then $\ell(x^{-1}w)=\ell(x)+\ell(w)$, and so 
$$
\ell(w^{-1}xw_0)=\ell(w_0)-\ell(w^{-1}x)=\ell(w_0)-\ell(x)-\ell(w)=\ell(xw_0)-\ell(w),
$$
and so $w\peq xw_0$. Conversely, if $w\peq xw_0$ then
$$
\ell(w^{-1}xw_0)=\ell(xw_0)-\ell(w)=\ell(w_0)-\ell(x)-\ell(w),
$$
but also $\ell(w^{-1}xw_0)=\ell(w_0)-\ell(w^{-1}x)=\ell(w_0)-\ell(x^{-1}w)$, and so $\ell(x^{-1}w)=\ell(x)+\ell(w)$, giving $w\in T(x^{-1})$. 

In particular, if $T(x^{-1})=T(y^{-1})$ then since $xw_0\in T(x^{-1})$ we have $xw_0\peq yw_0$, and similarly $yw_0\peq xw_0$. Hence $xw_0=yw_0$ and so $x=y$. 
\end{proof}

\newpage
\begin{lem}\label{lem:sphericalprefix1}
Let $x\in W$ and $J\subseteq S$ with $J$ spherical. Write $x=uv$ with $u\in W_J$ and $v\in W^J$. If $w\in W_J$ with $w\in T(u^{-1})$ then $w\in T(x^{-1})$. 
\end{lem}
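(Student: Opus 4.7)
The plan is to prove $w \in T(x^{-1})$ via the inversion set criterion in Proposition~\ref{prop:conetypebasics}(4): show that $\Phi(x)\cap\Phi(w)=\emptyset$, using the hypothesis $\Phi(u)\cap\Phi(w)=\emptyset$ coming from $w\in T(u^{-1})$.

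First, since $x=uv$ with $\ell(x)=\ell(u)+\ell(v)$ (by the $W_J$-decomposition recalled in Section~\ref{sec:1:Coxetergroups}), Proposition~\ref{prop:rootsystembasics}(6) gives
$$
\Phi(x)=\Phi(u)\sqcup u\Phi(v),
$$
so it suffices to show $u\Phi(v)\cap\Phi(w)=\emptyset$, as the other piece $\Phi(u)\cap\Phi(w)=\emptyset$ is exactly the hypothesis $w\in T(u^{-1})$ re-expressed via Proposition~\ref{prop:conetypebasics}.

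For the remaining piece, the key observation is that everything gets confined to $\Phi_J$. Since $w\in W_J$, Lemma~\ref{lem:rootsdecomposition} (applied with $v$ trivial in the $W_J$-decomposition of $w$) gives $\Phi(w)=\Phi_J(w)\subseteq\Phi_J$. The set $\Phi_J$ is $W_J$-stable, and $u\in W_J$, so $u\beta\in\Phi_J$ if and only if $\beta\in\Phi_J$. Therefore any $u\beta\in u\Phi(v)\cap\Phi(w)$ would force $\beta\in\Phi(v)\cap\Phi_J=\Phi_J(v)$. But $v\in W^J$ means $\Phi_J(v)=\emptyset$ by Lemma~\ref{lem:rootsdecomposition}, a contradiction.

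Combining the two steps yields $\Phi(x)\cap\Phi(w)=\emptyset$, hence $w\in T(x^{-1})$ by Proposition~\ref{prop:conetypebasics}. There is no real obstacle here: the argument is a direct application of the $\Phi$-decomposition of a length-additive product together with the standard characterisation of $W^J$ in terms of $\Phi_J$. (Note that the sphericity of $J$ is not actually used in this proof; it appears only to match the ambient hypothesis of the section.)
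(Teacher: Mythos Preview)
Your proof is correct. The paper takes a different, slightly shorter route via a direct length computation: writing $x^{-1}w=v^{-1}(u^{-1}w)$ with $u^{-1}w\in W_J$ and $v\in W^J$, the $W_J$-decomposition gives $\ell(x^{-1}w)=\ell(v)+\ell(u^{-1}w)$, and then $w\in T(u^{-1})$ gives $\ell(u^{-1}w)=\ell(u)+\ell(w)$, so $\ell(x^{-1}w)=\ell(x)+\ell(w)$. Your argument instead works on the level of inversion sets, splitting $\Phi(x)=\Phi(u)\sqcup u\Phi(v)$ and using $\Phi_J(v)=\emptyset$ together with the $W_J$-stability of $\Phi_J$. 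Both approaches rest on the same underlying fact (the length-additivity of the $W_J$-decomposition), just expressed in dual languages; your version fits naturally with the inversion-set machinery used elsewhere in the paper, while the paper's version is a one-line computation. Your observation that sphericity of $J$ is not needed is also correct, and applies equally to the paper's proof.
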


\begin{proof}
Since $u^{-1}w\in W_J$, $v\in W^J$, and $w\in T(u^{-1})$, we have
$$
\ell(x^{-1}w)=\ell(v^{-1}u^{-1}w)=\ell(v)+\ell(u^{-1}w)=\ell(v)+\ell(u)+\ell(w),
$$
and the result follows since $\ell(v)+\ell(u)=\ell(uv)=\ell(x)$. 
\end{proof}

\begin{cor} \label{cor:distinguishconetypes}
Let $x,y \in W$ and $J \subseteq S$ with $J$ spherical. Write $x = uv$ and $y=u'v'$ with $u, u' \in W_J$ and $v, v' \in W^J$. If $u \neq u'$, then $T(x^{-1}) \neq T(y^{-1})$.
\end{cor}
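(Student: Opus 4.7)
The plan is to reduce the problem to Proposition~\ref{prop:paraboliccone} applied inside the finite Coxeter group $W_J$. The key observation is that the restriction of a cone type to $W_J$ depends only on the $W_J$-factor of the element, namely
\[
T(x^{-1}) \cap W_J \;=\; T(u^{-1}) \cap W_J,
\]
and symmetrically for $y$ and $u'$. Once this is established, intersecting the hypothetical equality $T(x^{-1}) = T(y^{-1})$ with $W_J$ forces $T(u^{-1})$ and $T(u'^{-1})$ to agree on all of $W_J$, and Proposition~\ref{prop:paraboliccone}, applied to the finite Coxeter group $W_J$, will distinguish $u$ from $u'$.

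First I would verify the displayed identity. The inclusion $T(u^{-1}) \cap W_J \subseteq T(x^{-1}) \cap W_J$ is precisely Lemma~\ref{lem:sphericalprefix1}. For the reverse, suppose $w \in W_J$ satisfies $w \in T(x^{-1})$, so that $\ell(x^{-1}w) = \ell(x) + \ell(w)$. Writing $x^{-1}w = v^{-1}(u^{-1}w)$ and applying the triangle inequality twice, together with $\ell(x) = \ell(u) + \ell(v)$ from (\ref{eq:WJdecomposition}), we obtain
\[
\ell(x^{-1}w) \;\le\; \ell(v) + \ell(u^{-1}w) \;\le\; \ell(v) + \ell(u) + \ell(w) \;=\; \ell(x) + \ell(w) \;=\; \ell(x^{-1}w).
\]
Hence both inequalities are equalities; the second gives $\ell(u^{-1}w) = \ell(u) + \ell(w)$, i.e.\ $w \in T(u^{-1})$.

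I would then argue by contraposition. Assuming $T(x^{-1}) = T(y^{-1})$, the displayed identity (applied to both $x$ and $y$) yields $T(u^{-1}) \cap W_J = T(u'^{-1}) \cap W_J$. Since the length functions on $W$ and on the standard parabolic $W_J$ agree on elements of $W_J$, each side is precisely the cone type of the corresponding element computed inside the finite Coxeter group $W_J$. Proposition~\ref{prop:paraboliccone}, applied to $W_J$, then forces $u = u'$, contradicting the hypothesis.

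I do not expect any serious obstacle: the heavy lifting is already done by Lemma~\ref{lem:sphericalprefix1} and Proposition~\ref{prop:paraboliccone}, and the corollary amounts to packaging the restriction-to-$W_J$ step cleanly. The only mild subtlety is in the reverse inclusion, where sphericity of $J$ is not directly used but enters implicitly through the final appeal to Proposition~\ref{prop:paraboliccone}, which requires $W_J$ to be finite.
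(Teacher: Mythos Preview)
Your proof is correct and follows essentially the same approach as the paper: both arguments hinge on Lemma~\ref{lem:sphericalprefix1} for one inclusion, the contrapositive of the analogous statement for the other, and Proposition~\ref{prop:paraboliccone} applied inside $W_J$ to finish. The paper's proof is organised slightly more tersely (picking a single witness $w \in T(u^{-1}) \setminus T(u'^{-1})$ rather than establishing the full identity $T(x^{-1}) \cap W_J = T(u^{-1}) \cap W_J$), and it leaves the reverse inclusion you spell out as an implicit step, but the underlying reasoning is identical.
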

\begin{proof}
Since $u \neq u'$ by Proposition~\ref{prop:paraboliccone} there is $w\in W_J$ with $w \in T(u^{-1}) \setminus T(u'^{-1})$. By Lemma~\ref{lem:sphericalprefix1} we have $x \in T(w)$, and since $w \notin T(u'^{-1})$ we have $w \notin T(y^{-1})$.
\end{proof}

We note, in passing, the following corollary which shows that the minimal automata recognising the language of reduced words in a finite Coxeter group~$W$ is just the ``trivial'' automaton with states $W$ and transition function $\mu(w,s)=ws$ if $\ell(ws)=\ell(w)+1$ (note that this gives an automaton recognising $\cL(W,S)$ for all Coxeter systems, however of course it is finite state if and only if $W$ is finite). 

\begin{cor}
If $W$ is a finite Coxeter group then $|\mathcal{A}(W,S)| = |W|$.
\end{cor}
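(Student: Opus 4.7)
The plan is to show the map $W \to \mathbb{T}$ sending $w \mapsto T(w^{-1})$ is a bijection, which immediately yields $|\mathcal{A}(W,S)| = |\mathbb{T}| = |W|$ since the state set of $\mathcal{A}(W,S)$ is precisely $\mathbb{T}$.

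Surjectivity is immediate from the definition $\mathbb{T} = \{T(w) \mid w \in W\}$ together with the fact that $w \mapsto w^{-1}$ is a bijection on $W$. For injectivity, I would appeal directly to Proposition~\ref{prop:paraboliccone}, which asserts that in a finite Coxeter group $T(x^{-1}) = T(y^{-1})$ forces $x = y$. That proposition is the entire substance of the result, so no further work is required.

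There is no real obstacle here; the content has already been established. The only minor point worth noting explicitly is that $|\mathcal{A}(W,S)|$ refers to the cardinality of the state set $\mathbb{T}$ of the cone type automaton defined in Theorem~\ref{thm:MyhillNerode}, so the corollary is just a restatement of the injectivity half of Proposition~\ref{prop:paraboliccone}.
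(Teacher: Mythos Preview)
Your proposal is correct and matches the paper's own proof essentially verbatim: the paper simply writes ``By Proposition~\ref{prop:paraboliccone} we have $|\cA(W,S)|=|\mathbb{T}| = |W|$,'' which is exactly your argument that the map $w \mapsto T(w^{-1})$ is a bijection by the injectivity statement in that proposition.
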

\begin{proof}
By Proposition~\ref{prop:paraboliccone} we have $|\cA(W,S)|=|\mathbb{T}| = |W|$. 
\end{proof}

\section{Regular partitions}\label{sec:regularpartitions}

In this section we introduce one of the main concepts of this paper: the notion of a ``regular partition'' of~$W$. This concept has its genesis in the Ph.D. of P. Headley in his study of the classical Shi arrangement (see \cite[Lemma~V.5]{Hea:94}). We now give an outline of the results of this section.

We begin in Section~\ref{sec:partitions} by setting up appropriate language for working with the partially ordered set of all partitions of~$W$. We then introduce certain special partitions of $W$ that will play an important role in the paper, including the \textit{cone type partition} $\scrT$, \textit{Garside shadow partitions}, and the \textit{$n$-Shi partitions} associated to $n$-elementary inversion sets. 

In Section~\ref{subsec:regularpartitions} we define the notion of a regular partition, and exhibit some of the main examples of such partitions. We show in Theorem~\ref{thm:regularautomaton} that each such partition gives rise to an automaton recognising $\cL(W,S)$, and in Theorem~\ref{thm:converseautomaton} we show that every automaton recognising $\cL(W,S)$ satisfying a mild hypothesis arises in such a way. 

In Section~\ref{sec:regularcompletion} we study the partially ordered set $\Preg$ of all regular partitions of $W$. We show in Theorem~\ref{thm:regularlattice2} that $\Preg$ is a complete lattice with bottom element being the cone type partition~$\scrT$ (note the convention~(\ref{eq:convention})). This in turn allows us to define the \textit{regular completion} $\widehat{\scrP}$ of an arbitrary partition $\scrP$ of $W$ (this is the ``minimal'' regular partition refining $\scrP$). 

In Section~\ref{sec:simplerefinements} we develop an algorithm, based on ``simple refinements'', for producing the regular completion of a partition, and provide natural sufficient conditions for this algorithm to terminate in finite time. As a consequence, we prove in Corollary~\ref{cor:regularlattice1} that $\scrT=\widehat{\scrD}$, where $\scrD$ is the partition of $W$ according to left descent sets. This characterisation of $\scrT$ will be crucial in proving the main result of this paper (Theorem~\ref{thm:main1}).

\subsection{Partitions of $W$}\label{sec:partitions}

Various partitions of $W$ play an important role in this work, and we begin by recalling some terminology. A \textit{partition} of $W$ is a set $\scrP$ of subsets of $W$ such that $\bigcup_{P\in \scrP}P=W$ and $P\cap P'=\emptyset$ for $P,P'\in\scrP$ with $P\neq P'$. The sets $P\in\scrP$ are called the \textit{parts} of the partition. Let $\scrP(W)$ denote the set of all partitions of $W$.

If $\scrP$ and $\scrP'$ are partitions of $W$ such that each part of $\scrP'$ is contained in some part of $\scrP$ then we say that $\scrP'$ is a \textit{refinement} of $\scrP$. We also say that $\scrP'$ is \textit{finer} than $\scrP$, and that $\scrP$ is \textit{coarser} than $\scrP'$. 

We write 
\begin{align}\label{eq:convention}
\scrP\leq\scrP'\quad\text{if $\scrP'$ is a refinement of $\scrP$}.
\end{align}
Note that this is dual to the standard convention. Our choice here is motivated by the fact that we are often interested in the number of parts of a partition, and a partition with few parts is best considered to be ``small''. Thus, the partially ordered set $(\scrP(W),\leq)$ has top element $\mathbf{1}=\{\{w\}\mid w\in W\}$ (the partition into singletons) and bottom element $\mathbf{0}=\{W\}$ (the partition with one part).

A \textit{covering} of $W$ is a set $\mathbb{X}$ of subsets of $W$ with $\bigcup_{X\in\mathbb{X}}X=W$. Each covering of $W$ induces a partition of $W$, as follows.

\begin{defn}\label{defn:covering}
Let $\mathbb{X}$ be a covering of $W$. Let $\sim_{\mathbb{X}}$ be the equivalence relation on $W$ given by $x\sim_{\mathbb{X}}y$ if and only if $\{X\in\mathbb{X}\mid x\in X\}=\{X\in\mathbb{X}\mid y\in X\}$ (that is, $x\in X$ if and only if $y\in X$, for $X\in\mathbb{X}$). The \textit{partition induced by $\mathbb{X}$} is the partition $\scrX$ of $W$ into $\sim_{\mathbb{X}}$ equivalence classes. Thus elements $x,y\in W$ lie in the same part of $\scrX$ if and only if they lie in precisely the same elements of $\mathbb{X}$. 
\end{defn}

Important examples of partitions are provided by hyperplane arrangements. In our general setting, a hyperplane arrangement is most appropriately thought of as a partition of $W$ induced by a set of roots, as follows. Let $\Lambda\subseteq\Phi^+$ be nonempty. The \textit{partition of $W$ induced by $\Lambda$} is the partition $\scrH(\Lambda)$ induced by the covering $\{H_{\beta}^+,H_{\beta}^-\mid \beta\in\Lambda\}$ (as in Definition~\ref{defn:covering}). We will refer to such partitions as \textit{hyperplane partitions} to emphasise this connection to traditional hyperplane arrangements. 

We now provide the main examples of partitions that will appear in this work. Recall the definition of  $C(w)$ from Section~\ref{sec:1:conetypes}, and recall that $\Pi=\{\alpha_s\mid s\in S\}$. Recall that $\mathbb{T}$ denotes the set of all cone types.

\begin{defn}\label{defn:partitions} Let $n\in\mathbb{N}$, and let $B$ be a Garside shadow. 
\begin{enumerate}
\item The \textit{cone type partition} is the partition $\scrT$ induced by the covering $\mathbb{T}$.\item The \textit{Garside partition} associated to $B$ is the partition $\scrG_B$ induced by the covering $\{C(b)\mid b\in B\}$ (this is a covering as $e\in B$). 
\item The \textit{$n$-Shi partition} is the hyperplane partition $\scrS_n=\scrH(\cE_n)$.
\item The \textit{$S$-partition} is the hyperplane partition $\scrD=\scrH(\Pi)$.
\item The \textit{spherical partition} is the hyperplane partition $\scrJ=\scrH(\Phisph)$. 
\end{enumerate}
\end{defn}

We now give a more concrete description of the parts of each of the above partitions. Recall that $\mathbb{E}_n$ denotes the set of all $n$-elementary inversion sets, and $\mathbb{S}$ denotes the set of all spherical inversion sets.

\begin{prop}\label{prop:partsdescription}
Let $B$ be a Garside shadow and let $n\in\mathbb{N}$. 
\begin{enumerate}
\item The parts of the cone type partition $\scrT$ are the sets
$$
X_T=\{w\in W\mid T(w^{-1})=T\},\quad\text{with $T\in\mathbb{T}$}.
$$ 
\item If $B$ is a Garside shadow, the parts of $\scrG_B$ are the sets
$$
\pi_B^{-1}(b)=\{w\in W\mid \pi_B(w)=b\},\quad\text{with $b\in B$}.
$$
\item The parts of the $n$-Shi partition $\scrS_n$ are the sets
$$
\{w\in W\mid \cE_n(w)=E\},\quad\text{with $E\in\mathbb{E}_n$}.
$$
\item The parts of the $S$-partition $\scrD$ are the sets
$$
D_L^{-1}(J)=\{w\in W\mid D_L(w)=J\},\quad\text{with $J\subseteq S$ spherical}.
$$
\item The parts of the spherical partition $\scrJ$ are the sets
$$
\{w\in W\mid \Phisph(w)=\Sigma\},\quad\text{for $\Sigma\in\mathbb{S}$}.
$$
\end{enumerate}
In particular, $|\scrT|=|\mathbb{T}|<\infty$, $|\scrG_B|=|B|$, $|\scrS_n|=|\mathbb{E}_n|<\infty$, and $|\scrD|,|\scrJ|<\infty$.
\end{prop}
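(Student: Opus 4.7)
The plan is to unfold Definition~\ref{defn:covering} in each of the five cases. Given a covering $\mathbb{X}$ of $W$, two elements $x,y$ lie in the same part of the induced partition iff $\{X \in \mathbb{X} \mid x \in X\} = \{X \in \mathbb{X} \mid y \in X\}$. So the task is to identify, for each covering, the invariant of $w$ controlling which members contain $w$, and then count.

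For (1) the crucial ingredient is the symmetry in Proposition~\ref{prop:conetypebasics}, namely $x \in T(w^{-1}) \iff w \in T(x^{-1})$. If $T(x^{-1})=T(y^{-1})$, then for every cone type $T=T(w^{-1})$ we have $x\in T \iff w\in T(x^{-1})=T(y^{-1}) \iff y\in T$. Conversely, if $x$ and $y$ lie in the same cone types, then for $w\in T(x^{-1})$ the symmetry gives $x\in T(w^{-1})$, hence $y\in T(w^{-1})$, and applying the symmetry again gives $w\in T(y^{-1})$; so $T(x^{-1})\subseteq T(y^{-1})$, and equality follows by reversing the roles. This is the only step that requires genuine thought; the remaining cases are direct unpackings.

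For (2), $x\in C(b)\iff b\peq x$, so the parts of $\scrG_B$ are the fibres of the map $x\mapsto \{b\in B\mid b\peq x\}$. By join-closure of $B$ together with the fact that $x$ is an upper bound for this set, the join $\pi_B(x)$ lies in $B$ and satisfies $\pi_B(x)\peq x$, so $\pi_B(x)$ is the maximum of the set. Therefore the set equals $\{b\in B\mid b\peq \pi_B(x)\}$ and is determined by $\pi_B(x)$; conversely the set determines its maximum. Hence the parts are precisely the fibres of $\pi_B$, and since $\pi_B(b)=b$ for every $b\in B$ each fibre is nonempty, giving $|\scrG_B|=|B|$.

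Items (3), (4), (5) are all hyperplane partitions $\scrH(\Lambda)$ for $\Lambda=\cE_n$, $\Pi$, $\Phisph^+$ respectively (in (5) reading $\scrH(\Phisph)$ as $\scrH(\Phisph^+)$, since $H_\beta^\pm$ is defined only for $\beta\in\Phi^+$). Using the identity $\beta\in\Phi(w)\iff w\in H_\beta^-$ for $\beta\in\Phi^+$, two elements lie in the same part of $\scrH(\Lambda)$ iff $\Phi(x)\cap\Lambda=\Phi(y)\cap\Lambda$. Specialising to $\Lambda=\cE_n,\Pi,\Phisph^+$ gives the parts as level sets of $\cE_n(\cdot)$, $D_L(\cdot)$, and $\Phisph(\cdot)$ respectively, where for $\Lambda=\Pi$ one uses $\alpha_s\in\Phi(w)\iff s\in D_L(w)$ (Proposition~\ref{prop:rootsystembasics}). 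The finiteness claims are then immediate: $|\mathbb{T}|<\infty$ by Corollary~\ref{cor:finiteconetypes1}, $|\mathbb{E}_n|<\infty$ by Corollary~\ref{cor:Enfinite}, $|\scrD|\leq 2^{|S|}<\infty$, and $|\mathbb{S}|\leq 2^{|\cE|}<\infty$ since $\Phisph^+\subseteq \cE$.
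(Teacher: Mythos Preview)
Your proof is correct and follows essentially the same approach as the paper: both use the symmetry $x\in T(w^{-1})\iff w\in T(x^{-1})$ from Proposition~\ref{prop:conetypebasics} for (1), the characterisation $x\in C(b)\iff b\peq x$ together with join-closure of $B$ for (2), and the equivalence $\beta\in\Phi(w)\iff w\in H_\beta^-$ for the hyperplane cases (3)--(5). Your unified treatment of (3), (4), (5) via $\Phi(w)\cap\Lambda$ is a nice condensation of what the paper does case by case; the only small point you elide in (4) is that the index set is precisely the spherical subsets of $S$, which needs the standard fact that $D_L(w)$ is always spherical (stated in Section~\ref{sec:1:Coxetergroups}) and that $D_L(w_J)=J$ for spherical $J$.
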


\begin{proof}
(1) If $u\sim_{\mathbb{T}}v$ then for all $w\in W$ we have $u\in T(w^{-1})$ if and only if $v\in T(w^{-1})$. Thus, by Proposition~\ref{prop:conetypebasics}, for all $w\in W$ we have $w\in T(u^{-1})$ if and only if $w\in T(v^{-1})$, and hence $T(u^{-1})=T(v^{-1})$. So $u,v\in X_T$, where $T=T(u^{-1})$.

Conversely, suppose that $u,v\in X_T$ for some $T\in\mathbb{T}$. Thus $T(u^{-1})=T(v^{-1})=T$. If $u\in T(w^{-1})$ then by Proposition~\ref{prop:conetypebasics} we have $w\in T(u^{-1})=T(v^{-1})$, and so again by Proposition~\ref{prop:conetypebasics} we have $v\in T(w^{-1})$. Thus $u\sim_{\mathbb{T}}v$. 

Thus $|\scrT|=|\mathbb{T}|$, which is finite by Corollary~\ref{cor:finiteconetypes1}. 

(2) Let $P$ be a part of $\scrG_B$ and let $u,v\in P$. Then for all $b\in B$ we have $u\in C(b)$ if and only if $v\in C(b)$, and so $b\peq u$ if and only if $b\peq v$, and so $\pi_B(u)=\pi_B(v)$. 

Conversely, suppose that $u,v\in W$ with $\pi_B(u)=\pi_B(v)$. If $b\in B$ and $u\in C(b)$ then $b\peq \pi_B(u)=\pi_B(v)\peq v$, and hence $v\in C(b)$. 

(3) Let $P$ be a part of $\scrS_n$, and let $u,v\in P$ and $\beta\in\cE_n$. Then $\beta\in\cE_n(u)$ if and only if $\ell(s_{\beta}u)<\ell(u)$, if and only if $u\in H_{\beta}^-$, if and only if $v\in H_{\beta}^-$ (from the definition of $\scrS_n$, using $u,v\in P$), if and only if $\ell(s_{\beta}v)<\ell(v)$, if and only if $\beta\in \cE_n(v)$. Thus $\cE_n(u)=\cE_n(v)$.

Conversely if $u,v\in W$ with $\cE_n(u)=\cE_n(v)$, and if $\beta\in\cE_n$ and $\epsilon\in\{-,+\}$, then $u\in H_{\beta}^{\epsilon}$ if and only if $v\in H_{\beta}^{\epsilon}$, and so $u$ and $v$ lie in the same part of $\scrS_n$.

Thus $|\scrS_n|=|\mathbb{E}_n|$, which is finite by Corollary~\ref{cor:Enfinite}.

(4) Let $P$ be a part of $\scrD$, and let $u,v\in P$. For $s\in S$ we have $u\in H_{\alpha_s}^-$ if and only if $v\in H_{\alpha_s}^-$, and so $s\in D_L(u)$ if and only if $s\in D_L(v)$, and so $D_L(u)=D_L(v)$. 

Conversely, let $u,v\in W$ with $J=D_L(u)=D_L(v)$. If $s\in J$ then $u,v\in H_{\alpha_s}^-$, and if $s\in S\backslash J$ then $u,v\in H_{\alpha_s}^+$, and hence $u$ and $v$ are in the same part of $\scrD$. Finally, recall that descent sets are always spherical subsets (see \cite[Proposition~2.17]{AB:08}).

(5) This is similar to (3).
\end{proof}

\begin{exa}
Figure~\ref{fig:hyperbolicconetype} shows a cone type $T$ (shaded grey), and the corresponding set $X_T$ (shaded red). 
\begin{figure}[H]
\centering
\includegraphics[totalheight=8cm]{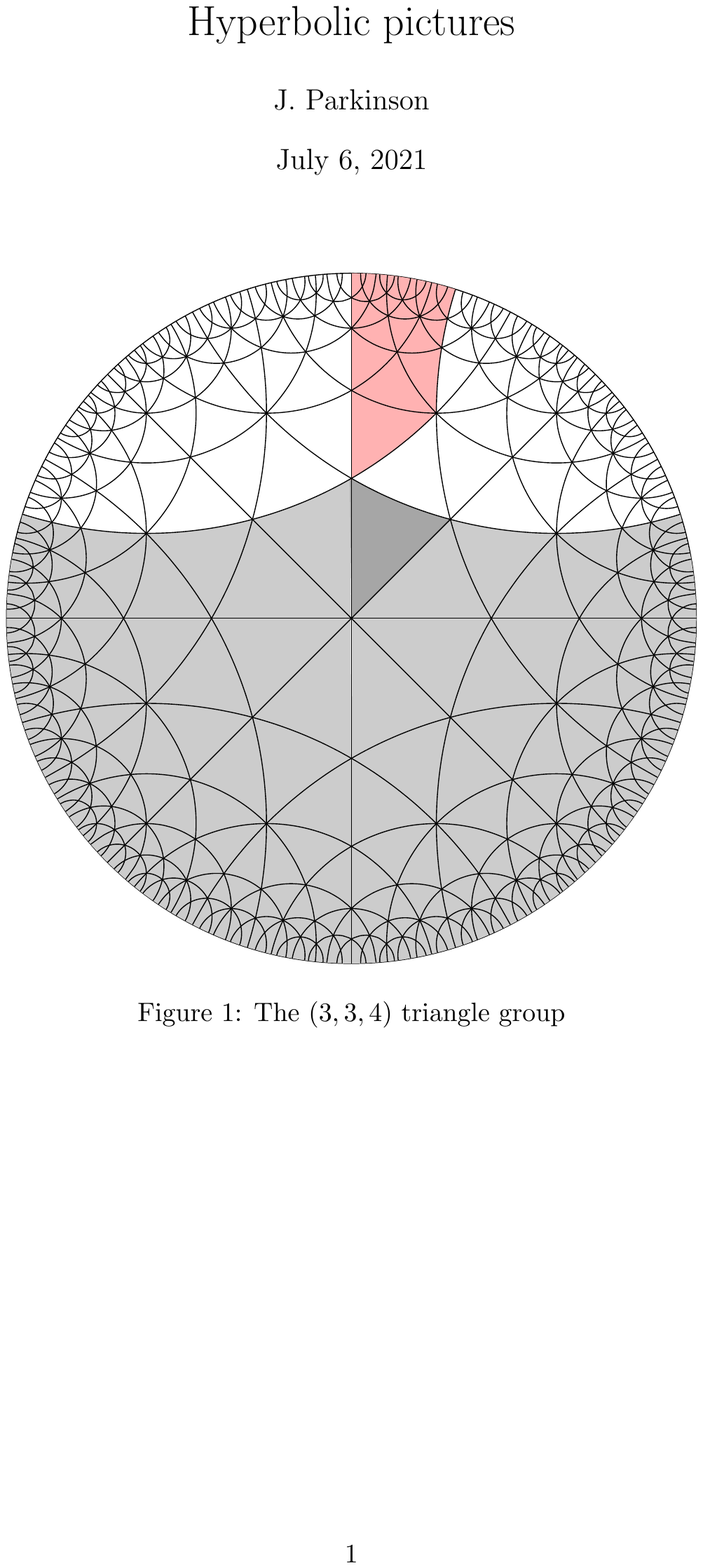}
\caption{A cone type $T$ and the corresponding part $X_T$ of $\scrT$}\label{fig:hyperbolicconetype}
\end{figure}
\end{exa}

Computing the partitions $\scrD$ and $\scrJ$ is of course trivial: geometrically the walls determining the hyperplane partition $\scrD$ are the walls bounding the fundamental chamber, and the walls determining the partition $\scrJ$ are the walls passing through a vertex of the fundamental chamber. Computing the partition $\scrS_n$ is also straightforward once the $n$-elementary roots are known. However computing the cone type partition $\scrT$ is nontrivial (see Algorithm~\ref{alg:regularisation} and Corollary~\ref{cor:regularlattice1}).

\newpage
\begin{exa}
The partitions $\scrD$, $\scrJ$, $\scrS_0$, and $\scrT$ are illustrated for $\tilde{\sG}_2$ in Figure~\ref{fig:G2partitions} (in each case the identity chamber is shaded grey, and the blue and red shaded chambers will be discussed in the following section). The partitions $\scrS_0$ and $\scrT$ for $\tilde{\sB}_2$ and $\tilde{\sA}_2$ are given in Figures~\ref{fig:B2partitions} and~\ref{fig:A2partitions}.

\begin{figure}[H]
\centering
\subfigure[The $S$-partition]{
\centering
\begin{tikzpicture}[scale=0.75]
\path [fill=blue!30] (0,0)--(-0.433,0.75)--(0,1);
\path [fill=blue!30] (0,0)--(0.866,0.5)--(0.433,0.75);
\path [fill=blue!30] (0,1)--(0.433,0.75)--(0.866,1.5);
\path [fill=blue!30] (0.433,0.75)--(0.866,0.5)--(0.866,1.5);
\path [fill=blue!30] (0,1)--(0,1.5)--(-0.866,1.5);
\path [fill=blue!30] (0,0)--(0,-1)--(-0.433,-0.75);
\path [fill=gray!90] (0,0) -- (0.433,0.75) -- (0,1) -- (0,0);
\draw(-4.33,4.5)--(4.33,4.5);
\draw(-4.33,3)--(4.33,3);
\draw(-4.33,1.5)--(4.33,1.5);
\draw(-4.33,0)--(4.33,0);
\draw(-4.33,-1.5)--(4.33,-1.5);
\draw(-4.33,-3)--(4.33,-3);
\draw(-4.33,-3)--(-4.33,4.5);
\draw(-3.464,-3)--(-3.464,4.5);
\draw(-2.598,-3)--(-2.598,4.5);
\draw(-1.732,-3)--(-1.732,4.5);
\draw(-.866,-3)--(-.866,4.5);
\draw(0,-3)--(0,4.5);
\draw(.866,-3)--(.866,4.5);
\draw(1.732,-3)--(1.732,4.5);
\draw(2.598,-3)--(2.598,4.5);
\draw(3.464,-3)--(3.464,4.5);
\draw(4.33,-3)--(4.33,4.5);
\draw(-4.33,3.5)--({-3*0.866},4.5);
\draw(-4.33,2.5)--({-1*0.866},4.5);
\draw(-4.33,1.5)--({1*0.866},4.5);
\draw(-4.33,.5)--({3*0.866},4.5);
\draw(-4.33,-.5)--(4.33,4.5);
\draw(-4.33,-1.5)--(4.33,3.5);
\draw(-4.33,-2.5)--(4.33,2.5);
\draw(-3.464,-3)--(4.33,1.5);
\draw(-1.732,-3)--(4.33,.5);
\draw(0,-3)--(4.33,-.5);
\draw(1.732,-3)--(4.33,-1.5);
\draw(3.464,-3)--(4.33,-2.5);
\draw(4.33,3.5)--({3*0.866},4.5);
\draw(4.33,2.5)--({1*0.866},4.5);
\draw(4.33,1.5)--({-1*0.866},4.5);
\draw(4.33,.5)--({-3*0.866},4.5);
\draw(4.33,-.5)--(-4.33,4.5);
\draw(4.33,-1.5)--(-4.33,3.5);
\draw(4.33,-2.5)--(-4.33,2.5);
\draw(3.464,-3)--(-4.33,1.5);
\draw(1.732,-3)--(-4.33,.5);
\draw(0,-3)--(-4.33,-.5);
\draw(-1.732,-3)--(-4.33,-1.5);
\draw(-3.464,-3)--(-4.33,-2.5);
\draw(-4.33,-1.5)--(-3.464,-3);
\draw(-4.33,1.5)--(-1.732,-3);
\draw(-4.33,4.5)--(0,-3);
\draw({-3*0.866},4.5)--(1.732,-3);
\draw({-1*0.866},4.5)--(3.464,-3);
\draw({1*0.866},4.5)--(4.33,-1.5);
\draw({3*0.866},4.5)--(4.33,1.5);
\draw(4.33,-1.5)--(3.464,-3);
\draw(4.33,1.5)--(1.732,-3);
\draw(4.33,4.5)--(0,-3);
\draw({3*0.866},4.5)--(-1.732,-3);
\draw({1*0.866},4.5)--(-3.464,-3);
\draw({-1*0.866},4.5)--(-4.33,-1.5);
\draw({-3*0.866},4.5)--(-4.33,1.5);
\draw[line width=2pt](0,-3)--(0,4.5);
\draw[line width=2pt](-1.732,-3)--({3*0.866},4.5);
\draw[line width=2pt](-4.33,3.5)--(4.33,-1.5);
\end{tikzpicture}
}\qquad 
\subfigure[The spherical partition]{
\centering
\begin{tikzpicture}[scale=0.75]
\path [fill=blue!30] (0,-1) -- (-0.866,-0.5) -- (-0.866,1.5) -- (0.866,1.5)--(0.866,0.5)--(0.866,-0.5)--(0,-1);
\path [fill=blue!30] (1.732,0)--(2.598,0)--(2.598,-0.5);
\path [fill=blue!30] (-0.866,1.5)--(-1.732,2)--(-1.299,2.25);
\path [fill=blue!30] (-1.732,0)--(-1.299,0.75)--(-0.866,0.5);
\path [fill=blue!30] (-2.598,0)--(-2.598,-0.5)--(-1.732,0);
\path [fill=blue!30] (0.866,0.5)--({0.866+0.433},0.75)--(1.732,0);
\path [fill=blue!30] (0.866,1.5)--({0.866+0.433},2.25)--(1.732,2);
\path [fill=gray!90] (0,0) -- (0.433,0.75) -- (0,1) -- (0,0);
\draw(-4.33,4.5)--(4.33,4.5);
\draw(-4.33,3)--(4.33,3);
\draw(-4.33,1.5)--(4.33,1.5);
\draw(-4.33,0)--(4.33,0);
\draw(-4.33,-1.5)--(4.33,-1.5);
\draw(-4.33,-3)--(4.33,-3);
\draw(-4.33,-3)--(-4.33,4.5);
\draw(-3.464,-3)--(-3.464,4.5);
\draw(-2.598,-3)--(-2.598,4.5);
\draw(-1.732,-3)--(-1.732,4.5);
\draw(-.866,-3)--(-.866,4.5);
\draw(0,-3)--(0,4.5);
\draw(.866,-3)--(.866,4.5);
\draw(1.732,-3)--(1.732,4.5);
\draw(2.598,-3)--(2.598,4.5);
\draw(3.464,-3)--(3.464,4.5);
\draw(4.33,-3)--(4.33,4.5);
\draw(-4.33,3.5)--({-3*0.866},4.5);
\draw(-4.33,2.5)--({-1*0.866},4.5);
\draw(-4.33,1.5)--({1*0.866},4.5);
\draw(-4.33,.5)--({3*0.866},4.5);
\draw(-4.33,-.5)--(4.33,4.5);
\draw(-4.33,-1.5)--(4.33,3.5);
\draw(-4.33,-2.5)--(4.33,2.5);
\draw(-3.464,-3)--(4.33,1.5);
\draw(-1.732,-3)--(4.33,.5);
\draw(0,-3)--(4.33,-.5);
\draw(1.732,-3)--(4.33,-1.5);
\draw(3.464,-3)--(4.33,-2.5);
\draw(4.33,3.5)--({3*0.866},4.5);
\draw(4.33,2.5)--({1*0.866},4.5);
\draw(4.33,1.5)--({-1*0.866},4.5);
\draw(4.33,.5)--({-3*0.866},4.5);
\draw(4.33,-.5)--(-4.33,4.5);
\draw(4.33,-1.5)--(-4.33,3.5);
\draw(4.33,-2.5)--(-4.33,2.5);
\draw(3.464,-3)--(-4.33,1.5);
\draw(1.732,-3)--(-4.33,.5);
\draw(0,-3)--(-4.33,-.5);
\draw(-1.732,-3)--(-4.33,-1.5);
\draw(-3.464,-3)--(-4.33,-2.5);
\draw(-4.33,-1.5)--(-3.464,-3);
\draw(-4.33,1.5)--(-1.732,-3);
\draw(-4.33,4.5)--(0,-3);
\draw({-3*0.866},4.5)--(1.732,-3);
\draw({-1*0.866},4.5)--(3.464,-3);
\draw({1*0.866},4.5)--(4.33,-1.5);
\draw({3*0.866},4.5)--(4.33,1.5);
\draw(4.33,-1.5)--(3.464,-3);
\draw(4.33,1.5)--(1.732,-3);
\draw(4.33,4.5)--(0,-3);
\draw({3*0.866},4.5)--(-1.732,-3);
\draw({1*0.866},4.5)--(-3.464,-3);
\draw({-1*0.866},4.5)--(-4.33,-1.5);
\draw({-3*0.866},4.5)--(-4.33,1.5);
\draw[line width=2pt](0,-3)--(0,4.5);
\draw[line width=2pt](-1.732,-3)--({3*0.866},4.5);
\draw[line width=2pt](1.732,-3)--({-3*0.866},4.5);
\draw[line width=2pt](-4.33,-2.5)--(4.33,2.5);
\draw[line width=2pt](-4.33,-1.5)--(4.33,3.5);
\draw[line width=2pt](-4.33,0)--(4.33,0);
\draw[line width=2pt](-4.33,2.5)--(4.33,-2.5);
\draw[line width=2pt](-4.33,3.5)--(4.33,-1.5);
\end{tikzpicture}
}
\subfigure[The $0$-Shi partition]{
\centering
\begin{tikzpicture}[scale=0.75]
\path [fill=blue!30] (0,-1) -- (-0.866,-0.5) -- (-0.866,1.5) -- (0.866,1.5)--(0.866,0.5)--(1.299,0.75)--(1.732,0)--(0.866,-0.5)--(0,-1);
\path [fill=blue!30] (0.866,-0.5)--(0.866,-1.5)--(1.299,-0.75);
\path [fill=blue!30] (0.866,1.5)--(0.4333,2.25)--(0.866,2.5)--(1.732,2)--(1.732,1.5);
\path [fill=blue!30] (0.866,4.5)--(0.866,5.5)--(1.299,5.25);
\path [fill=blue!30] (0,3)--(-0.433,3.75)--(0,4)--(0.433,3.75);
\path [fill=blue!30] (1.732,0)--(2.598,0)--(2.598,-0.5)--(2.165,-0.75);
\path [fill=blue!30] (-0.866,1.5)--(-1.732,1.5)--(-1.732,2)--(-1.299,2.25);
\path [fill=blue!30] (-2.598,1.5)--(-3.46,1.5)--(-3.46,2);
\path [fill=blue!30] (2.598,1.5)--(3.46,1.5)--(3.46,2);
\path [fill=blue!30] (-1.732,0)--(-1.299,0.75)--(-0.866,0.5);
\path [fill=blue!30] (-2.598,0)--(-2.598,-0.5)--(-2.165,-0.75)--(-1.732,0);
\path [fill=blue!30] (-2.598,-1.5)--(-3.46,-2)--(-3.03,-2.25);
\path [fill=blue!30] (2.598,-1.5)--(3.46,-2)--(3.03,-2.25);
\path [fill=blue!30] (0.866,-1.5)--(0.866,-2.5)--(1.299,-2.25);
\path [fill=red!30] (-0.866,1.5)--(0,2)--(0.866,1.5);
\path [fill=red!30] (-0.866,1.5)--(-0.866,2.5)--(-0.466,2.25);
\path [fill=red!30] (0.866,0.5)--(0.866,1.5)--(1.732,1);
\path [fill=red!30] (-1.732,0)--(-0.866,1.5)--(-1.732,1);
\path [fill=red!30] (1.299,0.75)--(1.732,1)--(1.732,0);
\path [fill=gray!90] (0,0) -- (0.433,0.75) -- (0,1) -- (0,0);
\draw(-4.33,6)--(4.33,6);
\draw(-4.33,4.5)--(4.33,4.5);
\draw(-4.33,3)--(4.33,3);
\draw(-4.33,1.5)--(4.33,1.5);
\draw(-4.33,0)--(4.33,0);
\draw(-4.33,-1.5)--(4.33,-1.5);
\draw(-4.33,-3)--(4.33,-3);
\draw(-4.33,-3)--(-4.33,6);
\draw(-3.464,-3)--(-3.464,6);
\draw(-2.598,-3)--(-2.598,6);
\draw(-1.732,-3)--(-1.732,6);
\draw(-.866,-3)--(-.866,6);
\draw(0,-3)--(0,6);
\draw(.866,-3)--(.866,6);
\draw(1.732,-3)--(1.732,6);
\draw(2.598,-3)--(2.598,6);
\draw(3.464,-3)--(3.464,6);
\draw(4.33,-3)--(4.33,6);
\draw(-4.33,5.5)--(-3.464,6);
\draw(-4.33,4.5)--(-1.732,6);
\draw(-4.33,3.5)--(0,6);
\draw(-4.33,2.5)--(1.732,6);
\draw(-4.33,1.5)--(3.464,6);
\draw(-4.33,.5)--(4.33,5.5);
\draw(-4.33,-.5)--(4.33,4.5);
\draw(-4.33,-1.5)--(4.33,3.5);
\draw(-4.33,-2.5)--(4.33,2.5);
\draw(-3.464,-3)--(4.33,1.5);
\draw(-1.732,-3)--(4.33,.5);
\draw(0,-3)--(4.33,-.5);
\draw(1.732,-3)--(4.33,-1.5);
\draw(3.464,-3)--(4.33,-2.5);
\draw(4.33,5.5)--(3.464,6);
\draw(4.33,4.5)--(1.732,6);
\draw(4.33,3.5)--(0,6);
\draw(4.33,2.5)--(-1.732,6);
\draw(4.33,1.5)--(-3.464,6);
\draw(4.33,.5)--(-4.33,5.5);
\draw(4.33,-.5)--(-4.33,4.5);
\draw(4.33,-1.5)--(-4.33,3.5);
\draw(4.33,-2.5)--(-4.33,2.5);
\draw(3.464,-3)--(-4.33,1.5);
\draw(1.732,-3)--(-4.33,.5);
\draw(0,-3)--(-4.33,-.5);
\draw(-1.732,-3)--(-4.33,-1.5);
\draw(-3.464,-3)--(-4.33,-2.5);
\draw(-4.33,-1.5)--(-3.464,-3);
\draw(-4.33,1.5)--(-1.732,-3);
\draw(-4.33,4.5)--(0,-3);
\draw(-3.464,6)--(1.732,-3);
\draw(-1.732,6)--(3.464,-3);
\draw(0,6)--(4.33,-1.5);
\draw(1.732,6)--(4.33,1.5);
\draw(3.464,6)--(4.33,4.5);
\draw(4.33,-1.5)--(3.464,-3);
\draw(4.33,1.5)--(1.732,-3);
\draw(4.33,4.5)--(0,-3);
\draw(3.464,6)--(-1.732,-3);
\draw(1.732,6)--(-3.464,-3);
\draw(0,6)--(-4.33,-1.5);
\draw(-1.732,6)--(-4.33,1.5);
\draw[line width=0.1pt](-3.464,6)--(-4.33,4.5);
\draw[line width=2pt](0,-3)--(0,6);%
\draw[line width=2pt](0.866,-3)--(0.866,6);%
\draw[line width=2pt](-3.46,-3)--(1.732,6);%
\draw[line width=2pt](-1.732,-3)--(3.46,6);%
\draw[line width=2pt](1.732,-3)--(-3.46,6);%
\draw[line width=2pt](3.46,-3)--(-1.732,6);%
\draw[line width=2pt](-4.33,-2.5)--(4.33,2.5);
\draw[line width=2pt](-4.33,-1.5)--(4.33,3.5);
\draw[line width=2pt](-4.33,0)--(4.33,0);
\draw[line width=2pt](-4.33,2.5)--(4.33,-2.5);
\draw[line width=2pt](-4.33,3.5)--(4.33,-1.5);
\draw[line width=2pt](-4.33,1.5)--(4.33,1.5);
\end{tikzpicture}}\qquad
\subfigure[The cone type partition]{
\centering
\begin{tikzpicture}[scale=0.75]
\path [fill=blue!30] (0,-1) -- (-0.866,-0.5) -- (-0.866,1.5) -- (0.866,1.5)--(0.866,0.5)--(1.299,0.75)--(1.732,0)--(0.866,-0.5)--(0,-1);
\path [fill=blue!30] (0.866,-0.5)--(0.866,-1.5)--(1.299,-0.75);
\path [fill=blue!30] (0.866,1.5)--(0.4333,2.25)--(0.866,2.5)--(1.732,2)--(1.732,1.5);
\path [fill=blue!30] (0.866,4.5)--(0.866,5.5)--(1.299,5.25);
\path [fill=blue!30] (0,3)--(-0.433,3.75)--(0,4)--(0.433,3.75);
\path [fill=blue!30] (1.732,0)--(2.598,0)--(2.598,-0.5)--(2.165,-0.75);
\path [fill=blue!30] (-0.866,1.5)--(-1.732,1.5)--(-1.732,2)--(-1.299,2.25);
\path [fill=blue!30] (-2.598,1.5)--(-3.46,1.5)--(-3.46,2);
\path [fill=blue!30] (2.598,1.5)--(3.46,1.5)--(3.46,2);
\path [fill=blue!30] (-1.732,0)--(-1.299,0.75)--(-0.866,0.5);
\path [fill=blue!30] (-2.598,0)--(-2.598,-0.5)--(-2.165,-0.75)--(-1.732,0);
\path [fill=blue!30] (-2.598,-1.5)--(-3.46,-2)--(-3.03,-2.25);
\path [fill=blue!30] (2.598,-1.5)--(3.46,-2)--(3.03,-2.25);
\path [fill=blue!30] (0.866,-1.5)--(0.866,-2.5)--(1.299,-2.25);
\path [fill=gray!90] (0,0) -- (0.433,0.75) -- (0,1) -- (0,0);
\draw(-4.33,6)--(4.33,6);
\draw(-4.33,4.5)--(4.33,4.5);
\draw(-4.33,3)--(4.33,3);
\draw(-4.33,1.5)--(4.33,1.5);
\draw(-4.33,0)--(4.33,0);
\draw(-4.33,-1.5)--(4.33,-1.5);
\draw(-4.33,-3)--(4.33,-3);
\draw(-4.33,-3)--(-4.33,6);
\draw(-3.464,-3)--(-3.464,6);
\draw(-2.598,-3)--(-2.598,6);
\draw(-1.732,-3)--(-1.732,6);
\draw(-.866,-3)--(-.866,6);
\draw(0,-3)--(0,6);
\draw(.866,-3)--(.866,6);
\draw(1.732,-3)--(1.732,6);
\draw(2.598,-3)--(2.598,6);
\draw(3.464,-3)--(3.464,6);
\draw(4.33,-3)--(4.33,6);
\draw(-4.33,5.5)--(-3.464,6);
\draw(-4.33,4.5)--(-1.732,6);
\draw(-4.33,3.5)--(0,6);
\draw(-4.33,2.5)--(1.732,6);
\draw(-4.33,1.5)--(3.464,6);
\draw(-4.33,.5)--(4.33,5.5);
\draw(-4.33,-.5)--(4.33,4.5);
\draw(-4.33,-1.5)--(4.33,3.5);
\draw(-4.33,-2.5)--(4.33,2.5);
\draw(-3.464,-3)--(4.33,1.5);
\draw(-1.732,-3)--(4.33,.5);
\draw(0,-3)--(4.33,-.5);
\draw(1.732,-3)--(4.33,-1.5);
\draw(3.464,-3)--(4.33,-2.5);
\draw(4.33,5.5)--(3.464,6);
\draw(4.33,4.5)--(1.732,6);
\draw(4.33,3.5)--(0,6);
\draw(4.33,2.5)--(-1.732,6);
\draw(4.33,1.5)--(-3.464,6);
\draw(4.33,.5)--(-4.33,5.5);
\draw(4.33,-.5)--(-4.33,4.5);
\draw(4.33,-1.5)--(-4.33,3.5);
\draw(4.33,-2.5)--(-4.33,2.5);
\draw(3.464,-3)--(-4.33,1.5);
\draw(1.732,-3)--(-4.33,.5);
\draw(0,-3)--(-4.33,-.5);
\draw(-1.732,-3)--(-4.33,-1.5);
\draw(-3.464,-3)--(-4.33,-2.5);
\draw(-4.33,-1.5)--(-3.464,-3);
\draw(-4.33,1.5)--(-1.732,-3);
\draw(-4.33,4.5)--(0,-3);
\draw(-3.464,6)--(1.732,-3);
\draw(-1.732,6)--(3.464,-3);
\draw(0,6)--(4.33,-1.5);
\draw(1.732,6)--(4.33,1.5);
\draw(3.464,6)--(4.33,4.5);
\draw(4.33,-1.5)--(3.464,-3);
\draw(4.33,1.5)--(1.732,-3);
\draw(4.33,4.5)--(0,-3);
\draw(3.464,6)--(-1.732,-3);
\draw(1.732,6)--(-3.464,-3);
\draw(0,6)--(-4.33,-1.5);
\draw(-1.732,6)--(-4.33,1.5);
\draw(-3.464,6)--(-4.33,4.5);
\draw[line width=2pt](0,-3)--(0,6);
\draw[line width=2pt](0.866,-3)--(0.866,0.5);
\draw[line width=2pt](0.866,1.5)--(0.866,6);
\draw[line width=2pt](-3.46,-3)--(-1.732,0);
\draw[line width=2pt](0,3)--(1.732,6);
\draw[line width=2pt](-1.732,-3)--(3.46,6);
\draw[line width=2pt](1.732,-3)--(-3.46,6);
\draw[line width=2pt](3.46,-3)--(1.732,0);
\draw[line width=2pt](0.866,1.5)--(-1.732,6);
\draw[line width=2pt](-4.33,-2.5)--(4.33,2.5);
\draw[line width=2pt](-4.33,-1.5)--(4.33,3.5);
\draw[line width=2pt](-4.33,0)--(4.33,0);
\draw[line width=2pt](-4.33,2.5)--(4.33,-2.5);
\draw[line width=2pt](-4.33,3.5)--(4.33,-1.5);
\draw[line width=2pt](-4.33,1.5)--(-0.866,1.5);
\draw[line width=2pt](0.866,1.5)--(4.33,1.5);
\end{tikzpicture}}
\caption{The partitions $\scrD$, $\scrJ$, $\scrS_0$, and $\scrT$ for $\tilde{\sG}_2$}\label{fig:G2partitions}
\end{figure}
\end{exa}

\begin{rem}\label{rem:shi} We make the following comments.
\begin{enumerate}
\item If $W$ is affine, then the $0$-Shi partition $\scrS_0$ is the partition induced by the classical Shi hyperplane arrangement (see \cite{Shi:87a,Shi:87b}). 
\item The partitions $\scrT$ and $\scrG_B$ (with $B$ a Garside shadow) are generally not hyperplane partitions. For example, see the partition $\scrT$ in Figure~\ref{fig:G2partitions}. 
\item Note that $\scrJ\leq \scrS_0$, as $\Phisph^+\subseteq \cE_0$.
\end{enumerate}
\end{rem}

\subsection{Regular partitions}\label{subsec:regularpartitions}
In this section we define the notion of a \textit{regular partition} of $W$, and show that these partitions are intimately related the automatic structure of~$W$. 

\begin{defn}
A partition $\scrP$ of $W$ is \textit{locally constant} if the function $D_L:W\to 2^S$ is constant on each part of $\scrP$. If $\scrP$ is locally constant, and $P\in\scrP$, we write $D_L(P)=D_L(w)$ for any $w\in W$.
\end{defn}

\newpage

Note that every refinement of a locally constant partition is again locally constant. Moreover, we have the following. 

\begin{lem}\label{lem:Sminimal}
A partition $\scrP$ is locally constant if and only if $\scrD\leq \scrP$. 
\end{lem}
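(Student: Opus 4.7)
The plan is to unfold definitions on both sides and apply the explicit description of the parts of $\scrD$ from Proposition~\ref{prop:partsdescription}(4). The statement is essentially a tautology once one identifies the parts of $\scrD$ with the fibres of the left descent map.

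First I would recall that by Proposition~\ref{prop:partsdescription}(4) the parts of $\scrD$ are precisely the sets
\[
D_L^{-1}(J) = \{w \in W \mid D_L(w) = J\},
\]
as $J$ ranges over the spherical subsets of $S$. Combined with the convention in~\eqref{eq:convention}, the inequality $\scrD \leq \scrP$ holds if and only if every part $P \in \scrP$ is contained in some part of $\scrD$, that is, in some fibre $D_L^{-1}(J)$.

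For the forward direction, suppose $\scrD \leq \scrP$ and fix $P \in \scrP$. Then $P \subseteq D_L^{-1}(J)$ for some spherical $J \subseteq S$, so $D_L(w) = J$ for all $w \in P$; hence $\scrP$ is locally constant. Conversely, suppose $\scrP$ is locally constant and fix $P \in \scrP$. Pick any $w \in P$ and set $J = D_L(w)$. By \cite[Proposition~2.17]{AB:08} (recalled in Section~\ref{sec:1:Coxetergroups}) $J$ is automatically spherical, so $D_L^{-1}(J)$ is a part of $\scrD$, and by local constancy $P \subseteq D_L^{-1}(J)$. Since this holds for every $P \in \scrP$, we conclude $\scrD \leq \scrP$.

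There is no genuine obstacle here; the only point worth flagging is the implicit use of the fact that left descent sets are always spherical, which ensures the value $J = D_L(w)$ actually indexes a part of $\scrD$.
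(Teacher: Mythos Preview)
Your proof is correct and follows the same approach as the paper, which simply says the lemma is immediate from Proposition~\ref{prop:partsdescription}. You have merely made explicit the unpacking of that reference, including the (necessary) observation that $D_L(w)$ is always spherical so that $D_L^{-1}(J)$ is indeed a part of $\scrD$.
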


\begin{proof}
This is immediate from Proposition~\ref{prop:partsdescription}.
\end{proof}

\begin{prop}\label{prop:locallyconstantexamples}
All partitions in Definition~\ref{defn:partitions} are locally constant.
\end{prop}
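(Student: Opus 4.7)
My plan is to invoke Lemma~\ref{lem:Sminimal} and reduce the proposition to showing that each of $\scrT$, $\scrG_B$, $\scrS_n$, $\scrJ$ refines $\scrD$ (the $\scrD$ case being trivial). Unpacked, this just means: if $x,y$ lie in the same part of one of these partitions, then $D_L(x)=D_L(y)$. The common thread is the elementary equivalence
\[
s\in D_L(x)\iff \alpha_s\in\Phi(x)\iff x\in H_{\alpha_s}^-,
\]
coming from Proposition~\ref{prop:rootsystembasics}(2). Thus in each case I just need to observe that the data recorded by the partition is fine enough to recover, for each $s\in S$, whether $\alpha_s\in\Phi(x)$.

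For the cone type partition $\scrT$, using the description in Proposition~\ref{prop:partsdescription}(1), I would note that $s\in T(x^{-1})$ iff $\ell(x^{-1}s)=\ell(x)+1$, which (taking inverses) is equivalent to $\ell(sx)=\ell(x)+1$, i.e.\ $s\notin D_L(x)$. Hence $D_L(x)=S\setminus(T(x^{-1})\cap S)$, and so $T(x^{-1})=T(y^{-1})$ forces $D_L(x)=D_L(y)$. For the Garside partition $\scrG_B$, again using Proposition~\ref{prop:partsdescription}(2), I use that $s\in D_L(x)$ iff $s\peq x$, together with $S\subseteq B$, so that $s\in D_L(x)$ iff $s\in\{b\in B\mid b\peq x\}$, iff $s\peq \pi_B(x)$. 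Hence $\pi_B(x)=\pi_B(y)$ implies $D_L(x)=D_L(y)$.

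For the $n$-Shi partition $\scrS_n$ and the spherical partition $\scrJ$, the key input is that every simple root $\alpha_s$ lies in $\cE$ (hence in $\cE_n$ for all $n$) and in $\Phisph$. The latter is immediate since $\mathrm{supp}(\alpha_s)=\{s\}$ is spherical. For the former: if $\alpha_s$ dominated some $\alpha\neq\alpha_s$ in $\Phi^+$, then since $s\in H_{\alpha_s}^-$ (as $\alpha_s\in\Phi(s)$) we would have $s\in H_{\alpha}^-$, i.e.\ $\alpha\in\Phi(s)=\{\alpha_s\}$, a contradiction. With this in hand, the equivalence $s\in D_L(x)\iff \alpha_s\in\cE_n(x)$ (resp.\ $\alpha_s\in\Phisph(x)$) is immediate, and Proposition~\ref{prop:partsdescription}(3),(5) then give $D_L(x)=D_L(y)$ whenever $x,y$ are in the same part.

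There is no real obstacle here; the whole proposition is a short verification. The only subtle point is recognising that simple roots are elementary, which is the only place an ``ingredient'' (rather than a direct unpacking of definitions) is needed, and this is a one-line argument as above.
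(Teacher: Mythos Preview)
Your proof is correct and follows essentially the same approach as the paper. The only minor difference is in the Garside case: the paper invokes \cite[Proposition~2.6]{HNW:16} (which gives $D_L(x)=D_L(\pi_B(x))$ directly), whereas you give a self-contained argument via $S\subseteq B$ and $\pi_B(x)\peq x$; your version is slightly more elementary but amounts to reproving that cited fact in the special case needed here.
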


\begin{proof}
By Proposition~\ref{prop:partsdescription} the parts of $\scrT$ are the sets $X_T=\{w\in W\mid T(w^{-1})=T\}$, with $T\in\mathbb{T}$. If $x,y\in X_T$ then $T(x^{-1})=T(y^{-1})$, and thus $D_L(x)=D_L(y)$. So $\scrT$ is locally constant. 

Let $B$ be a Garside shadow. By Proposition~\ref{prop:partsdescription} the parts of $\scrG_B$ are the sets $\pi^{-1}_B(b)$, with $b\in B$. If $x,y\in \pi^{-1}_B(b)$ then $\pi_B(x)=\pi_B(y)=b$. By \cite[Proposition~2.6]{HNW:16} we have $D_L(x)=D_L(\pi_B(x))=D_L(y)$, and hence $\scrG_B$ is locally constant.

If $x,y$ lie in the same part of $\scrS_n$ then $\cE_n(x)=\cE_n(y)$. Since each root $\alpha_s$ with $s\in S$ is elementary, it follows that $D_L(x)=D_L(y)$. 

The remaining cases follow easily from Proposition~\ref{prop:partsdescription}.
\end{proof}

The main definition of this section is as follows.

\begin{defn}
A partition $\scrR$ of $W$ is \textit{regular} if:
\begin{enumerate}
\item $\scrR$ is locally constant, and
\item if $R\in \scrR$ and $s\notin D_L(R)$ then $sR\subseteq R'$ for some part $R'\in\scrR$.\end{enumerate}
Let $\Preg$ denote the set of all regular partitions of $W$.
\end{defn}

Note that the condition $s\notin D_L(R)$ is equivalent to $R\subseteq H_{\alpha_s}^+$, or more gemetrically, that $e$ and $R$ both lie on the same side of the wall $H_{\alpha_s}$ in the Coxeter complex.

\begin{thm}\label{thm:regularpartitions}
The following partitions of $W$ are regular: 
\begin{enumerate}
\item the cone type partition $\scrT$;
\item the Garside partition $\scrG_B$, for any Garside shadow~$B$;
\item the $n$-Shi partition $\scrS_n$, for $n\in\mathbb{N}$.
\end{enumerate}
\end{thm}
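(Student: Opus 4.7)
The plan is to verify condition~(2) in the definition of regularity for each of the three partitions, since the local constancy condition~(1) is already established in Proposition~\ref{prop:locallyconstantexamples}. In each case I exploit an iterative formula describing how the invariant defining the partition transforms under left multiplication by a simple reflection.

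For the cone type partition $\scrT$, Proposition~\ref{prop:partsdescription}(1) identifies parts as $X_T=\{w\mid T(w^{-1})=T\}$. For $x\in X_T$, the condition $s\notin D_L(x)$ is equivalent to $s\in T$, and applying Lemma~\ref{lem:conetypeevolution} (with $v=x^{-1}$) gives
$$
T((sx)^{-1})=T(x^{-1}s)=s(T\setminus H_{\alpha_s}^+),
$$
a cone type $T'$ depending only on $T$ and $s$. Hence $sX_T\subseteq X_{T'}$. For the $n$-Shi partition $\scrS_n$ the argument is even more direct: by Proposition~\ref{prop:partsdescription}(3), the parts are fibres of $w\mapsto\cE_n(w)$, and Lemma~\ref{lem:elementaryrootsbasics} gives $\cE_n(sx)=(\{\alpha_s\}\sqcup s\cE_n(x))\cap\cE_n$, depending only on $\cE_n(x)$ and $s$.

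The Garside case is the main obstacle, and rests on the following projection lemma: \emph{if $B$ is a Garside shadow and $s\notin D_L(x)$, then $\pi_B(sx)=\pi_B(s\pi_B(x))$.} To prove this, since $s\in B$ and $s\peq sx$ we have $s\peq\pi_B(sx)$, so $\pi_B(sx)=sv$ for some $v\in W$ with $\ell(\pi_B(sx))=\ell(v)+1$. Here $v$ is a suffix of $\pi_B(sx)\in B$, so $v\in B$ by closure under suffixes. From $sv\peq sx$, the decompositions $\Phi(sv)=\{\alpha_s\}\sqcup s\Phi(v)$ and $\Phi(sx)=\{\alpha_s\}\sqcup s\Phi(x)$ (valid since $s\notin D_L(v)\cup D_L(x)$) yield $\Phi(v)\subseteq\Phi(x)$, i.e.\ $v\peq x$. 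Therefore $v\peq\pi_B(x)$, which gives $sv\peq s\pi_B(x)$ and hence $\pi_B(sx)\peq\pi_B(s\pi_B(x))$; the reverse inequality is immediate from $s\pi_B(x)\peq sx$ and the order-preservation of $\pi_B$.

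With this lemma, if $x,y\in\pi_B^{-1}(b)$ and $s\notin D_L(b)=D_L(x)=D_L(y)$ (by local constancy), then $\pi_B(sx)=\pi_B(sb)=\pi_B(sy)$, so $sx$ and $sy$ lie in the common part $\pi_B^{-1}(\pi_B(sb))$ of $\scrG_B$, completing the verification.
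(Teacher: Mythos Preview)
Your proof is correct and follows essentially the same strategy as the paper. Parts~(1) and~(3) are identical to the paper's treatment. For part~(2), the paper simply invokes \cite[Proposition~2.8]{HNW:16} for the projection identity $\pi_B(sx)=\pi_B(s\pi_B(x))$, whereas you supply a self-contained proof of this identity; your argument is valid, though you might make explicit that $s\notin D_L(\pi_B(x))$ (which follows from $\pi_B(x)\peq x$ and $s\notin D_L(x)$, since $\Phi(\pi_B(x))\subseteq\Phi(x)$) before concluding $sv\peq s\pi_B(x)$.
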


\begin{proof}
By Proposition~\ref{prop:locallyconstantexamples} these partitions are all locally constant. 

(1) By Proposition~\ref{prop:partsdescription} each part of $\scrT$ is of the form $X_T=\{w\in W\mid T(w^{-1})=T\}$ for some cone type~$T$. Suppose that $s\in S$ with $s\notin D_L(X_T)$. For any $w\in X_T$ we have $D_L(w)=D_L(X_T)$, and so $s\in T(w^{-1})=T$. By Lemma~\ref{lem:conetypeevolution} we have 
$$
T((sw)^{-1})=T(w^{-1}s)=s\{x\in T\mid \ell(sx)=\ell(x)-1\},
$$
which is independent of the particular $w\in X_T$. Thus, writing $T'=T(w^{-1}s)$ (for any $w\in X_T$), we have $sX_T\subseteq X_{T'}$, and  hence $\scrT\in\Preg$.

(2) Let $B$ be a Garside shadow. By Proposition~\ref{prop:partsdescription} the parts of $\scrG_B$ are the sets $\pi^{-1}_B(b)$, with $b\in B$. If $x,y\in \pi^{-1}_B(b)$ and $s\notin D_L(x)=D_L(y)$ then by \cite[Proposition~2.8]{HNW:16} we have $\pi_B(sx)=\pi_B(s\pi_B(x))=\pi_B(sb)=\pi_B(sy)$, and so $sx$ and $sy$ lie in the part $\pi^{-1}_B(\pi_B(sb))$. Hence $\scrG_B\in\Preg$.

(3) If $x,y\in W$ lie in the same part of $\scrS_n$ then $\cE_n(x)=\cE_n(y)=A$, say. If $s\notin D_L(x)=D_L(y)$ then by Lemma~\ref{lem:elementaryrootsbasics} we have $\cE_n(sx)=(\{\alpha_s\}\sqcup sA)\cap \cE_n=\cE_n(sy)$. Thus $sx$ and $sy$ lie in a common part of $\scrS_n$, and so the partition is regular.
\end{proof}

\begin{rem}
We record the following observations.  
\begin{enumerate}
\item The top element $\mathbf{1}=\{\{w\}\mid w\in W\}$ of $(\scrP(W),\leq)$ is obviously regular. Thus each $\scrP\in\scrP(W)$ can be refined to a regular partition. In Section~\ref{sec:regularcompletion} we will show that there is a unique minimal such ``regular completion''.
\item The partition $\scrD$ is generally not regular (see, for example, Figure~\ref{fig:G2partitions}(a)).  
\item The partition $\scrJ$ is generally not regular. For example, the partition $\scrJ$ is not regular for $\tilde{\sG}_2$ (see Figure~\ref{fig:G2partitions}(b)), however it is regular for $\tilde{\sA}_2$ (see Figure~\ref{fig:A2partitions}). 
\begin{figure}[H]
\centering
\begin{tikzpicture}[scale=0.9]
 \path [fill=blue!30] (-1.5,0.866)--(-1,{2*0.866})--(1,{2*0.866})--(1.5,0.866)--(0.5,-0.866)--(-0.5,-0.866)--(-1.5,0.866);
  \path [fill=blue!30] (0,{2*0.866})--(-0.5,{3*0.866})--(0.5,{3*0.866})--(0,{2*0.866});
   \path [fill=blue!30] (1,0)--(2,0)--(1.5,-0.866);
    \path [fill=blue!30] (-1,0)--(-2,0)--(-1.5,-0.866);
    \path [fill=gray!90] (0,0) -- (-0.5,0.866) -- (0.5,0.866) -- (0,0);
    \draw (2.5, {-3*0.866})--( 3.5, {-1*0.866} );
    \draw (1.5, {-3*0.866})--( 3.5, {1*0.866} );
    \draw  (0.5, {-3*0.866})--( 3.5, {3*0.866} );
    \draw  (-0.5, {-3*0.866})--( 3, {4*0.866} );
    \draw  [line width=2pt](-1.5, {-3*0.866})--( 2, {4*0.866} );
    \draw  [line width=2pt] (-2.5, {-3*0.866})--( 1, {4*0.866} );
    \draw  (-3.5, {-3*0.866})--(0, {4*0.866} );
    \draw  (-3.5, {-1*0.866})--(-1, {4*0.866} );
    \draw  (-3.5, {1*0.866})--(-2, {4*0.866} );
    \draw  (-3.5, {3*0.866})--(-3, {4*0.866} );
   \draw (-2.5, {-3*0.866})--( -3.5, {-1*0.866} );
    \draw (-1.5, {-3*0.866})--( -3.5, {1*0.866} );
    \draw  (-0.5, {-3*0.866})--( -3.5, {3*0.866} );
    \draw  (0.5, {-3*0.866})--( -3, {4*0.866} );
    \draw [line width=2pt] (1.5, {-3*0.866})--( -2, {4*0.866} );
    \draw  [line width=2pt] (2.5, {-3*0.866})--( -1, {4*0.866} );
    \draw  (3.5, {-3*0.866})--(0, {4*0.866} );
    \draw  (3.5, {-1*0.866})--(1, {4*0.866} );
    \draw  (3.5, {1*0.866})--(2, {4*0.866} );
    \draw  (3.5, {3*0.866})--(3, {4*0.866} );
    \draw (-3.5, -2.598)--( 3.5, -2.598);
    \draw (-3.5, -1.732)--( 3.5, -1.732);
    \draw (-3.5, -0.866)--( 3.5, -0.866);
    \draw[line width=2pt] (-3.5, 0)--( 3.5, 0);
    \draw (-3.5, 3.464)--( 3.5, 3.464 );
    \draw (-3.5, 2.598)--( 3.5, 2.598);
    \draw (-3.5, 1.732)--( 3.5, 1.732);
    \draw [line width=2pt] (-3.5, 0.866)--( 3.5, 0.866);
     %
%
\end{tikzpicture}
\caption{In type $\tilde{\sA}_2$ we have $\scrT=\scrS_0=\scrJ$}\label{fig:A2partitions}
\end{figure}
\end{enumerate} 
\end{rem}

The main interest in the concept of regular partitions stems from the following theorem, providing a very general geometric construction of automata recognising $\cL(W,S)$. Note that if $\scrP$ is locally constant then the part of $\scrP$ containing $e$ is the singleton~$\{e\}$ (by considering left descent sets). 

\begin{thm}\label{thm:regularautomaton}
Let $\scrR$ be a regular partition of $W$. Define $\mu:\scrR\times S\to \scrR\cup\{\dagger\}$ by 
$$
\mu(R,s)=\begin{cases}
R'&\text{if $s\notin D_L(R)$ and $sR\subseteq R'\in\scrR$}\\
\dagger&\text{if $s\in D_L(R)$}.
\end{cases}
$$
Then $\mathcal{A}(\scrR)=(\scrR,\mu,\{e\})$ is an automaton recognising $\cL(W,S)$. 

Moreover, if $w=s_1\cdots s_n$ is reduced, then the final state of the path with edge labels $(s_1,\ldots,s_n)$ starting at $\{e\}$ is the part $R\in\scrR$ with $w^{-1}\in R$.
\end{thm}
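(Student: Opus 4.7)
\medskip

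\noindent\textbf{Proof proposal.}
The plan is to verify the "moreover" statement first by induction on word length, and then deduce language recognition as a corollary. Before starting, I would record two initial observations. First, the map $\mu$ is well defined: if $s\notin D_L(R)$ then by clause~(2) of the definition of a regular partition there is a unique part $R'\in\scrR$ with $sR\subseteq R'$ (uniqueness because distinct parts are disjoint, and existence by regularity). Second, the part of $\scrR$ containing $e$ is the singleton $\{e\}$, since $\scrR$ is locally constant and $D_L(e)=\emptyset$ forces any $x$ in the part of $e$ to satisfy $D_L(x)=\emptyset$, i.e.\ $x=e$.

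For the moreover statement, let $w=s_1\cdots s_n$ be reduced. I would induct on $n$: the base case $n=0$ is immediate from the start state being $\{e\}$. For the inductive step, suppose the path labelled $(s_1,\ldots,s_{n-1})$ ends at the part $R_{n-1}$, and by the inductive hypothesis $y:=(s_1\cdots s_{n-1})^{-1}=s_{n-1}\cdots s_1\in R_{n-1}$. Because $w$ is reduced, so is $w^{-1}=s_n y$, giving $\ell(s_n y)=\ell(y)+1$, i.e.\ $s_n\notin D_L(y)=D_L(R_{n-1})$. Hence $\mu(R_{n-1},s_n)$ is the unique part $R_n$ with $s_n R_{n-1}\subseteq R_n$; in particular $w^{-1}=s_n y\in R_n$, as required.

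It then remains to show that the automaton accepts exactly $\cL(W,S)$. Reduced words are accepted by the moreover statement: the path ends in $\scrR$, not in $\dagger$. For a non-reduced word $(s_1,\ldots,s_n)$, I would pick the smallest index $k$ such that $(s_1,\ldots,s_k)$ is not reduced; then $(s_1,\ldots,s_{k-1})$ is reduced and, by the moreover statement applied to the reduced prefix, the path after $k-1$ steps is at the part $R_{k-1}$ containing $y:=s_{k-1}\cdots s_1$. Since $\ell(s_1\cdots s_k)=\ell(s_k y)<k=\ell(y)+1$, parity forces $\ell(s_k y)=\ell(y)-1$, so $s_k\in D_L(y)=D_L(R_{k-1})$, whence $\mu(R_{k-1},s_k)=\dagger$. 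Once in $\dagger$ the automaton remains there, so the word is rejected.

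There is no serious obstacle here; the proof is essentially a bookkeeping exercise. The one subtlety worth flagging is the inversion: because the transition rule is built from the action $R\mapsto sR$ (prepending $s$ on the left), the state reached after reading $s_1,\ldots,s_n$ tracks $s_n\cdots s_1=w^{-1}$ rather than $w$. Getting this index-reversal right in the inductive step is the only place where a careless slip is possible.
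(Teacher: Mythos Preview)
Your proof is correct and follows essentially the same approach as the paper: an induction on word length tracking the part containing $w^{-1}=s_n\cdots s_1$, using regularity for the transition step and local constancy to read off the descent condition. The only cosmetic difference is the order of presentation---you prove the ``moreover'' statement first and deduce language recognition from it, whereas the paper proves ``accepted $\Leftrightarrow$ reduced'' directly by induction (with the tracking of $w^{-1}$ built into the inductive step) and then records the ``moreover'' as a byproduct; the underlying argument is the same.
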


\begin{proof} Let $R_0=\{e\}$. We show, by induction on $n\geq 1$, that $(s_1,\ldots,s_n)$ is accepted by $\cA(\scrR)$ if and only if $s_1\cdots s_n$ is reduced. Consider $n=1$. Each expression $s$ is reduced. On the other hand, since $s\notin D_L(R_0)$ we have $\mu(R_0,s)=R_1$, where $R_1\in\scrR$ is the part containing $s$. Thus all length $1$ reduced words are accepted by~$\cA(\scrR)$.

Let $k\geq 1$, and suppose that $s_1\cdots s_k$ is reduced if and only if $(s_1,\ldots,s_k)$ is accepted by $\cA(\scrR)$. Let $s_1\cdots s_ks$ be reduced. Then $s_1\cdots s_k$ is reduced, and so $(s_1,\ldots,s_k)$ is accepted by~$\cA(\scrR)$. Let $R_k\in\scrR$ be the part of $\scrR$ containing $s_k\cdots s_1$. Since $s_1\cdots s_ks$ is reduced we have $s\notin D_L(s_k\cdots s_1)=D_L(R_k)$. Hence, by the regularity condition, $sR_k\subseteq R_{k+1}$ where $R_{k+1}$ is the part of $\scrR$ with $ss_k\cdots s_1\in R_{k+1}$. Then $\mu(R_k,s)=R_{k+1}$, and so $(s_1,\ldots,s_k,s)$ is accepted by~$\cA(\scrR)$. 

Conversely, suppose that $(s_1,\ldots,s_k,s)$ is accepted by $\cA(\scrR)$. Then $(s_1,\ldots,s_k)$ is accepted, and so $s_1\cdots s_k$ is reduced. Moreover, with $R_k$ being the part containing $s_k\cdots s_1$, the fact that $(s_1,\ldots,s_k,s)$ is accepted gives $\mu(R_k,s)=R_{k+1}$ where $R_{k+1}$ is the part containing $ss_k\cdots s_1$. Thus, by definition, $s\notin D_L(R_k)$. In particular $s\notin D_L(s_k\cdots s_1)$, and so $s_1\cdots s_ks$ is reduced. 

The final statement is now clear: If $w=s_1\cdots s_k$ is a reduced expression, then the corresponding path in the automaton $\cA(\scrR)=(\scrR,\mu,\{e\})$ is 
$$
\{e\}=R_0\to_{s_1}R_1\to_{s_2}R_2\to_{s_3}\cdots\to_{s_k}R_k,
$$
where $R_j\in\scrR$ is the part containing $s_jR_{j-1}$. Thus $s_n\cdots s_1\in R_n$.
\end{proof}

The above construction leads to a uniform and conceptual proof of the known automata recognising $\cL(W,S)$. In particular, using Theorems~\ref{thm:regularpartitions} and~\ref{thm:regularautomaton} we obtain new proofs of Theorems~\ref{thm:canonicalautomaton} and~\ref{thm:garsideautomaton}. Moreover, the above construction leads to the following remarkable fact that will be used in a crucial way to define the ``regular completion'' of a partition in the following section.

\newpage

\begin{cor}\label{cor:Tisminimal}
If $\scrR\in\Preg$ then $\scrR$ is a refinement of $\scrT$ (that is, $\scrT\leq \scrR$). 
\end{cor}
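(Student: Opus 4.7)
The plan is to leverage Theorem~\ref{thm:regularautomaton} together with Lemma~\ref{lem:stateconetype}. Recall the convention that $\scrT\leq\scrR$ means $\scrR$ refines $\scrT$, i.e.\ every part $R\in\scrR$ is contained in some part $X_T$ of $\scrT$. Since, by Proposition~\ref{prop:partsdescription}(1), the parts of $\scrT$ are precisely the level sets $X_T=\{w\in W\mid T(w^{-1})=T\}$, it suffices to show that for every part $R\in\scrR$ and every $x,y\in R$, we have $T(x^{-1})=T(y^{-1})$.

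So fix $R\in\scrR$ and $x,y\in R$. Choose reduced expressions $x=s_n\cdots s_1$ and $y=t_m\cdots t_1$, and consider the reversed words $(s_1,\ldots,s_n)$ and $(t_1,\ldots,t_m)$, which are themselves reduced expressions for $x^{-1}$ and $y^{-1}$. By Theorem~\ref{thm:regularautomaton}, the automaton $\cA(\scrR)=(\scrR,\mu,\{e\})$ recognises $\cL(W,S)$, and the path starting at $\{e\}$ with edge labels $(s_1,\ldots,s_n)$ terminates at the part of $\scrR$ containing $(x^{-1})^{-1}=x$, namely $R$; similarly the path with edge labels $(t_1,\ldots,t_m)$ terminates at the part containing $y$, also $R$. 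Thus both reduced words lead to the same state in $\cA(\scrR)$.

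Now Lemma~\ref{lem:stateconetype} applies directly to the automaton $\cA(\scrR)$: two reduced words whose associated paths terminate at the same state represent elements with equal cone type. Hence $T(s_1\cdots s_n)=T(t_1\cdots t_m)$, that is, $T(x^{-1})=T(y^{-1})$. Writing $T$ for this common cone type, we conclude $x,y\in X_T$, so $R\subseteq X_T$. Since this holds for each part of $\scrR$, the partition $\scrR$ refines $\scrT$, as required.

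The only point requiring care is the order-reversal in the bookkeeping: the automaton reads an input word $(s_1,\ldots,s_n)$ from left to right but, by the construction in Theorem~\ref{thm:regularautomaton}, lands in the part containing the \emph{inverse} $s_n\cdots s_1$. Provided one respects this conventional mismatch, the argument is essentially a one-line consequence of Theorem~\ref{thm:regularautomaton} and Lemma~\ref{lem:stateconetype}, reflecting the fact that cone types are exactly the states of the minimal automaton and therefore constitute the coarsest partition arising from any automaton recognising~$\cL(W,S)$.
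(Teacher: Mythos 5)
Your proof is correct and follows essentially the same route as the paper: apply the final statement of Theorem~\ref{thm:regularautomaton} to see that the reduced words for $x^{-1}$ and $y^{-1}$ drive the automaton $\cA(\scrR)$ to the same state $R$, then invoke Lemma~\ref{lem:stateconetype} to conclude $T(x^{-1})=T(y^{-1})$ and hence $R\subseteq X_T$. The order-reversal bookkeeping you flag is handled correctly and matches the paper's own treatment.
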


\begin{proof}
Let $\cA=(\scrR,\mu,\{e\})$ be the automaton constructed in Theorem~\ref{thm:regularautomaton}. Let $R\in \scrR$, and suppose that $x,y\in R$. Let $x=s_1\cdots s_n$ and $y=s_1'\cdots s_m'$ be reduced expressions. By the final statement of Theorem~\ref{thm:regularautomaton}, we have that the paths in $\cA$ starting at $\{e\}$ with edge labels $(s_n,\ldots,s_1)$ and $(s_m',\ldots,s_1')$ both end at the state $R$. Then by Lemma~\ref{lem:stateconetype} we have $T(s_n\cdots s_1)=T(s_m'\cdots s_1')$, and so $T(x^{-1})=T(y^{-1})$. Thus $x$ and $y$ lie in the same part of the cone type partition (by Proposition~\ref{prop:partsdescription}) and so $\scrR$ is a refinement of $\scrT$.  
\end{proof}

There is a partial converse to Theorem~\ref{thm:regularautomaton}. We call an automaton $\cA=(Y,\mu,o)$ recognising $\cL(W,S)$ \textit{reduced} if the following property holds: If $w=s_1\cdots s_n$ and $w=s_1'\cdots s_n'$ are both reduced expressions for $w$, then the paths in $\cA$ starting at $o$ with edge labels $(s_1,\ldots,s_n)$ and $(s_1',\ldots,s_n')$ finish at the same state. 

\begin{prop}
Let $\scrR\in\Preg$. The automaton $\cA(\scrR)$ constructed in Theorem~\ref{thm:regularautomaton} is reduced. 
\end{prop}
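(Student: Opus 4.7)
The plan is essentially to invoke the ``moreover'' clause of Theorem~\ref{thm:regularautomaton}, which already identifies the terminal state of any accepted path in purely intrinsic terms. Recall that this clause asserts: if $w = s_1 \cdots s_n$ is a reduced expression, then the path in $\cA(\scrR)$ starting at $\{e\}$ with edge labels $(s_1,\ldots,s_n)$ ends at the unique part $R \in \scrR$ containing $w^{-1}$.

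Given two reduced expressions $w = s_1 \cdots s_n = s_1' \cdots s_n'$, I would apply this clause to each. Both paths start at $\{e\}$, and both traverse reduced words, so both terminate at the part of $\scrR$ containing $w^{-1}$. Since $\scrR$ is a partition, this part is unique, so both paths end at the same state. Hence $\cA(\scrR)$ is reduced.

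Thus there is essentially no new work: the reducedness of $\cA(\scrR)$ is already encoded in the intrinsic description of the terminal state (namely, the part containing $w^{-1}$), which does not depend on the reduced expression chosen. The only possible ``obstacle'' would be if one were not careful about the fact that the final clause of Theorem~\ref{thm:regularautomaton} is proved by induction on the length of a reduced expression, not on the particular sequence $(s_1,\ldots,s_n)$; but since the proof of that theorem tracks only the element $s_n \cdots s_1$ reached after the path (and not the specific letters used to get there), the conclusion is automatic. No further lemmas are needed.
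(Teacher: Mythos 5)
Your proposal is correct and is essentially identical to the paper's own proof: both simply invoke the final clause of Theorem~\ref{thm:regularautomaton}, which identifies the terminal state of any path with reduced edge labels as the part of $\scrR$ containing $w^{-1}$, a description depending only on $w$ and not on the chosen reduced expression. No further comment is needed.
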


\begin{proof}
From the final statement of Theorem~\ref{thm:regularautomaton}, if $w=s_1\cdots s_n$ is a reduced expression then the final state in the path with edge labels $(s_1,\ldots,s_n)$ does not depend on the particular reduced expression chosen, and so $\cA(\scrR)$ is reduced.
\end{proof}
%
%

If $\cA=(Y,\mu,o)$ is reduced, then for $w\in W$ we can define
$$
\mu(o,w)=\mu(\ldots\mu(\mu(o,s_1),s_2),\ldots,s_n)
$$
where $s_1\cdots s_n$ is any reduced expression for~$w$ (this is well defined by the reduced assumption). Thus $\mu(o,w)$ is the common end state of all paths in $\cA$ whose edge labels represent~$w$. 

Let $\mathbf{A}_{\mathrm{red}}(W,S)$ denote the set of isomorphism classes of reduced automata recognising~$\cL(W,S)$.

\begin{thm}\label{thm:converseautomaton}
Let $\cA=(Y,\mu,o)$ be a reduced automaton recognising $\cL(W,S)$. The partition $\scrR(\cA)$ of $W$ into sets 
$$
A_y=\{w\in W\mid \mu(o,w^{-1})=y\},\quad\text{with $y\in Y$},
$$
is a regular partition of $W$.

Moreover, the functions $F:\Preg\to \mathbf{A}_{\mathrm{red}}(W,S)$ and $G:\mathbf{A}_{\mathrm{red}}(W,S)\to \Preg$ with $F(\scrR)=\cA(\scrR)$ (c.f. Theorem~\ref{thm:regularautomaton}) and $G(\cA)=\scrR(\cA)$ are are mutually inverse bijections.
\end{thm}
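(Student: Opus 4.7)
The plan is to split the argument into two parts: first, that $\scrR(\cA)$ is a regular partition; second, that $F$ and $G$ are mutually inverse bijections.

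For the regularity of $\scrR(\cA)$, I would first observe that the sets $A_y$ form a partition of $W$: for every $w\in W$ the value $\mu(o,w^{-1})\in Y$ is well-defined, because any reduced expression for $w^{-1}$ traces a path in $\cA$ that stays in $Y$ (by recognition of $\cL(W,S)$), and this endpoint is independent of the choice of reduced expression by the reduced hypothesis on $\cA$. The key technical observation, driving both the locally constant and transition-closure properties, is that for $x\in W$, $s\in S$, and $y=\mu(o,x^{-1})$,
\[
s\notin D_L(x) \iff \ell(x^{-1}s)=\ell(x^{-1})+1 \iff \mu(y,s)\neq \dagger.
\]
The forward implication in the second equivalence uses that any reduced expression for $x^{-1}$ extended by $s$ remains reduced, hence accepted; the converse uses the existence of a reduced expression for $x^{-1}$ ending in $s$ whenever $s\in D_L(x)$, so that appending another $s$ would yield a non-reduced word that $\cA$ must reject. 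The right-hand side depends only on $y$, so $D_L(x)$ is determined by $y$, and $\scrR(\cA)$ is locally constant. For the regularity condition, if $s\notin D_L(A_y)$ and $y'=\mu(y,s)\in Y$, then for each $x\in A_y$ the word $x^{-1}s$ is reduced, so $\mu(o,(sx)^{-1})=\mu(y,s)=y'$ and hence $sA_y\subseteq A_{y'}$.

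For $G\circ F=\mathrm{id}$, given $\scrR\in\Preg$, the final statement of Theorem~\ref{thm:regularautomaton} says that $\mu(\{e\},w^{-1})$ in $\cA(\scrR)$ equals the part of $\scrR$ containing $w$; hence $A_R=R$ for each $R\in\scrR$, so $\scrR(\cA(\scrR))=\scrR$. For $F\circ G\cong \mathrm{id}$, given a reduced automaton $\cA=(Y,\mu,o)$ (which, by discarding unreachable states, we may assume is accessible), I would define $f:Y\to \scrR(\cA)$ by $f(y)=A_y$. This map is surjective by construction and injective because $A_y=A_{y'}$ forces $y=\mu(o,w^{-1})=y'$ for any $w\in A_y$. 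The transition compatibility $\mu'(f(x),s)=f(\mu(x,s))$, where $\mu'$ is the transition function of $\cA(\scrR(\cA))$, then follows directly from the regularity step above together with the defining formula in Theorem~\ref{thm:regularautomaton}.

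The main obstacle is verifying $f(o)=\{e\}$, i.e.\ $A_o=\{e\}$, for a reduced automaton. The plan is a contradiction argument: suppose $w\in A_o$ with $w\neq e$ and fix reduced expressions $w^{-1}=s_1\cdots s_n$ and $w=t_1\cdots t_n$. The concatenation $(s_1,\ldots,s_n,t_1,\ldots,t_n)$ represents $w^{-1}w=e$ and so is a non-reduced word of positive length. However, tracing it in $\cA$, the first $n$ steps return the state to $o$ by hypothesis, and the last $n$ steps then follow a reduced expression for $w$ from $o$, which must stay in $Y$ since $\cA$ recognises $\cL(W,S)$; this exhibits a non-reduced accepted word, contradicting recognition. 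A subsidiary concern is the handling of unreachable states when formalising $\mathbf{A}_{\mathrm{red}}(W,S)$, resolved by identifying each automaton with its accessible part.
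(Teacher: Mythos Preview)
Your proof is correct and follows essentially the same route as the paper's: both establish that $\scrR(\cA)$ is a partition, verify the locally constant and transition-closure conditions directly from the automaton structure, and then check $G\circ F=\mathrm{id}$ and $F\circ G\cong\mathrm{id}$ via the final statement of Theorem~\ref{thm:regularautomaton} and the map $y\mapsto A_y$. One small simplification: the step you flag as the ``main obstacle'', namely $A_o=\{e\}$, is in fact immediate once locally constant is proved, since $e\in A_o$ forces $D_L(w)=D_L(e)=\emptyset$ for every $w\in A_o$; the paper (which gets locally constant via Lemma~\ref{lem:stateconetype}, i.e.\ equal end-states give equal cone types) does not single this out, and your concatenation argument, while valid, is more work than needed.
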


\begin{proof}
It is clear that $W=\bigcup_{y\in Y}A_y$, and that $A_y\cap A_{y'}=\emptyset$ if $y\neq y'$. Thus $\scrR(\cA)$ is a partition of~$W$. Moreover, if $u,v\in A_y$ then $T(u^{-1})=T(v^{-1})$ by Lemma~\ref{lem:stateconetype}. In particular, $D_L(u)=D_L(v)$, and so $\scrR(\cA)$ is locally constant. 

Suppose that $s\notin D_L(A_y)$, and consider $w\in A_y$. Thus $\ell(sw)=\ell(w)+1$ and so $\ell(w^{-1}s)=\ell(w)+1$. Since $y=\mu(o,w^{-1})$, if $w^{-1}=s_1\cdots s_n$ is reduced then $w^{-1}s=s_1\cdots s_ns$ is also reduced, and hence $\mu(o,w^{-1}s)=\mu(y,s)$. Thus $sw\in A_{\mu(y,s)}$, and hence $sA_y\subseteq A_{\mu(y,s)}$. So $\scrR(\cA)$ is regular.

To prove the final statement, we show that $G(F(\scrR))=\scrR$ and $F(G(\cA))=\cA$ for all $\scrR\in\Preg$ and $\cA\in \mathbf{A}_{\mathrm{red}}(W,S)$. For the first statement, the states of the automaton $F(\scrR)$ are parts of $\scrR$, and so the parts of the partition $G(F(\scrR))$ are the sets
$
A_P=\{w\in W\mid \mu(o,w^{-1})=P\}$ with $P\in\scrR$, 
with $\mu$ as in Theorem~\ref{thm:regularautomaton}. Recall that if $w^{-1}=s_1\cdots s_n$ is reduced then the path in $F(\scrR)$ starting at $\{e\}$ with edge labels $(s_1,\ldots,s_n)$ ends at the part $P$ of $\scrR$ containing $w$ (see Theorem~\ref{thm:regularautomaton}). Thus $A_P=\{w\in W\mid w\in P\}=P$.

On the other hand, if $\cA=(Y,\mu,o)\in\mathbf{A}_{\mathrm{red}}(W,S)$, then the states of the automaton $F(G(\cA))$ are the sets $A_y=\{w\in W\mid \mu(o,w^{-1})=y\}$, with $y\in Y$, and the transition function is given by $\mu'(A_y,s)=A_{y'}$ if $s\notin D_L(A_y)$ and $sA_y\subseteq A_{y'}$, with $y'\in Y$. We showed above that $sA_y\subseteq A_{\mu(y,s)}$, and hence $\mu'(A_y,s)=A_{\mu(y,s)}$. It follows that $f:\cA\to F(G(\cA))$ with $f(y)=A_y$ is an isomorphism of automata. 
\end{proof}

\subsection{The regular completion of a partition}\label{sec:regularcompletion}

In this section we show that the partially ordered set $(\Preg,\leq)$ is a complete lattice (that is, every nonempty subset has both a meet and a join). As a consequence, given an arbitrary partition $\scrP\in\scrP(W)$ there exists a unique minimal regular partition $\widehat{\scrP}\in\Preg$ refining $\scrP$ (we call this partition the \textit{regular completion} of $\scrP$. We provide an algorithm for computing the regular completion, along with sufficient conditions for this algorithm to terminate in finite time. An important consequence of this analysis is that $\scrT=\widehat{\scrD}$ (see Corollary~\ref{cor:regularlattice1}). This fact will play a pivotal role in proving Theorem~\ref{thm:main1}.

\begin{thm}\label{thm:regularlattice2}
The partially ordered set $(\Preg,\leq)$ is a complete lattice, with bottom element $\scrT$ and top element $\mathbf{1}$ (recall convention~(\ref{eq:convention})).
\end{thm}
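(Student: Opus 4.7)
The plan is to verify four ingredients: (a) $\mathbf{1} \in \Preg$ is the top of $(\Preg, \leq)$; (b) $\scrT \in \Preg$ is the bottom of $(\Preg, \leq)$; (c) $\Preg$ is closed under taking arbitrary joins as computed in the ambient partition lattice $\scrP(W)$; and (d) existence of arbitrary meets then follows formally from (a)--(c). Steps (a) and (b) are essentially bookkeeping using results already established; the substantive work is in step (c).

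For (a), the singleton partition $\mathbf{1}$ is trivially locally constant, and whenever $\{w\} \in \mathbf{1}$ and $s \notin D_L(w)$ we have $s\{w\} = \{sw\} \in \mathbf{1}$, so $\mathbf{1}$ is regular; and every partition $\scrP$ of $W$ satisfies $\scrP \leq \mathbf{1}$ by the convention on $\leq$. For (b), I would invoke Theorem~\ref{thm:regularpartitions}(1) to see $\scrT \in \Preg$, and Corollary~\ref{cor:Tisminimal} to see $\scrT \leq \scrR$ for every $\scrR \in \Preg$.

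For the main step (c), let $\{\scrR_i\}_{i \in I} \subseteq \Preg$, and for each $w \in W$ let $R_i(w)$ denote the part of $\scrR_i$ containing $w$. The join in $\scrP(W)$ is the partition $\scrR$ whose part containing $w$ is $R(w) = \bigcap_{i \in I} R_i(w)$. For local constancy: if $x, y \in R(w)$ then $x, y$ lie in the same part of each $\scrR_i$, so local constancy of each $\scrR_i$ forces $D_L(x) = D_L(y)$. For the regularity condition: if $s \notin D_L(R(w))$, then $s \notin D_L(R_i(w))$ for every $i$, and regularity of $\scrR_i$ yields a part $R_i' \in \scrR_i$ with $sR_i(w) \subseteq R_i'$. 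Consequently $sR(w) \subseteq \bigcap_{i \in I} R_i'$, which is itself the part of $\scrR$ containing $sx$ for any $x \in R(w)$. Hence $\scrR \in \Preg$.

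Finally for (d), the existence of all joins together with the bottom element $\scrT$ from (b) yields a complete lattice in the standard way: given $\{\scrR_i\}_{i \in I} \subseteq \Preg$, we set
$$
\bigwedge_{i \in I} \scrR_i := \bigvee \bigl\{ \scrP \in \Preg \mid \scrP \leq \scrR_i \text{ for all } i \bigr\},
$$
where the set on the right is non-empty because it contains~$\scrT$. The only possible obstacle is the verification in (c), but since regularity is a condition about how left multiplication by a single generator interacts with the partition, it passes through intersections of parts across different $\scrR_i$ without difficulty; the deeper content has already been absorbed into the identification of the bottom element via Corollary~\ref{cor:Tisminimal}.
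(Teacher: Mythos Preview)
Your proposal is correct and follows essentially the same approach as the paper: both show that $\Preg$ is closed under joins computed in $\scrP(W)$ by intersecting parts and checking the regularity condition componentwise, then invoke Corollary~\ref{cor:Tisminimal} to identify $\scrT$ as the bottom element and obtain meets formally as joins of lower bounds. Your organization into four labelled ingredients is slightly more explicit than the paper's, but the substance is identical.
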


\begin{proof}
Let $X=\{\scrP_i\mid i\in I\}\subseteq \Preg$ be nonempty. The join of $X$ in the partially ordered set $(\scrP(W),\leq)$ is (see, for example, \cite[p.36]{DP:02})
$$
\bigvee X=\bigg\{\bigcap_{i\in I}P_i\biggm| P_i\in\scrP_i,\,\bigcap_{i\in I}P_i\neq\emptyset\bigg\}.
$$
Write $\scrP=\bigvee X$. We claim that $\scrP\in\Preg$. Clearly $\scrP$ is locally constant (as it is a common refinement of locally constant partitions). Moreover, if $P\in\scrP$ and $s\in S$ with $s\notin D_L(P)$ then writing $P=\bigcap_{i\in I}P_i$ with $P_i\in \scrP_i$ we have $s\notin D_L(P_i)=D_L(P)$ for all $i\in I$ (as $P\neq \emptyset$ and each $\scrP_i$ is locally constant). Thus by regularity of each $\scrP_i$ there is $P_i'\in\scrP_i$ with $sP_i\subseteq P_i'$. Thus $sP=\bigcap_{i\in I}(sP_i)\subseteq \bigcap_{i\in I}P_i'$, which is a part of $\scrP$ by definition. Thus $\scrP\in\Preg$.  Thus every nonempty subset $X\subseteq \Preg$ has a join in the partially ordered set $(\Preg,\leq)$.  

By Corollary~\ref{cor:Tisminimal} the cone type partition $\scrT\in\Preg$ is a lower bound for every nonempty subset $X\subseteq \Preg$. Thus the set 
$$
\{\scrR\in \Preg\mid \scrR\leq \scrP\text{ for all $\scrP\in X$}\}
$$
is nonempty, and using the existence of joins from the previous paragraph, the meet of $X$ is given by
$$
\bigwedge X=\bigvee\{\scrR\in \Preg\mid \scrR\leq \scrP\text{ for all $\scrP\in X$}\}.
$$
Thus $\Preg$ is a complete lattice with bottom element $\scrT$ and top element $\mathbf{1}$.
\end{proof}

Theorem~\ref{thm:regularlattice2} allows us to define the ``regular completion'' of a partition.

\begin{defn}\label{defn:regularcompletion}
The \textit{regular completion} of $\scrP\in\scrP(W)$ is 
$$
\widehat{\scrP}=\bigwedge\{\scrR\in\Preg\mid \scrP\leq \scrR\}
$$
Then $\widehat{\scrP}$ is a regular partition, as $(\Preg,\leq)$ is a complete lattice by Theorem~\ref{thm:regularlattice2}.
\end{defn}

It is not immediate from the definition that $\scrP\leq\widehat{\scrP}$, however we shall see that this is indeed true in Theorem~\ref{thm:newterminateatregularisation} below (and hence $\widehat{\scrP}$ is the minimal regular partition refining~$\scrP$).

\subsection{Simple refinements algorithm}\label{sec:simplerefinements}

We now develop an algorithm to compute $\widehat{\scrP}$. This algorithm will not always terminate in finite time, however we will provide natural sufficient conditions under which it will terminate in finite time.

 To begin with, if $\scrP\in\scrP(W)$ then the minimal locally constant partition refining $\scrP$ is obviously $\scrP\vee \scrD$ (see Lemma~\ref{lem:Sminimal}), whose parts are the sets $P\cap D_L^{-1}(J)$ for $P\in\scrP$, $J\subseteq S$ spherical, and $P\cap D_L^{-1}(J)\neq\emptyset$. Moreover, from the definition of regular completion it is clear that $\widehat{\scrP}=\widehat{\scrP\vee\scrD}$, for if $\scrR$ is regular with $\scrP\leq\scrR$ then $\scrP\vee\scrD\leq \scrR\vee\scrD=\scrR$ (as $\scrR$ is locally constant). Thus after replacing $\scrP$ with $\scrP\vee\scrD$, we may assume that $\scrP$ is locally constant.

We now define a \textit{simple refinement} of a locally constant partition. These operations will be the basic building blocks of our algorithm for computing the regular completion.

\begin{defn}\label{defn:simplerefinement}
Let $\scrP$ be a locally constant partition of $W$. Suppose that $(P,s)\in\scrP\times S$ is such that $s\notin D_L(P)$ and $sP$ is not contained in a part of $\scrP$. Let
$
X=\{P'\in\scrP\mid P'\cap sP\neq\emptyset\},
$
and partition the set $P$ as
$$
P=\bigsqcup_{P'\in X}(P\cap sP'),
$$
and let $\scrP'$ be the refinement of $\scrP$ obtained by replacing the part $P$ of $\scrP$ by the above partition. We call the refinement $\scrP\mapsto \scrP'$ the \textit{simple refinement}, and we say that this refinement is \textit{based at the pair $(P,s)$}. 
\end{defn}

\newpage

We note the following.

\begin{prop}\label{prop:finiteparts} If $\scrP$ is locally constant and $\scrP\mapsto\scrP'$ by a simple refinement, then $\scrP'$ is locally constant, and if $|\scrP|<\infty$ then $|\scrP'|<2|\scrP|<\infty$. 
\end{prop}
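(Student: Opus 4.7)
The plan is to handle the two claims separately, both by direct unpacking of Definition~\ref{defn:simplerefinement}.

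For local constancy of $\scrP'$, I would use the general fact that any refinement of a locally constant partition is locally constant. Concretely, the simple refinement only subdivides the part $P$; every other part of $\scrP$ appears unchanged in $\scrP'$, and the new parts $P \cap sP'$ (for $P'\in X$) are subsets of $P$. Since $\scrP$ is locally constant, all elements of $P$ share a common left descent set, and this property is inherited by each of its subsets. So there is nothing to prove beyond this observation.

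For the cardinality bound, I would first note that the simple refinement operation replaces exactly one part $P$ by the parts $\{P\cap sP' \mid P' \in X\}$, all of which are nonempty by the definition of~$X$. Therefore
$$
|\scrP'| = |\scrP| - 1 + |X|.
$$
The sets $\{P'\cap sP \mid P'\in X\}$ form a partition of $sP$ into nonempty pieces indexed by members of $\scrP$, so $|X| \le |\scrP|$. Combining these gives $|\scrP'| \le 2|\scrP|-1 < 2|\scrP|$, and in particular $|\scrP'|$ is finite when $|\scrP|$ is.

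There is really no obstacle here: the proposition is a bookkeeping statement about the simple refinement operation, and both parts follow immediately from the definition together with the trivial observations that refinements preserve local constancy and that $X$ is a subfamily of $\scrP$. The only point worth flagging is that the bound $|\scrP'|<2|\scrP|$ is essentially tight, achieved when $sP$ meets every part of $\scrP$; this is why the strict inequality cannot be improved to something like $|\scrP'|\le |\scrP|+c$ for a fixed constant~$c$, and it foreshadows the need in the next subsection for extra hypotheses to ensure finite termination of iterated simple refinements.
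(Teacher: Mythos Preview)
Your proof is correct and matches the paper's approach exactly: both parts follow from the observation that refinements preserve local constancy and the count $|\scrP'|=|\scrP|-1+|X|$ with $|X|\le|\scrP|$. Your additional remark about tightness is a nice extra observation not in the paper.
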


\begin{proof}
The first statement is clear, because all refinements of a locally constant partition are locally constant. For the second statement, note that $|\scrP'|=|\scrP|+|X|-1$ (with $X$ as in Definition~\ref{defn:simplerefinement}, as the single part $P$ is replaced by $|X|$ parts $P\cap sP'$ with $P'\in X$), and that $|X|\leq |\scrP|$.
\end{proof}

\begin{exa}
Figure~\ref{fig:simplerefinement} gives an example of a simple refinement in type $\tilde{\sG}_2$. We start with the locally constant partition $\scrP$ determined by the solid heavy lines. Let $P$ be the part of $\scrP$ shaded blue, and let $s$ be reflection in the vertical wall bounding the identity chamber (shaded grey). Note that $s\notin D_L(P)$, as $e$ and $P$ lie on the same side of $H_{\alpha_s}$. The set $sP$ is shaded red. There are $4$ parts $P'$ of $\scrP$ such that $sP\cap P'\neq\emptyset$. Let $\scrP\mapsto \scrP'$ via the simple refinement based at $(P,s)$. The partition $\scrP'$ is given by the union of the solid heavy lines and the dotted heavy lines. The meaning of the black and red circles will be given in Section~\ref{sec:gatedpartitions} (see Example~\ref{ex:gates}).

\begin{figure}[H]
\centering
\begin{tikzpicture}[scale=1]
\path [fill=red!20] (-0.433,0.75)--(-0.866,1.5)--(-4.33,3.5)--(-4.33,-1.5);
\path [fill=gray!90] (0,0) -- (0.433,0.75) -- (0,1) -- (0,0);
\path [fill=blue!40] (0.433,0.75)--(0.866,1.5)--(4.33,3.5)--(4.33,-1.5);
\draw(-4.33,4.5)--(4.33,4.5);
\draw(-4.33,3)--(4.33,3);
\draw(-4.33,1.5)--(4.33,1.5);
\draw(-4.33,0)--(4.33,0);
\draw(-4.33,-1.5)--(4.33,-1.5);
\draw(-4.33,-3)--(4.33,-3);
\draw(-4.33,-3)--(-4.33,4.5);
\draw(-3.464,-3)--(-3.464,4.5);
\draw(-2.598,-3)--(-2.598,4.5);
\draw(-1.732,-3)--(-1.732,4.5);
\draw(-.866,-3)--(-.866,4.5);
\draw(0,-3)--(0,4.5);
\draw(.866,-3)--(.866,4.5);
\draw(1.732,-3)--(1.732,4.5);
\draw(2.598,-3)--(2.598,4.5);
\draw(3.464,-3)--(3.464,4.5);
\draw(4.33,-3)--(4.33,4.5);
\draw(-4.33,3.5)--({-3*0.866},4.5);
\draw(-4.33,2.5)--({-1*0.866},4.5);
\draw(-4.33,1.5)--({1*0.866},4.5);
\draw(-4.33,.5)--({3*0.866},4.5);
\draw(-4.33,-.5)--(4.33,4.5);
\draw(-4.33,-1.5)--(4.33,3.5);
\draw(-4.33,-2.5)--(4.33,2.5);
\draw(-3.464,-3)--(4.33,1.5);
\draw(-1.732,-3)--(4.33,.5);
\draw(0,-3)--(4.33,-.5);
\draw(1.732,-3)--(4.33,-1.5);
\draw(3.464,-3)--(4.33,-2.5);
\draw(4.33,3.5)--({3*0.866},4.5);
\draw(4.33,2.5)--({1*0.866},4.5);
\draw(4.33,1.5)--({-1*0.866},4.5);
\draw(4.33,.5)--({-3*0.866},4.5);
\draw(4.33,-.5)--(-4.33,4.5);
\draw(4.33,-1.5)--(-4.33,3.5);
\draw(4.33,-2.5)--(-4.33,2.5);
\draw(3.464,-3)--(-4.33,1.5);
\draw(1.732,-3)--(-4.33,.5);
\draw(0,-3)--(-4.33,-.5);
\draw(-1.732,-3)--(-4.33,-1.5);
\draw(-3.464,-3)--(-4.33,-2.5);
\draw(-4.33,-1.5)--(-3.464,-3);
\draw(-4.33,1.5)--(-1.732,-3);
\draw(-4.33,4.5)--(0,-3);
\draw({-3*0.866},4.5)--(1.732,-3);
\draw({-1*0.866},4.5)--(3.464,-3);
\draw({1*0.866},4.5)--(4.33,-1.5);
\draw({3*0.866},4.5)--(4.33,1.5);
\draw(4.33,-1.5)--(3.464,-3);
\draw(4.33,1.5)--(1.732,-3);
\draw(4.33,4.5)--(0,-3);
\draw({3*0.866},4.5)--(-1.732,-3);
\draw({1*0.866},4.5)--(-3.464,-3);
\draw({-1*0.866},4.5)--(-4.33,-1.5);
\draw({-3*0.866},4.5)--(-4.33,1.5);
\draw[line width=2pt](0,-3)--(0,4.5);
\draw[line width=2pt]({-2*0.866},-3)--({3*0.866},4.5);
\draw[line width=2pt]({-5*0.866},-2.5)--(0,0);
\draw[line width=2pt]({-3*0.866},4.5)--({2*0.866},-3);
\draw[line width=2pt]({-5*0.866},2.5)--(0,0);
\draw[line width=2pt]({-5*0.866},3.5)--({5*0.866},-1.5);
\draw[line width=2pt]({-5*0.866},1.5)--(-0.866,1.5);
\draw[line width=2pt](-0.866,1.5)--(-0.866,4.5);
\draw[line width=2pt](0,1)--({5*0.866},3.5);
\draw[line width=2pt, dotted] (0.866,0.5)--({5*0.866},2.5);
\draw[line width=2pt, dotted] (0.866,1.5)--({5*0.866},1.5);
\node at (-0.5,0.5) {$\bullet$};
\node at (-0.65,0.15) {$\bullet$};
\node at (-1.55,1.7) {$\bullet$};
\node at (-3.3,1.7) {$\bullet$};
\node [color=red] at (0.7,0.82) {$\circ$};
\node [color=red] at (1.25,0.5) {$\bullet$};
\node [color=red] at (1.55,1.7) {$\bullet$};
\node [color=red] at (3.3,1.7) {$\bullet$};
\end{tikzpicture}
\caption{A simple refinement}\label{fig:simplerefinement}
\end{figure}
\end{exa}

The following algorithm is called the \textit{simple refinements algorithm}.

\begin{alg}\label{alg:regularisation}
Let $\scrP\in\scrP(W)$. Let $\scrP_0=\scrP\vee\scrD$. For $j\geq 1$, if there exists a pair $(P,s)$ with $P\in\scrP_{j-1}$ and $s\in S$ with $s\notin D_L(P)$ and $sP\not\subseteq P'$ for any $P'\in\scrP_{j-1}$, let $\scrP_{j}$ be the simple refinement of $\scrP_{j-1}$ based at the pair $(P,s)$. 
\end{alg}

We will show in Theorem~\ref{thm:newterminateatregularisation} below that if Algorithm~\ref{alg:regularisation} terminates in finite time, then the output of the algorithm is the regular completion of the input partition. The key observation is the following lemma.

\begin{lem}\label{lem:minimalrefinement}
Let $\scrP$ be a locally constant partition, and suppose that $\scrR$ is a regular partition with $\scrP\leq \scrR$. Let $\scrP\mapsto \scrP'$ by a simple refinement. Then $\scrP'\leq \scrR$. 
\end{lem}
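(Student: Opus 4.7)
The plan is to show that every part of $\scrR$ is contained in some part of $\scrP'$. Since $\scrP \leq \scrR$, each part $R \in \scrR$ is already contained in some part of $\scrP$. The only parts of $\scrP'$ that differ from parts of $\scrP$ are the new ``slices'' of $P$, so the only case requiring work is when $R \subseteq P$, where $P$ is the part being refined at $(P,s)$. For such an $R$, I need to produce a single part $P''\in X$ of $\scrP$ such that $R \subseteq P \cap sP''$.

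To produce $P''$, I would first observe that $\scrR$ is locally constant (being regular), that $\scrP \leq \scrR$ forces $D_L(R)=D_L(P)$ by local constancy of $\scrP$, and therefore $s \notin D_L(R)$ since $s \notin D_L(P)$. Then regularity of $\scrR$ gives a part $R' \in \scrR$ with $sR \subseteq R'$, and $\scrP\leq\scrR$ supplies a part $P''\in\scrP$ with $R'\subseteq P''$; thus $sR \subseteq P''$. Rewriting this as $R \subseteq sP''$ (using $s^2=e$) and combining with $R\subseteq P$ yields $R \subseteq P\cap sP''$. It remains to check $P''\in X$: since $\emptyset \neq sR \subseteq P''\cap sP$ (using $R\subseteq P$), we have $P''\cap sP \neq \emptyset$, so $P''\in X$ by definition. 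Therefore $P\cap sP''$ is indeed a part of $\scrP'$ containing~$R$.

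The argument is essentially a two-line check once the regularity and local-constancy machinery is unpacked, so I do not anticipate any real obstacle. The only subtle point to be careful about is the direction of the refinement convention (\ref{eq:convention}): ``$\scrP\leq\scrR$'' here means $\scrR$ refines $\scrP$, so a part of $\scrR$ sits inside a part of $\scrP$, and this is precisely the direction I use when locating the ambient part $P''$ of $sR$ inside $\scrP$.
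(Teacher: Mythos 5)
Your proof is correct and follows essentially the same route as the paper's: reduce to the case $R\subseteq P$, use local constancy to get $s\notin D_L(R)$, apply regularity of $\scrR$ to find $R'\supseteq sR$, locate $R'$ inside a part $P''$ of $\scrP$, and conclude $R\subseteq P\cap sP''$. Your explicit verification that $P''\in X$ is a small point the paper leaves implicit, but otherwise the arguments coincide.
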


\begin{proof}
We must show that each part of $\scrR$ is contained in a part of $\scrP'$. Suppose that the simple refinement $\scrP\mapsto\scrP'$ is based at the pair $(P,s)\in\scrP\times S$. Let $R$ be a part of $\scrR$. Then $R\subseteq Q$ for some part $Q$ of $\scrP$ (because $\scrP\leq \scrR$). If $Q\neq P$ then $Q$ is also a part of $\scrP'$ (by the definition of simple refinements), and we are done. So suppose that $Q=P$ and so $R\subseteq P$. Since $\scrR$ is regular, and since $s\notin D_L(R)$ (as $s\notin D_L(P)$) we have that $sR\subseteq R'$ for some part $R'$ of $\scrR$. Moreover, since $\scrP\leq \scrR$ and since $R'\cap R=\emptyset$ (by the locally constant condition) we have $R'\subseteq P'$ for some part $P'$ of $\scrP$ with $P'\in X$ (with $X$ as in Definition~\ref{defn:simplerefinement}). Thus $sR\subseteq R'\subseteq P'$, and so $R\subseteq sP'$. But also $R\subseteq P$, and so $R\subseteq P\cap sP'$, which is a part of $\scrP'$.
\end{proof}

\begin{thm}\label{thm:newterminateatregularisation}
Let $\scrP\in\scrP(W)$ and let $\widehat{\scrP}$ be the regular completion of~$\scrP$.
\begin{enumerate}
\item We have $\scrP\leq\widehat{\scrP}$. 
\item If Algorithm~\ref{alg:regularisation} terminates in finite time with output~$\scrQ$, then $\scrQ=\widehat{\scrP}$.
\item If $|\scrP|<\infty$ then Algorithm~\ref{alg:regularisation} terminates in finite time if and only if $|\widehat{\scrP}|<\infty$. 
\end{enumerate}
\end{thm}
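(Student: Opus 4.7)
The plan is to prove the three parts in the order~(2), (1), (3), with Lemma~\ref{lem:minimalrefinement} as the workhorse throughout. The guiding observation is that simple refinements cannot ``overshoot'' any regular partition refining $\scrP$, so iterating them should converge to the minimum such partition, which will coincide with $\widehat{\scrP}$.

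First I would establish~(2). If Algorithm~\ref{alg:regularisation} halts at stage $j$ with output $\scrQ = \scrP_j$, the termination condition is precisely regularity, so $\scrQ \in \Preg$, and since each step $\scrP_{i-1} \to \scrP_i$ is a refinement and $\scrP \leq \scrP_0 = \scrP \vee \scrD$, we have $\scrP \leq \scrQ$. An induction on Lemma~\ref{lem:minimalrefinement} (with base case $\scrP_0 \leq \scrR$, valid because any locally constant $\scrR \geq \scrP$ automatically refines $\scrP \vee \scrD$) then yields $\scrQ \leq \scrR$ for every $\scrR \in \Preg$ with $\scrP \leq \scrR$. Hence $\scrQ$ is a minimum of $\{\scrR \in \Preg \mid \scrP \leq \scrR\}$; since a minimum of a subset of a poset coincides with the meet of that subset in the ambient lattice, we conclude $\scrQ = \widehat{\scrP}$.

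For~(1), which is more delicate because Algorithm~\ref{alg:regularisation} may not halt in countably many steps, I would extend it transfinitely: at each successor ordinal perform a simple refinement when available, and at each limit ordinal $\lambda$ set $\scrP_\lambda = \bigvee_{\alpha < \lambda} \scrP_\alpha$ (the join in $\scrP(W)$). Transfinite induction maintains two invariants simultaneously: $\scrP \leq \scrP_\alpha$ for all $\alpha$ (by monotonicity of the sequence), and $\scrP_\alpha \leq \scrR$ for every $\scrR \in \Preg$ with $\scrP \leq \scrR$ (via Lemma~\ref{lem:minimalrefinement} at successors, and the universal property of the join at limits). Since the partitions of $W$ form a set, the non-decreasing sequence must stabilize at some ordinal $\gamma$; stability then forces $\scrP_\gamma \in \Preg$, because no further simple refinement is available. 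It follows that $\scrP_\gamma$ is the minimum of $\{\scrR \in \Preg \mid \scrP \leq \scrR\}$, so $\scrP_\gamma = \widehat{\scrP}$, and therefore $\scrP \leq \scrP_\gamma = \widehat{\scrP}$.

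Finally I would prove~(3). The forward direction is immediate: if the algorithm halts, (2) identifies its output with $\widehat{\scrP}$, and iterating Proposition~\ref{prop:finiteparts} starting from the finite $\scrP_0$ keeps every $\scrP_j$ finite. For the converse, assume $|\widehat{\scrP}| < \infty$; then~(1) gives $\scrP \leq \widehat{\scrP}$, hence $\scrP_0 \leq \widehat{\scrP}$, and iterating Lemma~\ref{lem:minimalrefinement} yields $\scrP_j \leq \widehat{\scrP}$ for all $j$, so $|\scrP_j| \leq |\widehat{\scrP}|$ at every stage. The key combinatorial point is that each simple refinement strictly increases the number of parts: the defining condition $sP \not\subseteq P'$ forces the indexing set $X$ of Definition~\ref{defn:simplerefinement} to have size at least $2$, while $P$ is replaced by the $|X|$ nonempty pieces $P \cap sP'$. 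Thus the algorithm halts after at most $|\widehat{\scrP}| - |\scrP_0|$ iterations. The main obstacle is step~(1): one cannot circularly invoke the meet formula defining $\widehat{\scrP}$, and must instead carry out the ordinal iteration, with the most delicate verification being that the induction hypothesis survives the limit stages through the join.
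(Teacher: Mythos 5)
Your proof is correct, and parts (2) and (3) follow the paper's argument essentially verbatim (halting output is a minimum of $\{\scrR\in\Preg\mid\scrP\leq\scrR\}$ and hence equals the meet; the converse of (3) via the strict growth $|\scrP_j|<|\scrP_{j+1}|\leq|\widehat{\scrP}|$, where your observation that $|X|\geq 2$ is exactly the right justification for strictness). Where you genuinely diverge is part (1) in the non-terminating case. The paper stays at length $\omega$: it chooses the infinite run of simple refinements \emph{fairly}, so that for every $N$ some $\scrP_M$ is already regular when restricted to the ball $B_N$, and then argues that the single join $\scrP_\infty=\bigvee_n\scrP_n$ is regular because regularity can be checked on finite balls. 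You instead run the recursion transfinitely, taking joins at limit ordinals, and get regularity for free from stabilization: a simple refinement always strictly refines, so $\scrP_\gamma=\scrP_{\gamma+1}$ forces that no refinement is available, which (together with local constancy, inherited from $\scrP_0$) is precisely regularity. Your route trades the paper's somewhat informal fairness claim (``it is clear that this sequence can be chosen such that\dots'') for standard transfinite machinery plus the cardinality argument that a strictly increasing ordinal-indexed chain in the set $\scrP(W)$ must stabilize; the invariants $\scrP\leq\scrP_\alpha$ and $\scrP_\alpha\leq\scrR$ do pass through limit stages exactly as you say, since $\scrR$ is an upper bound of all earlier terms and the join is the least such. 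Both arguments are sound; yours is arguably more robust (no fairness bookkeeping, no appeal to finite-ball regularity), while the paper's stays within a countable construction.
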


\begin{proof}
Suppose first that the simple refinements algorithm terminates in finite time. Then there is a chain of partitions $\scrP\vee\scrD=\scrP_0\leq\scrP_1\leq\scrP_2\leq\cdots\leq\scrP_n$ with $\scrP_{j-1}\mapsto\scrP_j$ by a simple refinement for $1\leq j\leq n$ and with $\scrP_n$ regular. By Lemma~\ref{lem:minimalrefinement} every regular partition $\scrR$ with $\scrP\leq\scrR$ satisfies $\scrP_n\leq\scrR$, and since $\widehat{\scrP}$ is the meet of such partitions (by definition), and since $\scrP_n$ is regular, we have $\widehat{\scrP}=\scrP_n$. This proves (2), and also proves (1) in the case that the simple refinements algorithm terminates in finite time.

Suppose now that the simple refinements algorithm does not terminate in finite time. Then one can construct an infinite chain $\scrP\vee\scrD=\scrP_0\leq\scrP_1\leq\scrP_2\leq\cdots$ of partitions with $\scrP_{j-1}\mapsto\scrP_j$ by a simple refinement for all $j\geq 1$. Moreover, it is clear that this sequence can be chosen such that for each $N>0$ there exists $M>0$ such that the partition $\scrP_M$ restricted to the ball $B_N=\{w\in W\mid \ell(w)\leq N\}$ is regular (by this we shall mean that if $P$ is a part of $\scrP_M$ and $s\notin D_L(P)$ then $sP\cap B_N$ is contained in some $P'\cap B_N$ with $P'$ a part of $\scrP_M$). Consider the join
$$
\scrP_{\infty}=\bigvee_{n\geq 0}\scrP_n.
$$
Every regular partition $\scrR$ with $\scrP\leq \scrR$ satisfies $\scrP_{\infty}\leq\scrR$ (for if not, then since $\scrP_0\leq\scrP_1\leq\cdots$ there is some $n$ with $\scrP_n\not\leq \scrR$, contradicting Lemma~\ref{lem:minimalrefinement}). Moreover, $\scrP_{\infty}$ is regular because by construction the restriction of $\scrP_{\infty}$ to each finite ball is regular. Thus, as in the previous paragraph, $\widehat{\scrP}=\scrP_{\infty}$, and the proof of (1) is complete.

To prove (3), note that if $|\scrP|<\infty$ and the algorithm terminates in finite time, then the output partition (which is $\widehat{\scrP}$ by (2)) has finitely many parts by Proposition~\ref{prop:finiteparts}. Conversely, if $|\widehat{\scrP}|<\infty$, and if $\scrP\vee\scrD=\scrP_0\mapsto \scrP_1\mapsto\cdots \mapsto\scrP_n$ is a sequence of simple refinements, then since $|\scrP|<|\scrP_1|<\cdots <|\scrP_n|$, and since $\scrP_n\leq \widehat{\scrP}$ (by Lemma~\ref{lem:minimalrefinement}) we have $n\leq |\widehat{\scrP}|-|\scrP|$, and so Algorithm~\ref{alg:regularisation} terminates in finite time.
\end{proof}

\begin{rem}
Note that, as a consequence of Theorem~\ref{thm:newterminateatregularisation}, if Algorithm~\ref{alg:regularisation} terminates in finite time then the output partition is independent of the order of the simple refinements chosen.
\end{rem}

We note, in passing, the following corollary. Recall (c.f. \cite[Section~7.1]{DP:02}) that a \textit{closure operator} on a partially ordered set $(X,\leq)$ is a map $c:X\to X$ satisfying (1) $x\leq c(x)$ for all $x\in X$, (2) if $x,y\in X$ with $x\leq y$ then $c(x)\leq c(y)$, and (3) $c(c(x))=c(x)$ for all $x\in X$. The \textit{closed elements} of the closure operator $c:X\to X$ are the elements $x\in X$ with $c(x)=x$. 

\begin{cor}
The map $c:\scrP(W)\to\scrP(W)$ with $c(\scrP)=\widehat{\scrP}$ is a closure operator on $\scrP(W)$. The set of closed elements is precisely $\Preg$. 
\end{cor}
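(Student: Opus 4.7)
The plan is to verify the three defining properties of a closure operator (extensivity, monotonicity, idempotence) and then read off the characterisation of closed elements, in each case leveraging the two main preceding results: Theorem~\ref{thm:regularlattice2} (which says $\Preg$ is a complete lattice, so arbitrary meets exist inside $\Preg$) and Theorem~\ref{thm:newterminateatregularisation}(1) (which gives the crucial, non-obvious extensivity $\scrP\leq\widehat{\scrP}$). Importantly, without Theorem~\ref{thm:newterminateatregularisation}(1), extensivity is not clear from the bare definition of $\widehat{\scrP}$ as a meet taken inside $\Preg$; with it, everything else is formal.

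For extensivity, there is nothing to do: it is exactly Theorem~\ref{thm:newterminateatregularisation}(1). For monotonicity, suppose $\scrP\leq\scrQ$. Then $\scrP\leq\scrQ\leq\widehat{\scrQ}$ by extensivity applied to $\scrQ$, and $\widehat{\scrQ}\in\Preg$ by construction, so $\widehat{\scrQ}$ lies in the collection $\{\scrR\in\Preg\mid \scrP\leq\scrR\}$ whose meet (in $\Preg$) defines $\widehat{\scrP}$. Hence $\widehat{\scrP}\leq\widehat{\scrQ}$.

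For idempotence, note that $\widehat{\scrP}$ itself is an element of $\Preg$, so it belongs to $\{\scrR\in\Preg\mid \widehat{\scrP}\leq\scrR\}$ and is therefore the minimum of that set. Taking the meet of this set gives $\widehat{\widehat{\scrP}}\leq\widehat{\scrP}$, while extensivity gives the reverse inequality, so $\widehat{\widehat{\scrP}}=\widehat{\scrP}$.

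Finally, for the characterisation of closed elements: if $c(\scrP)=\scrP$ then $\scrP=\widehat{\scrP}\in\Preg$, so $\scrP$ is regular. Conversely, if $\scrP\in\Preg$ then $\scrP$ itself is a regular partition refining $\scrP$, so it belongs to the collection over which the meet defining $\widehat{\scrP}$ is taken; this forces $\widehat{\scrP}\leq\scrP$, which together with extensivity gives $\widehat{\scrP}=\scrP$. Thus the closed elements are exactly the regular partitions. There is no real obstacle here — the corollary is essentially a formal consequence of the lattice structure once extensivity has been secured by the earlier theorem.
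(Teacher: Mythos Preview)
Your proof is correct and follows essentially the same approach as the paper: extensivity via Theorem~\ref{thm:newterminateatregularisation}(1), monotonicity and idempotence from the definition of $\widehat{\scrP}$ as a meet over regular refinements, and the characterisation of closed elements by checking both directions. The only cosmetic difference is that for monotonicity the paper argues directly via the containment $\{\scrR\in\Preg\mid \scrQ\leq\scrR\}\subseteq\{\scrR\in\Preg\mid \scrP\leq\scrR\}$ (so does not invoke extensivity there), whereas you route through $\widehat{\scrQ}$ being a member of the defining set for $\widehat{\scrP}$; both are valid and amount to the same thing.
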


\begin{proof}
By Theorem~\ref{thm:newterminateatregularisation} we have $\scrP\leq c(\scrP)$ for all $\scrP\in\scrP(W)$. If $\scrP,\scrQ\in\scrP(W)$ with $\scrP\leq\scrQ$ then $\{\scrR\in\Preg\mid \scrP\leq\scrR\}\supseteq \{\scrR\in\Preg\mid \scrQ\leq\scrR\}$, and hence from the definition of regular completion we have $c(\scrP)\leq c(\scrQ)$. Since $\widehat{\scrP}$ is regular we have $c(c(\scrP))=c(\scrP)$, and so $c$ is a closure operator. The closed elements are those partitions $\scrP\in\scrP(W)$ with $\scrP=\widehat{\scrP}$, and these are precisely the regular partitions.
\end{proof}

%
%
%
%
%

\newpage

We note that the simple refinements algorithm (Algorithm~\ref{alg:regularisation}) may not terminate in finite time, even in the case that the input partition $\scrP$ has finitely many parts, as the following example shows.

\begin{exa}\label{exa:nonterminate}
Let $W=\langle s,t\mid s^2=t^2=e\rangle$ be the infinite dihedral group. Let $\scrP=\{P_0,P_1,P_2,P_3\}$ be the partition of $W$ into $4$ parts, with $P_0=\{e\}$, $P_1=\{w\in W\mid D_L(w)=\{s\}\}$, $P_2=\{w\in W\mid D_L(w)=\{t\}\text{ and }\ell(w)\in N\}$, and $P_3=\{w\in W\mid D_L(w)=\{t\}\text{ and }\ell(w)\notin N\}$, where $N=\{n(n+1)\mid n\in\mathbb{Z}_{>0}\}$. This is illustrated in Figure~\ref{fig:nonterminate}, with $P_0$ grey, $P_1$ green, $P_2$ red, and $P_3$ blue.

\begin{figure}[H]
\centering
\begin{tikzpicture}[scale=0.5]
\path [fill=red!30] (-1,0) -- (-2,0) -- (-2,0.6) -- (-1,0.6);
\path [fill=red!30] (3,0) -- (2,0) -- (2,0.6) -- (3,0.6);
\path [fill=red!30] (9,0) -- (8,0) -- (8,0.6) -- (9,0.6);
\path [fill=gray!90] (-3,0) -- (-4,0) -- (-4,0.6) -- (-3,0.6);
\path [fill=blue!40] (-2,0)--(-3,0)--(-3,0.6)--(-2,0.6);
\path [fill=blue!40] (2,0)--(-1,0)--(-1,0.6)--(2,0.6);
\path [fill=blue!40] (8,0)--(3,0)--(3,0.6)--(8,0.6);
\path [fill=blue!40] (13.5,0)--(9,0)--(9,0.6)--(13.5,0.6);
\path [fill=ForestGreen!40] (-4,0)--(-13.5,0)--(-13.5,0.6)--(-4,0.6);
\draw (-13.5,0)--(13.5,0);
\draw (-13,0)--(-13,0.6);
\draw (-12,0)--(-12,0.6);
\draw (-11,0)--(-11,0.6);
\draw (-10,0)--(-10,0.6);
\draw (-9,0)--(-9,0.6);
\draw (-8,0)--(-8,0.6);
\draw (-7,0)--(-7,0.6);
\draw (-6,0)--(-6,0.6);
\draw (-5,0)--(-5,0.6);
\draw (-4,0)--(-4,0.6);
\draw (-3,0)--(-3,0.6);
\draw (-2,0)--(-2,0.6);
\draw (-1,0)--(-1,0.6);
\draw (0,0)--(0,0.6);
\draw (1,0)--(1,0.6);
\draw (2,0)--(2,0.6);
\draw (3,0)--(3,0.6);
\draw (4,0)--(4,0.6);
\draw (5,0)--(5,0.6);
\draw (6,0)--(6,0.6);
\draw (7,0)--(7,0.6);
\draw (8,0)--(8,0.6);
\draw (9,0)--(9,0.6);
\draw (10,0)--(10,0.6);
\draw (11,0)--(11,0.6);
\draw (12,0)--(12,0.6);
\draw (13,0)--(13,0.6);
\node at (-3.5,0.275) {$e$};
\node at (-4.5,0.275) {$s$};
\node at (-2.5,0.32) {$t$};
\end{tikzpicture}
\caption{The simple refinements algorithm does not terminate in finite time}\label{fig:nonterminate}
\end{figure}

\noindent We claim that Algorithm~\ref{alg:regularisation} does not terminate in finite time when applied to $\scrP$. To prove this it is sufficient to show that the regular completion $\widehat{\scrP}$ has infinitely many parts (see Proposition~\ref{prop:finiteparts} and Theorem~\ref{thm:newterminateatregularisation}). For $n\geq 1$ write $w_n=tst\cdots s$ with $\ell(w_n)=n(n+1)$. Thus $P_2=\{w_1,w_2,w_3,\ldots\}$. We claim that if $i\neq j$ then $w_i$ and $w_j$ do not lie in a common part of $\widehat{\scrP}$. Suppose that $i<j$, and that $\{w_i,w_j\}$ is contained in a part $Q_0$ of $\widehat{\scrP}$. Since $s\notin D_L(Q_0)$ we have that $\{sw_i,sw_j\}$ is contained in a part $Q_1$ of $\widehat{\scrP}$ (as the regular completion is regular). Continuing, we have that $\{tsw_i,tsw_j\}$ is contained in a part $Q_2$ of $\widehat{\scrP}$, and so on. Writing $v=t\cdots sts$ with $\ell(v)=2(i+1)$ it follows that $\{vw_i,vw_j\}$ is contained in a part $Q_{i+1}$ of $\widehat{\scrP}$. Note that $vw_i=w_{i+1}$ and $j(j+1)<\ell(vw_j)=j(j+1)+2(i+1)<j(j+1)+2(j+1)=(j+1)(j+2)$. Thus $vw_i\in P_2$ and $vw_j\in P_3$ are in different parts of $\scrP$, and hence in different parts of the refinement $\widehat{\scrP}$, a contradiction. Hence the result.
\end{exa}

We now provide a sufficient condition for Algorithm~\ref{alg:regularisation} to terminate in finite time. Given a partition $\scrP$, we define the \textit{roots of $\scrP$} to be the set $\Phi(\scrP)$ of roots $\beta\in\Phi^+$ such that there exist parts $P_1\neq P_2$ of $\scrP$ and elements $w\in P_1$ and $s\in S$ such that $ws\in P_2$ and $w\alpha_s=\pm\beta$. Geometrically, this means that the wall $H_{\beta}$ of the Coxeter complex separates the chambers of $P_1$ from the chambers of $P_2$, and that $P_1\cap P_2$ (intersection of simplical complexes) contains a panel (codimension~$1$ simplex) of $H_{\beta}$. For example, if $\scrP$ is the hyperplane partition induced by $\Lambda$, then one can easily check that $\Phi(\scrP)=\Lambda$. 

Note that if $|\Phi(\scrP)|<\infty$ then $|\scrP|<\infty$, however the converse is false (see, for example, the infinite dihedral example in Example~\ref{exa:nonterminate}).

\begin{thm}\label{thm:finitetermination}
Let $\scrP$ be a locally constant partition with $|\Phi(\scrP)|<\infty$. Then Algorithm~\ref{alg:regularisation} terminates in finite time, outputting the regular completion $\widehat{\scrP}$, and moreover $|\widehat{\scrP}|<\infty$. 
\end{thm}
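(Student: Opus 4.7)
The strategy is to reduce the problem to showing $|\widehat{\scrP}| < \infty$ and then to bound this by exhibiting a suitable finite regular refinement of $\scrP$.

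First I would establish $|\scrP| \leq 2^{|\Phi(\scrP)|} < \infty$ via a standard gallery argument in the Coxeter complex: if $u, v \in W$ lie on the same side of every wall $H_\beta$ with $\beta \in \Phi(\scrP)$, one can choose a gallery from $u$ to $v$ which crosses no wall of $\Phi(\scrP)$, and by the definition of $\Phi(\scrP)$ each such crossing preserves the $\scrP$-part; hence $u$ and $v$ lie in the same part of $\scrP$. By Theorem~\ref{thm:newterminateatregularisation}(3) it then suffices to show $|\widehat{\scrP}| < \infty$.

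The obvious candidate for a finite regular refinement is $\scrQ = \scrP \vee \scrT$, which is finite because $|\scrT| < \infty$ by Corollary~\ref{cor:finiteconetypes1}. Unfortunately this $\scrQ$ need not be regular: for a part $R = X_T \cap P$ with $s \notin D_L(R)$, $\scrT$-regularity forces $sX_T \subseteq X_{T'}$, but $sP$ may intersect multiple $\scrP$-parts inside $X_{T'}$, as small examples (e.g.\ in $\tilde{\sA}_1$) already illustrate. My plan instead is to run Algorithm~\ref{alg:regularisation} from $\scrP_0 = \scrP \vee \scrD$ and track the accumulating wall set $\Phi(\scrP_i)$. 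A careful inspection of a single simple refinement at $(P, s)$ shows that any new wall $\beta \in \Phi(\scrP_{i+1}) \setminus \Phi(\scrP_i)$ arises in the scenario where $w, wt \in P$ but $sw, swt$ lie in different parts of $\scrP_i$; in that case $\beta = \pm w\alpha_t$ and $s\beta = \pm(sw)\alpha_t \in \pm\Phi(\scrP_i)$, so the new walls lie in the $s$-image of the existing wall set.

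The main obstacle is to show that this accumulation of walls is uniformly bounded across all iterations, since the naive recursive estimate $|\Phi(\scrP_{i+1})| \le 2|\Phi(\scrP_i)|$ yields only exponential growth in $i$ and is insufficient. The key input I expect to exploit is that $\widehat{\scrP}$ refines $\scrT$ (Corollary~\ref{cor:Tisminimal}), so every part of $\widehat{\scrP}$ lies inside a single cone type part $X_T$. Combined with the fact that each $X_T$ is bounded by the finite set of boundary roots $\partial T$ provided by Theorem~\ref{thm:geometry2}, the walls separating $\widehat{\scrP}$-subparts inside a fixed $X_T$ should be constrained to a finite subset determined by $\Phi(\scrP)\cap \mathrm{Int}(T)$ together with the transitions of the cone type automaton out of $T$. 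Once $|\Phi(\widehat{\scrP})| < \infty$ is established, the gallery argument of the first paragraph applied to $\widehat{\scrP}$ yields $|\widehat{\scrP}| \leq 2^{|\Phi(\widehat{\scrP})|} < \infty$, and Theorem~\ref{thm:newterminateatregularisation}(3) completes the proof.
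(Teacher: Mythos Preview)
Your proposal has a genuine gap: you correctly identify that it suffices to exhibit a finite regular partition refining $\scrP$, but you never actually produce one. You try $\scrP\vee\scrT$, observe (correctly) that it need not be regular, and then embark on a wall-tracking programme whose crucial step---bounding the accumulated wall set $\Phi(\scrP_i)$ uniformly in $i$---you leave unfinished. The sketch involving $\partial T$ and $\mathrm{Int}(T)$ is too vague to assess, and the naive recursion you mention indeed only gives exponential growth.

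The paper's proof bypasses all of this with a single observation you are missing: the $n$-Shi partition $\scrS_n$ is itself the desired finite regular refinement. Since $\cE_0\subseteq\cE_1\subseteq\cdots$ exhausts $\Phi^+$ and $|\Phi(\scrP)|<\infty$, one may choose $n$ with $\Phi(\scrP)\subseteq\cE_n$. Then $\scrS_n=\scrH(\cE_n)$ refines $\scrP$ (by exactly your gallery argument, since its wall set contains $\Phi(\scrP)$), is regular by Theorem~\ref{thm:regularpartitions}(3), and is finite by Corollary~\ref{cor:Enfinite}. Lemma~\ref{lem:minimalrefinement} now forces every partition produced by the algorithm to satisfy $\scrP_j\leq\scrS_n$, so the strictly increasing sequence $|\scrP_0|<|\scrP_1|<\cdots$ is bounded by $|\scrS_n|$ and the algorithm terminates.

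Incidentally, your wall-tracking observation that new walls after a simple refinement at $(P,s)$ lie in $s\Phi(\scrP_i)$ is morally the content of Lemma~\ref{lem:elementaryrootsbasics}, which is precisely what makes $\scrS_n$ regular; so your instinct was pointing in the right direction, but the clean way to package it is to invoke $\scrS_n$ directly rather than to re-derive its closure property by hand.
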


\begin{proof}
Since $\cE_0\subseteq \cE_1\subseteq\cdots$ is a filtration of $\Phi^+$, and since $|\Phi(\scrP)|<\infty$, there is $n\geq 0$ such that 
$\Phi(\scrP)\subseteq \cE_{n}$. From the definition of $\Phi(\scrP)$ it is clear that $\scrS_{n}$ is a refinement of $\scrP$, and in particular, $|\scrP|<\infty$ (see Proposition~\ref{prop:partsdescription}). Then, by Lemma~\ref{lem:minimalrefinement}, if $\scrP\to\scrP'$ by a simple refinement we have $\scrP'\leq\scrS_n$. Since $|\scrP|<|\scrP'|<|\scrS_n|$ Algorithm~\ref{alg:regularisation} must terminate after finitely many iterations (at most $|\scrS_n|-|\scrP|$ iterations in fact). The output is $\widehat{\scrP}$ and $|\widehat{\scrP}|<\infty$, by Theorem~\ref{thm:newterminateatregularisation}.
\end{proof}

The following important corollary is a key ingredient in the proof of Theorem~\ref{thm:main1}. 

\begin{cor}\label{cor:regularlattice1} We have $\scrT=\widehat{\scrD}$.
\end{cor}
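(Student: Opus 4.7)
The plan is short, since all the hard work has already been done. The goal is to establish $\scrT \leq \widehat{\scrD}$ and $\widehat{\scrD} \leq \scrT$ separately, using results already assembled.

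First, I would observe that $\scrT$ itself is a regular partition (Theorem~\ref{thm:regularpartitions}(1)) and is locally constant (Proposition~\ref{prop:locallyconstantexamples}). By Lemma~\ref{lem:Sminimal}, being locally constant is equivalent to being refined by $\scrD$, so $\scrD \leq \scrT$. Hence $\scrT$ belongs to the set $\{\scrR \in \Preg : \scrD \leq \scrR\}$ whose meet defines the regular completion $\widehat{\scrD}$ (Definition~\ref{defn:regularcompletion}). This immediately yields $\widehat{\scrD} \leq \scrT$.

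For the reverse inequality, I would invoke Theorem~\ref{thm:regularlattice2} to conclude that $\widehat{\scrD} \in \Preg$, and then apply Corollary~\ref{cor:Tisminimal}, which states that $\scrT$ is the bottom element of $(\Preg,\leq)$: every regular partition refines $\scrT$. Applied to $\widehat{\scrD}$, this gives $\scrT \leq \widehat{\scrD}$.

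Combining the two inclusions yields $\scrT = \widehat{\scrD}$. There is no genuine obstacle here — the entire content of the corollary has been packaged into Corollary~\ref{cor:Tisminimal} (proved via the automaton $\cA(\scrR)$ together with Lemma~\ref{lem:stateconetype}) and the lattice structure of $\Preg$ established in Theorem~\ref{thm:regularlattice2}. The corollary is really just recording that the bottom element of $\Preg$ coincides with the minimal regular partition refining the descent partition $\scrD$, which is important because it means the simple refinements algorithm (Algorithm~\ref{alg:regularisation}), when started from $\scrD$, computes $\scrT$.
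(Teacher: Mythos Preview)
Your proof is correct and follows essentially the same approach as the paper: both directions rest on $\scrT$ being regular with $\scrD\leq\scrT$, and on Corollary~\ref{cor:Tisminimal} for $\scrT\leq\widehat{\scrD}$. The only minor difference is that for $\widehat{\scrD}\leq\scrT$ you invoke the meet property directly (since $\scrT$ lies in the set whose meet defines $\widehat{\scrD}$), whereas the paper routes this through Lemma~\ref{lem:minimalrefinement} and finite termination of the simple refinements algorithm; your route is slightly more direct.
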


\begin{proof}
Since $\scrT$ is regular (see Theorem~\ref{thm:regularpartitions}) we have $\scrD\leq \scrT$ (see Lemma~\ref{lem:Sminimal}). Hence by Lemma~\ref{lem:minimalrefinement} and the fact that Algorithm~\ref{alg:regularisation} terminates in finite time (Theorem~\ref{thm:finitetermination}), we have $\widehat{\scrD}\leq \scrT$. Since $\widehat{\scrD}$ is regular Corollary~\ref{cor:Tisminimal} gives $\scrT\leq \widehat{\scrD}$, and hence the result.
\end{proof}

Combining Corollary~\ref{cor:regularlattice1} with Algorithm~\ref{alg:regularisation} and Theorem~\ref{thm:finitetermination} we obtain an algorithmic way to compute the cone type partition~$\scrT$ by starting with $\scrD$ and applying simple refinements. However it is more efficient to instead apply the following theorem, allowing us to start with $\scrJ$ instead of $\scrD$ (see Example~\ref{ex:completingB2} below).

\begin{thm}\label{thm:spherical1}
We have $\scrJ\leq \scrT$, and $\widehat{\scrJ}=\scrT$.
\end{thm}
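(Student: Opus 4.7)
The plan is to dispatch both assertions using tools already developed in the paper, with the real content concentrated in the first inclusion.

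For $\scrJ \leq \scrT$, I need to show that each part of $\scrT$ is contained in a part of $\scrJ$; by Proposition~\ref{prop:partsdescription}, this reduces to proving that $T(x^{-1}) = T(y^{-1})$ implies $\Phisph(x) = \Phisph(y)$. Fix a spherical subset $J \subseteq S$ and write the parabolic decompositions $x = u_J v_J$ and $y = u'_J v'_J$ with $u_J, u'_J \in W_J$ and $v_J, v'_J \in W^J$, as in~\eqref{eq:WJdecomposition}. Corollary~\ref{cor:distinguishconetypes} is exactly the tool needed here: the hypothesis $T(x^{-1}) = T(y^{-1})$ forces $u_J = u'_J$. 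Then Lemma~\ref{lem:rootsdecomposition} gives $\Phi_J(x) = \Phi(u_J) = \Phi(u'_J) = \Phi_J(y)$. Since every spherical positive root is supported on some spherical $J \subseteq S$, taking the union over all such $J$ yields $\Phisph(x) = \Phisph(y)$, as desired.

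For the equality $\widehat{\scrJ} = \scrT$, both directions are now essentially formal. Since $\scrT$ is regular by Theorem~\ref{thm:regularpartitions} and $\scrJ \leq \scrT$ by the first part, the partition $\scrT$ belongs to the set $\{\scrR \in \Preg \mid \scrJ \leq \scrR\}$ whose meet defines $\widehat{\scrJ}$ (Definition~\ref{defn:regularcompletion}), so $\widehat{\scrJ} \leq \scrT$. Conversely, Theorem~\ref{thm:regularlattice2} identifies $\scrT$ as the bottom element of the complete lattice $(\Preg, \leq)$, so every regular partition refines it; in particular $\scrT \leq \widehat{\scrJ}$, since $\widehat{\scrJ} \in \Preg$.

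I do not anticipate any real obstacles here: the paper has already done the heavy lifting by establishing Corollary~\ref{cor:distinguishconetypes} (for the refinement statement) and the lattice structure of $\Preg$ together with the identification $\scrT = \bot_{\Preg}$ (for the completion statement). The only conceptual point worth highlighting is that this theorem gives a significantly more efficient starting point than Corollary~\ref{cor:regularlattice1}'s $\scrT = \widehat{\scrD}$ for algorithmic computation of $\scrT$ via Algorithm~\ref{alg:regularisation}, since $\scrJ$ is typically much finer than $\scrD$ and already captures much of the cone type information through the parabolic decompositions.
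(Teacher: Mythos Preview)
Your proof is correct and follows essentially the same approach as the paper. The only cosmetic difference is that for $\scrJ\leq\scrT$ you argue directly (show $\Phi_J(x)=\Phi_J(y)$ for each spherical $J$ and take the union) whereas the paper argues by contradiction (pick a distinguishing spherical root and derive $u\neq u'$); both rest on Corollary~\ref{cor:distinguishconetypes} and Lemma~\ref{lem:rootsdecomposition}, and for the second assertion your citation of Theorem~\ref{thm:regularlattice2} is interchangeable with the paper's use of Corollary~\ref{cor:Tisminimal}.
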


\begin{proof}
To prove that $\scrJ\leq \scrT$ we must show that if $x,y\in W$ with $T(x^{-1})=T(y^{-1})$, then $\Phisph(x)=\Phisph(y)$. Suppose that $\Phisph(x)\neq\Phisph(y)$. Then, after interchanging the roles of $x$ and $y$ if neccessary, we may assume that there is a root $\beta\in\Phisph^+$ with $\beta\in\Phi(y)\backslash\Phi(x)$. Let $J=\mathrm{supp}(\beta)$. Writing $x=uv$ and $y=u'v'$ with $u,u'\in W_J$ and $v,v'\in W^J$, we have $\Phi_J(x)=\Phi(u)$ and $\Phi_J(y)=\Phi(u')$ (see Lemma~\ref{lem:rootsdecomposition}), and so $u\neq u'$. Thus $T(x^{-1})\neq T(y^{-1})$ by Corollary~\ref{cor:distinguishconetypes}.

Since $\scrJ\leq \scrT$ we have $\widehat{\scrJ}\leq \scrT$, but also $\scrT\leq \widehat{\scrJ}$ by Corollary~\ref{cor:Tisminimal}.
\end{proof}

\begin{exa}\label{ex:completingB2}
Figure~\ref{fig:completingB2} illustrates the calculation of $\scrT$ for $\tilde{\sB}_2$ using Theorem~\ref{thm:spherical1} and Algorithm~\ref{alg:regularisation}. Let $s$ (respectively, $t$) be the reflection in the vertical (respectively, horizontal) wall bounding the fundamental chamber. The spherical partition $\scrJ$ is shown in Figure~\ref{fig:completingB2}(a) (in solid heavy lines). Let $P_0$ be the part of $\scrJ$ shaded in blue, and then $sP_0$ is shaded red. The partition $\scrP_1$ obtained by applying the simple refinement based at $(P_0,s)$ to $\scrJ$ is shown in Figure~\ref{fig:completingB2}(a) as the union of the solid and dotted heavy lines. Similarly, Figure~\ref{fig:completingB2}(b) shows the simple refinement $\scrP_1\to\scrP_2$ based at $(P_1,s)$ (with $P_1\in\scrP_1$ shaded blue), Figure~\ref{fig:completingB2}(c) shows the simple refinement $\scrP_2\to\scrP_3$ based at $(P_2,t)$, and Figure~\ref{fig:completingB2}(d) shows the simple refinement $\scrP_3\to\scrP_4$ based at $(P_3,t)$. Since $\scrP_4$ is regular we have $\scrP_4=\widehat{\scrJ}=\scrT$ (by Theorems~\ref{thm:newterminateatregularisation} and~\ref{thm:spherical1}). 
\begin{figure}[H]
\centering
\subfigure[$\scrJ\mapsto\scrP_1$]{
\begin{tikzpicture}[scale=0.65]
\path [fill=gray!90] (0,0) -- (1,1) -- (0,1) -- (0,0);
\path [fill=blue!40] (1,1)--(1,5)--(0,5)--(0,2)--(1,1);
\path [fill=red!20] (-1,1)--(-1,5)--(0,5)--(0,2)--(-1,1);
\draw (-4,-2) -- (5,-2); 
\draw (-4,-1) -- (5,-1);
\draw [line width=2pt](-4,0) -- (5,0);
\draw [line width=2pt](-4,1) -- (5,1);
\draw (-4,2) -- (5,2);
\draw (-4,3) -- (5,3); 
\draw (-4,4) -- (5,4);
\draw (-4,5) -- (5,5);
\draw (-4,-3) -- (5,-3);
\draw (-4,-4) -- (5,-4);
\draw (-2,-4) -- (-2,5);
\draw (-1,-4) -- (-1,5);
\draw [line width=2pt](0,-4) -- (0,5);
\draw [line width=2pt](1,-4) -- (1,5);
\draw (2,-4) -- (2,5);
\draw (-4,-4) -- (-4,5);
\draw (-3,-4) -- (-3,5);
\draw (3,-4) -- (3,5);
\draw (4,-4) -- (4,5);
\draw (5,-4) -- (5,5);
\draw (-4,4)--(-3,5);
\draw (-4,2)--(-1,5);
\draw (-4,0) -- (1,5);
\draw (-4,-2) -- (3,5);
\draw [line width=2pt](-4,-4) -- (5,5);
\draw (-2,-4) -- (5,3);
\draw (0,-4) -- (5,1);
\draw (2,-4)--(5,-1);
\draw (4,-4)--(5,-3);
\draw (3,5)--(5,3);
\draw (1,5)--(5,1);
\draw (-1,5)--(5,-1);
\draw[line width=2pt] (-3,5)--(5,-3);
\draw[line width=2pt] (-4,4)--(4,-4);
\draw (-4,2)--(2,-4);
\draw (-4,0)--(0,-4);
\draw (-4,-2)--(-2,-4);
\draw[dotted, line width=2pt] (0,2)--(1,3);
\end{tikzpicture}
}\qquad
\subfigure[$\scrP_1\mapsto\scrP_2$]{
\begin{tikzpicture}[scale=0.65]
\path [fill=gray!90] (0,0) -- (1,1) -- (0,1) -- (0,0);
\path [fill=blue!40] (1,1)--(1,5)--(5,5)--(1,1);
\path [fill=red!20] (-1,1)--(-1,5)--(-4,5)--(-4,4)--(-1,1);
\draw (-4,-2) -- (5,-2); 
\draw (-4,-1) -- (5,-1);
\draw [line width=2pt](-4,0) -- (5,0);
\draw [line width=2pt](-4,1) -- (5,1);
\draw (-4,2) -- (5,2);
\draw (-4,3) -- (5,3); 
\draw (-4,4) -- (5,4);
\draw (-4,5) -- (5,5);
\draw (-4,-3) -- (5,-3);
\draw (-4,-4) -- (5,-4);
\draw (-2,-4) -- (-2,5);
\draw (-1,-4) -- (-1,5);
\draw [line width=2pt](0,-4) -- (0,5);
\draw [line width=2pt](1,-4) -- (1,5);
\draw (2,-4) -- (2,5);
\draw (-4,-4) -- (-4,5);
\draw (-3,-4) -- (-3,5);
\draw (3,-4) -- (3,5);
\draw (4,-4) -- (4,5);
\draw (5,-4) -- (5,5);
\draw (-4,4)--(-3,5);
\draw (-4,2)--(-1,5);
\draw (-4,0) -- (1,5);
\draw (-4,-2) -- (3,5);
\draw [line width=2pt](-4,-4) -- (5,5);
\draw (-2,-4) -- (5,3);
\draw (0,-4) -- (5,1);
\draw (2,-4)--(5,-1);
\draw (4,-4)--(5,-3);
\draw (3,5)--(5,3);
\draw (1,5)--(5,1);
\draw (-1,5)--(5,-1);
\draw[line width=2pt] (-3,5)--(5,-3);
\draw[line width=2pt] (-4,4)--(4,-4);
\draw (-4,2)--(2,-4);
\draw (-4,0)--(0,-4);
\draw (-4,-2)--(-2,-4);
\draw[line width=2pt] (0,2)--(1,3);
\draw[dotted, line width=2pt] (1,3)--(3,5);
\end{tikzpicture}
}
\subfigure[$\scrP_2\mapsto\scrP_3$]{
\begin{tikzpicture}[scale=0.65]
\path [fill=gray!90] (0,0) -- (1,1) -- (0,1) -- (0,0);
\path [fill=blue!40] (0,0)--(-4,0)--(-4,1)--(-1,1)--(0,0);
\path [fill=red!20] (0,2)--(-4,2)--(-4,1)--(-1,1)--(0,2);
\draw (-4,-2) -- (5,-2); 
\draw (-4,-1) -- (5,-1);
\draw [line width=2pt](-4,0) -- (5,0);
\draw [line width=2pt](-4,1) -- (5,1);
\draw (-4,2) -- (5,2);
\draw (-4,3) -- (5,3); 
\draw (-4,4) -- (5,4);
\draw (-4,5) -- (5,5);
\draw (-4,-3) -- (5,-3);
\draw (-4,-4) -- (5,-4);
\draw (-2,-4) -- (-2,5);
\draw (-1,-4) -- (-1,5);
\draw [line width=2pt](0,-4) -- (0,5);
\draw [line width=2pt](1,-4) -- (1,5);
\draw (2,-4) -- (2,5);
\draw (-4,-4) -- (-4,5);
\draw (-3,-4) -- (-3,5);
\draw (3,-4) -- (3,5);
\draw (4,-4) -- (4,5);
\draw (5,-4) -- (5,5);
\draw (-4,4)--(-3,5);
\draw (-4,2)--(-1,5);
\draw (-4,0) -- (1,5);
\draw (-4,-2) -- (3,5);
\draw [line width=2pt](-4,-4) -- (5,5);
\draw (-2,-4) -- (5,3);
\draw (0,-4) -- (5,1);
\draw (2,-4)--(5,-1);
\draw (4,-4)--(5,-3);
\draw (3,5)--(5,3);
\draw (1,5)--(5,1);
\draw (-1,5)--(5,-1);
\draw[line width=2pt] (-3,5)--(5,-3);
\draw[line width=2pt] (-4,4)--(4,-4);
\draw (-4,2)--(2,-4);
\draw (-4,0)--(0,-4);
\draw (-4,-2)--(-2,-4);
\draw[dotted, line width=2pt] (-2,0)--(-1,1);
\draw[line width=2pt] (0,2)--(3,5);
\end{tikzpicture}
}\qquad
\subfigure[$\scrP_3\mapsto\scrT$]{
\begin{tikzpicture}[scale=0.65]
\path [fill=gray!90] (0,0) -- (1,1) -- (0,1) -- (0,0);
\path [fill=blue!40] (0,0)--(-4,-4)--(-4,0)--(0,0);
\path [fill=red!20] (0,2)--(-4,2)--(-4,5)--(-3,5)--(0,2);
\draw (-4,-2) -- (5,-2); 
\draw (-4,-1) -- (5,-1);
\draw [line width=2pt](-4,0) -- (5,0);
\draw [line width=2pt](-4,1) -- (5,1);
\draw (-4,2) -- (5,2);
\draw (-4,3) -- (5,3); 
\draw (-4,4) -- (5,4);
\draw (-4,5) -- (5,5);
\draw (-4,-3) -- (5,-3);
\draw (-4,-4) -- (5,-4);
\draw (-2,-4) -- (-2,5);
\draw (-1,-4) -- (-1,5);
\draw [line width=2pt](0,-4) -- (0,5);
\draw [line width=2pt](1,-4) -- (1,5);
\draw (2,-4) -- (2,5);
\draw (-4,-4) -- (-4,5);
\draw (-3,-4) -- (-3,5);
\draw (3,-4) -- (3,5);
\draw (4,-4) -- (4,5);
\draw (5,-4) -- (5,5);
\draw (-4,4)--(-3,5);
\draw (-4,2)--(-1,5);
\draw (-4,0) -- (1,5);
\draw (-4,-2) -- (3,5);
\draw [line width=2pt](-4,-4) -- (5,5);
\draw (-2,-4) -- (5,3);
\draw (0,-4) -- (5,1);
\draw (2,-4)--(5,-1);
\draw (4,-4)--(5,-3);
\draw (3,5)--(5,3);
\draw (1,5)--(5,1);
\draw (-1,5)--(5,-1);
\draw[line width=2pt] (-3,5)--(5,-3);
\draw[line width=2pt] (-4,4)--(4,-4);
\draw (-4,2)--(2,-4);
\draw (-4,0)--(0,-4);
\draw (-4,-2)--(-2,-4);
\draw[line width=2pt] (-2,0)--(-1,1);
\draw[dotted, line width=2pt] (-4,-2)--(-2,0);
\draw[line width=2pt] (0,2)--(3,5);
\end{tikzpicture}
}
\caption{Computing $\scrT$ using $\scrT=\widehat{\scrJ}$}\label{fig:completingB2}
\end{figure}
\end{exa}

\newpage
We note the following corollary.

\begin{cor}\label{cor:someobservations}
The following are equivalent. 
\begin{enumerate}
\item $\cE=\Phisph^+$,
\item $\scrS_0=\scrJ$, 
\item $\scrT=\scrS_0$.
\end{enumerate}
\end{cor}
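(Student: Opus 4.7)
The plan is to prove the cycle $(1) \Rightarrow (2) \Rightarrow (3) \Rightarrow (1)$. The first two implications will follow essentially formally from material already developed in this section; only the closing implication $(3) \Rightarrow (1)$ requires input from outside the present paper, and this is where I expect the main obstacle to lie.

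For $(1) \Rightarrow (2)$, I would simply observe that each hyperplane partition $\scrH(\Lambda)$ depends only on the set of walls $\{H_\beta \mid \beta \in \Lambda\}$, which is insensitive to sign. Hence the equality $\cE = \Phisph^+$ immediately yields $\scrS_0 = \scrH(\cE) = \scrH(\Phisph^+) = \scrJ$, using that $\Phisph$ is closed under negation.

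For $(2) \Rightarrow (3)$, the hypothesis $\scrJ = \scrS_0$ combined with the regularity of $\scrS_0$ (Theorem~\ref{thm:regularpartitions}(3)) forces $\scrJ \in \Preg$. A regular partition is always its own regular completion, since $\scrJ \leq \widehat{\scrJ}$ by Theorem~\ref{thm:newterminateatregularisation}(1), while $\widehat{\scrJ}$ is the meet over all regular refinements of $\scrJ$, and $\scrJ$ itself is one such refinement, giving the reverse inequality. Combining $\scrJ = \widehat{\scrJ}$ with Theorem~\ref{thm:spherical1}, which identifies $\widehat{\scrJ}$ with $\scrT$, yields $\scrT = \scrJ = \scrS_0$.

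For $(3) \Rightarrow (1)$ I would argue by counting parts. Since $\scrS_0 \in \Preg$ and $\scrT$ is the bottom element of $(\Preg, \leq)$ by Theorem~\ref{thm:regularlattice2}, we have $\scrT \leq \scrS_0$; Proposition~\ref{prop:partsdescription} gives $|\scrT| = |\mathbb{T}| < \infty$ and $|\scrS_0| = |\mathbb{E}| < \infty$. Hence $\scrT = \scrS_0$ is equivalent to $|\mathbb{T}| = |\mathbb{E}|$, which in turn is equivalent to the Brink--Howlett automaton $\cA_0$ (which has $|\mathbb{E}|$ states and recognises $\cL(W,S)$) attaining the minimum possible number of states among automata recognising $\cL(W,S)$, i.e.\ to $\cA_0$ being minimal. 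The hard step is to conclude from this that $\cE = \Phisph^+$, and here the plan is to invoke \cite[Theorem~1]{PY:19}, which characterises minimality of $\cA_0$ as precisely this condition. Without appeal to such a classification result I do not see a direct structural route from $\scrT = \scrS_0$ to $\cE = \Phisph^+$, which is why I flag this final implication as the main obstacle.
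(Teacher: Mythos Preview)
Your proof is correct and follows essentially the same route as the paper's. The only cosmetic difference is in $(2) \Rightarrow (3)$: the paper argues directly via the sandwich $\scrT \leq \scrJ$ (Corollary~\ref{cor:Tisminimal}) and $\scrJ \leq \scrT$ (Theorem~\ref{thm:spherical1}), whereas you route through the regular completion; both use the same ingredients and the closing implication $(3) \Rightarrow (1)$ via \cite[Theorem~1]{PY:19} is identical.
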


\begin{proof}
If $\cE=\Phisph^+$ then $\scrS_0=\scrJ$ directly from the definitions. If $\scrS_0=\scrJ$ then $\scrJ$ is regular (as $\scrS_0$ is regular by Theorem~\ref{thm:regularpartitions}). Thus $\scrT\leq\scrJ$ by Corollary~\ref{cor:Tisminimal}. But  $\scrJ\leq\scrT$ by Theorem~\ref{thm:spherical1}, and so equality holds. Hence $\scrT=\scrJ=\scrS_0$. 

On the other hand, suppose that $\scrT=\scrS_0$. Thus the Brink-Howlett (ie, the $0$-canonical) automaton is minimal, and so by \cite[Theorem~1]{PY:19} we have $\cE=\Phisph^+$.
\end{proof}

\section{Gated partitions}\label{sec:gatedpartitions}

In this section we introduce the notion of a \textit{gated partition}. In a gated partition $\scrP$, each part $P\in\scrP$ contains a unique ``gate'' $g$ with the property that $g\peq x$ for all $x\in P$, and we write $\Gamma(\scrP)$ for the set of all gates. We show, in Section~\ref{sec:simplerefinementsgates}, that simple refinements preserve the gated property (provided an additional hypothesis, convexity, is assumed). It follows that $\scrT$ is gated, proving Theorem~\ref{thm:main1}.

\subsection{Convex partitions}\label{sec:convex}

We begin with a discussion of convexity.

\begin{defn}
A subset $X\subseteq W$ is \textit{convex} if for all $x,y\in X$, and all reduced expressions $x^{-1}y=s_1\cdots s_n$, each element $xs_1\cdots s_j$ with $0\leq j\leq n$ is in $X$. A partition $\scrP$ of $W$ is \textit{convex} if each part $P\in \scrP$ is convex. 
\end{defn}

More intuitively, $X\subseteq W$ is convex if for all $x,y\in X$, each chamber that lies on a minimal length gallery from $x$ to $y$ in the Coxeter complex lies in $X$. Here \textit{gallery} means a sequence of adjacent chambers, starting at $x$, and ending at $y$. The following well-known result gives a useful characterisation of convexity. 

\begin{lem}\cite[Proposition~3.94]{AB:08}\label{lem:intersection}
A subset of $W$ is convex if and only if it is an intersection of half-spaces.
\end{lem}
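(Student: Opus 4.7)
The proof splits into the two implications. For the easier direction, suppose $X = \bigcap_{i \in I} H_{\beta_i}^{\epsilon_i}$. Given $x, y \in X$ and a reduced expression $x^{-1}y = s_1 \cdots s_n$, the sequence $z_j = xs_1 \cdots s_j$ for $0 \le j \le n$ is a minimal gallery from $x$ to $y$. By Proposition~\ref{prop:rootsystembasics}(3), such a gallery crosses each wall at most once, and crosses a given wall $H_\beta$ precisely when its two endpoints lie on opposite sides of it. Since $x, y \in H_{\beta_i}^{\epsilon_i}$ for every $i$, the wall $H_{\beta_i}$ is not crossed, so each $z_j$ stays in $H_{\beta_i}^{\epsilon_i}$; intersecting over $i$ gives $z_j \in X$.

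For the converse, let $X$ be convex and set $Y = \bigcap \{H_\beta^{\epsilon} \mid X \subseteq H_\beta^{\epsilon}\}$; clearly $X \subseteq Y$. To prove $Y \subseteq X$, argue by contradiction: suppose $y \in Y \setminus X$, and choose $x \in X$ minimizing $\ell(x^{-1}y)$. Write $x^{-1}y = s_1 \cdots s_n$ reduced and set $z_1 = xs_1$. Minimality of $\ell(x^{-1}y)$ forces $z_1 \notin X$, for otherwise $z_1$ is a strictly closer element of $X$ to $y$. Let $H_\beta$ be the wall between $x$ and $z_1$, with $x \in H_\beta^+$; since $x, z_1, \ldots, y$ is a minimal gallery, $y$ lies on the opposite side of $H_\beta$ from $x$, so $y \in H_\beta^-$.

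The crux is to show $X \subseteq H_\beta^+$: this places $y \in Y \subseteq H_\beta^+$, contradicting $y \in H_\beta^-$. Suppose instead that some $x' \in X$ lies in $H_\beta^-$. By convexity of $X$, a minimal gallery from $x$ to $x'$ lies entirely in $X$; such a gallery must cross $H_\beta$ at some step $u \to us = s_\beta u$, so $u \in H_\beta^+ \cap X$ and $us \in H_\beta^- \cap X$. A short computation with the ``set of walls separating'' function (using symmetric differences, and that $x$ and $u$ lie on the same side of $H_\beta$) yields
$$
\ell(x^{-1}us) = \ell(x^{-1}u) + 1 \quad \text{and} \quad \ell(z_1^{-1}us) = \ell(x^{-1}u),
$$
the second identity following from $z_1^{-1}us = x^{-1}s_\beta \cdot s_\beta u = x^{-1}u$. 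Combining, $\ell(x^{-1}us) = \ell(x^{-1}z_1) + \ell(z_1^{-1}us)$, so $z_1$ lies on some minimal gallery from $x$ to $us$. Since $x, us \in X$, convexity of $X$ forces this entire gallery, and in particular $z_1$, to lie in $X$, contradicting $z_1 \notin X$.

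The main obstacle is engineering the final contradiction so that convexity produces the \emph{specific} element $z_1$, rather than some other reflection of $x$ across $H_\beta$. The pair $(x, us)$ is chosen precisely because $u$ is a crossing chamber for $H_\beta$ along the gallery from $x$ to $x'$: this makes $H_\beta$ the unique ``new'' wall separating $x$ from $us$ beyond those already separating $x$ from $u$, which is exactly what allows the reflected edge $x \to z_1$ to sit at the start of an alternative minimal gallery from $x$ to $us$ and be caught by convexity.
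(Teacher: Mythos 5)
Your proof is correct. Note that the paper does not prove this lemma at all: it is quoted verbatim from \cite[Proposition~3.94]{AB:08}, so there is no internal argument to compare against. Your self-contained proof is essentially the standard gallery-theoretic one. The forward direction (intersections of half-spaces are convex) is the routine observation that a minimal gallery crosses exactly the walls separating its endpoints, each exactly once. For the converse, your strategy of taking $Y=\bigcap\{H_\beta^{\epsilon}\mid X\subseteq H_\beta^{\epsilon}\}$, picking $y\in Y\setminus X$ at minimal distance from $X$, and deriving a contradiction from the first wall crossed is sound, and the crux is handled correctly: from $u\in H_\beta^{+}\cap X$ and $us=s_\beta u\in H_\beta^{-}\cap X$ you get $\ell(x^{-1}us)=\ell(x^{-1}u)+1$ (since $H_\beta$ does not separate $x$ from $u$) and $z_1^{-1}us=x^{-1}s_\beta\cdot s_\beta u=x^{-1}u$, so $z_1=xs_1$ starts a reduced expression for $x^{-1}us$ and convexity forces $z_1\in X$, the desired contradiction. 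Two cosmetic points: the paper fixes $H_\beta^{+}$ to be the half-space containing $e$, so your convention ``$x\in H_\beta^{+}$'' should be read as choosing the sign $\epsilon$ with $x\in H_\beta^{\epsilon}$ (harmless, since both signs are allowed in the intersection); and the choice of $x$ minimizing $\ell(x^{-1}y)$ tacitly assumes $X\neq\emptyset$, which is fine because for $X=\emptyset$ the set $Y$ is also empty (it contains $H_\beta^{+}\cap H_\beta^{-}$ for any $\beta$). Neither affects the validity of the argument.
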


The above characterisation of convex sets leads to the following proposition. 

\begin{prop}\label{prop:convexitybasics}
The following are convex:
\begin{enumerate}
\item the intersection of convex sets;
\item hyperplane partitions;
\item cones and cone types;
\item the cone type partition~$\scrT$.
\end{enumerate}
\end{prop}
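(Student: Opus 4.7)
The plan is to deduce all four parts from Lemma~\ref{lem:intersection}, which characterises convex subsets of $W$ as intersections of half-spaces. The strategy throughout is to exhibit each relevant set as such an intersection, using the explicit formulae already established in Sections~\ref{sec:1} and~\ref{sec:conetypes}.

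First I would dispatch (1) directly: if $\{X_i\}_{i\in I}$ is a family of convex sets, then by Lemma~\ref{lem:intersection} each $X_i$ is an intersection of half-spaces, and hence so is $\bigcap_{i\in I}X_i$; by Lemma~\ref{lem:intersection} again this intersection is convex. Part (2) is then immediate from the very definition of a hyperplane partition $\scrH(\Lambda)$: each part is a set of the form $\bigcap_{\beta\in\Lambda}H_\beta^{\epsilon(\beta)}$ for some choice of signs $\epsilon(\beta)\in\{+,-\}$, which is an intersection of half-spaces.

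For part (3), I would handle cone types using Theorem~\ref{thm:geometry1}, which gives $T(x^{-1})=\bigcap_{\beta\in\Phi(x)}H_\beta^+$, so cone types are convex. For cones, I would first observe that $v\in C(w)$ iff $w\peq v$ iff $\Phi(w)\subseteq\Phi(v)$ (by Proposition~\ref{prop:rootsystembasics}(5)), iff $v\in H_\beta^-$ for all $\beta\in\Phi(w)$; hence $C(w)=\bigcap_{\beta\in\Phi(w)}H_\beta^-$, which is again an intersection of half-spaces.

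For part (4), I would apply Theorem~\ref{thm:conetypeprojection}, which expresses each part $X_T$ of $\scrT$ as
$$
X_T=\bigg(\bigcap_{\beta\in\partial T}H_\beta^-\bigg)\cap\bigg(\bigcap_{\beta\in\intT}H_\beta^+\bigg),
$$
an intersection of half-spaces, so $X_T$ is convex by Lemma~\ref{lem:intersection}, and hence $\scrT$ is a convex partition. None of these steps presents any real obstacle: the main content lies in the formulae already proved, and the proof is essentially a bookkeeping exercise assembling Lemma~\ref{lem:intersection} with Theorems~\ref{thm:geometry1} and~\ref{thm:conetypeprojection}.
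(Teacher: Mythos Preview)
Your proposal is correct and follows essentially the same approach as the paper: both derive all four parts from Lemma~\ref{lem:intersection} together with Theorem~\ref{thm:geometry1} (for cone types) and Theorem~\ref{thm:conetypeprojection} (for the parts $X_T$). The only minor variation is in (3) for cones: the paper invokes $C(w)=wT(w)$ (using that left translation preserves convexity), whereas you give the direct formula $C(w)=\bigcap_{\beta\in\Phi(w)}H_\beta^-$ via Proposition~\ref{prop:rootsystembasics}(5), which is equally valid and arguably more self-contained.
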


\begin{proof}
(1) is clear from Lemma~\ref{lem:intersection}. To prove (2), note that if $\Lambda\subseteq\Phi^+$ then the part of the hyperplane partition $\scrH(\Lambda)$ containing $w\in W$ is 
$$
P=\bigg(\bigcap_{\beta\in\Lambda_+}H_{\beta}^+\bigg)\cap\bigg(\bigcap_{\beta\in\Lambda_-}H_{\beta}^-\bigg)
$$
where $\Lambda_{\pm}=\{\beta\in\Lambda\mid w\in H_{\beta}^{\pm}\}$, and use (1). Part (3) is clear from (1) and the formula in Theorem~\ref{thm:geometry1}, and the fact that $C(w)=wT(w)$. Finally, the partition~$\scrT$ is convex by the description of the parts given in Theorem~\ref{thm:conetypeprojection} combined with~(1).
\end{proof}

In particular, note that Proposition~\ref{prop:convexitybasics}(2) shows that $\scrD$, $\scrJ$, and $\scrS_n$ are all convex.

\begin{rem}\label{rem:garsideconvexity}
Based on examples, we expect that if $B$ is a Garside shadow, then $\scrG_B$ is convex. However we have only proved that $\scrG_B$ satisfies a weaker form of convexity (see Proposition~\ref{prop:garsidegated}). See Remark~\ref{rem:conicalconvex} for further discussion.
\end{rem}

%

\subsection{Gated partitions}\label{sec:gated} We now define gated partitions, provide some of the main examples, and prove some basic properties of the gates of a regular gated partition.

\begin{defn}\label{defn:gatedpartitions}
A subset $X\subseteq W$ is \textit{gated} if there exists $g\in X$ such that $g\peq x$ for all $x\in X$. The element $g$ is called a \textit{gate} of $X$. A partition $\scrP$ of $W$ is called \textit{gated} if each part $P\in\scrP$ is gated. If $\scrP$ is gated we write $\Gamma(\scrP)$ for the set of all gates of $\scrP$. 
\end{defn}

\begin{lem}\label{lem:uniquegate}
Every gated subset $X\subseteq W$ has a unique gate, and this gate is the unique minimal length element of~$X$. 
\end{lem}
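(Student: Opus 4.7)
The plan is to extract everything from two basic facts about the right weak order: that $\peq$ is a partial order on $W$ (so in particular antisymmetric), and that $v \peq w$ implies $\ell(w) = \ell(v) + \ell(v^{-1}w) \geq \ell(v)$, with equality iff $v = w$.

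First I would observe that any gate $g$ of $X$ is automatically a minimal length element of $X$. Indeed, by definition $g \peq x$ for every $x \in X$, and applying the length identity above gives $\ell(g) \leq \ell(x)$ for all $x \in X$.

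For uniqueness of the gate, suppose $g_1, g_2 \in X$ are both gates. Applying the defining property of $g_1$ to the element $g_2 \in X$ gives $g_1 \peq g_2$, and symmetrically $g_2 \peq g_1$. Antisymmetry of $\peq$ then forces $g_1 = g_2$. For uniqueness of the minimal length element, let $g$ be the gate and suppose $x \in X$ satisfies $\ell(x) = \ell(g)$. Since $g \peq x$ we have $\ell(x) = \ell(g) + \ell(g^{-1}x)$, whence $\ell(g^{-1}x) = 0$ and so $x = g$.

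Since the statement is a formal consequence of the definition of the weak order, there is no real obstacle here; the only care required is to apply the gate property of $g_1$ and $g_2$ symmetrically to each other in the uniqueness argument.
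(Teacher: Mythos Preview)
Your proof is correct and follows essentially the same approach as the paper: both obtain uniqueness of the gate from antisymmetry of $\peq$ applied to two gates, and both deduce minimality of length from $g \peq x$. You spell out the uniqueness of the minimal length element a bit more explicitly than the paper does, but the underlying argument is the same.
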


\begin{proof}
If $X\subseteq W$ is gated, and if $g_1,g_2\in X$ are gates, then $g_1\peq g_2$ and $g_2\peq g_1$. Hence $g_1=g_2$. Let $g\in X$ be the unique gate. Since $g\peq x$ for all $x\in X$ the element $g$ is the unique minimal length element of~$X$.
\end{proof}

We will show in Corollary~\ref{cor:gateexist} that $\scrT$ is gated (this proves Theorem~\ref{thm:main1}). We first develop some basic theory for gated partitions, and provide simple examples. 

The following weaker notion of convexity is useful for studying Garside partitions.

\begin{defn}
Let $X\subseteq W$ be gated, with gate $g$. We say that $X$ is \textit{weakly convex} if $g\peq y\peq x$ and $x\in X$ implies that $y\in X$. A gated partition $\scrP$ is called \textit{weakly convex} if each part $P\in\scrP$ is weakly convex. 
\end{defn}

It is obvious that if a gated set $X\subseteq W$ (respectively a gated partition $\scrP\in\scrP(W)$) is convex, then $X$ (respectively $\scrP$) is also weakly convex, however the converse is clearly false. For example, consider the partition $W=\{\{e,s,t,st,ts\},\{sts\}\}$ of the $\mathsf{A}_2$ Coxeter group. This gated partition is weakly convex but not convex. 

The gates of a gated weakly convex partition have the following characterisation.

\begin{prop}\label{prop:weakconvexity}
If $X\subseteq W$ is gated and weakly convex then the gate $g$ of $X$ is characterised by the properties $g\in X$ and $gs\notin X$ for all $s\in D_R(g)$. 
\end{prop}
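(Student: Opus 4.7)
The plan is to prove the two implications separately: first that the gate $g$ itself satisfies the two stated properties, and second that these properties pin down $g$ uniquely.

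For the forward direction, the membership $g \in X$ is immediate from the definition of a gate. To see that $gs \notin X$ for all $s \in D_R(g)$, I would argue by contradiction. If $gs \in X$ then by the gate property we must have $g \peq gs$, which by definition of $\peq$ forces $\ell(gs) = \ell(g) + \ell(g^{-1}(gs)) = \ell(g) + \ell(s) = \ell(g)+1$. This contradicts $\ell(gs) = \ell(g) - 1$, which holds since $s \in D_R(g)$.

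For the uniqueness direction, suppose $h \in X$ has the property that $hs \notin X$ for every $s \in D_R(h)$. Since $g$ is the gate of $X$, we already have $g \peq h$. I would then show $g = h$ by assuming otherwise and deriving a contradiction: writing $h = gu$ with $\ell(h) = \ell(g) + \ell(u)$ and $u \neq e$, pick any reduced expression $u = t_1 \cdots t_k$ with $k \geq 1$. Then $t_k \in D_R(h)$ because $ht_k = gt_1\cdots t_{k-1}$ has length $\ell(h) - 1$. Moreover $g \peq ht_k \peq h$, and since $h \in X$ and $X$ is weakly convex, the intermediate element $ht_k$ must also lie in $X$. This contradicts the assumption $ht_k \notin X$, so $g = h$.

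The argument is short and the only step that really uses the weak convexity hypothesis is the last one: namely, deducing that every prefix of $h$ (relative to the gate $g$) which is an ``extension'' of $g$ must sit inside $X$. I do not anticipate any genuine obstacle here, since Lemma~\ref{lem:uniquegate} already identifies the gate as the unique minimum-length element, and the proposition is essentially a descent-theoretic reformulation of that fact enabled by weak convexity.
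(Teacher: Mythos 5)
Your proof is correct and follows essentially the same route as the paper: the forward direction uses that the gate is the minimum-length element (your $g\peq gs$ argument is equivalent), and the uniqueness direction peels off the last letter of a reduced expression for $g^{-1}h$ and applies weak convexity to the chain $g\peq ht_k\peq h$, exactly as the paper does for a non-gate element $x$. No gaps.
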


\begin{proof}
We have $gs\notin X$ for all $s\in D_R(g)$ as $g$ has minimal length in $X$ (by Lemma~\ref{lem:uniquegate}). Conversely, suppose that $x\in X$ is not the gate. Then $g\peq x$ gives $x=gs_1\cdots s_n$ with $n\geq 1$ and $\ell(x)=\ell(g)+n$. Then $s_n\in D_R(x)$, and $g\peq xs_n\peq x$, and by weak convexity $xs_n\in X$. 
\end{proof}

The following is a simple, but important, example of a gated partition.

\begin{lem}\label{lem:Spartitiongated}
The partition $\scrD$ is a locally constant, convex, gated partition. Moreover, 
$$\Gamma(\scrD)=\{w_J\mid J\subseteq S\text{ is spherical}\}.$$ 
\end{lem}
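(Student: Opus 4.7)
The plan is to unpack Proposition~\ref{prop:partsdescription}(4), which identifies the parts of $\scrD$ as the fibres $D_L^{-1}(J) = \{w \in W \mid D_L(w) = J\}$ indexed by spherical subsets $J \subseteq S$, and then verify the three required properties in turn. Local constancy is immediate from this description, and convexity follows from Proposition~\ref{prop:convexitybasics}(2) since $\scrD = \scrH(\Pi)$ is a hyperplane partition. The substantive content is thus the gated claim and the identification of $\Gamma(\scrD)$.

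To show $\scrD$ is gated with $\Gamma(\scrD) = \{w_J \mid J \subseteq S \text{ spherical}\}$, I will argue that $w_J$ is the gate of $D_L^{-1}(J)$. First note $w_J \in D_L^{-1}(J)$: since $w_J \in W_J$ we have $D_L(w_J) \subseteq J$, and since $w_J$ is the longest element of $W_J$ every $s \in J$ shortens $w_J$, giving $D_L(w_J) = J$. The key lemma I would then isolate is
$$
w_J \peq w \iff J \subseteq D_L(w) \qquad \text{for all } w \in W.
$$
Granted this lemma, any $w \in D_L^{-1}(J)$ satisfies $w_J \peq w$, so $w_J$ is a gate of $D_L^{-1}(J)$; uniqueness of the gate and the identification of $\Gamma(\scrD)$ with the $w_J$ then follow from Lemma~\ref{lem:uniquegate}.

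The main obstacle is establishing this key lemma. The forward direction is routine: if $w_J \peq w$, write $w = w_J v$ with $\ell(w) = \ell(w_J) + \ell(v)$; for $s \in J$ factor $w_J = s u$ with $\ell(u) = \ell(w_J) - 1$, so $w = s u v$ with $\ell(uv) = \ell(w) - 1$, and thus $s \in D_L(w)$. For the reverse direction, I would use the parabolic decomposition $w = u v$ with $u \in W_J$ and $v \in W^J$ from \eqref{eq:WJdecomposition}. By Proposition~\ref{prop:rootsystembasics}(2), the hypothesis $J \subseteq D_L(w)$ translates into $\{\alpha_s \mid s \in J\} \subseteq \Phi(w)$, and since each $\alpha_s$ with $s \in J$ lies in $\Phi_J$, this gives $\{\alpha_s \mid s \in J\} \subseteq \Phi_J(w) = \Phi(u)$ by Lemma~\ref{lem:rootsdecomposition}. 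Hence $J \subseteq D_L(u)$. Since $u \in W_J$ also forces $D_L(u) \subseteq J$, we conclude $D_L(u) = J$; but in the finite Coxeter group $W_J$ the unique element with left descent set equal to $J$ is the longest element $w_J$ (an element with every $s \in J$ as a left descent admits no further $J$-multiplication increasing length, so must be longest). Therefore $u = w_J$, and $w = w_J v$ with $\ell(w) = \ell(w_J) + \ell(v)$ directly exhibits $w_J \peq w$, completing the proof.
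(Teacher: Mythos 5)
Your proof is correct and follows essentially the same route as the paper: identify the parts as the fibres $D_L^{-1}(J)$, get local constancy and convexity from Propositions~\ref{prop:partsdescription} and~\ref{prop:convexitybasics}, and exhibit $w_J$ as the gate via the decomposition $w=w_Jv$ with $\ell(w)=\ell(w_J)+\ell(v)$. The only difference is that the paper simply cites \cite[Proposition~2.17]{AB:08} for that decomposition, whereas you prove it from scratch via the parabolic decomposition and Lemma~\ref{lem:rootsdecomposition}; your argument for this is sound.
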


\begin{proof}
The $S$-partition is locally constant by the description of the parts in Proposition~\ref{prop:partsdescription}, and it is convex by Proposition~\ref{prop:convexitybasics}. Let $J\subseteq S$ be spherical, and let $w_J$ be the longest element of $W_J$. Each $w\in W$ with $D_L(w)=J$ can be written as $w=w_Jv$ with $\ell(w)=\ell(w_J)+\ell(v)$ (see \cite[Proposition~2.17]{AB:08}), and hence $w_J\peq w$. Thus the part $D_L^{-1}(J)$ of $\scrD$ is gated, with gate~$w_J$.
\end{proof}

The following theorem, applied to the case $X=W$, shows that the join of convex gated partitions is again convex and gated. If $X\subseteq W$ then the notion of a gated partition of~$X$ has the obvious meaning.

\begin{thm}\label{thm:joingated}
Let $X\subseteq W$ be convex. Let $\scrP_i$, $i\in I$, be a family of convex (respectively weakly convex) gated partitions of $X$. Then the join $\scrP=\bigvee_{i\in I}\scrP_i$ is a convex (respectively weakly convex) gated partition of $X$. 
\end{thm}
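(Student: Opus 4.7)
The parts of the join $\scrP = \bigvee_{i \in I} \scrP_i$ are precisely the nonempty intersections $P = \bigcap_{i \in I} P_i$ with $P_i \in \scrP_i$ (as recalled in the proof of Theorem~\ref{thm:regularlattice2}). The convex case of the theorem is then immediate: each $P_i$ is convex by hypothesis, and the intersection of convex sets is convex by Proposition~\ref{prop:convexitybasics}(1).

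For the gated statement, let $P = \bigcap_{i \in I} P_i$ be a nonempty part of $\scrP$, and let $g_i \in P_i$ denote the gate of $P_i$. The key observation is that the gate of $P$ should be $g = \bigvee_{i \in I} g_i$. To see this join exists, pick any $x \in P$; then $x \in P_i$ for every $i$, so $g_i \peq x$, showing that $\{g_i \mid i \in I\}$ is bounded and hence has a join in $(W, \peq)$ satisfying $g \peq x$. Thus once $g \in P$ is established, $g$ is automatically a gate for $P$.

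To verify $g \in P$, I would argue that $g \in P_i$ for each $i \in I$. Fix $i$ and any $x \in P$. Then $g_i \peq g \peq x$ with $g_i, x \in P_i$. In the convex case, $g$ lies on a reduced gallery from $g_i$ to $x$, so $g \in P_i$ by convexity of $P_i$. In the weakly convex case, the same chain $g_i \peq g \peq x$ together with $x \in P_i$ and the fact that $g_i$ is the gate of $P_i$ gives $g \in P_i$ directly from the definition of weak convexity. Since this holds for every $i$, we get $g \in \bigcap_i P_i = P$.

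Finally, in the weakly convex case I would also check that each part $P$ of $\scrP$ is itself weakly convex. If $g \peq y \peq x$ with $x \in P$, then for each $i$ we have $g_i \peq g \peq y \peq x$ and $x \in P_i$, so weak convexity of $P_i$ (applied to its gate $g_i$) gives $y \in P_i$; hence $y \in P$. No step is a genuine obstacle: the only subtle point is recognising that the gate of the intersection is forced to be the join of the gates, after which both convex and weakly convex cases follow uniformly from the defining chain $g_i \peq g \peq x$.
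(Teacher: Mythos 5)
Your proposal is correct and follows essentially the same route as the paper: identify the parts of the join as nonempty intersections $\bigcap_i P_i$, show the join $g=\bigvee_i g_i$ of the gates exists and satisfies $g_i\peq g\peq x$ for all $x$ in the part, and then use (weak) convexity of each $P_i$ to conclude $g\in P_i$, with the same chain giving weak convexity of the part. The only cosmetic difference is that the paper reduces the convex case to the weakly convex one rather than invoking convexity via a minimal gallery, but both arguments are valid and equivalent in substance.
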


\begin{proof}
Recall that the parts of $\scrP=\bigvee_{i\in I}\scrP_i$ are of the form $P=\bigcap_{i\in I}P_i$ with $P_i\in\scrP_i$ and $P\neq\emptyset$. Let $g_i$ be the gate of $P_i$. Since $P\neq\emptyset$ there is $w\in P$, and since $w\in P_i$ we have $g_i\peq w$ for all $i\in I$. Thus $\{g_i\mid i\in I\}$ is bounded, and so $g=\bigvee\{g_i\mid i\in I\}$ exists. Moreover, for any $w\in P$ we have $g\peq w$, and thus  $g_i\peq g\peq w$ (for all $i\in I$). We may assume $\scrP_i$ is weakly convex (for if $\scrP_i$ is convex then it is also weakly convex). Thus we have $g\in P_i$ for each $i\in I$, and so $g\in P$. Thus $P$ is gated with gate~$g$. Moreover, if $w\in P$ and $g\peq y\peq w$ then $g_i\peq g\peq y\peq w$ for all $i\in I$ and so $y\in P_i$ for all $i\in I$, giving $y\in P$. Thus $\scrP$ is weakly convex. Finally, it is clear that if each $\scrP_i$ is convex then $\scrP$ is convex (as the intersection of convex sets is convex, by Proposition~\ref{prop:convexitybasics}). 
\end{proof}

\begin{cor}\label{cor:JIsGated}
The spherical partition $\scrJ$ is gated (see Proposition~\ref{prop:Jgated} for a description of the set of gates of~$\scrJ$). 
\end{cor}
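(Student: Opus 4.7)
The plan is to exhibit $\scrJ$ as a join of convex gated partitions indexed by the spherical subsets of $S$, and then invoke Theorem~\ref{thm:joingated}. Specifically, since every root $\alpha\in\Phisph^+$ has spherical support, we have $\Phisph^+=\bigcup_{J}\Phi_J^+$ where $J$ ranges over the (finitely many) spherical subsets of $S$. From the description of hyperplane partitions in Proposition~\ref{prop:partsdescription}(5) and the definition of the join of partitions, this implies
$$
\scrJ=\bigvee_{J\subseteq S\text{ spherical}}\scrH(\Phi_J^+),
$$
since two elements $x,y\in W$ lie in the same part of the right-hand side iff $\Phi_J(x)=\Phi_J(y)$ for every spherical $J$, iff $\Phisph(x)=\Phisph(y)$.

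Next I would verify that each $\scrH(\Phi_J^+)$ (for $J$ spherical) is convex and gated. Convexity is immediate from Proposition~\ref{prop:convexitybasics}(2), as it is a hyperplane partition. For the gated property, fix a spherical $J\subseteq S$ and a part $P$ of $\scrH(\Phi_J^+)$. By Lemma~\ref{lem:rootsdecomposition}, for $w\in W$ written as $w=uv$ with $u\in W_J$ and $v\in W^J$, we have $\Phi_J(w)=\Phi(u)$. Hence the parts of $\scrH(\Phi_J^+)$ are precisely the sets
$$
P_u=\{uv\mid v\in W^J\},\quad u\in W_J,
$$
with $u$ uniquely determined by the part. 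Now $u\in P_u$ (take $v=e$), and for any $uv\in P_u$ we have $\ell(uv)=\ell(u)+\ell(v)$ by~(\ref{eq:WJdecomposition}), so $u\peq uv$. Thus $P_u$ is gated with gate $u$, and the partition $\scrH(\Phi_J^+)$ is gated.

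Finally, I would apply Theorem~\ref{thm:joingated} with $X=W$ to the family $\{\scrH(\Phi_J^+)\mid J\subseteq S\text{ spherical}\}$. Each partition in this family is convex and gated, so the join is gated (and convex), yielding that $\scrJ$ is gated, as required. There is no serious obstacle here: the argument is essentially a bookkeeping exercise of reducing the spherical partition to its finite-parabolic constituents, where the gate of each part is transparent from the standard $W_J$–$W^J$ decomposition. The actual characterisation of the gate of each part of $\scrJ$ (to be worked out in Proposition~\ref{prop:Jgated}) will follow from the proof of Theorem~\ref{thm:joingated}, which identifies the gate of a part of the join as the join of the gates of the constituent parts.
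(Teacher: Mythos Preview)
Your proposal is correct and follows essentially the same approach as the paper: write $\scrJ=\bigvee_{J}\scrH(\Phi_J)$ over spherical $J\subseteq S$, note that each $\scrH(\Phi_J)$ is convex (hyperplane partition) and gated with gates $W_J$, and apply Theorem~\ref{thm:joingated}. In fact you supply more detail than the paper does, spelling out via Lemma~\ref{lem:rootsdecomposition} and the $W_J$--$W^J$ decomposition why the parts of $\scrH(\Phi_J)$ are the cosets $uW^J$ with gate~$u$.
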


\begin{proof}
It is clear that for each spherical subset $J\subseteq S$ the partition $\scrJ_J=\scrH(\Phi_J)$ is convex (being a hyperplane partition, see Propostion~\ref{prop:convexitybasics}) and gated (with $\Gamma(\scrJ_J)=W_J$). Since 
$$
\scrJ=\bigvee_{J}\scrJ_J,
$$
with the union over spherical subsets $J\subseteq S$, the partition $\scrJ$ is gated by Theorem~\ref{thm:joingated}.
\end{proof}

In the following proposition we show that Garside partitions are gated (see Theorem~\ref{thm:conesgated} for a generalisation).

\begin{prop}\label{prop:garsidegated}
Let $B$ be a Garside shadow. The partition $\scrG_B$ is gated and weakly convex, with $\Gamma(\scrG_B)=B$. 
\end{prop}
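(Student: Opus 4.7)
The plan is to unpack the description of the parts of $\scrG_B$ given by Proposition~\ref{prop:partsdescription}(2), namely $P_b = \pi_B^{-1}(b) = \{w \in W \mid \pi_B(w) = b\}$ for $b \in B$, and then verify gatedness, identify the gates, and check weak convexity directly from the definition $\pi_B(w) = \bigvee\{b' \in B \mid b' \peq w\}$.

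First I would show that each $b \in B$ lies in its own fiber $P_b$: the element $b$ appears in the set $\{b' \in B \mid b' \peq b\}$ and is obviously an upper bound for it, so $\pi_B(b) = b$. Next I would check that $b$ is a gate of $P_b$. For any $w \in P_b$, the element $b = \pi_B(w)$ is by definition the join of elements each satisfying ${\cdot} \peq w$, so $b \peq w$. Hence $P_b$ is gated with gate~$b$, and by the uniqueness of gates (Lemma~\ref{lem:uniquegate}) the set of gates is exactly $\Gamma(\scrG_B) = \{b \mid b \in B\} = B$.

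For weak convexity, I would assume $w \in P_b$ and $b \peq y \peq w$, and prove $\pi_B(y) = b$. One inequality is immediate: since $b \in B$ and $b \peq y$, the element $b$ belongs to the set whose join defines $\pi_B(y)$, so $b \peq \pi_B(y)$. For the reverse, $y \peq w$ gives the inclusion $\{b' \in B \mid b' \peq y\} \subseteq \{b' \in B \mid b' \peq w\}$, and taking joins yields $\pi_B(y) \peq \pi_B(w) = b$. Combining gives $\pi_B(y) = b$, i.e., $y \in P_b$.

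The argument is essentially a direct unwinding of the projection formula, so there is no genuine obstacle; the Garside axioms enter only implicitly, via the closure of $B$ under join, which is what makes the projection $\pi_B$ well defined and ensures the fibers $P_b$ are nonempty parts indexed by honest elements of~$B$.
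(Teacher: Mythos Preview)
Your proof is correct and follows essentially the same approach as the paper's: both use the description of the parts as fibers $\pi_B^{-1}(b)$ from Proposition~\ref{prop:partsdescription}(2), observe that $b\peq w$ for all $w\in\pi_B^{-1}(b)$ directly from the definition of $\pi_B$, and verify weak convexity by showing that $b\peq y\peq w$ forces $\pi_B(y)=b$ via the two inequalities $b\peq\pi_B(y)$ and $\pi_B(y)\peq\pi_B(w)=b$. Your version is slightly more explicit in checking that $b\in\pi_B^{-1}(b)$, but otherwise the arguments coincide.
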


\begin{proof}
Each part of $\scrG_B$ is of the form $\pi^{-1}_B(b)$ for some $b\in B$, and if $x\in \pi^{-1}_B(b)$ then $b\peq x$ by the definition of $\pi_B(x)$. Hence $\pi^{-1}_B(b)$ is gated with gate~$b$.

We now show that $\scrG_B$ is weakly convex. Suppose that $b\in B$ and $x\in\pi^{-1}_B(b)$, and that $b\peq y\peq x$. Let $\pi_B(y)=b'$. Thus $b'\peq y$, and so $b'\peq y\peq x$, giving $b'\peq \pi_B(x)=b$ (by definition of $\pi_B(x)$). On the other hand, since $b\peq y$ we have $b\peq b'$ (by definition of $\pi_B(y)$), and so $b=b'$. Thus $y\in \pi^{-1}_B(b)$.
\end{proof}

\begin{rem} We note the following.
\begin{enumerate}
\item It is unknown if the $n$-Shi partition $\scrS_n$ is gated (see Conjecture~\ref{conj:Shigated}).
\item The partitions in Figures~\ref{fig:B2partitions}, \ref{fig:G2partitions}, \ref{fig:A2partitions}, and~\ref{fig:nongarside} are all gated. However, we note that the gated property is in fact rather rare. For example, the partition in Figure~\ref{fig:nongated} is convex, regular, but not gated (the part shaded red has no gate).
\begin{figure}[H]
\centering
\begin{tikzpicture}[scale=0.9]
    \path [fill=red!30] (-1,{-2*0.866})--(-1.5,{-3*0.866})--(1.5,{-3*0.866})--(1,{-2*0.866});
    \path [fill=gray!90] (0,0) -- (-0.5,0.866) -- (0.5,0.866) -- (0,0);
    \draw (2.5, {-3*0.866})--( 3.5, {-1*0.866} );
    \draw (1.5, {-3*0.866})--( 3.5, {1*0.866} );
    \draw  (0.5, {-3*0.866})--( 3.5, {3*0.866} );
    \draw  (-0.5, {-3*0.866})--( 3, {4*0.866} );
    \draw  [line width=2pt](-1.5, {-3*0.866})--( 2, {4*0.866} );
    \draw  [line width=2pt] (-2.5, {-3*0.866})--( 1, {4*0.866} );
    \draw  (-3.5, {-3*0.866})--(0, {4*0.866} );
    \draw  (-3.5, {-1*0.866})--(-1, {4*0.866} );
    \draw  (-3.5, {1*0.866})--(-2, {4*0.866} );
    \draw  (-3.5, {3*0.866})--(-3, {4*0.866} );
    \draw (-2.5, {-3*0.866})--( -3.5, {-1*0.866} );
    \draw (-1.5, {-3*0.866})--( -3.5, {1*0.866} );
    \draw  (-0.5, {-3*0.866})--( -3.5, {3*0.866} );
    \draw  (0.5, {-3*0.866})--( -3, {4*0.866} );
    \draw  [line width=2pt](1.5, {-3*0.866})--( -2, {4*0.866} );
    \draw  [line width=2pt] (2.5, {-3*0.866})--( -1, {4*0.866} );
    \draw  (3.5, {-3*0.866})--(0, {4*0.866} );
    \draw  (3.5, {-1*0.866})--(1, {4*0.866} );
    \draw  (3.5, {1*0.866})--(2, {4*0.866} );
    \draw  (3.5, {3*0.866})--(3, {4*0.866} );
    \draw (-3.5, -2.598)--( 3.5, -2.598);
    \draw (-3.5, -1.732)--( 3.5, -1.732);
    \draw (-3.5, -0.866)--( 3.5, -0.866);
    \draw[line width=2pt] (-3.5, 0)--( 3.5, 0);
    \draw (-3.5, 3.464)--( 3.5, 3.464 );
    \draw (-3.5, 2.598)--( 3.5, 2.598);
    \draw (-3.5, 1.732)--( 3.5, 1.732);
    \draw [line width=2pt] (-3.5, 0.866)--( 3.5, 0.866);
   \draw [line width=2pt] (-1,{-2*0.866})--(-2,0)--(-1,{2*0.866})--(1,{2*0.866})--(2,0)--(1,{-2*0.866})--(-1,{-2*0.866});
   \draw [line width=2pt]  (-1,0)--(-1.5,0.866);
   \draw [line width=2pt]  (1,0)--(1.5,0.866);
     %
%
\end{tikzpicture}
\caption{A regular convex partition that is not gated}\label{fig:nongated}
\end{figure}
\item \noindent There exist regular gated partitions that are not convex. For example let $W$ be the infinite dihedral group generated by $s$ and $t$, and let $\scrP=\{P_0,P_1,P_2,P_3,P_4\}$ be the partition of~$W$ with $P_0=\{e\}$, $P_1=\{w\in W\mid D_L(w)=\{s\}\text{ and $\ell(w)\notin 2\mathbb{Z}$}\}$, $P_2=\{w\in W\mid D_L(w)=\{s\}\text{ and $\ell(w)\in 2\mathbb{Z}$}\}$, $P_3=\{w\in W\mid D_L(w)=\{t\}\text{ and $\ell(w)\notin 2\mathbb{Z}$}\}$, and $P_4=\{w\in W\mid D_L(w)=\{t\}\text{ and $\ell(w)\in 2\mathbb{Z}$}\}$. This partition is regular and gated (with corresponding gates $g_0=e$, $g_1=s$, $g_2=st$, $g_3=t$, and $g_4=ts$) however it is clearly not convex.
\item The set of gates of a gated regular partition does not determine the partition. For example, let $\scrP$ be the gated regular partition of the infinite dihedral group from (2), and let $\scrP'=\{P_0',P_1',P_2',P_3',P_4'\}$ with $P_0'=\{e\}$, $P_1'=\{s\}$, $P_2'=\{w\in W\mid D_L(w)=\{s\}\text{ and }\ell(w)>1\}$, $P_3'=\{t\}$, and $P_4'=\{w\in W\mid D_L(w)=\{t\}\text{ and }\ell(w)>1\}$. Then $\scrP'$ is gated and regular, and $\Gamma(\scrP')=\Gamma(\scrP)$.
\end{enumerate}
\end{rem}

\newpage

We are particularly interested in partitions that are both gated and regular.

\begin{lem}\label{lem:minimalpath}
Let $\scrR$ be a regular gated partition, and let $\cA(\scrR)=(\scrR,\mu,\{e\})$ be the automaton constructed in Theorem~\ref{thm:regularautomaton}. Let $R\in\scrR$ with gate $g$, and let $(s_1,\ldots,s_n)$ be a reduced word. Then $g^{-1}=s_1\cdots s_n$ if and only if the path in $\cA(\scrR)$ starting at $\{e\}$ with edge labels $(s_1,\ldots,s_n)$ is of minimal length amongst all paths in $\cA(\scrR)$ from $\{e\}$ to~$R$. 
\end{lem}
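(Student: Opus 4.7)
The plan is to translate everything through the final clause of Theorem~\ref{thm:regularautomaton}, which pins down exactly where a reduced-word path ends in $\cA(\scrR)$. Concretely, if $(s_1,\ldots,s_n)$ is a reduced word then the path in $\cA(\scrR)$ starting at $\{e\}$ with these edge labels ends at the part $R'\in\scrR$ containing $(s_1\cdots s_n)^{-1}$, and has length $n$. So the set of paths from $\{e\}$ to a fixed $R\in\scrR$ is in natural bijection with the set of reduced expressions of elements $w^{-1}$ with $w\in R$, and the length of a path equals $\ell(w)$.

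With this dictionary set up, I would argue as follows. First, suppose $g^{-1}=s_1\cdots s_n$ reduced. Since $g\in R$, Theorem~\ref{thm:regularautomaton} tells us the corresponding path ends at $R$ and has length $n=\ell(g)$. For any other path from $\{e\}$ to $R$ of length $m$, the above dictionary supplies $w\in R$ with $\ell(w)=m$; by the gate property $g\peq w$, hence $m=\ell(w)\geq \ell(g)=n$. So the path is of minimal length.

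Conversely, assume the path $(s_1,\ldots,s_n)$ from $\{e\}$ to $R$ is of minimal length. By the dictionary, $w:=s_1\cdots s_n$ satisfies $w^{-1}\in R$ and $\ell(w^{-1})=n$. Since $g$ has a reduced expression whose reverse gives a path from $\{e\}$ to $R$ of length $\ell(g)$, minimality forces $n\leq \ell(g)$, while the gate property applied to $w^{-1}\in R$ gives $g\peq w^{-1}$ and so $\ell(g)\leq \ell(w^{-1})=n$. Hence $\ell(g)=\ell(w^{-1})$ and $g\peq w^{-1}$ together force $g=w^{-1}$, i.e.\ $g^{-1}=s_1\cdots s_n$.

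There is no real obstacle here: the entire lemma is a bookkeeping exercise combining the final statement of Theorem~\ref{thm:regularautomaton} (which identifies end states of reduced-word paths) with the defining property of a gate (Lemma~\ref{lem:uniquegate}, that $g$ is the unique minimal length element of its part and is a prefix of every element of that part). The only point requiring a moment of care is ensuring that paths in $\cA(\scrR)$ from $\{e\}$ to $R$ are necessarily labelled by reduced words; this is automatic because $\cA(\scrR)$ recognises $\cL(W,S)$, so any path from the start state consists of edge labels forming a reduced word.
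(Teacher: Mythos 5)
Your proof is correct and takes essentially the same route as the paper, which simply cites the final statement of Theorem~\ref{thm:regularautomaton} (paths labelled by reduced words end at the part containing the inverse of the word) and declares that "the lemma follows"; you have merely written out the bookkeeping with the gate property that the paper leaves implicit. The one point of care you flag — that all paths from the start state are labelled by reduced words because $\cA(\scrR)$ recognises $\cL(W,S)$ — is handled correctly.
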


\begin{proof}
By Theorem~\ref{thm:regularautomaton}, if $w=s_1\cdots s_n$ is reduced then the path in $\cA(\scrR)$ starting at $\{e\}$ with edge labels $(s_1,\ldots,s_n)$ ends at the part $R$ with $w^{-1}\in R$, and the lemma follows.
\end{proof}

\begin{thm}\label{thm:suffixclosuregates}
Let $\scrR$ be a regular gated partition. Then the set $\Gamma(\scrR)$ of gates is closed under taking suffix. Moreover, if $J\subseteq S$ is spherical then $W_J\subseteq \Gamma(\scrR)$, and in particular $S\subseteq \Gamma(\scrR)$. 
\end{thm}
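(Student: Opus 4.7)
The plan is to handle the two statements separately: the suffix-closure claim via the automaton $\cA(\scrR) = (\scrR, \mu, \{e\})$ from Theorem~\ref{thm:regularautomaton}, and the $W_J$ claim via Corollary~\ref{cor:Tisminimal} together with Theorem~\ref{thm:parabolicconetype}.

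For suffix closure, fix $g \in \Gamma(\scrR)$, let $R \in \scrR$ be its part, and write $g = vu$ with $\ell(g) = \ell(v) + \ell(u)$. Let $R' \in \scrR$ be the part containing $u$, with gate $g'$; the aim is to show $g' = u$. First I would choose a reduced expression $g^{-1} = s_1 \cdots s_n$ such that $s_1 \cdots s_{\ell(u)} = u^{-1}$ and $s_{\ell(u)+1} \cdots s_n = v^{-1}$, which is possible because $g^{-1} = u^{-1} v^{-1}$ is a reduced factorisation. By Theorem~\ref{thm:regularautomaton} this word labels a path in $\cA(\scrR)$ from $\{e\}$ that passes through $R'$ at step $\ell(u)$ (since $(s_1 \cdots s_{\ell(u)})^{-1} = u \in R'$) and ends at $R$, and by Lemma~\ref{lem:minimalpath} this is a path of minimum length $\ell(g)$ from $\{e\}$ to $R$. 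If $\ell(g') < \ell(u)$, then picking any reduced expression $(t_1,\ldots,t_m)$ for $g'^{-1}$ gives, again by Theorem~\ref{thm:regularautomaton}, a path $\{e\} \to R'$ of length $m = \ell(g')$; splicing with the tail $(s_{\ell(u)+1},\ldots,s_n)$ would yield a path $\{e\} \to R$ of length $\ell(g') + \ell(v) < \ell(g)$, contradicting minimality. Hence $g' = u$.

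For the second statement, let $u \in W_J$ with $J$ spherical, let $R$ be the part of $\scrR$ containing $u$, and let $g$ be its gate, so $g \peq u$. By Corollary~\ref{cor:Tisminimal} we have $\scrT \leq \scrR$, so $u$ and $g$ lie in a common part of $\scrT$ and therefore $T(u^{-1}) = T(g^{-1})$. Theorem~\ref{thm:parabolicconetype}, applied with $x = u \in W_J$ and $y = g$, then gives $u \peq g$; combined with $g \peq u$ this forces $u = g \in \Gamma(\scrR)$. The assertion $S \subseteq \Gamma(\scrR)$ is the special case $J = \{s\}$.

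The main obstacle is the splicing argument: one must check that concatenating the two paths really produces a single path in $\cA(\scrR)$ which still lands at $R$. This reduces to the determinism of the transition function (each $\mu(Q,s)$ depends only on the state $Q$ and the label $s$), so after arriving at $R'$ by any reduced path, reading the continuation labels $s_{\ell(u)+1},\ldots,s_n$ reproduces the same sequence of states as the corresponding tail of the original path, and in particular terminates at $R$.
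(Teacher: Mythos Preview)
Your proof is correct. For suffix closure you and the paper both use the automaton $\cA(\scrR)$ and the minimality characterisation of Lemma~\ref{lem:minimalpath}: the paper peels off one generator at a time and inducts, while you handle an arbitrary suffix in one step via the same splicing argument (and your observation that determinism of $\mu$ makes the splice land at $R$ is exactly the point). These are essentially the same.

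For the statement $W_J\subseteq\Gamma(\scrR)$ you take a genuinely different route. The paper argues via $\scrD\leq\scrR$ (Lemma~\ref{lem:Sminimal}): since $w_J$ is the gate of its part in $\scrD$ (Lemma~\ref{lem:Spartitiongated}), and the part of $\scrR$ containing $w_J$ sits inside that $\scrD$-part, $w_J$ is forced to be the gate in $\scrR$ as well; then suffix closure pulls in all of $W_J$. Your argument instead uses the finer fact $\scrT\leq\scrR$ (Corollary~\ref{cor:Tisminimal}) together with Theorem~\ref{thm:parabolicconetype} to show each $u\in W_J$ is directly a gate, without invoking suffix closure. Your approach is self-contained for the $W_J$ claim but relies on the heavier Theorem~\ref{thm:parabolicconetype}; the paper's is more elementary (only $\scrD$ and its obvious gates) but needs the suffix closure already in hand.
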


\begin{proof}
Let $\cA(\scrR)=(\scrR,\mu,\{e\})$ be the automaton constructed in Theorem~\ref{thm:regularautomaton} and let $R\in\scrR$ with gate $g\in\Gamma(\scrR)$. Let $s\in D_L(g)$ and choose a reduced expression $g^{-1}=s_1\cdots s_{n-1}s$. Then, by Lemma~\ref{lem:minimalpath} the path 
$$
\{e\}=R_0\to_{s_1}R_1\to_{s_2}\cdots\to_{s_{n-1}}R_{n-1}\to_sR_n=R
$$
in $\cA(\scrR)$ from $\{e\}$ to $R$ with edge labels $(s_1,\ldots,s_{n-1},s)$ is of minimal length amongst all paths in $\cA(\scrR)$ from $\{e\}$ to $R$. Hence the path from $\{e\}$ to $R_{n-1}$ with edge labels $(s_1,\ldots,s_{n-1})$ is of minimal length amongst all paths from $\{e\}$ to $R_{n-1}$, and so by Lemma~\ref{lem:minimalpath} $g'=s_{n-1}\cdots s_1=sg$ is the gate of $R_{n-1}$. Thus $\Gamma(\scrR)$ is closed under taking suffixes by induction.

Since $\scrD\leq \scrR$ (by Lemma~\ref{lem:Sminimal}) and $\scrD$ has gates $w_J$ with $J\subseteq S$ spherical, it follows that $w_J\in\Gamma(\scrR)$. Since $\Gamma(\scrR)$ is closed under suffix we have $W_J\subseteq \Gamma(\scrR)$ for all spherical $J\subseteq S$. 
\end{proof}

\begin{exa}
We note that the set $\Gamma(\scrR)$ of gates of a regular gated partition is not necessarily closed under join (and hence $\Gamma(\scrR)$ is not necessarily a Garside shadow). An example, in type $\tilde{\sA}_2$, is given in Figure~\ref{fig:nongarside}. With $x,y\in\Gamma(\scrR)$ as shown, we have that $z=x\vee y$ exists, yet $z\notin\Gamma(\scrR)$. 
\begin{figure}[H]
\centering
\begin{tikzpicture}[scale=0.9]
    \path [fill=blue!30] (-2,0) -- (-1.5,-0.866) -- (-0.5,-0.866)--(0,-1.732)--(0.5,-0.866)-- (1.5,-0.866) -- (2,0);
    \path [fill=blue!30] (-1,0) -- (1,0) -- (1.5,0.866) -- (1,1.732)--(-1,1.732)--(-1.5,0.866)--(-1,0);
    \path [fill=blue!30] (0,1.732)--(-0.5,2.598)--(0.5,2.598);
    \path [fill=gray!90] (0,0) -- (-0.5,0.866) -- (0.5,0.866) -- (0,0);
    \draw (2.5, {-3*0.866})--( 3.5, {-1*0.866} );
    \draw (1.5, {-3*0.866})--( 3.5, {1*0.866} );
    \draw  (0.5, {-3*0.866})--( 3.5, {3*0.866} );
    \draw  (-0.5, {-3*0.866})--( 3, {4*0.866} );
    \draw  [line width=2pt](-1.5, {-3*0.866})--( 2, {4*0.866} );
    \draw  [line width=2pt] (-2.5, {-3*0.866})--( 1, {4*0.866} );
    \draw  (-3.5, {-3*0.866})--(0, {4*0.866} );
    \draw  (-3.5, {-1*0.866})--(-1, {4*0.866} );
    \draw  (-3.5, {1*0.866})--(-2, {4*0.866} );
    \draw  (-3.5, {3*0.866})--(-3, {4*0.866} );
    \draw (-2.5, {-3*0.866})--( -3.5, {-1*0.866} );
    \draw (-1.5, {-3*0.866})--( -3.5, {1*0.866} );
    \draw  (-0.5, {-3*0.866})--( -3.5, {3*0.866} );
    \draw  (0.5, {-3*0.866})--( -3, {4*0.866} );
    \draw  [line width=2pt](1.5, {-3*0.866})--( -2, {4*0.866} );
    \draw  [line width=2pt] (2.5, {-3*0.866})--( -1, {4*0.866} );
    \draw  (3.5, {-3*0.866})--(0, {4*0.866} );
    \draw  (3.5, {-1*0.866})--(1, {4*0.866} );
    \draw  (3.5, {1*0.866})--(2, {4*0.866} );
    \draw  (3.5, {3*0.866})--(3, {4*0.866} );
    \draw (-3.5, -2.598)--( 3.5, -2.598);
    \draw (-3.5, -1.732)--( 3.5, -1.732);
    \draw (-3.5, -0.866)--( 3.5, -0.866);
    \draw[line width=2pt] (-3.5, 0)--( 3.5, 0);
    \draw (-3.5, 3.464)--( 3.5, 3.464 );
    \draw (-3.5, 2.598)--( 3.5, 2.598);
    \draw (-3.5, 1.732)--( 3.5, 1.732);
    \draw [line width=2pt] (-3.5, 0.866)--( 3.5, 0.866);
    \draw [line width=2pt] (-1,0)--(-0.5,-0.866)--(0.5,-0.866)--(1,0);
     %
%
\node at (-1,-0.55) {$x$};
\node at (0,-1.2) {$y$};
\node at (-0.5,-1.4) {$z$};
\end{tikzpicture}
\caption{An convex regular gated partition that is not join-closed}\label{fig:nongarside}
\end{figure}
\end{exa}

We conclude this section by noting that for each gated partition~$\scrP$ one can define a ``projection map'' $\pi_{\scrP}:W\to\Gamma(\scrP)$ by
\begin{align*}
\pi_{\scrP}:W\to \Gamma(\scrP),\quad\text{with $\pi_{\scrP}(w)$ the gate of the part containing $w$}.
\end{align*}
The following lemma shows that this map generalises the projection map $\pi_B$ for a Garside shadow~$B$.

\begin{lem}\label{lem:li}
Let $B$ be a Garside shadow. Then $\pi_{\scrG_B}=\pi_B$.
\end{lem}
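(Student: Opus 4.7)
The proof is essentially an assembly of two facts already established in the paper, so the plan is short. First I would recall from Proposition~\ref{prop:partsdescription}(2) that the parts of $\scrG_B$ are exactly the fibres $\pi_B^{-1}(b) = \{w \in W \mid \pi_B(w) = b\}$ for $b \in B$. Thus, for any $w \in W$, the unique part of $\scrG_B$ containing $w$ is $\pi_B^{-1}(\pi_B(w))$.

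Next, I would invoke Proposition~\ref{prop:garsidegated}, which states that $\scrG_B$ is gated with $\Gamma(\scrG_B) = B$, and more precisely that the gate of the part $\pi_B^{-1}(b)$ is $b$ itself. Applying this to $b = \pi_B(w)$ gives that the gate of the part of $\scrG_B$ containing $w$ is $\pi_B(w)$. By the definition of $\pi_{\scrG_B}$ (as the map sending $w$ to the gate of the part of $\scrG_B$ containing it), this yields $\pi_{\scrG_B}(w) = \pi_B(w)$ for every $w \in W$, which is the claim.

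There is no real obstacle here: the lemma is essentially tautological once the description of the parts of $\scrG_B$ and the identification of their gates have been established. The only thing to be careful about is to cite the correct earlier results and to note explicitly that $\pi_B(w) \in B$ (so that the fibre $\pi_B^{-1}(\pi_B(w))$ is indeed a part of $\scrG_B$ in the form prescribed by Proposition~\ref{prop:partsdescription}(2)).
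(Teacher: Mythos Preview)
Your proposal is correct and follows essentially the same approach as the paper's proof: identify the part of $\scrG_B$ containing $w$ as $\pi_B^{-1}(\pi_B(w))$ via Proposition~\ref{prop:partsdescription}(2), then use Proposition~\ref{prop:garsidegated} to conclude that its gate is $\pi_B(w)$, whence $\pi_{\scrG_B}(w)=\pi_B(w)$. The paper's proof is simply a one-line version of the same argument.
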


\begin{proof}
Let $x\in \pi_B^{-1}(b)$, with $b\in B$. Since $b$ is the gate of $\pi_B^{-1}(b)$ we have $\pi_{\scrG_B}(x)=b=\pi_B(x)$.
\end{proof}

In fact, for a regular gated partition the associated automaton can be described purely using the gates (rather than the parts of the partition), and the resulting formulation mirrors the Garside case from Theorem~\ref{thm:garsideautomaton}.

\begin{cor}
Let $\scrR$ be a regular gated partition. Define $\cA'(\scrR)=(\Gamma(\scrR),\mu',e)$ by 
$$
\mu'(g,s)=\begin{cases}
\pi_{\scrR}(sg)&\text{if $s\notin D_L(g)$}\\
\dagger&\text{if $s\in D_L(g)$}.
\end{cases}
$$
Then $\cA'(\scrR)\cong \cA(\scrR)$, where $\cA(\scrR)$ is the automaton constructed in Theorem~\ref{thm:regularautomaton}.
\end{cor}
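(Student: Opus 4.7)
The natural candidate isomorphism is the map $\phi:\Gamma(\scrR)\to\scrR$ sending each gate to the part it gates; that is, $\phi(g)=R$ where $R$ is the unique part of $\scrR$ containing~$g$. The plan is to show $\phi$ is a bijection intertwining the start states, dead states, and transition functions of the two automata.

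First I would observe that $\phi$ is well-defined and bijective. Every part $R\in\scrR$ is gated and so has a gate by assumption, and this gate is unique by Lemma~\ref{lem:uniquegate}; hence $\phi$ is surjective and injective (distinct gates lie in distinct parts since each part has exactly one gate). The identity $e$ is the gate of the part $\{e\}$ (which is indeed a part of $\scrR$, since $\scrR$ is locally constant and $D_L(e)=\emptyset$), so $\phi(e)=\{e\}$ matches the start state of $\cA(\scrR)$.

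Next I would verify compatibility of the transition functions in the two cases. Fix $g\in\Gamma(\scrR)$ and set $R=\phi(g)$. Since $\scrR$ is locally constant, $D_L(g)=D_L(R)$, so $s\in D_L(g)$ iff $\mu(R,s)=\dagger$, handling the dead-state case. If $s\notin D_L(g)$, then by regularity of $\scrR$ there exists a part $R'\in\scrR$ with $sR\subseteq R'$; by definition $\mu(R,s)=R'$. On the other hand $sg\in sR\subseteq R'$, so $R'$ is the part of $\scrR$ containing $sg$, and hence its gate is precisely $\pi_{\scrR}(sg)=\mu'(g,s)$. Thus $\phi(\mu'(g,s))=R'=\mu(R,s)$, which is the required compatibility.

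The only subtle point is the non-trivial case of the transition, and this is essentially a direct unwinding of the definition of $\mu$ in Theorem~\ref{thm:regularautomaton} combined with regularity; no deeper ingredient is needed. Once these checks are assembled, $\phi$ is a bijection of state sets that preserves the start state, dead state, and all transitions, so $\cA'(\scrR)\cong\cA(\scrR)$.
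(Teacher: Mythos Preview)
Your proof is correct and follows essentially the same approach as the paper. The only cosmetic difference is that the paper defines the isomorphism in the opposite direction, $f:\scrR\to\Gamma(\scrR)$ sending each part to its gate, and verifies $f(\mu(P,s))=\mu'(f(P),s)$; your map $\phi$ is simply $f^{-1}$, and the verification is the same unwinding of regularity and the definition of $\pi_{\scrR}$.
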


\begin{proof}
We define a bijection $f:\scrR\cup\{\dagger\}\to \Gamma(\scrR)\cup\{\dagger\}$ by $f(\dagger)=\dagger$ and $f(P)=g$ if $g$ is the gate of $P$. We need to show that $f(\mu(P,s))=\mu'(f(P),s)$ for all $P\in\scrR$. Let $P\in\scrR$ and let $g$ be the gate of $P$. If $s\in D_L(P)$ then $f(\mu(P,s))=f(\dagger)=\dagger$ and also $\mu'(f(P),s)=\mu'(g,s)=\dagger$ as $D_L(g)=D_L(P)$. If $s\notin D_L(P)$
let $P'\in \scrR$ be the part of $\scrR$ with $sP\subseteq P'$. Let $g'$ be the gate of~$P'$. Then $f(\mu(P,s))=f(P')=g'$ and $\mu'(f(P),s)=\mu'(g,s)=\pi_{\scrR}(sg)$. By definition, $\pi_{\scrR}(sg)$ is the gate of the part containing $sg$, and since $g\in P$ and $sP\subseteq P'$ we have $\pi_{\scrR}(sg)=g'$, and hence the result.
\end{proof}

We note, in passing, the following analogue of \cite[Proposition~2.8]{HNW:16} for general projection maps. 

\begin{prop}
Let $\scrR\in\Preg$. If $w\in W$ and $s\notin D_L(w)$ then $\pi_{\scrR}(sw)=\pi_{\scrR}(s\pi_{\scrR}(w))$. 
\end{prop}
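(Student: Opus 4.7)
The plan is straightforward: this is essentially immediate from the defining properties of regular gated partitions. First, I would note that for $\pi_{\scrR}$ to make sense, $\scrR$ must be gated (not merely regular), so I would implicitly take this as part of the hypothesis. Let $R \in \scrR$ be the part containing $w$, and let $g = \pi_{\scrR}(w)$ be its gate, so $g \in R$ and $g \peq w$.

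Next, I would use the locally constant property: since $g$ and $w$ both lie in $R$, we have $D_L(g) = D_L(R) = D_L(w)$, and in particular $s \notin D_L(g)$. Now I would invoke condition (2) in the definition of a regular partition: since $s \notin D_L(R)$, there exists a part $R' \in \scrR$ with $sR \subseteq R'$. In particular, both $sw \in R'$ and $sg \in R'$, so these two elements lie in the same part of $\scrR$.

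Finally, letting $g'$ denote the gate of $R'$, the definition of $\pi_{\scrR}$ gives
\[
\pi_{\scrR}(sw) = g' = \pi_{\scrR}(sg) = \pi_{\scrR}(s\pi_{\scrR}(w)),
\]
which is the desired equality.

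There is no real obstacle here; the statement is essentially a translation of the regularity axiom into the language of the projection map, and it is the natural generalisation of \cite[Proposition~2.8]{HNW:16} (which corresponds to the Garside case $\scrR = \scrG_B$, where by Lemma~\ref{lem:li} the map $\pi_{\scrR}$ reduces to $\pi_B$). The only point that deserves care is confirming that $\scrR$ is implicitly assumed gated so that $\pi_{\scrR}$ is defined.
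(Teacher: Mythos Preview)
Your proof is correct and follows essentially the same approach as the paper's: you identify the part $R$ containing $w$ with gate $g$, use regularity to find $R'\supseteq sR$, and conclude that $sw$ and $sg$ share the gate $g'$ of $R'$. Your observation that $\scrR$ must implicitly be gated for $\pi_{\scrR}$ to be defined is apt, and the extra remark that $s\notin D_L(g)$ is a harmless (if unnecessary) elaboration.
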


\begin{proof}
Let $P$ be the part of $\scrR$ containing $w$, and let $g$ be the gate of $P$. Thus $\pi_{\scrR}(w)=g$. Since $\scrR$ is regular we have $sP\subseteq P'$ for some part $P'$ of $\scrR$. Let $g'$ be the gate of $P'$. Since $sw\in sP\subseteq P'$ we have $\pi_{\scrR}(sw)=g'$. But also $sg\in P'$, and so $\pi_{\scrR}(s\pi_{\scrR}(w))=\pi_{\scrR}(sg)=g'$. Hence the result.
\end{proof}

\subsection{Simple refinements preserve the gate property}\label{sec:simplerefinementsgates}
In this section we show that if $\scrP$ is locally constant, convex, and gated, and if $\scrP\to\scrP'$ via a simple refinement, then $\scrP'$ is also locally constant, convex, and gated. Theorem~\ref{thm1:preservegatedness} follows, and this is a key component of the  proof of Theorem~\ref{thm:main1}. 

\begin{lem}\label{lem:littlegate}
Let $\epsilon\in\{-,+\}$ and $s\in S$. If $X\subseteq H_{\alpha_s}^{\epsilon}$ is gated with gate $g$, then $sX$ is gated with gate~$sg$.
\end{lem}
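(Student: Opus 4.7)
The plan is to verify directly that $sg$ serves as a gate for $sX$, by checking (i) $sg \in sX$ and (ii) $sg \peq sx$ for every $x \in X$. Part (i) is immediate from $g \in X$. For part (ii), the key algebraic identity is
\[
(sg)^{-1}(sx) = g^{-1}s \cdot sx = g^{-1}x,
\]
so $\ell((sg)^{-1}(sx)) = \ell(g^{-1}x) = \ell(x) - \ell(g)$ using the hypothesis $g \peq x$.

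To conclude $sg \peq sx$ it therefore suffices to show
\[
\ell(sx) - \ell(sg) = \ell(x) - \ell(g),
\]
equivalently $\ell(sx) - \ell(x) = \ell(sg) - \ell(g)$. This is where the hypothesis $X \subseteq H_{\alpha_s}^{\epsilon}$ is used: both $x$ and $g$ lie in the same half-space determined by $\alpha_s$, so by the definition of $H_{\alpha_s}^{\pm}$ the quantity $\ell(s \cdot -) - \ell(-)$ takes the common value $+1$ (if $\epsilon = +$) or $-1$ (if $\epsilon = -$) on both $x$ and $g$. Substituting gives the required equality.

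The argument is short and symmetric in the two cases $\epsilon = \pm$, so there is no real obstacle; the only subtlety is noticing that the constancy of $\ell(s \cdot -) - \ell(-)$ on $X \cup \{g\}$ (which is what the half-space hypothesis $X \subseteq H_{\alpha_s}^{\epsilon}$ encodes) is exactly what allows the length identity to propagate through left multiplication by $s$. Uniqueness of the gate of $sX$ then follows from Lemma~\ref{lem:uniquegate}.
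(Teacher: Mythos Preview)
Your proof is correct and follows essentially the same approach as the paper: both use the identity $(sg)^{-1}(sx)=g^{-1}x$ together with the fact that $\ell(sx)-\ell(x)=\ell(sg)-\ell(g)$ (from $x,g\in H_{\alpha_s}^{\epsilon}$) to deduce $\ell((sg)^{-1}(sx))=\ell(sx)-\ell(sg)$. The paper's version is merely more terse.
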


\begin{proof}
Let $x\in X$. Since $g$ is the gate of $X$ we have $\ell(g^{-1}x)=\ell(x)-\ell(g)$, and since $g,x\in X\subseteq H_{\alpha_s}^{\epsilon}$ we have $\ell(sx)-\ell(sg)=\ell(x)-\ell(g)$. Thus
$$
\ell((sg)^{-1}(sx))=\ell(g^{-1}x)=\ell(x)-\ell(g)=\ell(sx)-\ell(sg),
$$
and so $sg\peq y$ for all $y=sx\in sX$.
\end{proof}

\begin{lem}\label{lem:littlejoin}
Let $x,y\in W$ and $s\in S$ with $\{x,y\}$ bounded and $s\in D_L(x)\cap D_L(y)$. Then $s(x\vee y)=(sx)\vee (sy)$. 
\end{lem}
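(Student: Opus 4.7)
The plan is to establish the equality $s(x \vee y) = (sx) \vee (sy)$ by showing that both sides bound each other in the right weak order. Write $z = x \vee y$. Since $s \in D_L(x)$ we have $s \peq x$, and combined with $x \peq z$ this yields $s \peq z$, so $s \in D_L(z)$ and $z = s \cdot (sz)$ is a reduced factorisation.

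For the easy direction I would show $sx, sy \peq sz$, from which $w := (sx) \vee (sy)$ exists and $w \peq sz$ automatically. Using $x \peq z$, write $z = x v$ reducedly; cancelling the leading $s$ (which lies in $D_L(x) \cap D_L(z)$) gives $sz = sx \cdot v$, and a short length comparison confirms this factorisation is reduced, so $sx \peq sz$. The same argument applies to $sy$.

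For the reverse inequality $sz \peq w$, the main obstacle is controlling the left descent set of $w$. The key observation is that $s \notin D_L(w)$: by Proposition~\ref{prop:inversionsetjoin} one has $\Phi(w) = \mathrm{cone}_{\Phi}(\Phi(sx) \cup \Phi(sy))$, and since $s \notin D_L(sx)$ and $s \notin D_L(sy)$ we have $\alpha_s \notin \Phi(sx) \cup \Phi(sy)$. Because $\alpha_s$ is a simple root it cannot arise as a non-negative combination of other positive roots, so $\alpha_s \notin \Phi(w)$. Hence $sw = s \cdot w$ is a reduced factorisation. Using $sx \peq w$, I write $w = sx \cdot v_x$ reducedly and prepend $s$ to conclude $x = s \cdot sx \peq sw$, and similarly $y \peq sw$. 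Thus $sw$ is an upper bound for $\{x, y\}$, so $z \peq sw$; cancelling the leading $s$ once more gives $sz \peq w$, completing the argument.
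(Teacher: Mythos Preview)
Your proof is correct and follows the same overall two-inequality structure as the paper's argument: first show $sx, sy \peq sz$ (hence $(sx)\vee(sy)\peq sz$), then show $sz \peq (sx)\vee(sy)$ by proving that $sw$ is an upper bound for $\{x,y\}$ where $w=(sx)\vee(sy)$.

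The one genuine difference is in how you establish the key fact $s\notin D_L(w)$. You invoke Proposition~\ref{prop:inversionsetjoin} to write $\Phi(w)=\mathrm{cone}_{\Phi}(\Phi(sx)\cup\Phi(sy))$ and then argue that the simple root $\alpha_s$ cannot lie in the cone on a set of positive roots not containing it. The paper instead uses the elementary observation that $w\peq sz$ (already proved in the easy direction) together with $s\notin D_L(sz)$; since prefixes inherit non-descents (equivalently $\Phi(w)\subseteq\Phi(sz)$), this immediately gives $\alpha_s\notin\Phi(w)$. The paper's route is lighter---it avoids the join--cone formula and stays entirely within length computations---while your argument is self-contained in the sense that it does not reuse the easy direction. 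Both are perfectly valid; the paper's is a small shortcut worth noting.
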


\begin{proof}
Let $z=x\vee y$. Since $\ell(sx)=\ell(x)-1$ and $x\peq z$ we have $\ell(sz)=\ell(z)-1$. Then
$
\ell((sx)^{-1}(sz))=\ell(x^{-1}z)=\ell(z)-\ell(x)=\ell(sz)-\ell(sx),
$
and so $sx\peq sz$. Similarly $sy\peq sz$, and so $\{sx,sy\}$ is bounded, and $(sx)\vee(sy)\peq sz$. 

Now let $w$ be any bound for $\{sx,sy\}$ with $w\peq sz$. Since $s\notin D_L(sz)$ (as $\ell(sz)=\ell(z)-1$) and $w\peq sz$ we have $s\notin D_L(w)$. Thus
\begin{align*}
\ell(x^{-1}(sw))&=\ell((sx)^{-1}w)\\
&=\ell(w)-\ell(sx)&&\text{as $sx\peq w$ by assumption}\\
&=\ell(w)-\ell(x)+1&&\text{as $s\in D_L(x)$}\\
&=\ell(sw)-\ell(x)&&\text{as $s\notin D_L(w)$}.
\end{align*}
Thus $x\peq sw$, and similarly $y\peq sw$. So $sw$ is a bound for $\{x,y\}$. But also $sw\peq z$, because
\begin{align*}
\ell((sw)^{-1}z)&=\ell(w^{-1}(sz))\\
&=\ell(sz)-\ell(w)&&\text{as $w\peq sz$ by assumption}\\
&=\ell(z)-1-\ell(w)&&\text{as $s\in D_L(z)$}\\
&=\ell(z)-\ell(sw)&&\text{as $s\notin D_L(w)$}.
\end{align*}
Thus $sw=z$ (as $z=x\vee y$ is the least upper bound of $\{x,y\}$) and so $w=sz$. In particular we have $(sx)\vee(sy)=sz$.
\end{proof}

\begin{thm}\label{thm:simplerefinementsgates}
Let $\scrP$ be a locally constant, convex, gated partition of $W$, and for each $P\in\scrP$ let $g_P$ denote the gate of~$P$. Suppose that $\scrP\mapsto\scrP'$ by a simple refinement based at $(P,s)$. Let $X=\{P'\in\scrP\mid sP\cap P'\neq\emptyset\}$, and let $P_0'$ be the element of $X$ with $sg_P\in P_0'$. Then
\begin{enumerate}
\item the partition $\scrP'$ is locally constant, convex, and gated;
\item the element $g_P$ is the gate of $P\cap sP_0'$, and for each $P'\in X\backslash \{P_0'\}$ the set $\{g_P,sg_{P'}\}$ is bounded and $h_{P'}=g_P\vee sg_{P'}$ is the gate of $P\cap sP'$;
\item if $\scrP$ has the property that each part $P\in\scrP$ is an intersection of finitely many half-spaces, then the partition $\scrP'$ also has this property.
\end{enumerate}
\end{thm}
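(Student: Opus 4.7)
The natural order is to first identify the gates claimed in~(2), which simultaneously furnishes the gated property asserted in~(1); local constancy of $\scrP'$ is automatic since $\scrP'$ refines the locally constant partition $\scrP$. Convexity of $\scrP'$ reduces to convexity of each new part $P \cap sP'$ with $P' \in X$, and since $P$ is convex by hypothesis it suffices to show that $sP'$ is convex. I first observe that $s \notin D_L(P)$ forces $P \subseteq H_{\alpha_s}^+$, and that every $P' \in X$ satisfies $P' \subseteq H_{\alpha_s}^-$: $P' \cap sP \neq \emptyset$ puts at least one element of $P'$ in $H_{\alpha_s}^-$, and local constancy of $\scrP$ extends this to all of $P'$. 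Convexity of $sP'$ is then immediate, because left translation preserves convexity: given $x,y \in sP'$, any reduced expression $(sx)^{-1}(sy) = x^{-1}y = s_1 \cdots s_n$ yields a gallery in $P'$ from $sx$ to $sy$, whose left-translate by $s$ is a gallery in $sP'$ from $x$ to $y$.

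For the gates, since $P' \subseteq H_{\alpha_s}^-$ Lemma~\ref{lem:littlegate} shows that $sP'$ is gated with gate $sg_{P'}$. The key technical step is the claim that $sg_{P'} \peq x$ for every $x \in P \cap sP'$. Because $x \in H_{\alpha_s}^+$ one has $\Phi(sx) = \{\alpha_s\} \sqcup s\Phi(x)$, and because $s \in D_L(g_{P'})$ one has $\Phi(g_{P'}) = \{\alpha_s\} \sqcup s\Phi(sg_{P'})$; hence the inclusion $\Phi(g_{P'}) \subseteq \Phi(sx)$ (which holds since $g_{P'}$ is the gate of $P'$ and $sx \in P'$) is equivalent to $\Phi(sg_{P'}) \subseteq \Phi(x)$, i.e.\ to $sg_{P'} \peq x$. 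Combined with $g_P \peq x$, the pair $\{g_P, sg_{P'}\}$ is bounded by $x$, so $h_{P'} = g_P \vee sg_{P'}$ exists and satisfies $h_{P'} \peq x$. When $P' = P_0'$, the assumption $sg_P \in P_0'$ gives $g_{P_0'} \peq sg_P$, which by the same inversion-set identity becomes $sg_{P_0'} \peq g_P$; hence $h_{P_0'} = g_P$, recovering the first assertion of~(2).

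To conclude that $h_{P'}$ actually lies in $P \cap sP'$ --- and not merely bounds it below --- I will use that convexity implies weak convexity (by choosing a reduced expression for $g_P^{-1} x$ compatibly with the prefix $g_P^{-1} h_{P'}$). From $g_P \peq h_{P'} \peq x$ with $g_P, x \in P$ convex, weak convexity of $P$ gives $h_{P'} \in P$; similarly $sg_{P'} \peq h_{P'} \peq x$ with $sg_{P'}, x \in sP'$ convex gives $h_{P'} \in sP'$. Together with $h_{P'} \peq x$ for all $x \in P \cap sP'$, this shows $h_{P'}$ is the gate of $P \cap sP'$, completing~(2) and the gated component of~(1). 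I expect this last deduction --- that the candidate join actually lies in the new part --- to be the main obstacle, as it is the one point where the full force of convexity (rather than merely the gated hypothesis) is essential; without convexity, the join could easily fall outside the intended part.

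For~(3), I need to show that $sP'$ is a finite intersection of half-spaces whenever $P'$ is. This reduces to showing that for each $\beta \in \Phi^+$ and $\epsilon \in \{+,-\}$ the set $sH_\beta^{\epsilon}$ is itself a half-space. A direct verification using $s s_\beta s = s_{s\beta}$ gives $sH_\beta = H_{|s\beta|}$ (where $|s\beta|$ denotes its positive version), and testing which side contains the translate $s$ of the identity yields $sH_\beta^{\pm} = H_{|s\beta|}^{\pm}$ when $\beta \neq \alpha_s$, and $sH_{\alpha_s}^{\pm} = H_{\alpha_s}^{\mp}$. Writing $P' = \bigcap_i H_{\beta_i}^{\epsilon_i}$ and applying $s$ termwise then yields $sP' = \bigcap_i sH_{\beta_i}^{\epsilon_i}$ as a finite intersection of half-spaces, so $P \cap sP'$ is too.
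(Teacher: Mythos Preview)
Your proof is correct and follows essentially the same strategy as the paper: establish that both $g_P$ and $sg_{P'}$ are prefixes of every element of $P\cap sP'$, form the join, and then use convexity (via weak convexity) of the two pieces $P$ and $sP'$ to conclude the join lies in $P\cap sP'$. The main difference is organisational: the paper works first in $sP\cap P'$, finds the gate $\overline{v}=sg_P\vee g_{P'}$ there, transfers it to $P\cap sP'$ via Lemma~\ref{lem:littlegate}, and then invokes Lemma~\ref{lem:littlejoin} to rewrite $s(sg_P\vee g_{P'})$ as $g_P\vee sg_{P'}$; you work directly in $P\cap sP'$, applying Lemma~\ref{lem:littlegate} to the whole of $sP'$ up front, which lets you form $g_P\vee sg_{P'}$ immediately and bypass Lemma~\ref{lem:littlejoin} entirely. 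Your route is marginally more direct for that reason. (One small redundancy: once you have cited Lemma~\ref{lem:littlegate} to see $sg_{P'}$ is the gate of $sP'$, the inclusion $sg_{P'}\peq x$ for $x\in P\cap sP'\subseteq sP'$ is immediate, so the inversion-set computation that follows is not needed.) For part~(3) you give more detail than the paper, which simply asserts the claim; your half-space calculation is correct.
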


\begin{proof} Note that $s\notin D_L(P)$ (by the definition of simple refinements) and that $\scrP'$ is locally constant and convex (as all refinements of a locally constant partition are locally constant, and the intersection of convex sets is convex). It is also clear that (3) holds, for if $P\in\scrP$ and $P'\in X$ are expressed as an intersection of finitely many half-spaces, then $P\cap sP'$ can also be expressed as an intersection of finitely many half-spaces. Thus it remains to prove that each part $P\cap sP'$ of $\scrP'$ is gated. Since $g_P\in P\cap sP_0'$ and $g_P\peq w$ for all $w\in P$ we have that $P\cap sP_0'$ is gated with gate $g_P$. 

Let $P'\in X\backslash\{P_0'\}$. Let $v\in sP\cap P'$ (note that $sP\cap P'$ is nonempty by hypothesis). Since $v\in sP$ we have $v=s(g_Px)$ for some $x\in W$ with $\ell(g_Px)=\ell(g_P)+\ell(x)$ (by the gate property of $P$), and since $s\notin D_L(P)$ we have $v=(sg_P)x$ with $\ell(sg_Px)=\ell(sg_P)+\ell(x)$. Since $v\in P'$ we have $v=g_{P'}y$ for some $y\in W$ with $\ell(g_{P'}y)=\ell(g_{P'})+\ell(y)$ (by the gate property of $P'$). Thus $v=(sg_P)x=g_{P'}y$ is an upper bound for $\{sg_P,g_{P'}\}$. Let $\overline{v}=sg_P\vee g_{P'}$ be the least upper bound of $\{sg_P,g_{P'}\}$. In particular, $\overline{v}$ is a prefix of each $v\in sP\cap P'$. 

We claim that $\overline{v}\in sP\cap P'$, and hence $\overline{v}$ is a gate of $sP\cap P'$. To see that $\overline{v}\in P'$, note that for all $v\in sP\cap P'$ we have $g_{P'}\peq \overline{v}\peq v$, because $\overline{v}=sg_P\vee g_{P'}$ and we showed above that $\overline{v}\peq v$ for all $v\in sP\cap P'$. Since $g_{P'},v\in P'$ it follows that $\overline{v}\in P'$ as $P'$ is convex. Similarly, to see that $\overline{v}\in sP$ observe that $sg_P\peq \overline{v}\peq v$, and note that $sP$ is convex as $P$ is convex. 

Thus we have shown that $\overline{v}$ is a gate of $sP\cap P'$, and so $P\cap sP'$ is gated with gate $h_{P'}=s\overline{v}=s(sg_P\vee g_{P'})$ (by Lemma~\ref{lem:littlegate}). Since $s\in D_L(sg_P)\cap D_L(g_{P'})$ Lemma~\ref{lem:littlejoin} gives $h_{P'}=g_P\vee sg_{P'}$, completing the proof.

\end{proof}

\begin{exa}\label{ex:gates}
Theorem~\ref{thm:simplerefinementsgates} is illustrated in Figure~\ref{fig:simplerefinement}. We have $X=\{P_0',P_1',P_2',P_3'\}$. The gates $g_0,g_1,g_2,g_3$ of the parts $P_0',P_1',P_2',P_3'$ are shown as black dots. The gate $g_P$ is shown as a red circle, and the ``new'' gates $h_j=g_P\vee sg_j$ are shown as red dots.
\end{exa}

We have the following important corollary, proving Theorem~\ref{thm1:preservegatedness}. 

\begin{cor}\label{cor:regularcompletiongated}
Let $\scrP$ be a locally constant, convex and gated partition of $W$. If Algorithm~\ref{alg:regularisation} terminates in finite time then the regular completion $\widehat{\scrP}$ is gated and convex. 
\end{cor}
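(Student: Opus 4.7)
The plan is to combine the three main technical results that have already been established in this section: Theorem~\ref{thm:newterminateatregularisation} (which identifies the output of Algorithm~\ref{alg:regularisation} with $\widehat{\scrP}$ when the algorithm terminates), Theorem~\ref{thm:joingated} (joins of convex gated partitions are convex and gated), and Theorem~\ref{thm:simplerefinementsgates} (a single simple refinement preserves the locally constant, convex, and gated properties). With these in hand, the corollary reduces to a finite induction along the chain of simple refinements produced by Algorithm~\ref{alg:regularisation}.

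First I would handle the initialisation step of the algorithm. Recall that $\scrP_0 = \scrP \vee \scrD$. By hypothesis $\scrP$ is locally constant, convex, and gated, and by Lemma~\ref{lem:Spartitiongated} the $S$-partition $\scrD$ is locally constant, convex, and gated. Applying Theorem~\ref{thm:joingated} with $X = W$ to the family $\{\scrP,\scrD\}$ shows that $\scrP_0$ is a convex gated partition; it is also locally constant as a refinement of $\scrD$ (equivalently, by Lemma~\ref{lem:Sminimal}). So the initial partition in the algorithm already lies in the class (locally constant, convex, gated).

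Next, I would induct on the number of simple refinement steps. Suppose the algorithm terminates in $N$ steps, producing a chain
\[
\scrP_0 \mapsto \scrP_1 \mapsto \scrP_2 \mapsto \cdots \mapsto \scrP_N,
\]
where $\scrP_{j-1} \mapsto \scrP_j$ is a simple refinement based at some pair $(P_{j-1},s_{j-1}) \in \scrP_{j-1} \times S$. By Theorem~\ref{thm:simplerefinementsgates}, if $\scrP_{j-1}$ is locally constant, convex, and gated, then so is $\scrP_j$. Combined with the base case above, induction yields that $\scrP_N$ is locally constant, convex, and gated. Finally, by Theorem~\ref{thm:newterminateatregularisation}(2) we have $\scrP_N = \widehat{\scrP}$, so $\widehat{\scrP}$ is gated and convex, as required.

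The proof is essentially a bookkeeping exercise once Theorems~\ref{thm:joingated}, \ref{thm:newterminateatregularisation}, and \ref{thm:simplerefinementsgates} are available; no further ``hard step'' remains. The only point requiring mild care is the base case, where one must observe that $\scrP \vee \scrD$ is gated and convex (rather than merely $\scrP$ itself) because Algorithm~\ref{alg:regularisation} begins by replacing $\scrP$ with $\scrP \vee \scrD$ to ensure local constancy. Here the use of Theorem~\ref{thm:joingated} is crucial, since $\scrP$ need not itself be a refinement of~$\scrD$.
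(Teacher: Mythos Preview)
Your proof is correct and follows the same approach as the paper: induct along the finite chain of simple refinements, invoking Theorem~\ref{thm:simplerefinementsgates} at each step, and identify the output with $\widehat{\scrP}$ via Theorem~\ref{thm:newterminateatregularisation}(2).

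One minor point: your final paragraph contains a misstatement. The hypothesis already assumes $\scrP$ is locally constant, so by Lemma~\ref{lem:Sminimal} we have $\scrD \leq \scrP$, and hence $\scrP_0 = \scrP \vee \scrD = \scrP$. Thus the appeal to Theorem~\ref{thm:joingated} for the base case is unnecessary (though harmless), and the paper's proof simply starts the induction at~$\scrP$ itself.
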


\begin{proof}
By assumption $\widehat{\scrP}$ can be obtained from $\scrP$ by a finite sequence of simple refinements, and hence the result by Theorem~\ref{thm:simplerefinementsgates}.
\end{proof}

\subsection{The gates of $W$ and minimal length cone type representatives} \label{sec:gatesW}

We are finally able to prove Theorem~\ref{thm:main1}. 

\begin{cor}\label{cor:gateexist}
The cone type partition $\scrT$ is regular, convex, and gated. In particular, each part $X_T$ of the cone type partition has a unique minimal length element $g_T$, and if $x\in X_T$ then $g_T\peq x$. 
\end{cor}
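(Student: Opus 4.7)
The plan is to assemble the corollary from results already proved earlier in the paper, combining the gated-completion theorem with the key identification $\scrT=\widehat{\scrD}$.

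First I would dispose of the regular and convex assertions directly: $\scrT$ is regular by Theorem~\ref{thm:regularpartitions}(1), and $\scrT$ is convex by Proposition~\ref{prop:convexitybasics}(4) (where convexity of each part $X_T$ was established from the half-space description of Theorem~\ref{thm:conetypeprojection}).

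The main task is the gated property. My approach is to invoke Corollary~\ref{cor:regularcompletiongated} applied to $\scrP=\scrD$. For this I need three ingredients, each already available. First, by Lemma~\ref{lem:Spartitiongated} the partition $\scrD$ is locally constant, convex, and gated (with gates $w_J$ for spherical $J\subseteq S$). Second, since $\Phi(\scrD)=\Pi$ is finite, Theorem~\ref{thm:finitetermination} guarantees that the simple refinements algorithm applied to $\scrD$ terminates in finite time. Third, Corollary~\ref{cor:regularlattice1} identifies the resulting regular completion as $\widehat{\scrD}=\scrT$. Combining these, Corollary~\ref{cor:regularcompletiongated} yields that $\scrT$ is gated (and reconfirms convexity).

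Finally, the ``in particular'' statement is immediate from the definition of a gated set together with Lemma~\ref{lem:uniquegate}: each part $X_T\in\scrT$ has a unique gate $g_T$, which is the unique minimal length element of $X_T$, and by definition of a gate satisfies $g_T\peq x$ for every $x\in X_T$.

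I do not anticipate any real obstacle here; the work has all been done in Sections~3 and~5, and this corollary is the payoff. The only mild point to check is that the hypotheses of Corollary~\ref{cor:regularcompletiongated} are met by $\scrD$, but Lemma~\ref{lem:Sminimal} and Lemma~\ref{lem:Spartitiongated} handle this cleanly.
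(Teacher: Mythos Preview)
Your proposal is correct and follows essentially the same approach as the paper's proof: both apply Corollary~\ref{cor:regularcompletiongated} to $\scrD$, using Lemma~\ref{lem:Spartitiongated} for the hypotheses, Theorem~\ref{thm:finitetermination} for finite termination, and Corollary~\ref{cor:regularlattice1} to identify $\widehat{\scrD}=\scrT$. You add slightly more detail (citing Theorem~\ref{thm:regularpartitions} for regularity and Proposition~\ref{prop:convexitybasics} for convexity separately, and spelling out the ``in particular'' via Lemma~\ref{lem:uniquegate}), but the substance is identical.
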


\begin{proof}
By Theorem~\ref{thm:finitetermination} and Corollary~\ref{cor:regularlattice1} we have that Algorithm~\ref{alg:regularisation} applied to the $S$-partition $\scrD$ terminates in finite time with $\scrT=\widehat{\scrD}$. By Lemma~\ref{lem:Spartitiongated} the partition $\scrD$ is convex and gated, and so by Corollary~\ref{cor:regularcompletiongated} the partition $\scrT$ is also convex and gated. Hence the result.
\end{proof}

\begin{cor}\label{cor:minexist}
Each cone type $T$ has a unique minimal length cone type representative. That is, for each cone type $T$ the set $\{w\in W\mid T(w)=T\}$ has a unique minimal length element~$m_T$. Moreover, if $w\in W$ with $T(w)=T(m_T)$ then $m_T$ is a suffix of~$w$. We have $m_T=g_T^{-1}$, where $g_T$ is the gate of $X_T$. 
\end{cor}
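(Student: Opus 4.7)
The plan is to deduce Corollary~\ref{cor:minexist} directly from Corollary~\ref{cor:gateexist} by transporting everything through the involution $w \mapsto w^{-1}$, which preserves length and swaps the prefix and suffix relations.

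First I would observe that the set of all representatives of $T$ equals $X_T^{-1}$, i.e.
$$
\{w\in W\mid T(w)=T\} = \{x^{-1}\mid x\in X_T\},
$$
which follows immediately from the definition $X_T=\{x\in W\mid T(x^{-1})=T\}$ by substituting $w=x^{-1}$. Corollary~\ref{cor:gateexist} supplies a unique minimal length element $g_T\in X_T$ such that $g_T\peq x$ for every $x\in X_T$. Defining $m_T:=g_T^{-1}$, the identity $\ell(w)=\ell(w^{-1})$ shows $m_T$ is a minimal length element of $X_T^{-1}$, and its uniqueness in $X_T^{-1}$ is inherited from the uniqueness of $g_T$ in $X_T$ since inversion is a bijection.

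For the suffix claim, I would invoke the standard equivalence that $v$ is a prefix of $x$ if and only if $v^{-1}$ is a suffix of $x^{-1}$; this is immediate from the definition of prefix/suffix via $\ell(x)=\ell(v)+\ell(v^{-1}x)$ together with $\ell(x^{-1})=\ell((v^{-1}x)^{-1})+\ell(v^{-1})$. Applying this to the relation $g_T\peq x$ for each $x\in X_T$ yields that $m_T=g_T^{-1}$ is a suffix of $x^{-1}$, and as $w$ ranges over elements with $T(w)=T$, the element $w$ ranges over $X_T^{-1}$, giving the suffix property.

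There is no serious obstacle here; the corollary is essentially a relabelling of Corollary~\ref{cor:gateexist}. The only thing to be careful about is making clear at the outset that the two distinct sets $X_T=\{x\mid T(x^{-1})=T\}$ (whose gate is $g_T$) and $\{w\mid T(w)=T\}$ (whose minimal length element is $m_T$) are related by inversion, so that the ``gate'' structure on the former translates into a ``minimal-length with suffix property'' structure on the latter.
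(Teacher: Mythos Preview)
Your proposal is correct and follows exactly the same approach as the paper: the paper's proof simply states that it is obvious from Corollary~\ref{cor:gateexist} that $m_T=g_T^{-1}$ is the unique minimal length element with $T(m_T)=T$, which is precisely the inversion argument you have spelled out in detail.
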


\begin{proof}
It is obvious from Corollary~\ref{cor:gateexist} that if $g_T$ is the gate of the part $X_T$ of $\scrT$ then $m_T=g_T^{-1}$ is the unique minimal length element with $T(m_T)=T$. 
\end{proof}

\begin{cor}\label{cor:finiteintersection}
For each cone type $T$ the set $X_T$ can be expressed as an intersection of finitely many half-spaces. 
\end{cor}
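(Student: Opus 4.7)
The plan is to exploit the simple refinements machinery developed for Theorem~\ref{thm1:preservegatedness}, but tracking an additional property: the ability to express each part as a finite intersection of half-spaces. Theorem~\ref{thm:simplerefinementsgates} has a part~(3) tailor-made for this — it states that simple refinements preserve this finite-intersection property — so the argument will essentially be an induction running alongside the one used to produce $\scrT$ from $\scrD$.

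Concretely, I would start from $\scrD$. By Proposition~\ref{prop:partsdescription}(4), each part of $\scrD$ has the form $D_L^{-1}(J)=\bigcap_{s\in J}H_{\alpha_s}^-\cap\bigcap_{s\in S\setminus J}H_{\alpha_s}^+$ for some spherical $J\subseteq S$, which is an intersection of exactly $|S|<\infty$ half-spaces. Moreover, by Lemma~\ref{lem:Spartitiongated} the partition $\scrD$ is locally constant, convex, and gated, so it satisfies the hypotheses of Theorem~\ref{thm:simplerefinementsgates}. Since $\scrD=\scrH(\Pi)$ is a hyperplane partition with $\Phi(\scrD)=\Pi$, and $|\Pi|=|S|<\infty$, Theorem~\ref{thm:finitetermination} guarantees that Algorithm~\ref{alg:regularisation} applied to $\scrD$ terminates in finite time, producing a chain $\scrD=\scrP_0\mapsto\scrP_1\mapsto\cdots\mapsto\scrP_n=\widehat{\scrD}$ of simple refinements. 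By Corollary~\ref{cor:regularlattice1}, $\widehat{\scrD}=\scrT$.

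Now I would induct on $0\leq i\leq n$ to show that each $\scrP_i$ is locally constant, convex, gated, and has the property that every part is an intersection of finitely many half-spaces. The base case is $\scrP_0=\scrD$, handled above. For the induction step, Theorem~\ref{thm:simplerefinementsgates}(1) propagates local constancy, convexity, and gatedness (so that the hypotheses of the theorem are again met at the next step), while Theorem~\ref{thm:simplerefinementsgates}(3) propagates the finite-intersection property. Applying this at $i=n$ and using Proposition~\ref{prop:partsdescription}(1), which identifies the parts of $\scrT$ with the sets $X_T$, yields the corollary. No genuine obstacle arises: every ingredient has already been proved, and the only thing to verify is that the combined induction is valid — but this is immediate because parts~(1) and~(3) of Theorem~\ref{thm:simplerefinementsgates} are proved under the same set of hypotheses.
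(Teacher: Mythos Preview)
Your proposal is correct and follows essentially the same approach as the paper: the paper's proof is a one-line reference to Theorem~\ref{thm:simplerefinementsgates}(3), implicitly relying on the finite chain of simple refinements $\scrD=\scrP_0\mapsto\cdots\mapsto\scrP_n=\scrT$ already established in the proof of Corollary~\ref{cor:gateexist}. You have simply made explicit the induction and the verification that $\scrD$ satisfies the required hypotheses.
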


\begin{proof}
This follows from part (3) of Theorem~\ref{thm:simplerefinementsgates}.
\end{proof}

\begin{defn}
The \textit{gates of $W$} are the gates of the cone type partition. Let $\Gamma=\Gamma(\scrT)$ denote the set of gates of $W$. Then $\Gamma^{-1}$ is the set of minimal length cone type representatives. 
\end{defn}

We record the following observations.
\begin{prop} \label{prop:gatesbasicfacts}
We have
\begin{enumerate}
\item the set $\Gamma$ is closed under suffix;
\item $W_J\subseteq \Gamma$ for each spherical subset $J\subseteq S$;
\item if $\scrP$ is regular and gated then $\Gamma\subseteq \Gamma(\scrP)$;
\item if $B$ is a Garside shadow, then $\Gamma\subseteq B$;
\item $\Gamma\subseteq L$;
\item $|\Gamma|\leq|\mathbb{E}|$ with equality if and only if $\cE=\Phisph^+$. 
\end{enumerate}
\end{prop}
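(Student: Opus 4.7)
The plan is to deduce each of the six claims from results already established, so no substantial new argument should be required; the main effort is to cite the correct prior result in each case and spell out the short logical step.

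For parts (1) and (2), I would observe that $\scrT$ is a regular gated partition (it is regular by Theorem~\ref{thm:regularpartitions}(1) and gated by Corollary~\ref{cor:gateexist}), so Theorem~\ref{thm:suffixclosuregates} applies directly. That theorem gives both that $\Gamma(\scrT) = \Gamma$ is closed under suffix and that $W_J \subseteq \Gamma(\scrT) = \Gamma$ for every spherical $J \subseteq S$. No additional work is needed.

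For part (3), let $\scrP$ be any regular gated partition. By Corollary~\ref{cor:Tisminimal} we have $\scrT \leq \scrP$, so every part of $\scrP$ is contained in a part of $\scrT$. Given $g \in \Gamma$, let $X_T$ be the part of $\scrT$ whose gate is $g$, and let $P$ be the part of $\scrP$ containing $g$, with gate $g'$. Since $g \in P$, we have $g' \peq g$. Since $P \subseteq X_T$, also $g' \in X_T$, so $g \peq g'$ by the gate property of $g$ in $X_T$. Hence $g = g' \in \Gamma(\scrP)$. Part (4) is then immediate: for a Garside shadow $B$ the partition $\scrG_B$ is regular (Theorem~\ref{thm:regularpartitions}(2)) and gated with $\Gamma(\scrG_B) = B$ (Proposition~\ref{prop:garsidegated}), so (3) gives $\Gamma \subseteq B$. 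Part (5) follows from (4) together with Theorem~\ref{thm:nlowgarside} (the case $n=0$), which guarantees that $L = L_0$ is a Garside shadow.

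The one part needing slightly more care is (6). Since each part $X_T$ of $\scrT$ has a unique gate (Lemma~\ref{lem:uniquegate}), the assignment $T \mapsto g_T$ is a bijection $\mathbb{T} \to \Gamma$, so $|\Gamma| = |\mathbb{T}|$. By Theorem~\ref{thm:MyhillNerode}(2) the minimal automaton $\cA(W,S)$ is a quotient of the Brink–Howlett automaton $\cA_0$, which has state set $\mathbb{E}$, so $|\mathbb{T}| \leq |\mathbb{E}|$. Equality holds exactly when $\cA_0$ is minimal, which by \cite[Theorem 1]{PY:19} (cited in the excerpt as Theorem~\ref{thm:conjsspherical} / the classification following Corollary~\ref{cor:Enfinite}) happens if and only if $\cE = \Phisph^+$. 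I do not anticipate any real obstacle here; the only slightly delicate point is making sure the chain $|\Gamma| = |\mathbb{T}| \leq |\mathbb{E}|$ is justified by the correct prior statement, and that the equality case is attributed to the cited minimality criterion rather than re‑proved.
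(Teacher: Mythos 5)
Your proposal is correct and follows essentially the same route as the paper: (1)--(2) via Theorem~\ref{thm:suffixclosuregates}, (3) via Corollary~\ref{cor:Tisminimal} (where you helpfully spell out the gate comparison the paper leaves implicit), (4)--(5) via Theorem~\ref{thm:regularpartitions} and Proposition~\ref{prop:garsidegated}, and (6) via the state count of the minimal automaton together with \cite[Theorem~1]{PY:19}.
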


\begin{proof}
(1) and (2) are special cases of Theorem~\ref{thm:suffixclosuregates}. (3) follows from the fact that $\scrT\leq \scrP$ for all regular partitions~$\scrP$ (by Corollary~\ref{cor:Tisminimal}), and (4) is a special case of (3), using the Theorem~\ref{thm:regularpartitions} and Proposition~\ref{prop:garsidegated}. Moreover, since $L$ is a Garside shadow we have $\Gamma\subseteq L$ by (4). 

Finally, since $|\Gamma|$ is the number of states of the minimal automata recognising $\cL(W,S)$, and since $\cA_0$ has $|\mathbb{E}|$ states, we have $|\Gamma|\leq|\mathbb{E}|$. Equality holds if and only if the automaton $\cA_0$ is minimal, and by \cite[Theorem~1]{PY:19} this occurs if and only if $\cE=\Phisph^+$. 
\end{proof}

If $W$ is affine the partition $\scrS_0$ is gated by the classical work of Shi~\cite{Shi:87a,Shi:87b}, however in general it is unknown if the $n$-Shi partitions $\scrS_n$ are gated. We make the following conjecture.

\begin{conj}\label{conj:Shigated}
Let $n\in\mathbb{N}$. The $n$-Shi partition $\scrS_n$ is gated. 
\end{conj}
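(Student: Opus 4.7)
The plan is to construct gates for each part $P_E = \{w \in W : \cE_n(w) = E\}$ of $\scrS_n$ (with $E \in \mathbb{E}_n$) using $n$-low elements. Recall that $\scrS_n$ is already regular (Theorem~\ref{thm:regularpartitions}) and convex (Proposition~\ref{prop:convexitybasics}), so gatedness is the only remaining property to prove. Since the map $\Theta_n : L_n \to \mathbb{E}_n$ with $\Theta_n(w) = \cE_n(w)$ is injective, each $P_E$ contains at most one $n$-low element. Assuming the Dyer--Hohlweg conjecture~\cite[Conjecture~2]{DH:16} that $\Theta_n$ is bijective, each $P_E$ contains a unique $n$-low element $g_E$, and I would aim to show that this $g_E$ is the gate of $P_E$.

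To establish the prefix property $g_E \peq x$ for every $x \in P_E$, I would exploit the Garside shadow structure of $L_n$ (Theorem~\ref{thm:nlowgarside}) through the projection $\pi_{L_n}$, which by construction satisfies $\pi_{L_n}(x) \in L_n$ and $\pi_{L_n}(x) \peq x$. The crux would then be to show that $\pi_{L_n}(x) = g_E$ for every $x \in P_E$, equivalently that $\cE_n(\pi_{L_n}(x)) = \cE_n(x)$ for all $x \in W$; this is a natural compatibility between the projection onto $L_n$ and the $n$-elementary inversion set structure. Alternatively, I would take $w$ of minimum length in $P_E$ and argue by induction on $\ell(x) - \ell(w)$ that $w \peq x$ for every $x \in P_E$. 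The inductive step requires finding, for each non-minimum $x \in P_E$, a simple reflection $s \in D_L(x)$ with $sx \in P_E$, which geometrically should correspond to a wall bounding $P_E$ on the side towards the fundamental chamber.

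Both approaches ultimately hinge on the following key claim: if $w$ is of minimum length in $P_E$ then $\Phi^1(w) \subseteq \cE_n$, i.e., $w$ is $n$-low. This is the main obstacle. The intuition is clear---the convex region $P_E$ is cut out by $\cE_n$-inequalities, so its ``lowest'' chamber should be bounded below only by walls in $\cE_n$---but the roots in $\Phi^1(w)$ correspond to Bruhat-order covers rather than simple-reflection adjacencies, so the chamber $s_\beta w$ for $\beta \in \Phi^1(w)$ need not be adjacent to $w$ in the Coxeter complex. Translating minimum length into a constraint on $\cE_n$ therefore seems to demand a delicate analysis via the strong exchange property and the finiteness of dominance chains that underlies the finiteness of $\cE_n$ itself. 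A full proof of Conjecture~\ref{conj:Shigated} would likely proceed in parallel with, or in fact imply, the resolution of the Dyer--Hohlweg surjectivity conjecture; indeed in the affine case, where Shi established gates for $\scrS_0$ directly, both statements are already known.
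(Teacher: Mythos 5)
The statement you are asked to prove is stated in the paper as a \emph{conjecture}, and the paper offers no proof of it: the authors only record that it holds for $n=0$ in the affine case by Shi's classical work, that it would follow from the Dyer--Hohlweg conjecture that $\Theta_n:L_n\to\mathbb{E}_n$ is bijective (with the $n$-low elements as gates), and that it holds when $\cE=\Phisph^+$ because then $\scrS_0=\scrT$ and $\scrT$ is gated (Corollary~\ref{cor:someobservations} and Corollary~\ref{cor:gateexist}). Your proposal correctly identifies exactly these connections, but it does not constitute a proof. The first route is explicitly conditional on \cite[Conjecture~2]{DH:16}, which is open in general; the second route reduces to the ``key claim'' that the minimal-length element of each part $P_E$ is $n$-low, and as you yourself acknowledge, this is precisely the obstacle --- it is essentially equivalent to the surjectivity of $\Theta_n$, so nothing has been gained. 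Moreover, even granting that each $P_E$ contains a unique $n$-low element $g_E$ with $g_E$ of minimal length, the prefix property $g_E\peq x$ for all $x\in P_E$ still requires an argument: your suggested identity $\cE_n(\pi_{L_n}(x))=\cE_n(x)$ is asserted as a ``natural compatibility'' but is not proved, and your inductive alternative requires producing $s\in D_L(x)$ with $sx\in P_E$, which is not justified (convexity of $P_E$ alone does not give a descent staying inside the part).

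In short: there is a genuine gap, namely the key claim that minimal-length representatives of the parts of $\scrS_n$ are $n$-low (equivalently, surjectivity of $\Theta_n$), together with the unproved compatibility of $\pi_{L_n}$ with $n$-elementary inversion sets. Since the paper itself leaves this as an open conjecture and your reductions terminate at other open conjectures, the proposal should be presented as a discussion of implications and evidence, not as a proof.
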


In the case that $n=0$ and $(W,S)$ is affine, Conjecture~\ref{conj:Shigated} is true by Shi's work~\cite{Shi:87a,Shi:87b}. In \cite[Conjecture~2]{DH:16} Dyer and Hohlweg conjecture that the map $\Theta_n:L_n\to \mathbb{E}_n$ with $\Theta_n(x)=\cE_n(x)$ is bijective, and we note that this conjecture, if true, readily implies Conjecture~\ref{conj:Shigated} (with the gates being the $n$-low elements). Recently Chapelier-Laget and Hohlweg~\cite{CH:21} have proved \cite[Conjecture~2]{DH:16} in the case $n=0$ for affine Coxeter groups. Finally, Corollary~\ref{cor:gateexist} implies the following further evidence for Conjecture~\ref{conj:Shigated}.

%

\begin{prop}\label{prop:evidencegated}
If $\cE=\Phisph^+$ then $\scrS_0$ is gated. 
\end{prop}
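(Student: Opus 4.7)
The plan is short: under the hypothesis $\cE=\Phisph^+$, I would simply reduce the statement to one of the two gated partitions we have already constructed, namely $\scrJ$ or $\scrT$.

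First I would invoke Corollary~\ref{cor:someobservations}, which asserts that $\cE=\Phisph^+$ is equivalent both to $\scrS_0=\scrJ$ and to $\scrT=\scrS_0$. So the hypothesis immediately identifies $\scrS_0$ with a partition that is known to be gated. Specifically, Corollary~\ref{cor:JIsGated} says that the spherical partition $\scrJ$ is gated (obtained as a join, over spherical $J\subseteq S$, of the hyperplane-gated partitions $\scrJ_J=\scrH(\Phi_J)$, via Theorem~\ref{thm:joingated}). Equivalently, Corollary~\ref{cor:gateexist} says the cone type partition $\scrT$ is gated. Either identification settles the claim.

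Thus the proof is effectively a one-liner: from $\cE=\Phisph^+$ and Corollary~\ref{cor:someobservations} we get $\scrS_0=\scrJ$, and $\scrJ$ is gated by Corollary~\ref{cor:JIsGated}; hence $\scrS_0$ is gated. There is no real obstacle here because all the work has been done upstream — the main point of the proposition is merely to record that Conjecture~\ref{conj:Shigated} holds in the regime where elementary and spherical positive roots coincide, by piggy-backing on the already-established gated property of $\scrJ$ (equivalently $\scrT$).
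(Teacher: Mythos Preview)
Your proposal is correct and essentially the same as the paper's proof: the paper invokes Corollary~\ref{cor:someobservations} to get $\scrS_0=\scrT$ and then cites Corollary~\ref{cor:gateexist} for gatedness, while you also note the equivalent route through $\scrS_0=\scrJ$ and Corollary~\ref{cor:JIsGated}. Either identification works, and the paper even parenthetically mentions an alternative via Theorem~\ref{thm:conjsspherical}.
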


\begin{proof}
By Corollary~\ref{cor:someobservations} if $\cE=\Phisph^+$ then $\scrS_0=\scrT$, which is gated by Corollary~\ref{cor:gateexist}. (Another approach is to use Theorem~\ref{thm:conjsspherical}).
\end{proof}


In \cite[Conjecture~1]{HNW:16} Hohlweg, Nadeau and Williams conjecture that the automaton $\scrG_{\Gmin}$ is the minimal automaton recognising $\cL(W,S)$. We note that, in terms of the set $\Gamma$, minimality of $\scrG_{\Gmin}$ is equivalent to the following.

\begin{thm}\label{thm:GminGamma}
The automaton $\scrG_{\Gmin}$ is minimal if and only if $\Gamma$ is closed under join.
\end{thm}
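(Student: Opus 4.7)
The plan is to reduce the statement to the equivalence $\Gamma = \Gmin$ iff $\Gamma$ is closed under join, and then handle each direction of this simpler equivalence separately.

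First I would translate ``minimality of $\cA_{\Gmin}$'' into a counting statement. By Proposition~\ref{prop:partsdescription} the automaton $\cA_{\Gmin}$ has exactly $|\Gmin|$ states, while by Theorem~\ref{thm:MyhillNerode} the (unique up to isomorphism) minimal automaton recognising $\cL(W,S)$ is the cone type automaton $\cA(W,S)$, whose state set has cardinality $|\mathbb{T}|$. By Corollary~\ref{cor:gateexist} each part $X_T$ of $\scrT$ has a unique gate, so the assignment $T \mapsto g_T$ gives a bijection between $\mathbb{T}$ and $\Gamma$, yielding $|\Gamma| = |\mathbb{T}|$. Since $\cA_{\Gmin}$ recognises $\cL(W,S)$ (Theorem~\ref{thm:garsideautomaton}), minimality is equivalent to $|\Gmin| = |\mathbb{T}| = |\Gamma|$. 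Combined with the containment $\Gamma \subseteq \Gmin$ from Proposition~\ref{prop:gatesbasicfacts}(4), this equality is equivalent to $\Gamma = \Gmin$.

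The remaining task is to show that $\Gamma = \Gmin$ if and only if $\Gamma$ is closed under join. The forward direction is immediate: if $\Gamma = \Gmin$ then $\Gamma$ is a Garside shadow (being equal to one), hence closed under join by Definition~\ref{def:garside_shadow}. For the converse, assume $\Gamma$ is closed under join. By Proposition~\ref{prop:gatesbasicfacts}(1) the set $\Gamma$ is closed under taking suffixes, and by Proposition~\ref{prop:gatesbasicfacts}(2) (with $J = \{s\}$ for each $s \in S$) we have $S \subseteq \Gamma$. Thus $\Gamma$ satisfies all three defining properties of a Garside shadow, and the minimality of $\Gmin$ gives $\Gmin \subseteq \Gamma$. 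Combined with $\Gamma \subseteq \Gmin$ this forces $\Gamma = \Gmin$.

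Since the two directions of the auxiliary equivalence are essentially bookkeeping once the containment $\Gamma \subseteq \Gmin$ and the counting identity $|\Gamma|=|\mathbb{T}|$ are available, there is no genuine obstacle here; the theorem is really a repackaging of the substantive results already proved, namely Corollary~\ref{cor:gateexist} (which provides the bijection between cone types and gates and hence the equality $|\Gamma| = |\mathbb{T}|$) and Proposition~\ref{prop:gatesbasicfacts}(1), (2), (4) (which give the suffix-closure, containment of $S$, and containment $\Gamma \subseteq \Gmin$). The only ``step'' worth emphasising in the write-up is the reduction via cardinalities, since that is where minimality of the automaton gets cashed out.
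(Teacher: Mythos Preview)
Your proof is correct and follows essentially the same route as the paper's: both directions go through the equality $\Gamma=\Gmin$, using $\Gamma\subseteq\Gmin$ from Proposition~\ref{prop:gatesbasicfacts}(4) together with the cardinality identification $|\Gamma|=|\mathbb{T}|$ for one direction, and the Garside shadow axioms (suffix closure and $S\subseteq\Gamma$ from Proposition~\ref{prop:gatesbasicfacts}(1),(2)) for the other. If anything your write-up is slightly more explicit than the paper's, which in the converse direction stops at $\Gamma=\Gmin$ and leaves the step back to minimality of the automaton implicit.
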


\begin{proof}
By Proposition~\ref{prop:gatesbasicfacts} we have $\Gamma\subseteq\Gmin$. If $\scrG_{\Gmin}$ is minimal then $\cA_{\Gmin}\cong \cA(W,S)$ (by Theorem~\ref{thm:MyhillNerode}) and hence $|\Gamma|=|\Gmin|$, giving $\Gamma=\Gmin$. Thus $\Gamma$ is closed under join.

Conversely, if $\Gamma$ is closed under join, then by Proposition~\ref{prop:gatesbasicfacts} parts (1) and (2) we have that $\Gamma$ is a Garside shadow. Since $\Gamma\subseteq \Gmin$ we have $\Gamma=\Gmin$. 
\end{proof}

We have been unable to prove in general that $\Gamma$ is closed under join, however the following theorem establishes this fact in the case $\cE=\Phisph^+$, providing evidence for Conjecture~\ref{conj:garside}.
 
\begin{thm}\label{thm:conjectures}
Suppose that $\cE=\Phisph^+$. Then $\Gamma=\Gmin=L$. In particular, $\Gamma$ is closed under join.
\end{thm}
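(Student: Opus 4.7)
The plan is to establish $\Gamma=\Gmin=L$ by a cardinality sandwich, exploiting the fact that under the hypothesis $\cE=\Phisph^+$ all three sets have the same size as $\mathbb{E}$. First I will assemble the chain of containments. Proposition~\ref{prop:gatesbasicfacts}(4) gives $\Gamma\subseteq B$ for every Garside shadow $B$, in particular $\Gamma\subseteq\Gmin$. By Theorem~\ref{thm:nlowgarside} the set $L=L_0$ is itself a Garside shadow, and minimality of $\Gmin$ then forces $\Gmin\subseteq L$. Hence
$$
\Gamma\subseteq\Gmin\subseteq L.
$$

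Next I will show that each of the two containments is an equality by counting. For the right-hand containment, I invoke Theorem~\ref{thm:conjsspherical}: the hypothesis $\cE=\Phisph^+$ implies that $\cA_{\Gmin}$ is minimal and that $\Theta_0:L\to\mathbb{E}$ is a bijection, so $|L|=|\mathbb{E}|$; moreover (as in the proof of that theorem) the minimality of $\cA_{\Gmin}$ together with $|\cA(W,S)|=|\mathbb{T}|$ gives $|\Gmin|=|\mathbb{E}|$ as well. Thus $|\Gmin|=|L|$, and combined with $\Gmin\subseteq L$ this yields $\Gmin=L$. For the left-hand containment, I use Proposition~\ref{prop:gatesbasicfacts}(6), which asserts that $|\Gamma|\leq|\mathbb{E}|$ with equality precisely when $\cE=\Phisph^+$. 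Under our hypothesis this gives $|\Gamma|=|\mathbb{E}|=|\Gmin|$, and combined with $\Gamma\subseteq\Gmin$ we obtain $\Gamma=\Gmin$.

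Finally, since $\Gamma=\Gmin$ and $\Gmin$ is a Garside shadow, $\Gamma$ inherits closure under join, which is the last assertion of the theorem. There is no genuine obstacle here, as everything reduces to previously established cardinality statements; the proof is essentially a bookkeeping argument combining Proposition~\ref{prop:gatesbasicfacts}, Theorem~\ref{thm:nlowgarside}, and Theorem~\ref{thm:conjsspherical}.
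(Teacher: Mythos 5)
Your proof is correct and follows essentially the same route as the paper: both establish the chain $\Gamma\subseteq\Gmin\subseteq L$ via Proposition~\ref{prop:gatesbasicfacts}(4) and the fact that $L$ is a Garside shadow, and then force all containments to be equalities by a cardinality sandwich against $|\mathbb{E}|$ using Proposition~\ref{prop:gatesbasicfacts}(6). The only cosmetic difference is that you route the equality $|\Gmin|=|L|=|\mathbb{E}|$ through Theorem~\ref{thm:conjsspherical}, whereas the paper re-derives it directly from $|L|\leq|\mathbb{E}|$ and the forced equality $|\Gamma|=|\mathbb{E}|$; both are valid and non-circular.
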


\begin{proof}
By Proposition~\ref{prop:gatesbasicfacts}(4) we have $\Gamma\subseteq \Gmin$, and since $L$ is a Garside shadow we have $\Gmin\subseteq L$. By \cite[Proposition~3.26]{DH:16} we have $|L|\leq|\mathbb{E}|$, and hence $|\Gamma|\leq|\Gmin|\leq |L|\leq|\mathbb{E}|$. Thus if $\cE=\Phisph^+$ then Proposition~\ref{prop:gatesbasicfacts}(6) forces $|\Gamma|=|\Gmin|=|L|=|\mathbb{E}|$. Since $\Gamma\subseteq\Gmin\subseteq L$ this gives $\Gamma=\Gmin=L$.
%
\end{proof}
%
%


\begin{rem}
Figures~\ref{fig:B2partitions}, \ref{fig:G2partitions}, and~\ref{fig:A2partitions} show the partitions $\scrS_0$ and $\scrT$ for $\tilde{\sB}_2$, $\tilde{\sG}_2$, and $\tilde{\sA}_2$, respectively. In these case it turns out (and conjecturally this is always true) that $\scrS_0$ is gated, with $\Gamma(\scrS_0)=L$. In the $\tilde{\sB}_2$ and $\tilde{\sG}_2$ cases the inclusion $\Gamma\subseteq L$ is strict. The elements of $\Gamma$ are shaded blue, and the elements of $L\backslash\Gamma$ are shaded red. In the $\tilde{\sA}_2$ case we have $\Gamma=L$. 

Moreover, since $\Gamma$ (and also $L$) are closed under suffix, the sets $\Gamma^{-1}$ and $L^{-1}$ are closed under prefix. Thus these sets are connected regions of the Coxeter complex. We draw these sets in Figure~\ref{fig:Shitriangles} for $\tilde{\sA}_2$, $\tilde{\sB}_2$, and $\tilde{\sG}_2$, with $\Gamma^{-1}$ shaded blue, and $L^{-1}\backslash\Gamma^{-1}$ shaded red. The fact that $L^{-1}$ is a dilation of the fundamental alcove in these cases is explained by a celebrated result of Shi (see~\cite[\S8]{Shi:87b}) and the observation that $L$ is the set of gates of $\scrS_0$ in these cases. 

\begin{figure}[H]
\centering
\subfigure{
\begin{tikzpicture}[scale=0.8]
 \path [fill=blue!30] (-2,{2*0.866})--(0,{-2*0.866})--(2,{2*0.866})--(-2,{2*0.866});
    \path [fill=gray!90] (0,0) -- (-0.5,0.866) -- (0.5,0.866) -- (0,0);
    \draw [line width=2pt] (-2,{2*0.866})--(0,{-2*0.866})--(2,{2*0.866})--(-2,{2*0.866});
    \draw (-1,{2*0.866})--(-1.5,0.866);
    \draw (0,{2*0.866})--(-1,0);
    \draw (1,{2*0.866})--(-0.5,-0.866);
     \draw (1,{2*0.866})--(1.5,0.866);
    \draw (0,{2*0.866})--(1,0);
    \draw (-1,{2*0.866})--(0.5,-0.866);
    \draw (-0.5,-0.866)--(0.5,-0.866);
    \draw (-1,0)--(1,0);
    \draw (-1.5,0.866)--(1.5,0.866);
    \phantom{\draw (0,-3)--(0,2);}
\end{tikzpicture}
}\quad\quad\quad\,\,
\subfigure{
\begin{tikzpicture}[scale=0.55]
\path [fill=blue!30] (-1,-3)--(-1,2)--(4,2)--(-1,-3);
\path [fill=red!30] (1,-1)--(2,0)--(1,0)--(1,-1);
\path [fill=gray!90] (0,0) -- (1,1) -- (0,1) -- (0,0);
\draw [line width=2pt] (-1,-3)--(-1,2)--(4,2)--(-1,-3);
\draw (-1,1)--(3,1);
\draw (-1,0)--(2,0);
\draw (-1,-1)--(1,-1);
\draw (-1,-2)--(0,-2);
\draw (0,-2)--(0,2);
\draw (1,-1)--(1,2);
\draw (2,0)--(2,2);
\draw (3,1)--(3,2);
\draw (-1,1)--(0,2);
\draw (-1,-1)--(2,2);
\draw (-1,-1)--(0,-2);
\draw (-1,1)--(1,-1);
\draw (0,2)--(2,0);
\draw (2,1)--(3,1);
\phantom{\draw (0,-4.8)--(0,2);}
\end{tikzpicture}
}\quad
\subfigure{
\begin{tikzpicture}[scale=0.8]
%
\path [fill=red!30] (0,2)--(0,3)--(0.433,2.25);
\path [fill=red!30] (1.732,0)--(1.732,2)--(2.598,2.5)--({2.598+0.433},2.25)--(1.732,0);
\path [fill=blue!30] (0,-3)--(0,2)--(0.433,2.25)--(0,3)--(0,4)--(2.598,2.5)--(1.732,2)--(1.732,0)--(0,-3);
\path [fill=gray!90]  (0.866,1.5) -- (1.299,2.25) -- (0.866,2.5)--(0.866,1.5);
\draw[line width=2pt](0,-3)--(0,4)--(3.031,2.25)--(0,-3);
\draw(0,3)--(1.732,3);
\draw(0,1.5)--(2.598,1.5);
\draw(0,0)--(1.732,0);
\draw(0,-1.5)--(0.866,-1.5);
\draw(0,-3)--(0,4);
\draw(.866,-1.5)--(.866,3.5);
\draw(1.732,0)--(1.732,3);
\draw(2.598,1.5)--(2.598,2.5);
\draw(0,3)--(0.866,3.5);
\draw(0,2)--(1.732,3);
\draw(0,1)--(2.598,2.5);
\draw(0,0)--(2.598,1.5);
\draw(0,-1)--(1.732,0);
\draw(0,-2)--(0.866,-1.5);
\draw({2.598+0.433},2.25)--(0,4);
\draw(2.598,1.5)--(0,3);
\draw({1.732+0.433},0.75)--(0,2);
\draw(1.732,0)--(0,1);
\draw({0.866+0.433},-0.75)--(0,0);
\draw(0.866,-1.5)--(0,-1);
\draw(0.433,-2.25)--(0,-2);
\draw(0,0)--(0.866,-1.5);
\draw(0,3)--(1.732,0);
\draw(1.732,3)--(2.598,1.5);
\draw({2.598+0.433},2.25)--(0,-3);
\draw(1.732,3)--(0,0);
\draw(0.433,3.75)--(0,3);
\phantom{\draw(-1.2,0)--(1,0);}
\end{tikzpicture}
}
\caption{The sets $\Gamma^{-1}$ and $L^{-1}$}\label{fig:Shitriangles}
\end{figure}
\end{rem}

\subsection{A characterisation of the gates of $W$}

The following theorem gives a characterisation of the gates in terms of roots, and is linked to the concept of boundary roots (see Theorem~\ref{thm:boundaryroots}). Recall the definition of $\Phi^0(w)$ from Definition~\ref{defn:finalroots}.

\begin{thm}\label{thm:characterisegates}
Let $x\in W$. Then $x\in\Gamma$ if and only if for each $\beta\in\Phi^0(x)$ there exists $w\in W$ with $\Phi(x)\cap \Phi(w)=\{\beta\}$.
\end{thm}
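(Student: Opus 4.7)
The plan is to combine the characterisation of the gate of a weakly convex gated part (Proposition~\ref{prop:weakconvexity}) applied to the cone type partition $\scrT$ (which is gated and convex by Corollary~\ref{cor:gateexist}) with the description of boundary roots (Theorem~\ref{thm:boundaryroots}). Specifically, since $\scrT$ is convex (hence weakly convex) and gated, Proposition~\ref{prop:weakconvexity} tells us that $x$ is the gate of the part $X_{T(x^{-1})}$ containing $x$ if and only if $xs \notin X_{T(x^{-1})}$ for every $s \in D_R(x)$, i.e.\ if and only if $T((xs)^{-1}) \neq T(x^{-1})$ for every $s \in D_R(x)$. So the task reduces to proving the following equivalence: for every $s \in D_R(x)$ we have $T((xs)^{-1}) \neq T(x^{-1})$ if and only if for every $\beta \in \Phi^0(x)$ there exists $w \in W$ with $\Phi(x) \cap \Phi(w) = \{\beta\}$.

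Next, I would make the link between the $s \in D_R(x)$ and $\beta \in \Phi^0(x)$ sides explicit. By Definition~\ref{defn:finalroots}, $\Phi^0(x) = \{-x\alpha_s \mid s \in D_R(x)\}$, so the $s \in D_R(x)$ correspond bijectively to the $\beta \in \Phi^0(x)$ via $\beta = -x\alpha_s$. For such a pair $(s,\beta)$, Proposition~\ref{prop:rootsystembasics}(6) applied to $x = (xs)\cdot s$ (with $\ell(xs) = \ell(x)-1$) gives the key identity
\begin{equation*}
\Phi(x) = \Phi(xs) \sqcup \{\beta\}.
\end{equation*}

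For the $(\Leftarrow)$ direction, given $w$ with $\Phi(x) \cap \Phi(w) = \{\beta\}$, the above identity forces $\Phi(xs) \cap \Phi(w) = \emptyset$, so by Proposition~\ref{prop:conetypebasics} we have $w \in T((xs)^{-1})$ but $w \notin T(x^{-1})$, proving $T((xs)^{-1}) \neq T(x^{-1})$. For the $(\Rightarrow)$ direction, note $xs \peq x$, so Lemma~\ref{lem:containoneway} gives $T(x^{-1}) \subseteq T((xs)^{-1})$; strictness yields some $w \in T((xs)^{-1}) \setminus T(x^{-1})$, and by Proposition~\ref{prop:conetypebasics} we have $\Phi(xs) \cap \Phi(w) = \emptyset$ while $\Phi(x) \cap \Phi(w) \neq \emptyset$, which together with $\Phi(x) = \Phi(xs) \sqcup \{\beta\}$ force $\Phi(x) \cap \Phi(w) = \{\beta\}$, exactly the boundary-root condition from Theorem~\ref{thm:boundaryroots}.

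There is no genuine obstacle: once the convex gated property of $\scrT$ is in hand (Corollary~\ref{cor:gateexist}) and boundary roots are understood (Theorem~\ref{thm:boundaryroots}), the argument is essentially a direct translation between the two characterisations. The only subtlety worth flagging in the write-up is the correct use of Proposition~\ref{prop:weakconvexity}: one must verify that the gate test can be phrased using right-multiplication by descents of $x$ itself, which is exactly the content of that proposition applied to $\scrT$.
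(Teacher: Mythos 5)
Your proposal is correct and follows essentially the same route as the paper: both reduce the gate condition to ``$T((xs)^{-1})\neq T(x^{-1})$ for all $s\in D_R(x)$'' using the gatedness and convexity of $\scrT$ from Corollary~\ref{cor:gateexist} (you package this step via Proposition~\ref{prop:weakconvexity}, the paper does it directly via minimality of the gate and convexity of $X_T$), and both then translate via $\Phi(x)=\Phi(xs)\sqcup\{-x\alpha_s\}$ and Proposition~\ref{prop:conetypebasics}. No gaps.
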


\begin{proof}
Let $x\in\Gamma$, and let $\beta\in\Phi^0(x)$. Hence $\beta=-x\alpha_s$ for some $s\in D_R(x)$. Since $\ell(xs)<\ell(x)$ we have $T(sx^{-1})\neq T(x^{-1})$ (as $x\in\Gamma$ is of minimal length in its part of $\scrT$). Since $T(x^{-1})\subseteq T(sx^{-1})$ (by Lemma~\ref{lem:containoneway}) we have strict containment, and so there exists $w\in T(sx^{-1})\backslash T(x^{-1})$. Thus by Proposition~\ref{prop:conetypebasics} we have $\Phi(xs)\cap \Phi(w)=\emptyset$ and $\Phi(x)\cap \Phi(w)\neq\emptyset$. Since $\Phi(x)=\Phi(xs)\sqcup \{\beta\}$ we have $\Phi(x)\cap\Phi(w)=\{\beta\}$.

Conversely, suppose that $x\notin\Gamma$. Let $T=T(x^{-1})$, and let $g\in\Gamma$ be the gate of $X_T$. Then $g\peq x$, and by convexity of $X_T$ (see Proposition~\ref{prop:convexitybasics}) each $y\in W$ with $g\peq y\peq x$ has $T(y^{-1})=T$. In particular, since $g\neq x$ there exists $s\in D_R(x)$ with $T(sx^{-1})=T=T(x^{-1})$. Then $\beta=-x\alpha_s\in\Phi^0(x)$, and by Proposition~\ref{prop:conetypebasics} we have that for all $w\in W$ we have $\Phi(x)\cap \Phi(w)=\emptyset$ if and only if $\Phi(xs)\cap \Phi(w)=\emptyset$. Since $\Phi(x)=\Phi(xs)\sqcup\{\beta\}$ it follows that there is no element $w$ with $\Phi(x)\cap \Phi(w)=\{\beta\}$. 
\end{proof}

A priori, given $x\in W$ and $\beta\in\Phi^0(x)$, deciding if there exists $w\in W$ such that $\Phi(x)\cap\Phi(w)=\{\beta\}$ appears difficult to implement in an infinite Coxeter group. However we note that, by the following proposition, one only needs to check $w\in \Gamma$ (a finite set). 

\begin{prop}\label{prop:witnessgate}
Let $x\in W$ and $\beta\in \Phi^+$. Suppose there exists $w\in W$ such that $\Phi(x)\cap\Phi(w)=\{\beta\}$, and let $w$ be of minimal length subject to this property. Then $\Phi^0(w)=\{\beta\}$, and $w$ is a gate.
\end{prop}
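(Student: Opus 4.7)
The plan is to prove the two claims in sequence, with the first being the technical heart and the second then following by a clever application of Theorem~\ref{thm:characterisegates} with $x$ itself serving as the needed witness.

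First I would establish that $\beta\in\Phi^0(w)$. Pick a reduced expression $w=s_1\cdots s_n$. Since $\beta\in\Phi(w)$, by Proposition~\ref{prop:rootsystembasics}(3) there is a unique index $j$ with $\beta=s_1\cdots s_{j-1}\alpha_{s_j}$. I would argue $j=n$: if instead $j<n$, then the prefix $w'=s_1\cdots s_j$ of $w$ satisfies $\Phi(w')\subseteq\Phi(w)$ and $\beta\in\Phi(w')$, so
\[
\Phi(x)\cap\Phi(w')\subseteq\Phi(x)\cap\Phi(w)=\{\beta\},\qquad \beta\in\Phi(x)\cap\Phi(w'),
\]
giving $\Phi(x)\cap\Phi(w')=\{\beta\}$ with $\ell(w')<\ell(w)$, contradicting minimality. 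Hence $j=n$, meaning $\beta=s_1\cdots s_{n-1}\alpha_{s_n}=-w\alpha_{s_n}$ with $s_n\in D_R(w)$, so $\beta\in\Phi^0(w)$.

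Next I would show $\Phi^0(w)=\{\beta\}$. Suppose there were some $\gamma\in\Phi^0(w)$ with $\gamma\neq\beta$; then $\gamma=-w\alpha_t$ for some $t\in D_R(w)$, and setting $w''=wt$ gives $s_\gamma w=w''$ with $\ell(w'')=\ell(w)-1$ and $\Phi(w'')=\Phi(w)\setminus\{\gamma\}$. Consequently
\[
\Phi(x)\cap\Phi(w'')=\bigl(\Phi(x)\cap\Phi(w)\bigr)\setminus\{\gamma\}=\{\beta\}\setminus\{\gamma\}=\{\beta\},
\]
again contradicting the minimality of $\ell(w)$. Hence $\Phi^0(w)=\{\beta\}$.

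Finally, to conclude $w\in\Gamma$ I would invoke Theorem~\ref{thm:characterisegates}, which says $w$ is a gate precisely when for every $\gamma\in\Phi^0(w)$ there exists $u\in W$ with $\Phi(w)\cap\Phi(u)=\{\gamma\}$. Since $\Phi^0(w)=\{\beta\}$, the only condition to verify is the existence of such a $u$ for $\gamma=\beta$, and the hypothesis of the proposition supplies one directly: take $u=x$, for which $\Phi(w)\cap\Phi(x)=\{\beta\}$ by assumption. No step here is genuinely obstructive; the only subtlety is recognising that $x$ itself provides the witness required by Theorem~\ref{thm:characterisegates}, so that no further construction is needed.
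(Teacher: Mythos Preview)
Your proof is correct and follows essentially the same approach as the paper: use the minimality of $\ell(w)$ to determine $\Phi^0(w)$, then invoke Theorem~\ref{thm:characterisegates} with $x$ as the witness. The only difference is that your Step~1 (showing $\beta\in\Phi^0(w)$ via a prefix argument) is not actually needed: your Step~2 already shows that every element of $\Phi^0(w)$ equals $\beta$, and $\Phi^0(w)\neq\emptyset$ since $w\neq e$ (as $\beta\in\Phi(w)$), so $\Phi^0(w)=\{\beta\}$ follows directly---this is precisely how the paper argues.
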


\begin{proof}
Let $\alpha\in\Phi^0(w)$. Thus $s_{\alpha}w=ws$ for some $s\in S$ and $\ell(ws)=\ell(w)-1$. If $\alpha\neq\beta$ then $\Phi(x)\cap\Phi(ws)=\{\beta\}$, contradicting the minimal length assumption. Thus $\alpha=\beta$, and so $\Phi^0(w)=\{\beta\}$, and then $w$ is a gate by Theorem~\ref{thm:characterisegates}. 
\end{proof}

Similar ideas give the following corollary.

\begin{cor}
If $T=T(x^{-1})$ then 
$
\partial T=\{\beta\in\Phi^+\mid \Phi(x)\cap\Phi(g)=\{\beta\}\text{ for some $g\in\Gamma$}\}.
$
\end{cor}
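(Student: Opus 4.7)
The plan is to observe that this corollary is an almost immediate consequence of two results already established in the excerpt: Theorem~\ref{thm:boundaryroots} (which characterises $\partial T$ as the set of roots $\beta$ for which some $w \in W$ satisfies $\Phi(x) \cap \Phi(w) = \{\beta\}$) and Proposition~\ref{prop:witnessgate} (which says such a $w$, chosen of minimal length, is automatically a gate).

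For the containment $\supseteq$, I would argue as follows. Suppose $\beta \in \Phi^+$ and there is some $g \in \Gamma$ with $\Phi(x) \cap \Phi(g) = \{\beta\}$. Then by the ``if'' direction of Theorem~\ref{thm:boundaryroots} applied with $w = g$, we conclude $\beta \in \partial T$. This direction uses only that gates are a subset of $W$, so no extra work is required.

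For the containment $\subseteq$, let $\beta \in \partial T$. By Theorem~\ref{thm:boundaryroots}, there exists some $w \in W$ (independent of the choice of $x$) with $\Phi(x) \cap \Phi(w) = \{\beta\}$. Choose such a $w$ of minimal length subject to the property $\Phi(x) \cap \Phi(w) = \{\beta\}$. Then Proposition~\ref{prop:witnessgate} guarantees $w \in \Gamma$, and so we may take $g = w$, establishing the claimed inclusion.

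There is no serious obstacle here; the work has been done in the preceding results. The only mild subtlety is that Proposition~\ref{prop:witnessgate} requires minimal length for the property $\Phi(x) \cap \Phi(w) = \{\beta\}$ (not minimal length among elements with $\beta \in \Phi(w)$, or similar variants), so in writing up the proof I would be careful to invoke it with exactly the right minimality condition, and then conclude directly.
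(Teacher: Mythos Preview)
Your proof is correct and follows essentially the same approach as the paper's own proof, which simply cites Theorem~\ref{thm:boundaryroots} for the characterisation of $\partial T$ in terms of arbitrary $w\in W$ and then invokes Proposition~\ref{prop:witnessgate} to replace such a $w$ by a gate. You have written out the two inclusions explicitly where the paper leaves them implicit, but the content is identical.
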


\begin{proof}
By Theorem~\ref{thm:boundaryroots} we have
$
\partial T=\{\beta\in\Phi^+\mid \Phi(x)\cap\Phi(w)=\{\beta\}\text{ for some $w\in W$}\},
$
and the result follows from Proposition~\ref{prop:witnessgate}. 
\end{proof}

\begin{rem}\label{rem:calculation}
Theorem~\ref{thm:characterisegates} and Proposition~\ref{prop:witnessgate} allow one to implement the calculation of the set $\Gamma$ into MAGMA~\cite{MAGMA}, utilising the existing Coxeter group package. The main steps are as follows.
\begin{enumerate}
\item The set $\cE$ of elementary roots is computed inductively by setting $E_0=\{\alpha_s\mid s\in S\}$, and defining $E_{j+1}=E_j\cup\{s\alpha\mid \alpha\in E_j,\,s\in S,\,\text{ with } -1<(\alpha,\alpha_s)<0\}$. Once $E_{n+1}=E_n$ we have $\cE=E_n$ (see \cite[\S4.7]{BB:05}). 
\item The set $L$ of low elements is computed by setting $M_0=\{e\}$, and defining $M_{j+1}=M_j\cup\{sw\mid w\in M_j,\,s\in S\backslash D_L(w),\,\Phi^1(sw)\subseteq \cE\}$. Once $M_n=M_{n+1}$ we have $L=M_n$ (by the suffix closure property of $L$). 
\item Since $\Gamma\subseteq L$ we can then determine, in finite time, the set $\Gamma$ by checking, for each $x\in L$ and $\beta\in\Phi^0(x)$, whether there exists $w\in L$ such $\Phi(x)\cap\Phi(w)=\{\beta\}$ (using Proposition~\ref{prop:witnessgate} and the fact that $\Gamma\subseteq L$). 
\end{enumerate}
We have carried through the calculations for a variety of Coxeter groups, and the data is presented in Figure~\ref{fig:data} for some affine and compact hyperbolic groups. See \cite{PY:19} for the definitions of the compact hyperbolic groups $\sX_4(c)$ ($c\in\{4,5\}$), $\sX_5(d)$ ($d\in\{3,4,5\}$), $\sY_4$, $\sZ_4$, and $\sZ_5$.

We note that, as a general rule, groups with large spherical subgroups will have have many gates and low elements (by Proposition~\ref{prop:gatesbasicfacts}), while those with only small spherical subgroups tend to have very few gates. For example, in $\tilde{\sD}_5$ there are $59049$ low elements and $58965$ gates, while in the corresponding Coxeter group with each occurrence of $m_{st}=3$ replaced by $m_{st}=4$ there are only $332$ low elements and $247$ gates. 

\begin{figure}[H]
\centering
\subfigure{
$
\begin{array}{|c||c|c|c|c|c|}
\hline
W& |\cE|  & |L| & |\Gamma| \\
\hline\hline
\tilde{\sA}_2& 6  &16&16\\
 \hline
 \tilde{\sB}_2& 8  &25 &24\\
 \hline
 \tilde{\sG}_2&12    &49 &41\\
 \hline
   \tilde{\sA}_3&12    &125 &125\\
 \hline
  \tilde{\sB}_3&18    &343 &315\\
 \hline
   \tilde{\sC}_3&18    &343 &317\\
    \hline
   \tilde{\sA}_4&20    &1296 &1296\\
    \hline
   \tilde{\sB}_4&32    &6561 &5789\\
    \hline
   \tilde{\sC}_4&32    &6561 &5860\\
    \hline
   \tilde{\sD}_4&24    &2401 &2400\\
    \hline
\tilde{\sF}_4&48   &28561 &22428\\
 \hline
 \tilde{\sA}_5&30&16807&16807\\
 \hline
 \tilde{\sB}_5&50&161051&137147\\
 \hline
 \tilde{\sC}_5&50&161051&139457\\
 \hline
 \tilde{\sD}_5&40&59049&58965\\
 \hline
\end{array}
$
}\qquad\qquad
\subfigure{
$
\begin{array}{|c||c|c|c|c|c|}
\hline
W& |\cE| & |L| & |\Gamma| \\
\hline \hline
   \sX_4(4)&25    &438 &392\\
    \hline
   \sX_4(5)&32    &516 &462\\
 \hline
   \sY_4&32    &687 &578\\
    \hline
   \sZ_4&30    &513 &473\\
     \hline
   \sX_5(3)&114    &101412 &52542\\
    \hline
   \sX_5(4)&83   &25708 &22886\\
    \hline
   \sX_5(5)&135   &42064 &37956\\
    \hline
   \sZ_5&120   &41385 &39138\\
 \hline
 \end{array}
  $}
  \caption{Data for low rank affine and compact hyperbolic Coxeter groups}\label{fig:data}
\end{figure}
\end{rem}

We conclude this section with a conjecture. Define a partial order on the set $\mathbb{T}$ of cone types by $T_1\leq T_2$ if and only if $T_2\subseteq T_1$ (thus $\leq$ is given by reverse containment).

\begin{conj}\label{conj:orderisomorphism}
The map $\Theta:(\Gamma,\peq)\to(\mathbb{T},\leq)$ given by $\Theta(g)=T(g^{-1})$ is an order isomorphism.
\end{conj}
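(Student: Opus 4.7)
The plan is to verify that $\Theta$ is a bijection and then establish each direction of the order-preservation separately. Bijectivity is immediate from Corollary~\ref{cor:gateexist}: each part $X_T\in\scrT$ has a unique gate $g_T\in\Gamma$ with $T(g_T^{-1})=T$, so the assignment $T\mapsto g_T$ is a two-sided inverse to $\Theta$. The forward direction, $g_1\peq g_2\Rightarrow T(g_2^{-1})\subseteq T(g_1^{-1})$, is precisely Lemma~\ref{lem:containoneway}.

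The substantive content is the reverse direction: assuming $T_2:=T(g_2^{-1})\subseteq T(g_1^{-1})=:T_1$, one must show $g_1\peq g_2$, equivalently $\Phi(g_1)\subseteq\Phi(g_2)$ by Proposition~\ref{prop:rootsystembasics}(5). I would proceed by induction on $\ell(g_1)$, the case $g_1=e$ being trivial. A preliminary step establishes that $D_L(g_1)\subseteq D_L(g_2)$: given $s\in D_L(g_1)$, the element $s\in W$ satisfies $\alpha_s\in\Phi(g_1)\cap\Phi(s)$ so $s\notin T_1$ by Proposition~\ref{prop:conetypebasics}, whence $s\notin T_2$ and hence $\alpha_s\in\Phi(g_2)$.

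For the inductive step, fix $s\in D_L(g_1)$ and set $w:=sg_1$ and $g_2':=sg_2$. Both lie in $\Gamma$ by suffix-closure (Proposition~\ref{prop:gatesbasicfacts}(1)) with $\ell(w)=\ell(g_1)-1$. Using the decomposition $\Phi(g_i)=\{\alpha_s\}\sqcup s\Phi(g_i')$ (where $g_1':=w$), valid since $s\in D_L(g_i)$ for both $i$, the desired inclusion $\Phi(g_1)\subseteq\Phi(g_2)$ reduces to $\Phi(w)\subseteq\Phi(g_2')$, i.e.\ $w\peq g_2'$. This would follow from applying the inductive hypothesis to the pair $(w,g_2')$, provided the key claim $T((g_2')^{-1})\subseteq T(w^{-1})$ can be established from the original hypothesis $T_2\subseteq T_1$.

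The main obstacle I foresee is precisely this key claim. Writing $v\in T((sg_i)^{-1})\iff(\Phi(g_i)\setminus\{\alpha_s\})\cap s\Phi(v)=\emptyset$ and splitting on whether $s\in D_L(v)$, the descent case reduces cleanly to the hypothesis $T_2\subseteq T_1$ applied to $sv$, but the non-descent case appears to require information beyond the mere emptiness statements encoded by cone type inclusion. A promising alternative is to argue directly using boundary roots (Theorem~\ref{thm:boundaryroots}): for each $\beta\in\Phi(g_1)$ one seeks a witness $u\in\Gamma$ with $\Phi(g_1)\cap\Phi(u)=\{\beta\}$, from which $u\notin T_1\supseteq T_2$ would force some root in $\Phi(g_2)\cap\Phi(u)$ to coincide with $\beta$. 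The difficulty is that such witnesses exist automatically only for $\beta\in\partial T_1$, and promoting this from boundary roots to all of $\Phi(g_1)$---perhaps via the characterization of gates in Theorem~\ref{thm:characterisegates}---remains the core technical obstacle.
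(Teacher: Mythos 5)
The statement you are trying to prove is labelled as a \emph{conjecture} in the paper (Conjecture~\ref{conj:orderisomorphism}); the authors offer no proof of it, and only derive consequences from it (e.g.\ that it would imply $\Gamma$ is closed under join, hence Conjecture~\ref{conj:garside}). So there is no proof in the paper to compare yours against, and your proposal should be judged as an attempt at an open problem.

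Your easy parts are correct: bijectivity of $\Theta$ follows from Corollary~\ref{cor:gateexist}, the forward implication is Lemma~\ref{lem:containoneway}, and the observation $D_L(g_1)\subseteq D_L(g_2)$ is sound. However, the proposal is not a proof, and you have correctly located where it breaks. Writing $u_i=sg_i$ with $s\in D_L(g_1)\cap D_L(g_2)$, Lemma~\ref{lem:conetypeevolution} gives $T(g_i^{-1})=s\bigl(T(u_i^{-1})\setminus H_{\alpha_s}^+\bigr)$, so the hypothesis $T(g_2^{-1})\subseteq T(g_1^{-1})$ only yields $T(u_2^{-1})\cap H_{\alpha_s}^-\subseteq T(u_1^{-1})\cap H_{\alpha_s}^-$; it says nothing about the two cone types on the $H_{\alpha_s}^+$ side, which is exactly what your inductive hypothesis would need. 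Your fallback via boundary roots faces the same wall: Theorem~\ref{thm:boundaryroots} and Theorem~\ref{thm:characterisegates} supply witnesses $u$ with $\Phi(g_1)\cap\Phi(u)=\{\beta\}$ only for $\beta\in\partial T_1$ (respectively $\beta\in\Phi^0(g_1)$), and there is no mechanism in the paper for promoting this to all of $\Phi(g_1)$ — indeed Conjecture~\ref{conj:boundary} ($\partial T(x^{-1})=\Phi^1(x)$ for gates) is itself open. The only case the paper actually settles is $g_1\in W_J$ with $J$ spherical (Theorem~\ref{thm:parabolicconetype}), and that argument relies on the longest element $w_J$ and the decomposition $W=W_JW^J$, which has no analogue for a general gate. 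In short: a reasonable reduction, an honestly flagged gap, but no proof — consistent with the statement's status as a conjecture.
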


Note that the conjecture, if true, generalises Theorem~\ref{thm:parabolicconetype} because $W_J\subseteq \Gamma$ for all spherical $J\subseteq S$, by Proposition~\ref{prop:gatesbasicfacts}. We note the following consequence of Conjecture~\ref{conj:orderisomorphism}.

\newpage

\begin{prop}
If Conjecture~\ref{conj:orderisomorphism} holds then $\Gamma$ is closed under join (hence Conjecture~\ref{conj:garside} holds)
\end{prop}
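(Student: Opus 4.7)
The plan is to use the order isomorphism hypothesis to show that the join of two gates equals the gate of its own cone type part, and hence is itself a gate. Suppose $g_1,g_2\in\Gamma$ and that the join $h=g_1\vee g_2$ exists in $(W,\peq)$. Let $T=T(h^{-1})$ be the cone type of $h^{-1}$, and let $g=g_T\in\Gamma$ be the gate of the part $X_T\in\scrT$ containing $h$. By Corollary~\ref{cor:gateexist} we have $g\peq h$, and by definition $T(g^{-1})=T$.

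Next I would translate the prefix relations $g_i\peq h$ into the reverse-containment order on $\mathbb{T}$ and apply the order isomorphism. By Lemma~\ref{lem:containoneway}, $g_i\peq h$ implies $T=T(h^{-1})\subseteq T(g_i^{-1})$, i.e.\ $T(g_i^{-1})\leq T(g^{-1})$ in $(\mathbb{T},\leq)$ for $i=1,2$. Assuming Conjecture~\ref{conj:orderisomorphism}, the map $\Theta$ reflects the order on $\Gamma$, and since $g_1,g_2,g\in\Gamma$ we deduce $g_1\peq g$ and $g_2\peq g$.

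Thus $g$ is an upper bound for $\{g_1,g_2\}$, so $h=g_1\vee g_2\peq g$. Combined with $g\peq h$, this forces $g=h$, and therefore $h\in\Gamma$. The only step that uses Conjecture~\ref{conj:orderisomorphism} is the reflection of the order — everything else is a direct consequence of $\scrT$ being gated and Lemma~\ref{lem:containoneway}. Since the whole argument is essentially a one-line application of the order isomorphism together with the existence of gates, there is no real obstacle: the main subtlety is merely keeping track that the order on $\mathbb{T}$ is \emph{reverse} containment, so that $g_i\peq h$ correctly translates to $T(g_i^{-1})\leq T(g^{-1})$ before invoking the hypothesis.
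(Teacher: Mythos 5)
Your argument is correct and is essentially the same as the paper's: both take the gate $g$ of the part of $\scrT$ containing the join, use Lemma~\ref{lem:containoneway} to get $T(g^{-1})=T(h^{-1})\subseteq T(g_i^{-1})$, invoke the order isomorphism to conclude $g_i\peq g$, and then squeeze $g=h$ from $h\peq g$ and $g\peq h$. Your care with the reverse-containment convention on $(\mathbb{T},\leq)$ is exactly the right point to watch, and the proof goes through.
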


\begin{proof}
Let $x,y\in\Gamma$, and suppose that $\{x,y\}$ is bounded. Let $z=x\vee y$, and let $g$ be the gate of the part of $\scrT$ containing~$z$. Since $x\peq z$ and $y\peq z$ we have $T(z^{-1})\subseteq T(x^{-1})\cap T(y^{-1})$ (by Lemma~\ref{lem:containoneway}; in fact equality holds by Proposition~\ref{prop:joinconetype}). Since $T(g^{-1})=T(z^{-1})\subseteq T(x^{-1})\cap T(x^{-1})$ we have $x\peq g$ and $y\peq g$ (assuming Conjecture~\ref{conj:orderisomorphism}), and so $g$ is an upper bound for $\{x,y\}$. Thus $z\peq g$. But also $g\peq z$ by gate properties, and so $z=g$. 
\end{proof}
%
%
\section{Conical partitions}\label{sec:conical}

In this section we define \textit{conical partitions}, generalising the construction of Garside partitions. In fact, in Corollary~\ref{cor:garsideequivalent} we show that regular conical partitions are the partition theoretic equivalent to Garside shadows. 

\begin{defn}\label{defn:conical}
Let $X\subseteq W$ with $e\in X$. The \textit{conical partition} induced by $X$ is the partition $\scrC(X)$ of $W$ induced by the covering $\{C(x)\mid x\in X\}$. (Note that $e\in X$ is required for this to be a covering). 
\end{defn}

In particular, every Garside partition is conical by definition (see Definition~\ref{defn:partitions}). In fact, as we see in Theorem~\ref{thm:conesgated}, every conical partition is gated, and the gates of such a partition are necessarily closed under join, generalising Proposition~\ref{prop:garsidegated}.

\begin{defn} The \textit{join-closure} of a subset $X\subseteq W$ is
$$
X^{\vee}=\{\bigvee Y\mid Y\subseteq X\text{ is bounded}\}.
$$
\end{defn}

The following lemma shows that the join-closure of $X$ is indeed closed under joins. 

\begin{lem}\label{lem:joinclosure}
Let $X\subseteq W$. If $Y\subseteq X^{\vee}$ is bounded, then $\bigvee Y\in X^{\vee}$. 
\end{lem}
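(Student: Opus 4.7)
The plan is to exhibit $\bigvee Y$ as the join of a single bounded subset of $X$, by amalgamating the witnessing subsets of $X$ that define the elements of $Y$. This turns the apparent ``iterated join'' into a single join, which is exactly what membership in $X^{\vee}$ requires.

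Concretely, by definition of $X^{\vee}$, for each $y \in Y$ I may choose a bounded subset $Z_y \subseteq X$ such that $y = \bigvee Z_y$. Set $Z = \bigcup_{y \in Y} Z_y \subseteq X$. First I would show $Z$ is bounded: if $w$ is an upper bound for $Y$, then for every $y \in Y$ and every $z \in Z_y$ we have $z \peq y \peq w$, so $w$ bounds $Z$. Hence $\bigvee Z$ exists.

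Next I would verify $\bigvee Z = \bigvee Y$ by the usual two-sided upper-bound argument. On one hand, each $Z_y \subseteq Z$, so $\bigvee Z$ is an upper bound for $Z_y$, giving $y = \bigvee Z_y \peq \bigvee Z$; thus $\bigvee Z$ is an upper bound for $Y$ and $\bigvee Y \peq \bigvee Z$. On the other hand, any upper bound $u$ of $Y$ satisfies $z \peq y \peq u$ for all $z \in Z_y$ and all $y \in Y$, so $u$ is an upper bound for $Z$, whence $\bigvee Z \peq u$; applying this with $u = \bigvee Y$ gives $\bigvee Z \peq \bigvee Y$. Therefore $\bigvee Y = \bigvee Z$, and since $Z \subseteq X$ is bounded, $\bigvee Y \in X^{\vee}$.

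This is really just an unwinding of definitions, so I do not expect any genuine obstacle; the only point to be careful about is the verification that $Z$ is bounded, which crucially uses the hypothesis that $Y$ itself is bounded (not merely that each $Z_y$ individually is bounded).
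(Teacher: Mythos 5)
Your proof is correct and follows essentially the same idea as the paper's: replace each element of $Y$ by a bounded witnessing subset of $X$ and observe that the join is unchanged. The paper does this replacement one element at a time and leaves the key equality as ``clear,'' whereas you amalgamate all witness sets at once and verify $\bigvee Z=\bigvee Y$ in full detail, which is a slightly cleaner write-up of the same argument.
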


\begin{proof}
Let $Y\subseteq X^{\vee}$ be bounded. If $z\in Y\backslash X$ then $z=\bigvee Z$ for some bounded subset $Z\subseteq X$ (by the definition of join-closure). It is clear that
$$
\bigvee Y=\bigvee ((Y\backslash \{z\})\cup Z),
$$
and it follows that $\bigvee Y$ can we written as $\bigvee Y'$ for a bounded subset $Y'\subseteq X$. Hence $\bigvee Y\in X^{\vee}$.
\end{proof}

\begin{thm}\label{thm:conesgated}
Let $X\subseteq W$ with $e\in X$. Let $\scrP=\scrC(X)$ be the conical partition induced by~$X$. Then $\scrP$ is gated, with $\Gamma(\scrP)=X^{\vee}$. In particular, the set $\Gamma(\scrP)$ is closed under join. 
\end{thm}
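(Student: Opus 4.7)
The plan is to identify each part of $\scrP$ with a bounded subset of $X$, and then exhibit its gate as the join of that subset. From the definition of $\scrP=\scrC(X)$ via the covering $\{C(x)\mid x\in X\}$, two elements $u,v\in W$ lie in the same part if and only if $\{x\in X\mid x\peq u\}=\{x\in X\mid x\peq v\}$. So for each part $P\in\scrP$ the set $Y_P=\{x\in X\mid x\peq w\}$ is independent of the choice of $w\in P$, and is bounded (by any such $w$). Since $(W,\peq)$ is a complete meet semilattice, the join $g_P=\bigvee Y_P$ exists.

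The central step is to show that $g_P$ is the gate of $P$. For any $w\in P$, the element $w$ is an upper bound for $Y_P$, so $g_P\peq w$ by the universal property of joins; this already gives the defining prefix property of a gate, once $g_P$ is shown to lie in $P$. To verify $g_P\in P$ I need $\{x\in X\mid x\peq g_P\}=Y_P$: the inclusion $\supseteq$ is immediate from the definition of $g_P$ as an upper bound of $Y_P$, while for $\subseteq$, any $x\in X$ with $x\peq g_P\peq w$ satisfies $x\peq w$ and hence lies in $Y_P$. This shows $\scrP$ is gated with gate map $P\mapsto g_P$, and in particular $\Gamma(\scrP)\subseteq X^{\vee}$.

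For the reverse inclusion $X^{\vee}\subseteq \Gamma(\scrP)$, let $g=\bigvee Y$ with $Y\subseteq X$ bounded, and let $P$ be the part containing $g$. Each $y\in Y$ satisfies $y\peq g$, so $Y\subseteq Y_P$ and hence $g=\bigvee Y\peq \bigvee Y_P=g_P$; conversely the gate property applied to $w=g$ gives $g_P\peq g$. Therefore $g=g_P\in\Gamma(\scrP)$. This identifies $\Gamma(\scrP)=X^{\vee}$, and the final \emph{in particular} statement is immediate from Lemma~\ref{lem:joinclosure}.

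I do not anticipate a serious obstacle: the argument is essentially a compatibility check between the partition structure and the join operation in $(W,\peq)$, with the universal property of joins doing all the work. The only point where one must be mildly careful is the containment $\{x\in X\mid x\peq g_P\}\subseteq Y_P$ used to show $g_P\in P$, and this reduces cleanly to the fact that $g_P$ is bounded above by every element of $P$.
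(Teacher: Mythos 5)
Your proof is correct and follows essentially the same route as the paper: take $Y_P=\{x\in X\mid x\peq w\}$ for $w\in P$, show its join is the gate, and then identify $\Gamma(\scrP)$ with $X^{\vee}$ by the same two inclusions. The only cosmetic difference is that you derive $g_P\peq w$ directly from the universal property of the join, where the paper phrases the same step via Lemma~\ref{lem:conejoin} ($C(\bigvee Y)=\bigcap_{y\in Y}C(y)$).
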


\begin{proof}
Let $P$ be a part of $\scrP$, and let $w\in P$. Since $\{C(x)\mid x\in X\}$ is a covering of $W$, the set
$$
\{x\in X\mid w\in C(x)\}=\{x\in X\mid x\peq w\}
$$
is nonempty, and bounded above by $w$. Thus 
$
g=\bigvee\{x\in X\mid w\in C(x)\}
$
exists. By Lemma~\ref{lem:conejoin} we have 
$$
\bigcap_{\{x\in X\mid w\in C(x)\}}C(x)=C(g),
$$
and so $w\in C(g)$. Thus $g\peq w$ for all $w\in P$. We claim that $g$ and $w$ lie in the same part of $\scrP$ (from which it follows that $P$ is gated with gate $g$). Thus we must show that for all $x\in X$ we have $w\in C(x)$ if and only if $g\in C(x)$. If $x\in X$ with $w\in C(x)$, then by the definition of $g$ we have $x\peq g$ and so $g\in C(x)$. Conversely, if $g\in C(x)$ then $x\peq g$, and so since $g\peq w$ we have $x\peq w$, and so $w\in C(x)$. Hence the claim.

Now, if $Y\subseteq X$ is any bounded set, and $g=\bigvee Y$, then clearly 
$$
g=\bigvee\{x\in X\mid g\in C(x)\},
$$
and by the above discussion $g$ is the gate of the part of $\scrP$ containing $g$. Hence $\Gamma(\scrP)=X^{\vee}$. 
\end{proof}

\newpage

The following lemma shows that one may replace $X$ by its join closure when constructing~$\scrC(X)$.

\begin{lem}\label{lem:conicalclosed}
If $X\subseteq W$ then $\scrC(X)=\scrC(X^{\vee})$. 
\end{lem}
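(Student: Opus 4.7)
The plan is to show that the two partitions are induced by the same equivalence relation on $W$, and hence coincide. Recall from Definition~\ref{defn:covering} that $u$ and $v$ lie in the same part of $\scrC(X)$ if and only if $\{x \in X \mid u \in C(x)\} = \{x \in X \mid v \in C(x)\}$, which (since $u \in C(x)$ iff $x \peq u$) can be restated as
\[
    u \sim_X v \quad \Longleftrightarrow \quad \bigl(\forall x \in X:\ x \peq u \Leftrightarrow x \peq v\bigr).
\]
Similarly for $\sim_{X^\vee}$. So the task reduces to proving $\sim_X\, =\, \sim_{X^\vee}$.

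One direction is immediate from $X \subseteq X^\vee$: if $u \sim_{X^\vee} v$ then restricting the condition to $x \in X$ gives $u \sim_X v$. So $\scrC(X^\vee) \leq \scrC(X)$ would be immediate in the ``other direction'' of refinement, but what is really needed is that the equivalence relations coincide.

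The substantive direction is to show that $u \sim_X v$ implies $u \sim_{X^\vee} v$. I would argue this using only the universal property of joins in the right weak order. Let $y \in X^\vee$, so by definition $y = \bigvee Y$ for some bounded $Y \subseteq X$, and suppose $y \peq u$. Then for every $x \in Y$ we have $x \peq y \peq u$, and hence $x \peq u$; by hypothesis $u \sim_X v$, so $x \peq v$ for every $x \in Y$. Thus $v$ is an upper bound for $Y$, and by the defining property of joins $y = \bigvee Y \peq v$. Symmetric reasoning gives the reverse implication, so $y \peq u \Leftrightarrow y \peq v$, as required.

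Therefore $\sim_X$ and $\sim_{X^\vee}$ are the same equivalence relation on $W$, and so $\scrC(X) = \scrC(X^\vee)$. There is no serious obstacle here; the proof is essentially a one-line use of the universal property of joins, combined with the observation (already used implicitly in the proof of Theorem~\ref{thm:conesgated}) that $C(\bigvee Y) = \bigcap_{x \in Y} C(x)$ from Lemma~\ref{lem:conejoin}, which is just a geometric reformulation of the argument above.
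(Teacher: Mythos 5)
Your proof is correct and follows essentially the same route as the paper: the easy direction comes from $X\subseteq X^{\vee}$, and the substantive direction uses the universal property of the join, which is exactly the content of the identity $C(\bigvee Y)=\bigcap_{x\in Y}C(x)$ (Lemma~\ref{lem:conejoin}) that the paper invokes. The only difference is cosmetic — you phrase the argument directly in terms of $\peq$ rather than in terms of cones.
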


\begin{proof}
Let $u,v$ be in the same part of $\scrC(X)$. Let $x\in C(X^{\vee})$. Then $x=\bigvee Y$ for some $Y\subseteq X$. Then $C(x)=\bigcap_{y\in Y}C(y)$ (by Lemma~\ref{lem:conejoin}). So if $u\in C(x)$ we have $u\in C(y)$ for all $y\in Y$ (as $u,v$ are in the same part of $\scrC(X)$) and so $v\in C(x)$. Thus $u,v$ are in the same part of $\scrC(X^{\vee})$. Conversely, it is clear that if $u,v$ are in the same part of $\scrC(X^{\vee})$ then they are also in the same part of $\scrC(X)$.
\end{proof}

\begin{rem}\label{rem:conicalconvex} We note the following.
\begin{enumerate}
\item Not all conical partitions are Garside partitions. For example, consider $W$ of spherical type $\mathsf{A}_2$ and let $X=\{e,sts\}$. The conical partition induced by $X$ is $\scrP=\{\{e,s,t,st,ts\},\{sts\}\}$, which is obviously not a Garside partition (in fact, the only Garside partition of a finite Coxeter group is the partition $\mathbf{1}$ into singletons). We shall show below (in Corollary~\ref{cor:garsideequivalent}) that Garside partitions are equivalent to regular conical partitions. 
\item Conical partitions are not necessarily convex, as the above $\mathsf{A}_2$ example shows (compare with Remark~\ref{rem:garsideconvexity}). However conical partitions $\scrP=\scrC(X)$ are necessarily weakly convex. To see this, note that by Lemma~\ref{lem:conicalclosed} we may assume that $X=X^{\vee}$ and hence $\Gamma(\scrP)=X$ by Theorem~\ref{thm:conesgated}. Suppose that $w\in W$ and that $g\in X$ is the gate of the part of $\scrP$ containing $w$. We need to show that if $g\peq v\peq w$ then $v$ and $w$ lie in the same part of $\scrP$. Let $g'\in\Gamma(\scrP)$. If $v\in C(g')$ then $g'\peq v\peq w$ and so $w\in C(g')$. On the other hand, if $w\in C(g')$ then since $g$ and $w$ lie in the same part we have $g\in C(g')$ (by the definition of conical partitions) and hence $g'\peq g\peq v$, giving $v\in C(g')$. Thus $v$ and $w$ lie in the same part of $\scrP$.
\end{enumerate}
\end{rem}

\begin{cor}\label{cor:garsideequivalent}
Let $X\subseteq W$ with $e\in X$ and let $\scrP=\scrC(X)$ be the (necessarily gated, c.f. Theorem~\ref{thm:conesgated}) conical partition induced by $X$. Then $\Gamma(\scrP)$ is a Garside shadow if and only if $\scrP$ is regular. 
\end{cor}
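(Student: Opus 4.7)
My plan is to reduce both directions to known structural results by first observing that we may assume $X = X^{\vee}$. Indeed, by Lemma~\ref{lem:conicalclosed} we have $\scrC(X) = \scrC(X^{\vee})$, and by Theorem~\ref{thm:conesgated} the set of gates $\Gamma(\scrP) = X^{\vee}$. So, writing $B = \Gamma(\scrP) = X^{\vee}$, it suffices to prove: if $B \subseteq W$ contains $e$ and is join-closed, then $\scrP = \scrC(B)$ is regular if and only if $B$ is a Garside shadow.

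For the forward direction ($\scrP$ regular $\Rightarrow$ $B$ is a Garside shadow), the set $B = X^{\vee}$ is already closed under join by construction (Lemma~\ref{lem:joinclosure} or Theorem~\ref{thm:conesgated}). Since $\scrP$ is regular and gated, Theorem~\ref{thm:suffixclosuregates} immediately gives $S \subseteq \Gamma(\scrP) = B$ and closure of $B$ under suffixes, so $B$ satisfies all three axioms of a Garside shadow.

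For the reverse direction ($B$ a Garside shadow $\Rightarrow$ $\scrP$ regular), the key claim is that $\scrC(B) = \scrG_B$; regularity then follows from Theorem~\ref{thm:regularpartitions}(2). To establish this, I note that $u$ and $v$ lie in the same part of $\scrC(B)$ exactly when $\{b \in B \mid b \peq u\} = \{b \in B \mid b \peq v\}$. Because $B$ is join-closed, the projection $\pi_B(w) = \bigvee\{b \in B \mid b \peq w\}$ is well defined, so equality of these two sets forces $\pi_B(u) = \pi_B(v)$. Conversely, if $\pi_B(u) = \pi_B(v)$ then for any $b \in B$ with $b \peq u$ we have $b \peq \pi_B(u) = \pi_B(v) \peq v$, and symmetrically; hence the two lower-bound sets coincide. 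By Proposition~\ref{prop:partsdescription}(2) this is precisely the equivalence relation defining $\scrG_B$, giving $\scrC(B) = \scrG_B$.

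No step here is expected to be the main obstacle: all the hard work has been done in Lemma~\ref{lem:conicalclosed}, Theorem~\ref{thm:conesgated}, Theorem~\ref{thm:suffixclosuregates}, and Theorem~\ref{thm:regularpartitions}. The only mild subtlety is the reduction to $X = X^{\vee}$ at the outset, which ensures that $\Gamma(\scrP)$ can be cleanly identified with the set generating the conical partition and that the projection $\pi_B$ is available on the reverse direction.
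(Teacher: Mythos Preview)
Your proof is correct and follows essentially the same route as the paper: the forward direction via Theorem~\ref{thm:suffixclosuregates} and join-closure from Theorem~\ref{thm:conesgated}, and the reverse direction by identifying $\scrP$ with a Garside partition and invoking Theorem~\ref{thm:regularpartitions}. Note, however, that your argument establishing $\scrC(B)=\scrG_B$ via the $\pi_B$ characterisation is unnecessary: by Definitions~\ref{defn:partitions}(2) and~\ref{defn:conical} these are literally the same partition (both induced by the covering $\{C(b)\mid b\in B\}$), which is exactly how the paper dispatches this step.
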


\begin{proof}
By Theorem~\ref{thm:conesgated} $\scrP$ is gated, with $\Gamma(\scrP)=X^{\vee}$. Thus $\Gamma(\scrP)$ is closed under joins. If $\scrP$ is regular then by Theorem~\ref{thm:suffixclosuregates} we have $S\subseteq \Gamma(\scrP)$ and that $\Gamma(\scrP)$ is closed under taking suffixes, and hence $\Gamma(\scrP)$ is a Garside shadow. 

Conversely, suppose that $\Gamma(\scrP)$ is a Garside shadow. By Lemma~\ref{lem:conicalclosed} we have $\scrP=\scrC(X^{\vee})$, and thus $\scrP$ is the Garside partition of the Garside shadow $X^{\vee}=\Gamma(\scrP)$. Hence $\scrP$ is regular by Theorem~\ref{thm:regularpartitions}.
\end{proof}

We showed in Corollary~\ref{cor:JIsGated} that the spherical partition~$\scrJ$ is gated. We now give another proof, using Theorem~\ref{thm:conesgated}, that has the advantage of determining the set of gates. Let $W_{\mathrm{sph}}$ denote the union of all spherical parabolic subgroups of~$W$.

\begin{prop}\label{prop:Jgated}
We have $\scrJ=\scrC(W_{\mathrm{sph}})$. Thus $\scrJ$ is gated, with $\Gamma(\scrJ)=W_{\mathrm{sph}}^{\vee}$
\end{prop}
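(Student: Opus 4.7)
The plan is to prove the identity of partitions $\scrJ = \scrC(W_{\mathrm{sph}})$ directly, by showing both induce the same equivalence relation on $W$; the gate claim then follows immediately from Theorem~\ref{thm:conesgated} applied to $X = W_{\mathrm{sph}}$ (which contains $e$, so the conical partition is defined).

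To identify the two partitions, I would invoke two characterisations in parallel. By Proposition~\ref{prop:partsdescription}(5), $x$ and $y$ lie in the same part of $\scrJ$ if and only if $\Phisph(x) = \Phisph(y)$; by the definition of the conical partition (via the covering $\{C(w) \mid w \in W_{\mathrm{sph}}\}$), $x$ and $y$ lie in the same part of $\scrC(W_{\mathrm{sph}})$ if and only if $w \peq x \Longleftrightarrow w \peq y$ for every $w \in W_{\mathrm{sph}}$. I would then prove these two equivalences are the same.

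For the forward direction ($\scrJ$ coarser than $\scrC(W_{\mathrm{sph}})$), assume $\Phisph(x) = \Phisph(y)$ and fix $w \in W_J$ with $J \subseteq S$ spherical. Since $w \in W_J$, every root in $\Phi(w)$ has support in $J$, so $\Phi(w) \subseteq \Phi_J \subseteq \Phisph$. By Proposition~\ref{prop:rootsystembasics}(5), $w \peq x$ iff $\Phi(w) \subseteq \Phi(x)$ iff $\Phi(w) \subseteq \Phisph(x)$, and similarly for $y$; since $\Phisph(x) = \Phisph(y)$, these conditions coincide. For the reverse direction, assume $w \peq x \Longleftrightarrow w \peq y$ for all $w \in W_{\mathrm{sph}}$, and let $\beta \in \Phisph(x)$. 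Setting $J = \mathrm{supp}(\beta)$ (a spherical subset) and writing $x = uv$ with $u \in W_J$, $v \in W^J$ via the decomposition~(\ref{eq:WJdecomposition}), Lemma~\ref{lem:rootsdecomposition} gives $\Phi(u) = \Phi_J(x) \ni \beta$, and the decomposition forces $u \peq x$. Hypothesis then yields $u \peq y$, so $\beta \in \Phi(u) \subseteq \Phi(y)$, hence $\beta \in \Phisph(y)$; symmetry closes the argument.

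With $\scrJ = \scrC(W_{\mathrm{sph}})$ established, Theorem~\ref{thm:conesgated} immediately says $\scrJ$ is gated and $\Gamma(\scrJ) = W_{\mathrm{sph}}^{\vee}$. No substantial obstacle is expected: the whole argument hinges on the single observation that a root with spherical support is captured by the $W_J$-prefix $u$ of any element it inverts, which supplies the ``witness'' $w \in W_{\mathrm{sph}}$ needed to translate between containment of inversion sets and the $\peq$-relation.
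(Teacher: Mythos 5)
Your proof is correct and, for the direction it shares with the paper, uses the same ingredients: reduction to Theorem~\ref{thm:conesgated} (noting $e\in W_{\mathrm{sph}}$), the description of the parts of $\scrJ$ from Proposition~\ref{prop:partsdescription}, and Lemma~\ref{lem:rootsdecomposition}. The one genuine difference is that you prove both inclusions between the two equivalence relations, whereas the paper's proof only verifies that $\Phisph(x)=\Phisph(y)$ implies $x\in C(z)\Leftrightarrow y\in C(z)$ for all $z\in W_{\mathrm{sph}}$ --- i.e.\ that $\scrJ$ refines $\scrC(W_{\mathrm{sph}})$ --- and treats this as sufficient for the equality of partitions. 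Your reverse direction (taking $\beta\in\Phisph(x)$, setting $J=\mathrm{supp}(\beta)$, and using the parabolic prefix $u\in W_J$ of $x$ from the decomposition~(\ref{eq:WJdecomposition}) as the witness in $W_{\mathrm{sph}}$ with $\beta\in\Phi(u)$) is exactly the argument needed to close that gap, and it is needed: gatedness of $\scrC(W_{\mathrm{sph}})$ transfers to $\scrJ$ only if the two partitions genuinely coincide, since a proper sub-part of a gated set need not be gated. So your write-up is, if anything, more careful than the paper's own.
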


\begin{proof}
Once we show that $\scrJ=\scrC(W_{\mathrm{sph}})$ the result follows from Theorem~\ref{thm:conesgated}. So, consider a part $P=\{w\in W\mid \Phi_{\mathrm{sph}}(w)=\Sigma\}$ of $\scrJ$, for some $\Sigma\in \mathbb{S}$. Let $x,y\in P$, and let $z\in W_{\mathrm{sph}}$. It is sufficient to show, by symmetry of $x$ and $y$, that if $x\in C(z)$ then $y\in C(z)$ too. First we note that since $\Phi_{\mathrm{sph}}(x)=\Phi_{\mathrm{sph}}(y)$ we have $\Phi_J(x)=\Phi_J(y)$ for all spherical subsets $J\subseteq S$. 

Suppose that $x\in C(z)$ with $z\in W_{\mathrm{sph}}$. Choose a reduced expression of $z$, and let $J$ be the set of generators appearing in this reduced expression (thus $J$ is spherical, and note that $J$ does not depend on the particular reduced expression chosen, by \cite[Proposition~2.16]{AB:08}). Since $z\peq x$ we have $\Phi(z)\subseteq \Phi_J(x)=\Phi_J(y)\subseteq \Phi(y)$. Thus $z\peq y$, and so $y\in C(z)$.
\end{proof}

\section{Ultra low elements}\label{sec:ultralow}

We now define a new class of elements of $W$ called \textit{ultra low} elements, which we denote by $U$. Conjecturally, $U$ is the set of gates $\Gamma$ of $W$, which in turn is conjecturally the smallest Garside shadow $\Gmin$ (see the comments below).

\begin{defn} \label{def:ultra_low_elements}
An element $x \in W$ is \textit{ultra low} if for each $\beta \in \Phi^1(x)$ there exists $w\in W$ such that $\Phi(x) \cap \Phi(w) = \{ \beta \}$.
\end{defn}

For example, each $s\in S$ is ultra low, and $e$ is trivially ultra low.

\begin{prop}
We have $U\subseteq \Gamma\subseteq L$.
\end{prop}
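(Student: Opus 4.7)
The plan is to handle the two containments separately, and both will follow directly from results already established in the excerpt; no substantive new ideas are required.

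For the easy inclusion $\Gamma\subseteq L$, I would simply invoke Proposition~\ref{prop:gatesbasicfacts}(5), which was proved using the facts that $L$ is a Garside shadow (Theorem~\ref{thm:nlowgarside}) and that $\Gamma$ is contained in every Garside shadow (Proposition~\ref{prop:gatesbasicfacts}(4)). So there is nothing new to do here.

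For $U\subseteq \Gamma$, I would use Theorem~\ref{thm:characterisegates}, which characterises gates as those $x\in W$ such that for every $\beta\in\Phi^0(x)$ there exists $w\in W$ with $\Phi(x)\cap\Phi(w)=\{\beta\}$. The key observation is that $\Phi^0(x)\subseteq \Phi^1(x)$: if $\beta=-x\alpha_s\in\Phi^0(x)$ with $s\in D_R(x)$, then $s_\beta x=xs$ and $\ell(xs)=\ell(x)-1$, so $\beta\in\Phi^1(x)$ by Definition~\ref{defn:phi1} (this is noted immediately after Definition~\ref{defn:finalroots}). Now if $x\in U$, then by Definition~\ref{def:ultra_low_elements} the witness condition holds for every $\beta\in\Phi^1(x)$, hence a fortiori for every $\beta\in\Phi^0(x)$, and Theorem~\ref{thm:characterisegates} delivers $x\in\Gamma$.

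There is no real obstacle: the proposition is essentially the observation that the defining condition for ``ultra low'' is demanded on the (generally larger) set $\Phi^1(x)$, whereas the gate criterion of Theorem~\ref{thm:characterisegates} only requires it on the (generally smaller) set $\Phi^0(x)$. The work was done in setting up Theorem~\ref{thm:characterisegates}; everything else is a one-line chain of implications.
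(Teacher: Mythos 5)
Your proposal is correct and matches the paper's proof exactly: the paper also obtains $U\subseteq\Gamma$ from Theorem~\ref{thm:characterisegates} together with the inclusion $\Phi^0(x)\subseteq\Phi^1(x)$, and cites Proposition~\ref{prop:gatesbasicfacts} for $\Gamma\subseteq L$. Nothing to add.
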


\begin{proof}
The containment $U\subseteq \Gamma$ follows from the definition of ultra low elements, and the characterisation of gates in Theorem~\ref{thm:characterisegates}, noting that $\Phi^0(x)\subseteq\Phi^1(x)$. The containment $\Gamma\subseteq L$ is Proposition~\ref{prop:gatesbasicfacts}.
\end{proof}

\begin{rem}
One can show more directly that $U\subseteq L$ without passing through~$\Gamma$. For if $x\in U$ then from the definition and Lemma~\ref{lem:EandPhi1} we have $\Phi^1(x)\subseteq \cE$, and hence $x$ is low.
\end{rem}

The following proposition connects the concept of ultra low elements with boundary roots. 

\begin{prop}\label{prop:g}
Let $x\in W$ and let $T=T(x^{-1})$. Then $x$ is ultra low if and only if $\Phi^1(x) = \partial T$. Moreover, if $\Phi^1(x)=\partial T$ then $x$ is the gate of $X_T$. 
\end{prop}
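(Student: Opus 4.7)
The plan is to deduce both parts of the proposition directly from results already in the excerpt, chiefly the characterisation of boundary roots in Theorem~\ref{thm:boundaryroots} together with Corollary~\ref{cor:boundaryroots}, and for the second part the gate characterisation in Theorem~\ref{thm:characterisegates}.

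First I would observe that the inclusion $\partial T \subseteq \Phi^1(x)$ holds unconditionally by Corollary~\ref{cor:boundaryroots}, so the equivalence reduces to showing that $x$ is ultra low if and only if $\Phi^1(x) \subseteq \partial T$. For this I would simply unravel Definition~\ref{def:ultra_low_elements}: $x$ is ultra low means that for every $\beta \in \Phi^1(x)$ there exists $w \in W$ with $\Phi(x) \cap \Phi(w) = \{\beta\}$, and by Theorem~\ref{thm:boundaryroots} the existence of such a $w$ is precisely the defining condition for $\beta \in \partial T$ (where $T = T(x^{-1})$). So the equivalence is immediate.

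For the ``moreover'' statement, suppose $\Phi^1(x) = \partial T$. Since $\Phi^0(x) \subseteq \Phi^1(x)$ (noted after Definition~\ref{defn:finalroots}), every $\beta \in \Phi^0(x)$ lies in $\partial T$, hence by Theorem~\ref{thm:boundaryroots} admits a witness $w \in W$ with $\Phi(x) \cap \Phi(w) = \{\beta\}$. Theorem~\ref{thm:characterisegates} then yields $x \in \Gamma$. Since $x \in X_T$ (by definition $T = T(x^{-1})$) and each part of $\scrT$ has a unique gate (Corollary~\ref{cor:gateexist}), $x$ must be the gate of $X_T$.

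There is no genuine obstacle here: the definition of ultra low was engineered to match exactly the witnessing condition that characterises boundary roots via Theorem~\ref{thm:boundaryroots}, and Theorem~\ref{thm:characterisegates} expresses membership in $\Gamma$ via the same witnessing condition but only on the (a priori smaller) set $\Phi^0(x)$ of final roots; the hypothesis $\Phi^1(x)=\partial T$ is therefore more than enough to feed into the gate characterisation.
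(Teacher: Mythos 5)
Your proof of the equivalence is exactly the paper's argument: $\partial T\subseteq\Phi^1(x)$ from Corollary~\ref{cor:boundaryroots}, and the witnessing condition in Definition~\ref{def:ultra_low_elements} matches the characterisation of $\partial T$ in Theorem~\ref{thm:boundaryroots}, so nothing more is needed. For the ``moreover'' part, however, you take a genuinely different route. The paper argues directly with root data: for any $y\in X_T$ it has $\partial T\subseteq\Phi^1(y)$, hence $\Phi(x)=\mathrm{cone}_{\Phi}(\Phi^1(x))=\mathrm{cone}_{\Phi}(\partial T)\subseteq\mathrm{cone}_{\Phi}(\Phi^1(y))=\Phi(y)$ by Dyer's base theorem (Theorem~\ref{thm:eqcond}), so $x\peq y$ by Proposition~\ref{prop:rootsystembasics}(5), and $x$ is the gate. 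You instead feed $\Phi^0(x)\subseteq\Phi^1(x)=\partial T$ into Theorem~\ref{thm:characterisegates} to conclude $x\in\Gamma$, and then identify $x$ as the gate of the unique part containing it. Your argument is valid and there is no circularity, since Theorem~\ref{thm:characterisegates} and Corollary~\ref{cor:gateexist} are established well before this point; it is arguably more economical given what is already available. What the paper's version buys is independence from the gate machinery: it reproves the prefix property $x\peq y$ from scratch using only $\mathrm{cone}_{\Phi}$, whereas your version inherits the full strength of Corollary~\ref{cor:gateexist} (convexity and gatedness of $\scrT$) through Theorem~\ref{thm:characterisegates}.
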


\begin{proof}
Note that $\partial T\subseteq\Phi^1(x)$ by Corollary~\ref{cor:boundaryroots}. We have $x\in U$ if and only if for each $\beta\in\Phi^1(x)$ there exists $w\in W$ with $\Phi(x)\cap \Phi(w)=\{\beta\}$, if and only if $\beta\in\partial T$ by Theorem~\ref{thm:boundaryroots}.

Let $y \in X_T$, and so $T(y^{-1}) = T$. By Corollary~\ref{cor:boundaryroots} we have $\partial T \subseteq \Phi^1(y)$. Since $\Phi^1(x) = \partial T$, we have $\Phi(x) = \mathrm{cone}_{\Phi}(\partial T) \subseteq \mathrm{cone}_{\Phi}(\Phi^1(y))=\Phi(y)$ (see Theorem~\ref{thm:eqcond}). Thus by Proposition~\ref{prop:rootsystembasics}(5) we have $x\peq y$ for all $y\in X_T$. Thus $x=g_T$ is the gate of~$X_T$.
\end{proof}

We conjecture that the reverse implication also holds in the second statement of Proposition~\ref{prop:g} (this is the content of Conjecture~\ref{conj:boundary} in the introduction). If this conjecture holds then it follows that $U=\Gamma$. 

\begin{rem}
We have verified that $U=\Gamma$ for right angled Coxeter groups, rank~$3$ Coxeter groups, and all Coxeter groups with complete Coxeter graph (that is, $m_{st}>2$ for all $s\neq t$). See \cite[Chapter~5]{YY:21} for details. 
\end{rem}

\section{Super elementary roots}\label{sec:superelementary}

In Section~\ref{sec:boundaryroots} we observed that the boundary roots $\partial T\subseteq\cE$ of a cone type $T$ gives the minimal amount of root data required to determine $T$. It is natural to ask whether every elementary root is a boundary root of some cone type. Equivalently, is it true that $\Phi(\scrT)=\cE$ (in the notation of Section~\ref{sec:simplerefinements})? In this section we show that $\Phi(\scrT)=\cE$ for spherical and affine Coxeter groups (amongst others), however in general the containment $\Phi(\scrT)\subseteq \cE$ can be strict. In particular, we exhibit a class of rank $4$ Coxeter groups for which there is an elementary root that is not the boundary root of any cone type. We thank Bob Howlett for inspiring the work in this section and for suggesting a motivating example.

\begin{defn} \label{defn:superelementary}
A root $\beta\in\Phi^+$ is \textit{super-elementary} if there exists $x,y \in W$ with $$\Phi(x) \cap \Phi(y) = \{ \beta \}.$$
Let $\cS$ denote the set of all super-elementary roots. 
\end{defn}

By Theorem~\ref{thm:boundaryroots} $\cS$ is precisely the set of roots that occur as the boundary root of some cone type, and hence $\cS=\Phi(\scrT)$. 

\begin{prop}\label{prop:superareelementary}
Every super-elementary root is elementary. Thus $\cS\subseteq\cE$. 
\end{prop}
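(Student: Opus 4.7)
The plan is to observe that this proposition is essentially immediate from Lemma~\ref{lem:EandPhi1}(1), which was already established. Indeed, the definition of super-elementary says that $\beta \in \cS$ precisely when there exist $x, y \in W$ with $\Phi(x) \cap \Phi(y) = \{\beta\}$, and Lemma~\ref{lem:EandPhi1}(1) asserts that any such $\beta$ must lie in $\cE$. So I would simply unpack the definition of $\cS$, invoke Lemma~\ref{lem:EandPhi1}(1) to produce the pair $(x,y)$ witnessing super-elementarity, and conclude $\beta \in \cE$.

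Since there is essentially nothing to do beyond citing the earlier lemma, there is no real obstacle. The only thing worth spelling out is the contrapositive reasoning already contained in Lemma~\ref{lem:EandPhi1}(1): if $\beta$ dominated some $\alpha \in \Phi^+$ with $\alpha \neq \beta$, then both $x^{-1}\beta < 0$ and $y^{-1}\beta < 0$ would force $x^{-1}\alpha < 0$ and $y^{-1}\alpha < 0$ by the definition of dominance, giving $\alpha \in \Phi(x) \cap \Phi(y)$ and contradicting $\Phi(x) \cap \Phi(y) = \{\beta\}$. So the one-line proof is: let $\beta \in \cS$ and choose $x, y \in W$ with $\Phi(x) \cap \Phi(y) = \{\beta\}$; then $\beta \in \cE$ by Lemma~\ref{lem:EandPhi1}(1), which establishes $\cS \subseteq \cE$.
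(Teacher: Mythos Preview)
Your proof is correct and matches the paper's approach exactly: the paper's entire proof is ``See Lemma~\ref{lem:EandPhi1}.'' You have even reproduced the dominance argument underlying that lemma, which is more detail than the paper gives here.
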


\begin{proof}
See Lemma~\ref{lem:EandPhi1}.
\end{proof}

We now provide various classes of Coxeter systems for which $\cS=\cE$. 

\begin{thm} \label{thm:sphericalsuperelementary}
If $W$ is spherical then $\cS=\cE=\Phi^+$.
\end{thm}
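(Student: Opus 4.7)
The plan is to show $\cS = \Phi^+$ directly, and then deduce $\cE = \Phi^+$ from the already-established chain $\cS \subseteq \cE \subseteq \Phi^+$ (Proposition~\ref{prop:superareelementary}). Thus the entire theorem reduces to exhibiting, for each positive root $\beta$, a pair $x,y \in W$ with $\Phi(x) \cap \Phi(y) = \{\beta\}$.

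Given $\beta \in \Phi^+$, I would first invoke the $W$-orbit structure on $\Phi$ to write $\beta = v\alpha_s$ for some $v \in W$ and $s \in S$. Since $\beta > 0$, Proposition~\ref{prop:rootsystembasics}(1) gives $\ell(vs) = \ell(v) + 1$. The candidate pair will be $x = vw_0$ (using that $W$ is spherical, so $w_0$ exists) and $y = vs$. The claim is that $\Phi(vw_0) \cap \Phi(vs) = \{\beta\}$.

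The two inversion sets are easy to describe. On the one hand, $\Phi(vs) = \Phi(v) \sqcup \{\beta\}$ by Proposition~\ref{prop:rootsystembasics}(6). On the other hand, unpacking the definition and using that $w_0$ sends positive roots to negative roots, one gets
$$
\Phi(vw_0) = \{\gamma \in \Phi^+ \mid w_0 v^{-1}\gamma < 0\} = \{\gamma \in \Phi^+ \mid v^{-1}\gamma > 0\} = \Phi^+ \setminus \Phi(v).
$$
Intersecting, $\Phi(vw_0) \cap \Phi(vs) = (\Phi^+ \setminus \Phi(v)) \cap (\Phi(v) \sqcup \{\beta\})$, which reduces to checking $\beta \notin \Phi(v)$; but $v^{-1}\beta = \alpha_s > 0$, so indeed $\beta \notin \Phi(v)$, giving the intersection $\{\beta\}$ exactly. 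Hence $\beta \in \cS$, and since $\beta$ was arbitrary, $\cS = \Phi^+$.

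There is no significant obstacle — the argument is essentially a direct computation once one recognises that, in the spherical case, the ``complementary'' element $vw_0$ has inversion set $\Phi^+ \setminus \Phi(v)$, which is perfectly tuned to isolate the final root of any reduced prefix $vs$. The only subtlety is making sure the representation $\beta = v\alpha_s$ is compatible with $\ell(vs) = \ell(v)+1$, which is automatic from $\beta > 0$. Finally, $\cE = \Phi^+$ follows formally from $\cS = \Phi^+$ together with the inclusions $\cS \subseteq \cE \subseteq \Phi^+$.
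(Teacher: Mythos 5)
Your proof is correct and is essentially the paper's argument: both exhibit the pair $(vs,\,vw_0)$ (the paper writes $v=s_1\cdots s_{j-1}$, $s=s_j$ for a reduced word of $w_0$) and both rest on the identity $\Phi(ww_0)=\Phi^+\setminus\Phi(w)$. Your phrasing just avoids fixing a reduced expression for $w_0$, which is a cosmetic difference.
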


\begin{proof}
Let $w_0=s_1\cdots s_n$ be a reduced expression for the longest element of $W$. Since $\Phi(w_0)=\Phi^+$ we have $\Phi^+=\{\beta_1,\ldots,\beta_n\}$ where $\beta_j=s_1\cdots s_{j-1}\alpha_{s_j}$ for $1\leq j\leq n$. We claim that
$$
\Phi(s_1\cdots s_j)\cap \Phi(s_1\cdots s_{j-1}w_0)=\{\beta_j\}\quad\text{for $j=1,\ldots,n$}.
$$
For if $w\in W$ then $\Phi(ww_0)=\Phi^+\backslash\Phi(w)$ (because if $\alpha\in\Phi^+\backslash \Phi(w)$ then $(ww_0)^{-1}\alpha=w_0w^{-1}\alpha<0$ as $w^{-1}\alpha>0$, and $\ell(ww_0)=\ell(w_0)-\ell(w)=|\Phi^+\backslash\Phi(w)|$). Thus $\Phi(s_1\cdots s_{j-1}w_0)=\{\beta_j,\ldots,\beta_n\}$, and the claim follows as $\Phi(s_1\cdots s_j)=\{\beta_1,\ldots,\beta_j\}$. Thus every positive root is super elementary, and hence the theorem.
%
%
%
\end{proof}

\begin{cor} \label{cor:sphericalsuperelementary}
We have $\Phisph^+\subseteq\cS$. In particular, if $\cE=\Phisph^+$ then $\cS=\cE$. 
\end{cor}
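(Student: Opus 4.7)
The plan is to reduce the claim to the spherical case already handled in Theorem~\ref{thm:sphericalsuperelementary}. Suppose $\beta \in \Phisph^+$, and let $J = \mathrm{supp}(\beta) \subseteq S$, which is spherical by the definition of $\Phisph$. Then $\beta \in \Phi_J^+$ and the parabolic subgroup $W_J$ is a finite Coxeter group with root system $\Phi_J$.

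Applying Theorem~\ref{thm:sphericalsuperelementary} inside the spherical Coxeter system $(W_J, J)$, we obtain elements $x, y \in W_J$ such that $\beta$ is the unique common element of their $W_J$-inversion sets. The key compatibility is that for any $w \in W_J$, the inversion set of $w$ computed in $W_J$ coincides with the inversion set $\Phi(w)$ computed in $W$: indeed, by Lemma~\ref{lem:rootsdecomposition} (or directly, since a reduced expression of $w \in W_J$ uses only generators from $J$ and Proposition~\ref{prop:rootsystembasics}(3) then shows $\Phi(w) \subseteq \Phi_J$), we have $\Phi(w) = \Phi(w) \cap \Phi_J$. Consequently, the identity $\Phi(x) \cap \Phi(y) = \{\beta\}$ holds in $W$, certifying $\beta \in \cS$.

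For the second statement, combine the inclusion $\Phisph^+ \subseteq \cS$ just established with the reverse inclusion $\cS \subseteq \cE$ from Proposition~\ref{prop:superareelementary}: if $\cE = \Phisph^+$, then $\cE = \Phisph^+ \subseteq \cS \subseteq \cE$, forcing equality throughout.

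There is essentially no obstacle here beyond noting the compatibility of inversion sets under passage to a parabolic subgroup; the real work is done by Theorem~\ref{thm:sphericalsuperelementary}. The only point requiring a moment of care is verifying that the witnessing pair $(x,y) \in W_J \times W_J$ produced in the spherical setting witnesses super-elementarity in the (possibly infinite) ambient group $W$, which is immediate from the identification of $W_J$-inversion sets with $W$-inversion sets restricted to elements of $W_J$.
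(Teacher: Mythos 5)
Your proof is correct and follows the same route as the paper: both reduce to the spherical case via Theorem~\ref{thm:sphericalsuperelementary} applied to $W_J$ with $J=\mathrm{supp}(\beta)$, the only difference being that you spell out the (routine but genuinely needed) compatibility of inversion sets under passage to the parabolic subgroup, which the paper leaves implicit.
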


\begin{proof}
If $\beta\in\Phi_J^+$ with $J\subseteq S$ spherical, then by Theorem~\ref{thm:sphericalsuperelementary} there exists $x,w\in W_J$ with $\Phi(x)\cap\Phi(w)=\{\beta\}$, and so $\beta\in\cS$.
\end{proof}

In particular, Corollary~\ref{cor:sphericalsuperelementary} shows that if $W$ is right angled, or if $W$ has complete Coxeter graph, then $\cS=\cE$ (see \cite[Theorem~1]{PY:19} for the classification of Coxeter systems with $\cE=\Phisph^+$).

We will now show that $\cS=\cE$ for all affine Coxeter groups. When $W$ is affine there is a related notion of a ``root system'', for $W$ where one starts with a crystallographic root system $\Phi_0$ of a spherical Coxeter group~$W_0$. We will not repeat this construction here, we refer to \cite[Section~2.2]{PY:19} for details, and we will use the notation of \cite{PY:19} in the following paragraphs. In particular, the finite crystallographic root system $\Phi_0$ has simple system $\{\alpha_1,\ldots,\alpha_n\}$, and $\langle\cdot,\cdot\rangle$ denotes the bilinear form on the underlying vector space. For $\alpha\in\Phi_0$ we write $\alpha^{\vee}=2\alpha/\langle\alpha,\alpha\rangle$. Let $\omega_1,\ldots,\omega_n$ be the basis dual to the simple roots basis, and so $\langle\alpha_i,\omega_j\rangle=1$ if $i=j$ and equals $0$ otherwise. Let $P=\mathbb{Z}\omega_1+\cdots+\mathbb{Z}\omega_n$ (the \textit{coweight lattice} of $\Phi_0$). The affine root system is $\Phi=\Phi_0+\mathbb{Z}\delta$ with positive roots $\Phi^+=(\Phi_0^++\mathbb{Z}_{\geq 0}\delta)\cup(-\Phi_0^++\mathbb{Z}_{>0}\delta)$. The set of elementary roots is $\cE=\Phi_0^+\cup(-\Phi_0^++\delta)$. 

\begin{lem}\label{lem:affine1}
Let $K>0$ be an integer and $\alpha\in\Phi_0^+$. There exists $\lambda\in P$ such that $\langle\lambda,\alpha\rangle=1$ and $|\langle\lambda,\beta\rangle|>K$ for all $\beta\in\Phi_0^+\backslash\{\alpha\}$.
\end{lem}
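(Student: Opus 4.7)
The plan is to break the argument into two parts: first exhibit some coweight $\mu \in P$ satisfying $\langle\mu,\alpha\rangle=1$, and then shift $\mu$ by a large multiple of a suitable element of $P\cap\alpha^\perp$ to force the pairings with the other positive roots to be large in absolute value.

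For the first part, write $\alpha=\sum_{i=1}^{n}c_i\alpha_i$ with $c_i\in\mathbb{Z}_{\geq 0}$. I claim $\gcd(c_1,\ldots,c_n)=1$. Indeed, every positive root of $\Phi_0$ is obtained from a simple root by applying a sequence of simple reflections, and each simple reflection acts on coefficients (in the simple-root basis) by an integer matrix of determinant $\pm1$. Such matrices preserve the gcd of a tuple of coefficients, and simple roots have coefficient tuples with gcd~$1$. By Bezout there exist integers $m_1,\ldots,m_n$ with $\sum m_ic_i=1$, so $\mu=\sum m_i\omega_i\in P$ satisfies $\langle\mu,\alpha\rangle=1$.

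Next, observe that the affine set of all $\lambda\in P$ with $\langle\lambda,\alpha\rangle=1$ is precisely $\mu+(P\cap\alpha^\perp)$. The lattice $P\cap\alpha^\perp$ has rank $n-1$ and spans the hyperplane $\alpha^\perp\subseteq V$. For each $\beta\in\Phi_0^+\setminus\{\alpha\}$ the vectors $\alpha$ and $\beta$ are not proportional (since $2\alpha\notin\Phi_0$), so the linear functional $\nu\mapsto\langle\nu,\beta\rangle$ is not identically zero on $\alpha^\perp$. Consequently the sublattice
\[
L_\beta=\{\nu\in P\cap\alpha^\perp\mid \langle\nu,\beta\rangle=0\}
\]
is a proper sublattice of $P\cap\alpha^\perp$. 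A finite union of proper sublattices of a lattice of positive rank cannot cover the whole lattice (since each proper sublattice has density zero), so there exists $\nu_0\in P\cap\alpha^\perp$ with $\langle\nu_0,\beta\rangle\neq 0$ for every $\beta\in\Phi_0^+\setminus\{\alpha\}$.

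Finally, set $\lambda=\mu+m\nu_0$ for a positive integer $m$ to be chosen. Then $\lambda\in P$ and $\langle\lambda,\alpha\rangle=1$, while for each $\beta\in\Phi_0^+\setminus\{\alpha\}$ we have
\[
\langle\lambda,\beta\rangle=\langle\mu,\beta\rangle+m\langle\nu_0,\beta\rangle,
\]
whose absolute value tends to infinity with $m$ because $\langle\nu_0,\beta\rangle\neq 0$. Since $\Phi_0^+$ is finite, we may pick $m$ uniformly large enough to guarantee $|\langle\lambda,\beta\rangle|>K$ for every $\beta\in\Phi_0^+\setminus\{\alpha\}$. (The degenerate case $n=1$ is trivial, as then $\Phi_0^+=\{\alpha\}$.) The only subtle step is the first one: verifying $\gcd(c_i)=1$; everything afterwards is a standard density argument in the coweight lattice.
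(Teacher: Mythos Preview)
Your proof is correct, but it takes a genuinely different route from the paper's. The paper exploits Weyl group transitivity on roots: it picks $w\in W_0$ with $w\alpha=\alpha_i$ simple, writes down the explicit element $\lambda'=\omega_i+(K+1)\sum_{j\neq i}\omega_j$ (which pairs to $1$ with $\alpha_i$ and to something larger than $K$ in absolute value with every other root, since any $\beta\neq\pm\alpha_i$ has a nonzero $\alpha_j$-coefficient for some $j\neq i$), and then sets $\lambda=w^{-1}\lambda'$. This is short and entirely constructive. By contrast, you argue existentially via lattice geometry: you use $\gcd(c_i)=1$ and B\'ezout to land on the affine hyperplane $\langle\,\cdot\,,\alpha\rangle=1$, then push off to infinity along a generic direction in $P\cap\alpha^\perp$. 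Your approach avoids transporting the problem via $w^{-1}$ but pays for it with the sublattice non-covering argument; the paper's approach is more explicit but relies on $W_0$ preserving both $\Phi_0$ and $P$. Both are perfectly valid.

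One small wording issue: your parenthetical ``since each proper sublattice has density zero'' is false for proper sublattices in general (e.g.\ $2\mathbb{Z}\subset\mathbb{Z}$). It is, however, true for your particular $L_\beta$, because each is the kernel of a nonzero integer linear form on $P\cap\alpha^\perp$ and hence has strictly smaller rank. You might rephrase to say this explicitly, or simply invoke the standard fact that a lattice of positive rank is never a finite union of proper subgroups.
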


\begin{proof}
Let $w\in W_0$ be such that $w\alpha=\alpha_i$ (a simple root). Let 
$$
\lambda'=\omega_i+(K+1)\sum_{j\neq i}\omega_j.
$$
Then $\langle\lambda',\alpha_i\rangle=1$ and $\langle\lambda',\beta\rangle>K$ for all $\beta\in\Phi_0^+\backslash\{\alpha_i\}$. Now let $\lambda=w^{-1}\lambda'$. So $\langle\lambda,\alpha\rangle=\langle\lambda,w^{-1}\alpha_i\rangle=\langle w\lambda,\alpha_i\rangle=1$, and if $\beta\in\Phi_0^+\backslash\{\alpha\}$ we have $\langle\lambda,\beta\rangle=\langle\lambda',w\beta\rangle$. Since $w\beta\in\Phi_0\backslash\{-\alpha,\alpha\}$ there is an index $j\neq i$ and an element $\gamma$ in the $\mathbb{Z}_{\geq 0}$-span of the simple roots, such that either $w\beta=-\alpha_j-\gamma$ (in the case that $w\beta\in-\Phi_0^+$) or $w\beta=\alpha_j+\gamma$ (in the case that $w\beta\in\Phi_0^+$). Then $|\langle\lambda',w\beta\rangle|>K$, and so $|\langle\lambda,\beta\rangle|>K$.
\end{proof}

\begin{thm}\label{thm:affineSE}
If $W$ is affine then $\cS=\cE$.
\end{thm}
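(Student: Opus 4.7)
\bigskip
\noindent\emph{Proof sketch.} By Proposition~\ref{prop:superareelementary} we already have $\cS\subseteq\cE$, so the task is to prove the reverse inclusion for affine~$W$. In the affine setting $\cE=\Phi_0^+\cup(-\Phi_0^++\delta)$, and Corollary~\ref{cor:sphericalsuperelementary} together with the containment $\Phi_0^+\subseteq\Phisph^+$ shows that every root in $\Phi_0^+$ is super-elementary. It thus suffices to show that each root of the form $\beta=\delta-\alpha$ with $\alpha\in\Phi_0^+$ is super-elementary; this is exactly the setting for which Lemma~\ref{lem:affine1} was set up.

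Fix $\alpha\in\Phi_0^+$ and set $\beta=\delta-\alpha$. Geometrically, $H_\beta$ is the affine hyperplane $\{v:\langle v,\alpha\rangle=1\}$, and $\beta\in\Phi(x)$ if and only if $xC_0\subseteq H_\beta^-$. My plan is to produce $x,y\in W$ whose alcoves both lie just across $H_\beta$ from $C_0$, but positioned far apart along $H_\beta$ so that no hyperplane distinct from $H_\beta$ separates both from $C_0$. Concretely, I apply Lemma~\ref{lem:affine1} with $K$ taken very large to obtain $\lambda\in P$ with $\langle\lambda,\alpha\rangle=1$ and $|\langle\lambda,\gamma\rangle|>K$ for every $\gamma\in\Phi_0^+\setminus\{\alpha\}$. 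Set $\lambda'=-\lambda+\alpha^\vee$, so that $\langle\lambda',\alpha\rangle=-1+2=1$ (placing $\lambda'$ also on $H_\beta$), while
\[
\langle\lambda',\gamma\rangle=-\langle\lambda,\gamma\rangle+\langle\alpha^\vee,\gamma\rangle\qquad(\gamma\in\Phi_0^+\setminus\{\alpha\}),
\]
which differs from $-\langle\lambda,\gamma\rangle$ by a constant bounded independently of $K$ and therefore has the opposite sign to $\langle\lambda,\gamma\rangle$ and magnitude exceeding $K-c$. I then take $x,y\in W$ to be elements of the form $t_{\mu}u$ with $\mu$ in the translation sublattice of $W$ and $u\in W_0$ short, chosen so that $xC_0$ lies in $H_\beta^-$ adjacent to $\lambda$ and $yC_0$ lies in $H_\beta^-$ adjacent to $\lambda'$.

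To verify $\Phi(x)\cap\Phi(y)=\{\beta\}$, take an arbitrary $\gamma'\in\Phi^+\setminus\{\beta\}$ and argue $\gamma'\notin\Phi(x)\cap\Phi(y)$. Split into cases. If $\gamma'=k\delta-\alpha$ with $k\ge 2$ or $\gamma'=k\delta+\alpha$ with $k\ge 1$, then $H_{\gamma'}$ is parallel to $H_\beta$ at distance at least $1$ in the $\alpha$-direction; since both alcoves are pressed against $H_\beta$, neither lies in $H_{\gamma'}^-$. If instead $\gamma'=k\delta+\epsilon\gamma$ with $\gamma\in\Phi_0^+\setminus\{\alpha\}$ and $\epsilon\in\{\pm1\}$, the condition that both $\lambda,\lambda'$ lie in $H_{\gamma'}^-$ amounts to both $\epsilon\langle\lambda,\gamma\rangle<-k$ and $\epsilon\langle\lambda',\gamma\rangle<-k$, which is impossible when $\langle\lambda,\gamma\rangle$ and $\langle\lambda',\gamma\rangle$ have opposite signs (one side of the inequality is then strictly positive while $-k\le 0$). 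Taking $K$ large enough ensures that approximating $xC_0,yC_0$ by $\lambda,\lambda'$ is legitimate on every one of the finitely many hyperplanes $H_{\gamma'}$ that could possibly lie in $\Phi(x)\cup\Phi(y)$.

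The main technical obstacle is the mismatch between the coweight lattice $P$, in which Lemma~\ref{lem:affine1} produces $\lambda$, and the translation sublattice of the affine Weyl group $W$, which is $Q^\vee$. The plan to overcome this is to observe that a bounded integer multiple of $\lambda$ lies in $Q^\vee$ for every affine type, and to rescale $K$ accordingly; the sign-reversal shift from $\lambda$ to $\lambda'$ is likewise realised by an element of the affine Weyl group up to a short $W_0$-factor absorbed into the correctors $u$ and $v$. With these adjustments all the inequalities used in the verification retain their required magnitude, and the argument goes through uniformly across the classification of affine types.
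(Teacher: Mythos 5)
Your overall strategy coincides with the paper's: reduce to the roots $-\alpha+\delta$, invoke Lemma~\ref{lem:affine1} to produce $\lambda\in P$ with $\langle\lambda,\alpha\rangle=1$ and $|\langle\lambda,\gamma\rangle|$ large for $\gamma\in\Phi_0^+\setminus\{\alpha\}$, set $\mu=-\lambda+\alpha^\vee$ so that $\langle\mu,\alpha\rangle=1$ while $\langle\mu,\gamma\rangle$ and $\langle\lambda,\gamma\rangle$ have opposite signs, and conclude that only the single wall $H_{-\alpha+\delta}$ can separate both chosen alcoves from $C_0$. That sign computation, with $K$ taken to be $\max|\langle\alpha^\vee,\gamma\rangle|$, is exactly the paper's.

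However, your handling of the last step contains a genuine error. The ``main technical obstacle'' you identify --- that $\lambda$ lives in the coweight lattice $P$ rather than in the translation sublattice $Q^\vee$ of $W$ --- is not actually an obstacle, and your proposed fix would break the proof. Rescaling $\lambda$ to a multiple $f\lambda\in Q^\vee$ changes $\langle\lambda,\alpha\rangle$ from $1$ to $f$, so that all the hyperplanes $H_{k\delta-\alpha}$ with $1\le k\le f$ separate $C_0$ from the rescaled alcove; both inversion sets then contain $\{k\delta-\alpha\mid 1\le k\le f\}$ and the intersection is no longer a singleton whenever $f>1$ (as in types $\tilde{\sA}_n$, $\tilde{\sB}_n$, etc.). The condition $\langle\lambda,\alpha\rangle=1$ is precisely what ensures a unique separating wall in the parallelism class of $\alpha$, and it cannot be sacrificed. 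The correct resolution, used in the paper, is that no lattice membership is needed: the extended affine Weyl group $W_0\ltimes P$ preserves the alcove decomposition, so $\lambda+C_0$ and $\mu+C_0$ are alcoves, and simple transitivity of $W$ on alcoves yields unique elements $w_\lambda,w_\mu\in W$ with $w_\lambda(C_0)=\lambda+C_0$ and $w_\mu(C_0)=\mu+C_0$ \emph{as sets}. Taking $x=w_\lambda$ and $y=w_\mu$ (rather than your vaguely specified elements ``adjacent to'' $\lambda$ and $\lambda'$ with $W_0$-correctors) makes the inversion-set computation exact and the approximation and rescaling issues in your last two paragraphs disappear.
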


\begin{proof}
The roots in $\Phi_0^+$ are super elementary Corollary~\ref{cor:sphericalsuperelementary}. So consider the roots $-\alpha+\delta$ with $\alpha\in\Phi_0^+$. Let 
$$
K=\max_{\alpha,\beta\in\Phi_0^+}|\langle\alpha^{\vee},\beta\rangle|
$$
(this is an integer, as $\Phi_0$ is crystallographic). Using Lemma~\ref{lem:affine1}, choose $\lambda\in P$ with $\langle\lambda,\alpha\rangle=1$ and $|\langle\lambda,\beta\rangle|>K$ for all $\beta\in\Phi_0^+\backslash\{\alpha\}$. Let $\mu=-\lambda+\alpha^{\vee}$. Then $\langle\mu,\alpha\rangle=-\langle\lambda,\alpha\rangle+\langle\alpha^{\vee},\alpha\rangle=1$, and $\langle\mu,\beta\rangle=-\langle\lambda,\beta\rangle+\langle\alpha^{\vee},\beta\rangle$ for $\beta\in\Phi_0^+\backslash\{\alpha\}$. It follows, from the choice of $K$, that $\langle\mu,\beta\rangle$ and $\langle\lambda,\beta\rangle$ have opposite signs for all $\beta\in\Phi_0^+\backslash\{\alpha\}$. Let $w_{\lambda}$ and $w_{\mu}$ be the (unique) elements of $W$ such that $w_{\lambda}(C_0)=\lambda+C_0$ and $w_{\mu}(C_0)=\mu+C_0$ (as sets, where $C_0$ is the fundamental chamber in the standard geometric realisation). Therefore every root in $\Phi(w_{\lambda})\cap\Phi(w_{\mu})$ lies in the parallelism class of $\alpha$, and since $\langle\lambda,\alpha\rangle=\langle\mu,\alpha\rangle=1$ we have $\Phi(w_{\lambda})\cap\Phi(w_{\mu})=\{-\alpha+\delta\}$. Thus the root $-\alpha+\delta$ is super elementary. 
\end{proof}

We have now concluded our discussion of affine Coxeter groups, and thus we return to the ``standard'' notion of root systems henceforth. We turn our attention to exhibiting a class of Coxeter systems for which $\cS$ is a strict subset of $\cE$. Let $(W,S)$ be a rank $4$ Coxeter system with $S=\{s_1,s_2,s_3,s_4\}$ and $m_{s_1,s_2}=m_{s_1,s_3}=m_{s_2,s_3}=2$. Let $m_i = m_{s_i,s_4}$ and $t_i=2\cos(\pi/m_i)$ for $i=1,2,3$. Thus $W$ has Coxeter graph

    \begin{center}
    \begin{tikzpicture}[scale=1]
    \draw
    (2,-3) node[fill=black,circle,scale=0.6] (0) {}
    (3,-3) node[fill=black,circle,scale=0.6] (1)
    {}
    (4,-2.5) node[fill=black,circle,scale=0.6] (2)
    {}
    (4,-3.5) node[fill=black,circle,scale=0.6] (3)
    {};
    \draw (0)--node [midway,above]{$m_{1}$}(1);
    \draw (1)--node [midway,above]{$m_{2}$}(2);
    \draw (1)--node [midway,below]{$m_{3}$}(3);
\end{tikzpicture}
\end{center}

Write $\alpha_i=\alpha_{s_i}$ for $i=1,2,3,4$. If $\lambda=a\alpha_1+b\alpha_2+c\alpha_3+d\alpha_4\in V$ with $a,b,c,d\geq 0$ and $\lambda\neq 0$ we write $\lambda>0$ (note that this notation does not imply that $\lambda$ is a root).

\newpage

\begin{lem}\label{lem:SElemma}
Let $\lambda=a\alpha_1+b\alpha_2+c\alpha_3+d\alpha_4\neq 0$ with $a,b,c,d\geq 0$, $t_1d-2a\geq 0$, $t_2d-2b\geq 0$, $t_3d-2c\geq 0$, and $t_1a+t_2b+t_3c-2d\geq 0$. Then $w\lambda> 0$ for all $w\in W$.
\end{lem}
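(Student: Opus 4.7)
The plan is to first observe that the four non-trivial hypotheses on $\lambda$ are nothing but the statement that $\lambda$ lies in the antidominant cone, and then to deduce the conclusion from a standard Coxeter-theoretic induction on $\ell(w)$.

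Using $\langle \alpha_i, \alpha_j\rangle = 0$ for distinct $i,j \in \{1,2,3\}$ and $\langle \alpha_i, \alpha_4\rangle = -t_i/2$ for $i=1,2,3$, a direct computation gives
\begin{align*}
\langle \lambda, \alpha_1\rangle &= a - \tfrac{t_1}{2}d, & \langle \lambda, \alpha_2\rangle &= b - \tfrac{t_2}{2}d,\\
\langle \lambda, \alpha_3\rangle &= c - \tfrac{t_3}{2}d, & \langle \lambda, \alpha_4\rangle &= d - \tfrac{t_1}{2}a - \tfrac{t_2}{2}b - \tfrac{t_3}{2}c.
\end{align*}
Thus the four inequalities in the hypothesis translate precisely into $\langle \lambda, \alpha_s\rangle \le 0$ for every $s\in S$. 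The lemma will therefore follow from the general claim: \emph{if $\mu\in V$ has non-negative simple root coordinates and satisfies $\langle \mu,\alpha_s\rangle\le 0$ for all $s\in S$, then $w\mu-\mu$ is a non-negative combination of simple roots for every $w\in W$}. Granted this, $w\lambda = \lambda + (w\lambda-\lambda)$ has non-negative coordinates, and $w\lambda\neq 0$ because $\lambda\neq 0$ and $w$ acts invertibly on $V$.

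We prove the claim by induction on $\ell(w)$, with $w=e$ trivial. For $\ell(w)\ge 1$, write $w=sw'$ with $s\in S$ and $\ell(w')=\ell(w)-1$, and decompose
\[
w\mu-\mu \;=\; \bigl(s(w'\mu)-w'\mu\bigr) + (w'\mu-\mu) \;=\; -2\langle w'\mu,\alpha_s\rangle\,\alpha_s + (w'\mu-\mu).
\]
The second summand is a non-negative combination of simple roots by the inductive hypothesis. For the first, use $\langle w'\mu,\alpha_s\rangle = \langle \mu,(w')^{-1}\alpha_s\rangle$. Since $\ell(sw')>\ell(w')$, Proposition~\ref{prop:rootsystembasics}(2) gives $(w')^{-1}\alpha_s\in\Phi^+$; writing $(w')^{-1}\alpha_s = \sum_{t\in S}c_t\alpha_t$ with $c_t\ge 0$, antidominance of $\mu$ yields $\langle\mu,(w')^{-1}\alpha_s\rangle = \sum_t c_t \langle\mu,\alpha_t\rangle \le 0$, so the first summand is a non-negative multiple of $\alpha_s$. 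This completes the induction.

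The real content is the initial translation of the four hypotheses into antidominance; once this reframing is made, the rest is an essentially routine descent argument powered by the characterisation of left ascents via root signs in Proposition~\ref{prop:rootsystembasics}(2).
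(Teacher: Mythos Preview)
Your proof is correct and follows essentially the same induction-on-length descent argument as the paper's. The paper writes the induction as $w\mapsto ws_i$, computing $s_i\lambda=\lambda+c_i\alpha_i$ with $c_i\in\{t_1d-2a,\,t_2d-2b,\,t_3d-2c,\,t_1a+t_2b+t_3c-2d\}$ directly from the hypotheses and using $w\alpha_i>0$; you instead write it as $w'\mapsto sw'$ and use antidominance of $\lambda$ together with $(w')^{-1}\alpha_s\in\Phi^+$. Your reframing of the four inequalities as $\langle\lambda,\alpha_s\rangle\le 0$ for all $s\in S$ is a helpful observation that explains where the hypotheses come from and yields the slightly stronger conclusion $w\lambda-\lambda\ge 0$, but the underlying mechanics are the same.
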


\begin{proof}
Induction on $\ell(w)$, with the case $w=e$ true by hypothesis (as $a,b,c,d\geq 0$). Suppose the result is true for $w$, and suppose that $\ell(ws_i)=\ell(w)+1$. Thus $w\alpha_i>0$.

If $i=1$ we have $s_1\lambda=\lambda+(t_1d-2a)\alpha_1$, and hence
$$
ws_i\lambda=w(\lambda+(t_1d-2a)\alpha_1)=w\lambda+(t_1d-2a)w\alpha_1> 0
$$
as $t_1d-2a\geq 0$ and $w\alpha_1>0$. Similarly for $i=2,3$. If $i=4$ then
$$
ws_i\lambda=w(\lambda+(t_1a+t_2b+t_3c-2d)\alpha_4)> 0,
$$
hence the result.
\end{proof}

\begin{thm}\label{thm:SEtheorem}
Let $(W,S)$ be the rank $4$ Coxeter system as above, and suppose that $\frac{1}{m_i}+\frac{1}{m_j}\leq \frac{1}{2}$ for each pair $i,j\in\{1,2,3\}$ with $i\neq j$, and that $m_1,m_2,m_3<\infty$. Then the root $$\beta=t_1\alpha_1+t_2\alpha_2+t_3\alpha_3+\alpha_4$$ is elementary but not super elementary.
\end{thm}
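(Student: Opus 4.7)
The plan is to first verify $\beta=s_1s_2s_3\alpha_4$ by a direct computation using the pairwise commutativity of $s_1,s_2,s_3$ and the formula $s_i\alpha_4=\alpha_4+t_i\alpha_i$; in particular $\beta\in\Phi^+$. The elementarity of $\beta$ then follows by applying the Brink--Howlett inductive characterization of elementary roots to the chain $\alpha_4,\;s_3\alpha_4,\;s_2s_3\alpha_4,\;s_1s_2s_3\alpha_4=\beta$: at each step the relevant inner product, computed using the pairwise orthogonality of $\alpha_1,\alpha_2,\alpha_3$, reduces to $-t_i/2$ for the appropriate $i$, and the hypothesis $\tfrac{1}{m_i}+\tfrac{1}{m_j}\le\tfrac{1}{2}$ together with $m_i<\infty$ forces $m_i\in[3,\infty)$, so $-1\le-t_i/2<0$ as required by the criterion.

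For non-super-elementarity I would argue by contradiction: suppose $x,y\in W$ satisfy $\Phi(x)\cap\Phi(y)=\{\beta\}$. A short trigonometric identity rewrites the hypothesis as $t_i^2+t_j^2\ge 4$ for all $i\ne j$, whence $t_1^2+t_2^2+t_3^2\ge 6$. A direct check verifies that the vector $\lambda:=\beta+s_4\beta=2t_1\alpha_1+2t_2\alpha_2+2t_3\alpha_3+(t_1^2+t_2^2+t_3^2)\alpha_4$ satisfies the hypotheses of Lemma~\ref{lem:SElemma}, so $w\lambda>0$ for every $w\in W$. Applied to $w=x^{-1}$ and $w=y^{-1}$, together with the fact that $x^{-1}\beta$ and $y^{-1}\beta$ are negative roots, this forces $x^{-1}\alpha_4$ and $y^{-1}\alpha_4$ to be positive roots, i.e.\ $s_4\notin D_L(x)\cup D_L(y)$. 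Next, for each $i\in\{1,2,3\}$ the root $s_i\beta=\beta-t_i\alpha_i$ is positive, and a direct computation shows that if both $s_i\notin D_L(x)$ and $s_i\notin D_L(y)$, then $s_i\beta\in\Phi(x)\cap\Phi(y)$; since $s_i\beta\ne\beta$, this contradicts $\Phi(x)\cap\Phi(y)=\{\beta\}$. Therefore each $s_i$ ($i=1,2,3$) lies in exactly one of $D_L(x),D_L(y)$.

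Up to interchanging $x$ and $y$, this gives two cases: (I) $\{s_1,s_2,s_3\}\subseteq D_L(x)$; or (II) exactly two of $s_1,s_2,s_3$ lie in $D_L(x)$ and the third lies in $D_L(y)$. Case (I) is immediate: combined with $s_4\notin D_L(y)$ it forces $D_L(y)=\emptyset$, hence $y=e$, contradicting $\beta\in\Phi(y)$. Case (II) is where I expect the main obstacle. Say $D_L(x)\cap\{s_1,\ldots,s_4\}=\{s_1,s_2\}$ and $D_L(y)\cap\{s_1,\ldots,s_4\}=\{s_3\}$; the plan is to iterate the Lemma~\ref{lem:SElemma} argument on more refined vectors (plausible candidates being weighted combinations of $\beta,s_4\beta,s_1\beta,s_2\beta,s_3\beta$) to extract further coordinatewise bounds on $x^{-1}\alpha_i$ and $y^{-1}\alpha_i$. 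Combining these bounds with the constraints $s_1\beta,s_2\beta\in\Phi(y)\setminus\Phi(x)$ and $s_3\beta\in\Phi(x)\setminus\Phi(y)$, I expect to exhibit a concrete positive root $\alpha\ne\beta$ (most plausibly a two-fold reflection of $\beta$ such as $s_4s_3\beta$) that must lie in $\Phi(x)\cap\Phi(y)$, yielding the final contradiction. Locating the correct witness root in Case (II) is the technical heart of the argument; the sharp trigonometric bound $t_i^2+t_j^2\ge 4$ appears to be calibrated precisely so that Lemma~\ref{lem:SElemma} applies to the entire family of vectors needed to close this case.
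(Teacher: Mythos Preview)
Your elementarity argument is essentially correct (the inner product at each step is $-t_i/2\in(-1,0)$ since $m_i\ge 3$ and $m_i<\infty$; note the inequality must be strict, not $\le$).

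There are two genuine gaps in the non-super-elementarity argument.

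\textbf{Wrong choice of $\lambda$.} Your vector $\lambda=\beta+s_4\beta$ does satisfy Lemma~\ref{lem:SElemma}, but all it tells you is that $x^{-1}\beta$ and $x^{-1}(s_4\beta)$ cannot both be negative; since $x^{-1}\beta<0$, you conclude $s_4\beta\notin\Phi(x)$, not $\alpha_4\notin\Phi(x)$. The vector you want is $\lambda=\beta+\alpha_4=t_1\alpha_1+t_2\alpha_2+t_3\alpha_3+2\alpha_4$: here $t_id-2a=0$ and $t_1a+t_2b+t_3c-2d=t_1^2+t_2^2+t_3^2-4\ge 0$, so the lemma applies, and now $x^{-1}(\beta+\alpha_4)>0$ with $x^{-1}\beta<0$ forces $x^{-1}\alpha_4>0$, i.e.\ $s_4\notin D_L(x)$.

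\textbf{Case (II) is the whole point and is not completed.} Your plan (searching among combinations of $\beta,s_4\beta,s_i\beta$ for a second common root such as $s_4s_3\beta$) is off-track. The paper closes this case with three further vectors $\lambda_i=s_i\beta+\alpha_4$ ($i=1,2,3$); for $\lambda_i$ one has $a,b,c,d$ equal to $t_1,t_2,t_3,2$ with the $i$th entry replaced by $0$, so the only nontrivial check is $t_j^2+t_k^2\ge 4$ for $\{j,k\}=\{1,2,3\}\setminus\{i\}$, which is exactly your hypothesis. Lemma~\ref{lem:SElemma} then says no $v\in W$ can have both $\alpha_4$ and $s_i\beta$ in $\Phi(v)$. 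Now in your Case (II) with $D_L(y)=\{s_3\}$, write $y=s_3v$ reduced. Commutativity of $s_1,s_2,s_3$ gives $D_L(v)\cap\{s_1,s_2,s_3\}=\emptyset$ (else a second $s_j$ would lie in $D_L(y)$), and $v\ne e$ since $\beta\ne\alpha_3$, so $D_L(v)=\{s_4\}$ and $\alpha_4\in\Phi(v)$. Hence $s_3\beta\notin\Phi(v)$, i.e.\ $v^{-1}s_3\beta>0$, but then $y^{-1}\beta=v^{-1}s_3\beta>0$, contradicting $\beta\in\Phi(y)$. No witness root in $\Phi(x)\cap\Phi(y)$ is ever produced.

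The paper in fact packages this as a single statement: \emph{any} $w$ with $\beta\in\Phi(w)$ satisfies $|D_L(w)\cap\{s_1,s_2,s_3\}|\ge 2$. Applied to both $x$ and $y$, pigeonhole gives a common simple root, done.
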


\begin{proof}
Since $m_{s_1,s_2}=m_{s_1,s_3}=m_{s_2,s_3}=2$ we have $\beta =s_3s_2s_1(\alpha_4)$. Thus $\alpha_4\mapsto_{s_1}t_1\alpha_1+\alpha_4\mapsto_{s_2}t_1\alpha_1+t_2\alpha_2+\alpha_4\mapsto_{s_3}\beta$ is a path moving up the root poset of $\Phi^{+}$ such that the difference in the $\alpha_i$ coordinate at the $i$th step in the path is less than $2$ (as $m_1,m_2,m_3<\infty$). Thus $\beta$ is elementary by~\cite[\S4.7]{BB:05}.

Let $\lambda=\beta+\alpha_4$ and $\lambda_i=s_i\beta+\alpha_4$ for $i=1,2,3$. We claim that $\lambda$ and $\lambda_i$ satisfy the conditions of Lemma~\ref{lem:SElemma}. To see this, note that the condition $\frac{1}{m_i}+\frac{1}{m_j}\leq \frac{1}{2}$ is equivalent to $t_i^2+t_j^2\geq 4$ for $i,j\in\{1,2,3\}$ with $i\neq j$. We have $\lambda=t_1\alpha_1+t_2\alpha_2+t_3\alpha_3+2\alpha_4$, $\lambda_1=0\alpha_1+t_2\alpha_2+t_3\alpha_3+2\alpha_4$, $\lambda_2=t_1\alpha_1+0\alpha_2+t_3\alpha_3+2\alpha_4$, and $\lambda_3=t_1\alpha_1+t_2\alpha_2+0\alpha_3+2\alpha_4$. Then for $\lambda$ we have $t_1d-2a=t_2d-2b=t_3d-2c=0$ and $t_1a+t_2b+t_3c-2d=t_1^2+t_2^2+t_3^2-4\geq 0$, and for $\lambda_1$ we have $t_1d-2a=2t_1$, $t_2d-2b=t_3d-2c=0$, and $t_1a+t_2b+t_3c-2d=t_2^2+t_3^2-4\geq 0$. Similarly for $\lambda_2$ and $\lambda_3$, using $t_1^2+t_3^2\geq 4$ and $t_1^2+t_2^2\geq 4$.

Thus by Lemma~\ref{lem:SElemma} we have $w^{-1}\lambda> 0$ and $w^{-1}\lambda_i> 0$ ($i=1,2,3$) for all $w\in W$. It follows that there exists no element $w \in W$ with $\{ \alpha_4, \beta \} \subseteq \Phi(w)$ or $\{ \alpha_4, s_i\beta \} \subseteq \Phi(w)$ (for $i = 1,2,3)$, because if so then either $w^{-1}\lambda < 0$ or $w^{-1}\lambda_i < 0$.

We will now show that if $\beta \in \Phi(w)$ then $| \Phi(w) \cap \{ \alpha_1, \alpha_2, \alpha_3 \} | \geq 2 $ (and thus there cannot be $w, v \in W$ with $\Phi(w) \cap \Phi(v) = \{ \beta \}$, and so $\beta\notin\mathcal{S}$). Equivalently, we will show that at least two of the simple reflections $s_1,s_2,s_3$ are in $D_L(w)$. If $\beta\in\Phi(w)$ then the above observation gives $\alpha_4\notin\Phi(w)$, and so $s_4\notin D_L(w)$. Therefore at least one of $s_1,s_2,s_3$ lie in $D_L(w)$ (as $w\neq e$ since $\beta\in \Phi(w)$). Thus $w=s_iv$ with $\ell(s_iv)=\ell(v)+1$ for some $i\in\{1,2,3\}$ and $v\in W$, and note that $v\neq e$ (as $\beta\in\Phi(w)$ and $\beta\neq\alpha_i$). If $s_j\in D_L(v)$ for some $j\in\{1,2,3\}$ then we are done (as $i\neq j$, and $s_is_j=s_js_i$ giving $s_i,s_j\in D_L(w)$). So assume that $D_L(v)=\{s_4\}$. Then $\alpha_4\in\Phi(v)$, and so from the above observations $s_i\beta\notin\Phi(v)$. But then $w^{-1}\beta=v^{-1}s_i\beta>0$, a contradiction, completing the proof.
\end{proof}

\begin{cor}
Let $W$ and $\beta$ be as in Theorem~\ref{thm:SEtheorem}. Then $\beta$ is not a boundary root of any cone type. In particular, $\Phi(\scrT)$ is a strict subset of $\cE$ in this case. 
\end{cor}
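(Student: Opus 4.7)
The plan is to observe that this corollary is essentially a direct translation of Theorem~\ref{thm:SEtheorem} through the identification $\cS = \Phi(\scrT)$ that was made immediately after Definition~\ref{defn:superelementary}. Thus no new argument beyond Theorem~\ref{thm:SEtheorem} is required, and the proof reduces to carefully unpacking definitions.

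First I would recall the chain of equivalences. By Theorem~\ref{thm:boundaryroots}, a positive root $\beta$ lies in $\partial T$ for some cone type $T$ if and only if there exist $x, w \in W$ with $\Phi(x) \cap \Phi(w) = \{\beta\}$; that is, if and only if $\beta$ is super-elementary in the sense of Definition~\ref{defn:superelementary}. Hence $\bigcup_{T \in \mathbb{T}} \partial T = \cS$. On the other hand, from the definition of $\Phi(\scrP)$ given just before Theorem~\ref{thm:finitetermination}, together with the geometric description of boundary roots as those walls that bound a cone type, one sees directly that $\Phi(\scrT) = \bigcup_{T \in \mathbb{T}} \partial T$. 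Combining these gives $\Phi(\scrT) = \cS$.

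Next I would apply Theorem~\ref{thm:SEtheorem}: the specified root $\beta = t_1 \alpha_1 + t_2 \alpha_2 + t_3 \alpha_3 + \alpha_4$ is elementary but not super-elementary. The non-super-elementarity, combined with $\Phi(\scrT) = \cS$, immediately yields $\beta \notin \partial T$ for every cone type $T$, proving the first assertion. For the second assertion, we have $\beta \in \cE$ by the elementarity half of Theorem~\ref{thm:SEtheorem}, while $\beta \notin \Phi(\scrT) = \cS$, witnessing that $\Phi(\scrT) \subsetneq \cE$ under the hypotheses on $m_1, m_2, m_3$.

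There is no real obstacle here: the entire content of the corollary has already been carried by Theorem~\ref{thm:SEtheorem} and by the observation (made once before Proposition~\ref{prop:superareelementary}) that super-elementary roots are exactly those realised as boundary roots of some cone type. The only thing to be careful about is to state explicitly the identification $\Phi(\scrT) = \cS$ so that the final strict containment is visible to the reader, rather than leaving it as folklore from the definitions.
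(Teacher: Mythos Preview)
Your proposal is correct and matches the paper's approach exactly: the paper in fact leaves this corollary without an explicit proof, treating it as immediate from Theorem~\ref{thm:SEtheorem} together with the identification $\cS=\Phi(\scrT)$ recorded just after Definition~\ref{defn:superelementary}. Your unpacking of that identification via Theorem~\ref{thm:boundaryroots} is precisely what the paper intends the reader to supply.
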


%
%
%
%
%
%

\bibliographystyle{plain}


\end{document}